\def\res{\hbox{ {\vrule height .3cm}{\leaders\hrule\hskip.3cm}}\hskip5.0\mu}
\newcommand\beqn{\begin{equation}}
\newcommand\eeqn{\end{equation}}
\newcommand\beqny{\begin{eqnarray}}
\newcommand\eeqny{\end{eqnarray}}
\newcommand\beqnyn{\begin{eqnarray*}}
\newcommand\eeqnyn{\end{eqnarray*}}
\newtheorem{theorem}{Theorem}[section]
\newtheorem{lemma}[theorem]{Lemma}
\newtheorem{corollary}[theorem]{Corollary}
\newtheorem{definition}[theorem]{Definition}
\newtheorem{remark}[theorem]{Remark}
\numberwithin{equation}{section}
\newcommand{\op}[1]{\operatorname{\text{\rm #1}}}
\theoremstyle{remark}
\newtheorem{rmk}{\bf Remark}[section]
\begin{document}

\setlength\parskip{5pt}
\title[Fine properties of branch point singularities]{Fine properties of branch point singularities: stationary two-valued graphs and stable minimal hypersurfaces near points of density $< 3$}
\author{Brian Krummel \& Neshan Wickramasekera}

\begin{abstract}
We study  (higher order) asymptotic behaviour near branch points of stationary $n$-dimensional two-valued $C^{1, \mu}$ graphs in an open subset of ${\mathbb R}^{n+m}$. Specifically, if $M$ is the graph of a two-valued $C^{1, \mu}$ function $u$ on an open subset $\Omega \subset {\mathbb R}^{n}$ taking values in the space of un-ordered pairs of points in ${\mathbb R}^{m}$, and if the integral varifold $V = (M, \theta),$ 
 where the multiplicity function $\theta \, : \, M \to \{1, 2\}$ is such that $\theta =2$ on the set where the two values of $u$ agree and $\theta =1$ otherwise, is stationary in $\Omega \times {\mathbb R}^{m}$ with respect to the mass functional, we show that at ${\mathcal H}^{n-2}$-a.e.\ point $Z$ along its branch locus $u$ decays asymptotically, modulo its single valued average,  to a unique non-zero two-valued  cylindrical harmonic tangent function $\varphi^{(Z)}$ which is homogeneous of some degree $\geq 3/2$. As a corollary, we obtain that the branch locus of $u$ is countably $(n-2)$-rectifiable, and near points $Z$ where the degree of homogeneity  of $\varphi^{(Z)}$ is equal to $3/2$, the branch locus is an embedded real analytic submanifold of dimension $n-2$. These results, combined with the recent works \cite{M} and \cite{MW}, imply a stratification theorem for the (relatively open) set of density $< 3$ points of a stationary codimension 1 integral $n$-varifold with stable regular part and no triple junction singularities. 
\end{abstract}

\maketitle
\tableofcontents

\section{Introduction} 
For certain types of singularities of stationary integral varifolds, the works of L.~Simon in the early 1990's, and of Naber--Valtorta more recently, have established various fine properties, including asymptotic decay of the varifold to a unique tangent cone at a.e.\ singular point (\cite{Sim93}, \cite{Sim93b}) and countable rectifiability of the singular set (\cite{Sim93}, \cite{Sim93b}, \cite{NV}). 
These works apply to singularities that are assumed to be non-degenerate, in the sense of requiring that no tangent cone to the varifold at a singular point is supported on a plane. Let us henceforth call a singular point a \emph{branch point} if one tangent cone at that point is supported on an $n$-dimensional plane and in no neighborhood of that point the varifold is the sum of smoothly embedded $n$-dimensional submanifolds. Simon's work rules out branch points a priori by requiring that the stationary varifold belongs to a  ``multiplicity 1 class,'' i.e.\ a class ${\mathcal M}$ consisting of multiplicity 1 varifolds of some dimension $n$ in a given Euclidean space ${\mathbb R}^{n+m}$, such that each varifold in ${\mathcal M}$ is stationary in an open subsets of ${\mathbb R}^{n+m}$ and ${\mathcal M}$ is closed under translations, dilations, rotations and taking varifold limits. This last closure requirement  with respect to varifold limits in particular imposes the condition that tangent cones must have multiplicity 1 a.e., ruling out branch points.  The Naber--Valtorta work imposes no restriction on the (integer) multiplicity of the varifolds nor does it rule out the existence of branch points, but  proves countable rectifiability results for those strata of the singular set that do not contain branch points.   (Both these theories extend to bounded mean curvature setting, and hence to Riemannian ambient spaces, with straightforward technical modifications to the proofs in the zero mean curvature case.) 

The nature of branch point singularities on the other hand are understood in much less generality and precision. Among the known results are  Almgren's monumental work (\cite{Almgren}; see also \cite{DeLSpa-I}, \cite{DeLSpa-II}, \cite{DeLSpa-III}) which establishes that the sharp Hausdorff dimension upper bound on the branch set of an $n$-dimensional area minimising integral current of codimension $>1$ is $(n-2)$; for two dimensional area minimising integral currents much more is known, namely, that the branch points are isolated and that locally near a branch point the current is a parameterised disk (\cite{Chang}, \cite{DSS-I}, \cite{DSS-II}, \cite{DSS-III}).  In codimension 1, area minimising integral currents do not develop branch points, but stable varifolds generally do (\cite{SimWic07}, \cite{Ros10}, \cite{Krum16}). Concerning stable codimension 1 integral varifolds,  the work (\cite{MW}) shows that at a density $Q$ branch point which is not the limit point of classical singularities  of density 
$< Q,$ the varifold has a unique (multiplicity $Q$) tangent hyperplane, and that the varifold near that point is a $Q$-valued 
``generalised-$C^{1, \alpha}$'' graph over the tangent plane; here a classical singularity is a singular point near which the varifold is the sum of regular sheets coming smoothly together along an $(n-1)$-dimensional submanifold. 
Combining this regularity result (taken with $Q= 2$) with the work \cite{SimWic16} gives that for stable codimension 1 integral varifolds, the set of density 2 branch points which are not the limit points of triple junction singularities (i.e.\ classical singularities of density $3/2$) has Hausdorff dimension at most $n-2$. 

Whether the structure of (non-isolated) branch sets must be constrained in a way similar to the aforementioned rectifiability results of Simon and Naber--Valtorta (for non branch point singularities) is a natural question and yet one that has remained open for any class of stationary varifolds. 
In addressing this question, the main difference, in contrast to the analysis of non branch point singularities, is that to reveal the structural properties of branch sets one would need some form of higher order (i.e.\ beyond the level of tangent cone) analysis.

The present work is part of a series of papers in which we establish such higher order asymptotics for certain mimimal submanifolds and multi-valued harmonic functions, and as a corollary also the countable rectifiability of the the corresponding branch loci.  As part of this study, in~\cite{KrumWic1} we considered the two-valued case of multi-valued Dirichlet energy minimizing functions $v$ as developed by Almgren in~\cite{Almgren} as part of his proof of the Hausdorff dimension upper bound on the branch sets of area minimizing integral currents, as well as  two-valued $C^{1,\mu}$ harmonic functions $v$ (not necessarily energy minimizing) which arise for instance in the work \cite{Wic08}, \cite{Wic} (both being subsumed by \cite{MW}) on regularity of stable minimal hypersurfaces near density 2 branch points. 
In either case, the main results of \cite{KrumWic1} established that  on approach to ${\mathcal H}^{n-2}$-a.e.\ point of its branch locus, $v$ decays asymptotically, modulo its single-valued harmonic average, to a unique non-zero homogeneous cylindrical two-valued harmonic function, and that the branch locus of $v$ is countably $(n-2)$ rectifiable. In~\cite{KrumWic2}, we showed that the main results of \cite{KrumWic1} (for two-valued Dirichlet energy minimizers) hold in full generality, i.e.\ for $q$-valued Dirichlet energy minimizing functions for arbitrary $q \geq 2.$ (A key difference in this extension is that when $q \ge 3,$ the frequency---equivalently, the degree of homogeneity of blow-ups---can be $< 1/2$ on a set of branch points of positive $(n-2)$-dimensional Hausdorff measure; analysis near such points requires considerably more effort than for the case $q=2$ where the frequency is always $\geq 1/2$).  

Here we consider a corresponding quasilinear problem, i.e.\ $C^{1,\mu}$ two-valued minimal (stationary) graphs of arbitrary dimension and codimension.  In view of the 
aformentioned regularity results for stable codimension 1 integral varifolds admitting no triple junction singularities, our results here are directly applicable to the simplest type of branch points of such varifolds, i.e.\ density 2 branch points. 

We consider $M \equiv {\rm graph} \, u$ where  $u$ is a $C^{1,\mu}$ two-valued function over a domain $\Omega \subset {\mathbb R}^{n}$, which at each point $X \in \Omega$ takes a pair of values $u(X) = \{u_1(X),u_2(X)\}$ with $u_1(X),u_2(X) \in \mathbb{R}^m$, not necessarily distinct, and 
where ${\rm graph} \, u = \{(X,Y) \in \Omega \times {\mathbb R}^{m}\, : \, Y = u_{1}(X) \;\;\mbox{or} \;\; Y = u_{2}(X)\}.$ We assume that $M$, equipped with multiplicity $\theta(X, Y) = 1$ whenever $u_{1}(X) \neq u_{2}(X)$ and $\theta(X, Y) = 2$ whenever $u_{1}(X) = u_{2}(X),$ is stationary in $\Omega \times {\mathbb R}^{m},$ as a varifold, with respect to the mass functional (in the sense of Definition~\ref{twoval minimal graph} below). Note that here $u_{j}(X)$, $j=1, 2$ denote the two unordered values of $u$ at $X$, and in particular $u_{1}$, $u_{2}$ need \emph{not} separately represent single-valued $C^{1}$ functions globally defined on $\Omega.$ A branch point of $M$ is then a point of $M$ for which this separation property fails in every neighborhood of its projection onto $\Omega$ . 
Thus $X_{0} \in M$ is a branch point if, 
writing $X_{0} = (X_{0}^{\prime}, Y_{0})$ with $X_{0}^{\prime} \in \Omega$ and $Y_{0} \in {\mathbb R}^{m}$, 
there is no $\sigma >0$ having the property that  there are two (single-valued) functions 
$u_{1}, u_{2} \, : \, B_{\sigma}^{n} (X_{0}^{\prime}) \to {\mathbb R}^{m}$ of class $C^{1}$ such that $u(X) = \{u_1(X),u_2(X)\}$ for all $X \in  B_{\sigma}^{n} (X_{0}^{\prime}).$

\subsection{Main results of the present work}  Let $M$ be as above. We develop estimates to study higher order asymptotic behavior of  $M$ on approach to its branch points, including the question of uniqueness of \emph{tangent functions} $\varphi$ along the branch set.  The first main result implied by our estimates,  Theorem~A below, can be seen as giving a asymptotic expansion for $M$ at $\mathcal{H}^{n-2}$-a.e.~branch point $P$. 

\noindent \textbf{Theorem~A.} \emph{Let $\mu \in (0,1]$.  Let $\Omega \subset {\mathbb R}^{n}$ be open, and let $M$ be the graph of a $C^{1,\mu}$ two-valued function 
$u : \Omega \rightarrow \mathcal{A}_2(\mathbb{R}^m)$. Suppose that $M$ is stationary 
in $\Omega \times \mathbb{R}^m$ in the sense of Definition~\ref{twoval minimal graph}.  For $\mathcal{H}^{n-2}$-a.e.~branch point $P$ of $M$, there exists a number $\rho_P > 0$ and an orthogonal rotation $q_P$ of $T_P M$ such that 
\begin{equation*}
	(M - P) \cap B^{n+m}_{\rho_P}(0) = {\rm graph}\,\widetilde{u}_P \cap B^{n+m}_{\rho_P}(0)
\end{equation*}
for some $C^{1,1/2}$ two-valued function $\widetilde{u}_P : B^{n+m}_{\rho_P}(0) \cap T_P M \rightarrow \mathcal{A}_2(T_P M^{\perp})$ and 
\begin{equation*}
	\widetilde{u}_P(X) = \{ \widetilde{h}_P(X) - \varphi^{(P)}_1(q_P X) - \epsilon_P(X), \widetilde{h}_P(X) + \varphi^{(P)}_1(q_P X) + \epsilon_P(X) \}
\end{equation*}
for each $X \in B^{n+m}_{\rho_P}(0) \cap T_{P} M$ where $\widetilde{h}_P : B^{n+m}_{\rho_P}(0) \cap T_P M \rightarrow T_P M^{\perp}$ is a $C^{1,1}$ single-valued function (which depends on $P$ and in fact equals the average of the two values of $\widetilde{u}_P$); 
$$\varphi^{(P)} : T_P M \rightarrow \mathcal{A}_2(T_P M^{\perp})$$ 
is a non-zero, symmetric, homogeneous, harmonic two-valued function given by 
\begin{equation*}
	\varphi^{(P)}(X) = \{ \pm \varphi^{(P)}_1(X) \} = \{ \pm \op{Re}((a^{(P)} + i b^{(P)}) \,(x_1+ix_2)^{k_P/2}) \}
\end{equation*}
for some integer $k_P \geq 3$ and some $a^{(P)}, b^{(P)} \in T_P M^{\perp}$ (where we use coordinates $X = (x_1,x_2,\ldots,x_n)$ with respect to an orthonormal basis for $T_P M$), and 
$$\{\pm \epsilon_P\} : B^{n+m}_{\rho_P}(0) \cap T_P M \rightarrow \mathcal{A}_2(T_P M^{\perp})$$ 
is a symmetric two-valued function with 
\begin{equation*}
	\sigma^{-n-k_P} \int_{B_{\sigma}(0)} |\epsilon_P|^2 \leq C_P \,\sigma^{\gamma_P}
\end{equation*}
for all $\sigma \in (0,\rho_P)$ and some constants $C_P, \gamma_P > 0$ independent of $\sigma$. }

Theorem~A in particular gives uniqueness of tangent functions $\varphi^{(P)}$  at $\mathcal{H}^{n-2}$-a.e.~branch point $P$.

Let $\mathcal{B}_u$ be the set of branch point of $u$, i.e.\ the set of points $Z \in \Omega$ for which there is no $\delta > 0$ such that $u = \{u_1,u_2\}$ in $B_{\delta}(Z)$ for two single-valued functions 
$u_{1}, u_{2} \in C^{1}(B_{\delta}(Z); {\mathbb R}^{m}).$   (Thus ${\mathcal B}_{u}$ is the projection of the branch set of ${\rm graph} \, u$). We have the following variant of Theorem~A which expresses the conclusion of Theorem~A as an asymptotic expansion of $u$ (as opposed to an expansion of $\widetilde{u}_{(Z, u_{1}(Z))}$) near ${\mathcal H}^{n-2}$ a.e.\ $Z \in {\mathcal B}_{u}$: 

\noindent \textbf{Theorem~A$^\prime$.} \emph{Let $\mu \in (0,1]$.  There exists $\varepsilon = \varepsilon(n,m,\mu) > 0$ such that the following holds true.  Let $u : B_1(0) \rightarrow \mathcal{A}_2(\mathbb{R}^m)$ be a $C^{1,\mu}$ two-valued function such that $\|Du\|_{C^{0,\mu}(B_1(0))} \leq \varepsilon$ and suppose that $M = {\rm graph}\,u$ be stationary in $B_1(0) \times \mathbb{R}^m$ in the sense of Definition~\ref{twoval minimal graph}.  For $\mathcal{H}^{n-2}$-a.e.~$Z \in \mathcal{B}_u \cap B_{1/2}(0)$, there exists a number $\rho_Z \in (0,1/16]$, an orthogonal rotation $q_Z$ of $\mathbb{R}^n$ and an invertible $n \times n$ real matrix $A_Z$ with $\|A_Z - I\| \leq C(n,m) \,|Du(Z)|$ such that 
\begin{equation*}
	u(Z+X) = \{ h(Z+X) - \varphi^{(Z)}_1(q_Z A_Z X) - \epsilon_Z(X), h(Z+X) + \varphi^{(Z)}_1(q_Z A_Z X) + \epsilon_Z(X) \}
\end{equation*}
for each $X \in B_{\rho_Z}(0)$ where $h : B_1(0) \rightarrow \mathbb{R}^m$ is a $C^{1,1}$ single-valued function independent of $Z$ (and in fact $h(X)$ is the average of the two values of $u(X)$); 
$$\varphi^{(Z)} : \mathbb{R}^n \rightarrow \mathcal{A}_2(\mathbb{R}^m)$$ 
is non-zero, symmetric, homogeneous, harmonic two-valued function given by 
\begin{equation*}
	\varphi^{(Z)}(X) = \{ \pm \varphi^{(Z)}_1(X) \} = \{ \pm \op{Re}(c^{(Z)} (x_1+ix_2)^{k_Z/2}) \}
\end{equation*}
for some integer $k_Z \geq 3$ and some $c^{(Z)} \in \mathbb{C}^m \setminus \{0\}$ (where the notation is such that $X = (x_1,x_2,\ldots,x_n)$); and $\{\pm \epsilon_Z\} : B_{\rho_Z}(0) \rightarrow \mathcal{A}_2(\mathbb{R}^m)$ is a symmetric two-valued function with 
\begin{equation*}
	\sigma^{-n-k_Z} \int_{B_{\sigma}(0)} |\epsilon_Z|^2 \leq C_Z \,\sigma^{\gamma_Z}
\end{equation*}
for all $\sigma \in (0,\rho_Z)$ and some constants $C_Z, \gamma_Z > 0$ independent of $\sigma$. }

Much more is true near any branch point $P_0$ where the expansion as in Theorem~A is valid with $k_{P_0}$ equal to the minimum value $k_{P_0} = 3$:

\noindent \textbf{Theorem~B.} \emph{Let $\mu \in (0,1]$ and let $u : \Omega \rightarrow \mathcal{A}_2(\mathbb{R}^m)$ be a $C^{1,\mu}$ two-valued function such that $M = {\rm graph}\,u$ is stationary in $\Omega \times \mathbb{R}^m$ in the sense of Definition~\ref{twoval minimal graph}.  If $k_{P_0} = 3$ for some branch point $P_0$ of $M$ at which the asymptotic expansion as in Theorem~A holds true, then the expansion as in Theorem~A is valid for every branch point $P \in B^{n+m}_{\rho_{P_0}/2}(P_0)$ of $M$ (where $\rho_{P_0}$ is as in Theorem~A) with $k_P = 3$, $\rho_P = \rho_{P_0}/2$, $C_P = C_{P_0}$ and $\gamma_P = \gamma_{P_0}$.  Furthermore, letting $\{\pm \epsilon_P\}$ be as in Theorem~A, we have in this case that
\begin{equation*}
	\sup_{B_{\sigma}(0)} |\epsilon_P|^2 \leq C_0 \,\sigma^{3 + \frac{\gamma_{P_0}}{2n}}
\end{equation*} 
for every branch point $P \in B_{\rho_{P_0}/2}(P_0)$ of $M$ and every $\sigma \in (0,\rho_{P_0}/16]$, where $C_0$ is independent of $\sigma$ and $P$.  We also have that the branch set of $M$ in $B^{n+m}_{\rho_{P_0}/2}(P_0)$ is an $(n-2)$-dimensional $C^{1,\alpha}$ submanifold of  $B^{n+m}_{\rho_{P_0}/2}(P_0)$ for some $\alpha = \alpha_{P_0} \in (0,1)$.}

\emph{In fact if $P_{0} \in M$ is such that one tangent function at $P_{0}$, after composing with an orthogonal rotation of $T_{P_0} M$, has the form $\op{Re}(c (x_1+ix_2)^{3/2})$ with $c \in \mathbb{C}^m \setminus \{0\}$ a constant, then all of the above conclusions hold for some $\rho_{P_{0}}>0$ and constants $C_{P_{0}}, \gamma_{P_{0}} >0$.}

\begin{rmk}
The same result also holds  under the hypotheses of Theorem~A$^{\prime}$ near any point $Z_0 \in \mathcal{B}_u$ such that one tangent function to $M$ at 
$P_{0} = (Z_{0}, u_{1}(Z_{0}))$, after composing with an orthogonal rotation of $T_{P_0} M$, has the form $\op{Re}(c (x_1+ix_2)^{3/2})$ with $c \in \mathbb{C}^m \setminus \{0\}$ a constant. 
(We omit the formal statement and proof as they correspond to Theorem~B with obvious changes.)
\end{rmk}

The estimates leading to Theorem~A give also the following structure result for the branch set of a two-valued stationary graph:

\noindent \textbf{Theorem~C.} \emph{Let $\mu \in (0,1]$ and let $u : \Omega \rightarrow \mathcal{A}_2(\mathbb{R}^m)$ be a $C^{1,\mu}$ two-valued function such that $M = {\rm graph}\,u$ is stationary in $\Omega \times \mathbb{R}^m$ in the sense of Definition~\ref{twoval minimal graph}. Let $\op{sing} M$ be the branch set of $M$.  If $n = 2$, $\op{sing} M$ is discrete.  If $n \geq 3$ then for each closed ball $B \subset \Omega$, either $\op{sing} M \cap (B \times \mathbb{R}^m) = \emptyset$ or $\op{sing} M \cap (B \times \mathbb{R}^m)$ has positive $(n-2)$-dimensional Hausdorff measure and is equal to the union of a finite number of pairwise disjoint, locally compact sets each of which is locally $(n-2)$-rectifiable (and has in particular locally finite $(n-2)$-dimensional Hausdorff measure). }

Combining the above results with the main theorems in \cite{M} and \cite{MW}, we obtain the following stratification result for the density $< 3$ region of a stable codimension 1 integral varifolds with no density $3/2$ points.

\noindent \textbf{Theorem~D.} \emph{Let $V$ be a stationary integral $n$-varifold on $B^{n+1}_1(0)$ such that any two-sided portion of 
the regular part of $V$ is stable. Suppose that $\{P \in B^{n+1}_{1}(0) \, : \, \Theta \, (\|V\|, P) = 3/2\} = \emptyset$. 
Then 
\begin{equation*}
	{\rm spt} \, \|V\| \cap B^{n+1}_1(0) \cap \{ P : \Theta^n(\|V\|, P) < 3 \} = \Omega \cup \mathcal{T} \cup \mathcal{C} \cup \mathcal{B} \cup \mathcal{S}
\end{equation*}
where: }
\begin{enumerate}
        \item[(a)] \emph{$\Omega$ is the set of points $P \in 	{\rm spt} \, \|V\| \cap B^{n+1}_1(0) \cap \{ P : \Theta^n(\|V\|, P) < 3 \}$ such that ${\rm spt} \, \|V\|$ is a 
        smoothly embedded hypersurface of $B_{1}^{n+1}(0)$ near $P$};
        \item[(b)] \cite[Theorem~C]{MW} \emph{${\mathcal T}$ is the set of points $P \in {\rm spt} \, \|V\| \cap B_{1}^{n+1}(0)$ such that one tangent cone to $V$ at $P$ is $|L^{1}_{P}| + |L^{2}_{P}|$ for a pair of transversely intersecting hyperplanes $L^{1}_{P}$, $L^{2}_{P}$. This is the unique tangent cone to $V$ at $P$, and there is $\delta_{P} >0$ such that ${\rm spt} \, \|V\| \cap B_{\delta_{P}}^{n+1}(P)$ is the union of two transversely intersecting smoothly embedded minimal disks (in fact two graphs over the two hyperplanes $L^{1}_{P}$, $L^{2}_{P}$); ${\mathcal T}$ is a smoothly embedded $(n-1)$-dimensional submanifold of $B_{1}^{n+1}(0)$}; 
        \item[(c)] \cite{M} \emph{$\mathcal{C}$ is the set of points $P \in {\rm spt} \, \|V\| \cap B_{1}^{n+1}(0)$ where one tangent cone is the sum of 5 half-hyerplanes meeting along a common axis. At each point in ${\mathcal C}$ the varifold has a unique tangent cone, and ${\mathcal C}$ is a smoothly embedded $(n-1)$-dimensional submanifold.} 
	\item[(d)] \cite[Theorem~C] {MW} \emph{$\mathcal{B}$ is the set of points $P \in ({\rm spt}  \, \|V\| \setminus \Omega) \cap B^{n+1}_1(0)$ such that one tangent cone to $V$ at $P$ is $2|L_{P}|$ for some hyperplane $L_{P}$. This is the unique tangent cone to $V$ at $P$, and there is a $\delta_{P} > 0$ such that $M_{P} = {\rm spt} \, \|V\| \cap B^{n+1}_{\delta_{P}}(P)$ is the graph of a $C^{1,1/2}$ two-valued function over a domain in 
	$L_{P}$.} 
	\item[(e)] Moreover, by Theorem~A and Theorem~C, 
	\begin{enumerate} 
		\item[(i)] \emph{for $\mathcal{H}^{n-2}$-a.e.~$P \in \mathcal{B}$, there is $\rho_{P}>0$, a function $\widetilde{u}_{P} \, : \, B_{\rho_{P}}^{n+1}(0) \cap L_{P} \to {\mathcal A}_{2}(L_{p}^{\perp})$ of class $C^{1, 1/2}$, a unique non-zero cylindrical tangent function $\varphi^{(P)} \, : \, L_{P} \to {\mathcal A}_{2}(L_{P}^{\perp})$ to $M$ at $P$ and an orthogonal rotation $q_{P}$ of $L_{P}$ such that  $(M - P) \cap B^{n+1}_{\rho_P}(0) = {\rm graph}\,\widetilde{u}_P \cap B^{n+1}_{\rho_P}(0)$
and the asymptotic expansion of $\widetilde{u}_{P}$ as in Theorem~A (with $L_{P}$ in place of $T_{P} \, M$) holds; }

		\item[(ii)] \emph{${\mathcal B}$ is countably $(n-2)$-rectifiable; in fact for each closed ball $$B \subset B^{n+1}_1(0) \setminus \left(\{ P : \Theta^n(\|V\|) \geq 3 \} \cup \mathcal{S}\right),$$ either $\mathcal{B} \cap B= \emptyset$ or $\mathcal{H}^{n-2}(\mathcal{B} \cap B) > 0$ and $\mathcal{B} \cap B$ is equal to the union of a finite number of pairwise disjoint, locally compact sets each of which is locally $(n-2)$-rectifiable (and has in particular locally finite $(n-2)$-dimensional Hausdorff measure). }
	\end{enumerate}
	
	\item[(d)] \emph{$\mathcal{S} = {\rm spt} \, \|V\| \cap B^{n+1}_1(0) \cap \{ P : \Theta^n(\|V\|, P) < 3 \} \setminus (\Omega \cup {\mathcal T} \cup {\mathcal B})$ is a relatively closed subset of
${\rm spt} \, \|V\| \cap B^{n+1}_1(0) \cap \{ P : \Theta^n(\|V\|, P) < 3 \}$ of Hausdorff dimension $\leq n-3$.}
\end{enumerate}

\section{Two-valued function preliminaries} \label{sec:preliminaries sec}

\subsection{Two-valued functions}  We let $\mathcal{A}_2(\mathbb{R}^m)$ denote the space of unordered pairs $a = \{a_1,a_2\}$ where $a_1,a_2 \in \mathbb{R}^m$, possibly repeating.  We equip $\mathcal{A}_2(\mathbb{R}^m)$ with the metric $\mathcal{G}$ defined by 
\begin{equation*}
	\mathcal{G}(a,b) = \min\left\{ \sqrt{|a_1-b_1|^2 + |a_2-b_2|^2}, \sqrt{|a_1-b_2|^2 + |a_2-b_1|^2} \right\}
\end{equation*}
for each $a = \{a_1,a_2\}, \,b = \{b_1,b_2\} \in \mathcal{A}_2(\mathbb{R}^m)$.  
For each $a = \{a_1,a_2\} \in \mathcal{A}_2(\mathbb{R}^m)$ we let 
\begin{equation*}
	|a| = \mathcal{G}(a,\{0,0\}) = \sqrt{|a_1|^2 + |a_2|^2}.
\end{equation*}

Let $\Omega \subseteq \mathbb{R}^n$ be an open subset.  A \emph{two-valued function} $u : \Omega \rightarrow \mathcal{A}_2(\mathbb{R}^m)$ is a map such that at each $X \in \Omega$, $u(X) = \{u_1(X),u_2(X)\}$ as an unordered pair of $u_1(X),u_2(X) \in \mathbb{R}^m$. 

To each two-valued function $u : \Omega \rightarrow \mathcal{A}_2(\mathbb{R}^m)$ we associate the single-valued average $u_a : \Omega \rightarrow \mathbb{R}^m$ and symmetric part $u_s : \Omega \rightarrow \mathcal{A}_2(\mathbb{R}^m)$ defined by 
\begin{equation} \label{avg and free defn}
	u_a(X) = \frac{u_1(X) + u_2(X)}{2}, \quad u_s(X) = \left\{ \pm \frac{u_1(X) - u_2(X)}{2} \right\} 
\end{equation}
for each $X \in \Omega$, where $u(X) = \{u_1(X),u_2(X)\}$.  We say the two-valued function $u$ is \emph{symmetric} if $u_a(X) = 0$ for all $X \in \Omega$.  In other words, $u$ is symmetric if and only if $u(X) = \{\pm u_1(X)\}$ for each $X \in \Omega$, where $u_1(X) \in \mathbb{R}^m$.  Clearly $u_s$ is always a symmetric two-valued function.

Since $\mathcal{A}_2(\mathbb{R}^m)$ is a metric space, we can define the space $C^0(\Omega,\mathcal{A}_2(\mathbb{R}^m))$ of continuous two-valued functions with the uniform topology in the usual way.  We define the space $C^{0,\mu}(\Omega,\mathcal{A}_2(\mathbb{R}^m))$ of H\"older continuous two-valued functions with exponent $\mu \in (0,1]$ to be the set of all $u : \Omega \rightarrow \mathcal{A}_2(\mathbb{R}^m)$ such that 
\begin{equation*}
	[u]_{\mu,\Omega} \equiv \sup_{X,Y \in \Omega, \,X \neq Y} \frac{\mathcal{G}(u(X),u(Y))}{|X-Y|^{\mu}} < \infty.  
\end{equation*}
For each $u \in C^0(\Omega, \mathcal{A}_2(\mathbb{R}^m))$ we let 
\begin{equation*}
	\|u\|_{C^0(\Omega)} = \sup_{\Omega} |u| 
\end{equation*}
and for each $\mu \in (0,1]$ and $u \in C^{0,\mu}(\Omega, \mathcal{A}_2(\mathbb{R}^m))$ we let 
\begin{equation*}
	\|u\|_{C^{0,\mu}(\Omega)} = \sup_{\Omega} |u| + [u]_{\mu,\Omega}. 
\end{equation*}

We say a two-valued function $u : \Omega \rightarrow \mathcal{A}_2(\mathbb{R}^m)$ is \emph{differentiable} at a point $Y \in \Omega$ if there exists an affine two-valued function $\ell_Y : \mathbb{R}^n \rightarrow \mathcal{A}_2(\mathbb{R}^m)$, i.e.~a two-valued function given by $\ell_Y(X) = \{A_1^Y X + b_1^Y, A_2^Y X + b_2^Y\}$ for each $X \in \mathbb{R}^n$ and for some constant $m \times n$ real-valued matrices $A_1^Y,A_2^Y$ and constants $b_1^Y,b_2^Y \in \mathbb{R}^m$, such that 
\begin{equation*}
	\lim_{X \rightarrow 0} \frac{\mathcal{G}(u(Y+X),\ell_Y(X))}{|X|} = 0.
\end{equation*}
$\ell_Y$ is unique if it exists.  We call $\ell_Y$ the \emph{affine approximation} of $u$ at $Y$.  We let $Du(Y) = \{A_1^Y,A_2^Y\}$ denote the \emph{derivative} of $u$ at $Y$.  We define the space $C^1(\Omega,\mathcal{A}_2(\mathbb{R}^m))$ as the space of all $u : \Omega \rightarrow \mathcal{A}_2(\mathbb{R}^m)$ such that $u$ is differentiable at each point of $\Omega$ and 
\begin{equation*}
	\lim_{Y \rightarrow Y_0} \,\sup_{X \in B_1(0)} \mathcal{G}(\ell_Y(X),\ell_{Y_0}(X)) = 0 
\end{equation*}
for all $Y_0 \in \Omega$, where $\ell_Y, \ell_{Y_0}$ are the affine approximations of $u$ at $Y,Y_0 \in \Omega$.  If $u \in C^1(\Omega,\mathcal{A}_2(\mathbb{R}^m))$, then $u$ and $Du$ are continuous on $\Omega$.  Given $u_k,u \in C^1(\Omega,\mathcal{A}_2(\mathbb{R}^m))$ we say $u_k \rightarrow u$ in $C^1(\Omega,\mathcal{A}_2(\mathbb{R}^m))$ if 
\begin{equation*}
	\lim_{k \rightarrow \infty} \,\sup_{Y \in \Omega} \,\sup_{X \in B_1(0)} \mathcal{G}(\ell_{k,Y}(X),\ell_{Y}(X)) = 0, 
\end{equation*}
where $\ell_{k,Y}, \ell_Y$ are the affine approximations of $u_k, u$ at $Y \in \Omega$.  If $u_k \rightarrow u$ in $C^1(\Omega,\mathcal{A}_2(\mathbb{R}^m))$, then $u_k \rightarrow u$ and $Du_k \rightarrow Du$ uniformly on $\Omega$.  For each $\mu \in (0,1]$ we define $C^{1,\mu}(\Omega,\mathcal{A}_2(\mathbb{R}^m))$ to be the space of all $u \in C^1(\Omega,\mathcal{A}_2(\mathbb{R}^m))$ such that $Du \in C^{0,\mu}(\Omega,\mathcal{A}_2(\mathbb{R}^{m \times n}))$.  For each $u \in C^1(\Omega, \mathcal{A}_2(\mathbb{R}^m))$ we let 
\begin{equation*}
	\|u\|_{C^1(\Omega)} = \sup_{\Omega} |u| + \sup_{\Omega} |Du| 
\end{equation*}
and for each $\mu \in (0,1]$ and $u \in C^{1,\mu}(\Omega, \mathcal{A}_2(\mathbb{R}^m))$ we let 
\begin{equation*}
	\|u\|_{C^{1,\mu}(\Omega)} = \sup_{\Omega} |u| + \sup_{\Omega} |Du| + [Du]_{\mu,\Omega}. 
\end{equation*}

\begin{definition} \label{singular sets defn}
Given a two-valued function $u \in C^1(\Omega,\mathcal{A}_2(\mathbb{R}^m))$ we define: 
\begin{enumerate}
	\item[(a)] $\mathcal{Z}_u = \{ X \in \Omega : u_1(X) = u_2(X) \}$ and 
	\item[(b)] $\mathcal{K}_u = \{ X \in \Omega : u_1(X) = u_2(X), \,Du_1(X) = Du_2(X) \}$,
\end{enumerate}
where $u(X) = \{u_1(X),u_2(X)\}$ and $Du(X) = \{Du_1(X),Du_2(X)\}$ for each $X \in \Omega$, and
\begin{enumerate}
	\item[(c)] the \emph{branch set} $\mathcal{B}_u$ to be the set of all points $Y \in \Omega$ such that there is no radius $0 < \delta < \op{dist}(Y,\partial \Omega)$ such that $u(X) = \{u_1(X),u_2(X)\}$ for all $X \in B_{\delta}(Y)$ for single-valued functions $u_1,u_2 \in C^1(B_{\delta}(Y),\mathbb{R}^m)$. 
\end{enumerate}
\end{definition}

Clearly $\mathcal{B}_u \subseteq \mathcal{K}_u \subseteq \mathcal{Z}_u$.  Moreover, 
\begin{align*}
	\mathcal{Z}_u &= \{ X \in \Omega : u_s(X) = \{0,0\} \}, \\
	\mathcal{K}_u &= \{ X \in \Omega : u_s(X) = \{0,0\}, \,Du_s(X) = \{0,0\} \} ,
\end{align*}
where $u_s$ is the symmetric part of $u$ as in \eqref{avg and free defn}.  

For each $1 \leq p \leq \infty$, we define $L^p(\Omega,\mathcal{A}_2(\mathbb{R}^m))$ to be the space of all Lebesgue measurable functions $u : \Omega \rightarrow \mathcal{A}_2(\mathbb{R}^m)$ taking values in the metric space $\mathcal{A}_2(\mathbb{R}^m)$ such that $\|u\|_{L^p(\Omega)} = \|\mathcal{G}(u,\{0,0\})\|_{L^p(\Omega)} < \infty$.  We equip $L^p(\Omega,\mathcal{A}_2(\mathbb{R}^m))$ with the metric $d_{L^p}(u,v) = \|\mathcal{G}(u,v)\|_{L^p(\Omega)}$ for each $u,v \in L^p(\Omega,\mathcal{A}_2(\mathbb{R}^m))$. 

Let $u \in C^1(\Omega,\mathcal{A}_2(\mathbb{R}^m))$ be a symmetric two-valued function such that $u$ is smooth in $\Omega \setminus \mathcal{K}_u$ in the sense that in each open ball $B \subset \Omega \setminus \mathcal{K}_u$, $u = \{ \pm u_1\}$ for some smooth function $u_1 : B \rightarrow \mathbb{R}^m$.  Following~\cite{SimWic16}, we say $u \in W^{2,2}_{\rm loc}(\Omega,\mathcal{A}_2(\mathbb{R}^m))$ if $D^2 u \in L^2(\Omega' \setminus \mathcal{K}_u,\mathcal{A}_2(\mathbb{R}^m))$ for every open set $\Omega'$ with $\overline{\Omega'} \subset \Omega$ and we extend $D^2 u = \{0,0\}$ on $\mathcal{K}_u$.  By approximating the derivatives of $u$ as in~\cite{SimWic16}, one can show that $Du$ is in $W^{1,2}_{\rm loc}(\Omega,\mathcal{A}_2(\mathbb{R}^m))$, the space of Sobolev two-valued functions as defined by Almgren in~\cite{Almgren}.  Equivalently, $W^{1,2}_{\rm loc}(\Omega,\mathcal{A}_2(\mathbb{R}^m))$ is the space of Sobolev functions taking values in the metric space $\mathcal{A}_2(\mathbb{R}^m)$ as defined by Ambrosio~\cite{Ambrosio} and Reshetnyak~\cite{Resh1}\cite{Resh2}, see~\cite{DeLSpa11}.

\subsection{Two-valued harmonic functions} \label{sec:prelims harmonic subsec}  $C^{1,\mu}$ two-valued harmonic functions were introduced in~\cite{SimWic16} as approximations of $C^{1,\mu}$ two-valued functions whose graphs are area-stationary varifolds (as in Definition~\ref{twoval minimal graph} below).  We will use $C^{1,\mu}$ two-valued harmonic functions in a similar manner.  We state the definition and basic facts about $C^{1,\mu}$ two-valued harmonic functions below and refer the reader to~\cite{SimWic16} and~\cite{KrumWic1} for more thorough discussions. 

\begin{definition} \label{twoval harmonic}
Let $\mu \in (0,1]$ and $\Omega \subseteq \mathbb{R}^n$ be open.  We say $\varphi \in C^{1,\mu}(\Omega,\mathcal{A}_2(\mathbb{R}^m))$ is \emph{locally harmonic} in $\Omega \setminus \mathcal{B}_{\varphi}$ if for every open ball $B \subseteq \Omega \setminus \mathcal{B}_{\varphi}$ there exists a pair of single-valued harmonic (hence real analytic) functions $\varphi_1,\varphi_2 : B \rightarrow \mathbb{R}^m$ such that $\varphi(X) = \{\varphi_1(X),\varphi_2(X)\}$ for all $X \in B$.  Clearly if such $\varphi_1,\varphi_2$ exist, they are unique. 
\end{definition}

Suppose $\varphi \in C^{1,\mu}(\Omega,\mathcal{A}_2(\mathbb{R}^m))$ is a non-zero, symmetric two-valued function which is locally harmonic in $\Omega \setminus \mathcal{B}_{\varphi}$.  For each $Y \in \Omega$, the \emph{frequency function} 
\begin{equation} \label{freqfn harmonic defn}
	N_{\varphi,Y}(\rho) = \frac{\rho^{2-n} \int_{B_{\rho}(Y)} |D\varphi|^2}{\rho^{1-n} \int_{\partial B_{\rho}(Y)} |\varphi|^2}
\end{equation}
is monotone non-decreasing as a function of $\rho \in (0,\op{dist}(Y,\partial \Omega))$, see~\cite[Lemma 2.2]{SimWic16}.  Thus we can define the \emph{frequency} $\mathcal{N}_{\varphi}(Y)$ of $\varphi$ at each point $Y \in \Omega$ by 
\begin{equation} \label{freq harmonic defn}
	\mathcal{N}_{\varphi}(Y) = \lim_{\rho \downarrow 0} N_{\varphi,Y}(\rho). 
\end{equation}

Let $\alpha \geq 0$ and $\varphi \in C^{1,\mu}(\mathbb{R}^n,\mathcal{A}_2(\mathbb{R}^m))$ be a non-zero, symmetric, harmonic  two-valued function.  By~\cite[Remark 2.4(a)]{SimWic16}, $N_{\varphi,0}(\rho) = \alpha$ for all $\rho > 0$ if and only if $\varphi$ is homogeneous degree $\alpha$, i.e.~$\varphi(\lambda X) = \{ \pm \lambda^{\alpha} \varphi_1(X) \}$ for all $X \in \mathbb{R}^n$ and all $\lambda > 0$ where $\varphi(X) = \{ \pm \varphi_1(X) \}$.  Suppose that $\varphi$ is homogeneous degree $\alpha$.  By~\cite[Remark 2.4(b)]{SimWic16}, the set 
\begin{equation*}
	S(\varphi) = \{ X \in \mathbb{R}^n : \mathcal{N}_{\varphi}(X) = \mathcal{N}_{\varphi}(0) = \alpha \}
\end{equation*}
is a linear subspace of $\mathbb{R}^n$ with the property that $\varphi(Z+X) = \varphi(X)$ for all $X \in \mathbb{R}^n$ and $Z \in S(\varphi)$.  Assuming that $0 \in \mathcal{K}_{\varphi}$, $\dim S(\varphi) \leq n-2$ with equality if and only if $\alpha = k/2$ for some integer $k \geq 3$ and after an orthogonal change of coordinates $\varphi(X) = \op{Re}(c (x_1+ix_2)^{k/2})$ for some $c \in \mathbb{C}^m \setminus \{0\}$.  We call such a function $\varphi$ \emph{cylindrical}.  Furthermore, by~\cite[Lemma 4.1]{SimWic16}, assuming that $0 \in \mathcal{K}_{\varphi}$, $\alpha \geq 3/2$ with equality if and only if after an orthogonal change of coordinates $\varphi(X) = \op{Re}(c (x_1+ix_2)^{3/2})$ for some $c \in \mathbb{C}^m \setminus \{0\}$. 

Using the monotonicity formula for frequency functions, it is shown in~\cite{SimWic16} that whenever $\varphi \in C^{1,\mu}(\Omega,\mathcal{A}_2(\mathbb{R}))$ is symmetric and locally harmonic in $\Omega \setminus \mathcal{B}_{\varphi}$: 
\begin{enumerate}
	\item[(i)] \cite[Lemma 4.1]{SimWic16}  either $\varphi \equiv \{0,0\}$ on $\Omega$ or $\dim_{\mathcal{H}}(\mathcal{K}_{\varphi}) \leq n-2$, and moreover either $\mathcal{B}_{\varphi} = \emptyset$ or $\dim_{\mathcal{H}}(\mathcal{B}_{\varphi}) = n-2$ and $\mathcal{H}^{n-2}(\mathcal{B}_{\varphi}) > 0$; 
	\item[(ii)] \cite[Lemmas 2.1, 3.2 and 4.1]{SimWic16} $\varphi \in C^{1,1/2}(\Omega,\mathcal{A}_2(\mathbb{R}^m)) \cap W^{2,2}_{\rm loc}(\Omega,\mathcal{A}_2(\mathbb{R}^m))$ and $\varphi$ satisfies the estimates  
	\begin{align} \label{harmonic schauder}
		&\sup_{B_{\rho/2}(Y)} |\varphi| + \rho \sup_{B_{\rho/2}(Y)} |D\varphi| + \rho^{3/2} [D\varphi]_{1/2,B_{\rho/2}(Y)} 
			\\&\hspace{15mm} + \rho^{2-n/2} \|D^2 \varphi\|_{L^2(B_{\rho/2}(Y) \setminus \mathcal{K}_{\varphi})} 
		\leq C \rho^{-n/2} \|\varphi\|_{L^2(B_{\rho}(Y))} \nonumber 
	\end{align}
	for each open ball $B_{\rho}(Y)$ with $\overline{B_{\rho}(Y)} \subset \Omega$, where $C = C(n,m) \in (0,\infty)$ is a constant. 
\end{enumerate}

\subsection{Two-valued stationary graphs}  Let $\mu \in (0,1]$ and $\Omega \subseteq \mathbb{R}^n$ be open.  For each two-valued function $u \in C^{1,\mu}(\Omega,\mathcal{A}_2(\mathbb{R}^m))$, the graph of $u$ is the set 
\begin{equation*}
	M = {\rm graph}\,u = \{ (X,Y) \in \Omega \times \mathbb{R}^m : Y = u_1(X) \text{ or } Y = u_2(X) \} ,
\end{equation*}
where $u(X) = \{u_1(X),u_2(X)\}$ for each $X \in \Omega$.  We can associate to $M$ an $n$-dimensional integral varifold $V = (M, \theta)$ where the multiplicity function $\theta : M \rightarrow \mathbb{Z}_+$ is defined by $\theta(X,u_1(X)) = 2$ if $u_1(X) = u_2(X)$ and $\theta(X,u_1(X)) = \theta(X,u_2(X)) = 1$ if $u_1(X) \neq u_2(X)$.  

\begin{definition} \label{twoval minimal graph}
Let $M$ be the graph of a two-valued function $u \in C^{1,\mu}(\Omega,\mathcal{A}_2(\mathbb{R}^m))$.  We say that $M$ is \emph{stationary} in $\Omega \times {\mathbb R}^{m}$ if the varifold $V = (M, \theta)$ is stationary in $\Omega \times {\mathbb R}^{m}$ with respect to the mass functional; that is to say,  
\begin{equation*}
	\int_M \op{div}_M \zeta \,\theta \,d\mathcal{H}^n = 0
\end{equation*}
for all $\zeta \in C^1_c(\Omega \times \mathbb{R}^m,\mathbb{R}^{n+m})$, where 
$\op{div}_M \zeta$ denotes the divergence of $\zeta$ computed with respect to the tangent plane to $M$ at $\mathcal{H}^n$-a.e.~point of $M$.
\end{definition}

\begin{definition}
Let $M$ be a $C^{1,\mu}$ two-valued stationary graph as in Definition~\ref{twoval minimal graph} and let $\theta$ be the multiplicity function associated with $M$.  We define: 
\begin{enumerate}
	\item[(a)] $\mathcal{Z}_M = \{P \in M : \theta(P) = 2 \}$;
	\item[(b)] $\mathcal{K}_M$ to be the set of points $P \in M$ such that $\theta(P) = 2$ and $M$ has a multiplicity two tangent plane at $P$; 
	\item[(c)] the \emph{singular set} $\op{sing} M$ of $M$ to be the set of all points $P \in M$ such that there is no radius $\delta > 0$ such that $M \cap B^{n+m}_{\delta}(P)$ is a finite union of smoothly embedded submanifolds.
\end{enumerate}
\end{definition} 

Notice that $\op{sing} M \subseteq \mathcal{K}_M \subseteq \mathcal{Z}_M$ and in particular each point of $\op{sing} M$ is a branch point, i.e.~a singular point at which $M$ has a multiplicity two tangent plane.  
The respective orthogonal projections of the sets $\mathcal{Z}_M$, $\mathcal{K}_M$, $\op{sing} M$ onto $\mathbb{R}^n \times \{0\}$ are the sets $\mathcal{Z}_u$, $\mathcal{K}_u$, $\mathcal{B}_u$ as in Definition~\ref{singular sets defn}.   

Since $M = {\rm graph}\,u$ is stationary, $u$ is a two-valued solution to the minimal surface system on $\Omega \setminus \mathcal{B}_u$ in the sense that for every open ball $B \subset \Omega \setminus \mathcal{B}_u$, $u(X) = \{u_1(X),u_2(X)\}$ for real analytic single-valued solutions $u_1,u_2 : B \rightarrow \mathbb{R}^m$ to the minimal surface system
\begin{align} 
	\label{mss1a} D_i (G^{ij}(Du_l)) &= 0 \text{ in } B \text{ for } j = 1,2,\ldots,n, \,l = 1,2, \\
	\label{mss1b} D_i (G^{ij}(Du_l) \,D_j u_l^{\kappa}) &= 0 \text{ in } B \text{ for } \kappa = 1,\ldots,m, \, l = 1,2, 
\end{align}
where $G^{ij}(p) = \sqrt{\det g(p)} \,g^{ij}(p)$ letting $g(p) = (g_{ij}(p))$ be the $n \times n$ matrix with entries $g_{ij}(p) = \delta_{ij} + p_i p_j$ where $\delta_{ij}$ denotes the Kronecker delta and $(g^{ij}(p))$ is be the inverse matrix of $g(p)$.  
By taking differences in \eqref{mss1b}, we show that $w = u_s$ satisfies 
\begin{equation} \label{mss3} 
	\Delta w^{\kappa} = -D_i (b^{ij}_{\kappa\lambda}(Du_a,Du_s) \,D_j w^{\lambda}) = f_{\kappa} 
		\text{ in } \Omega \setminus \mathcal{B}_u \text{ for } \kappa = 1,\ldots,m , 
\end{equation}
where 
\begin{equation} \label{mss b defn} 
	b_{\kappa \lambda}^{ij}(p,q) = \frac{G^{ij}(p+q) + G^{ij}(p-q)}{2} \,\delta_{\kappa \lambda} - \delta_{ij} \delta_{\kappa \lambda} 
		+ \frac{1}{2} \int_{-1}^1 (D_{p^{\lambda}_j} G^{ik})(p+tq) \,p^{\kappa}_k \,dt .
\end{equation}
for all $m \times n$ real-valued matrices $p,q$.  Since $b^{ij}_{\kappa\lambda}(p,q) = b^{ij}_{\kappa\lambda}(p,-q)$ for all $m \times n$ real-valued matrices $p,q$, $b^{ij}_{\kappa\lambda}(Du_a, Du_s)$ is a well-defined single-valued function.  Throughout we shall interpret \eqref{mss3} as meaning that for each open ball $B \subset \Omega \setminus \mathcal{B}_w$, $w = \{\pm w_1\}$ in $B$ for some smooth single-valued function $w_1 : B \rightarrow \mathbb{R}^m$ and $\Delta w_1^{\kappa} = f_{1,\kappa}$ in $B$ where $f_{1,\kappa} = -D_i (b^{ij}_{\kappa\lambda}(Du_a,Du_s) \,D_j w_1^{\lambda})$.  By \eqref{mss b defn}, 
\begin{equation} \label{mss b est} 
	|b_{\kappa \lambda}^{ij}(p,q)| \leq C (|p|^2 + |q|^2), \quad 
	|D_p b_{\kappa \lambda}^{ij}(p,q)| \leq C |p|, \quad 
	|D_q b_{\kappa \lambda}^{ij}(p,q)| \leq C |q|,
\end{equation}
for all $m \times n$ real-valued matrices $p,q$ with $|p|,|q| \leq 1$ and some constant $C = C(n,m) \in (0,\infty)$.  Thus 
\begin{equation} \label{mss3 f est} 
	|f_{\kappa}| \leq C (|Du_a|\,|D^2 u_a| + |Du_s|\,|D^2 u_s|) |Dw| + C (|Du_a|^2 + |Du_s|^2) |D^2 w| 
\end{equation}
for some constant $C = C(n,m) \in (0,\infty)$.

By localizing $M$ at a point of $\mathcal{K}_M$, let us assume that $M$ is the graph of a two-valued function $u \in C^{1,\mu}(B_1(0),\mathcal{A}_2(\mathbb{R}))$ with $\|u\|_{C^{1,\mu}(B_1(0))} \leq \varepsilon(n,m)$.  Then: 
\begin{enumerate}
	\item[(i)] \cite[Theorem 8.10]{SimWic16}  Either $\mathcal{K}_M = M$ or $\dim_{\mathcal{H}}(\mathcal{K}_M) \leq n-2$, and moreover either $\op{sing} M = \emptyset$ or $\dim_{\mathcal{H}}(\op{sing} M) = n-2$ and $\mathcal{H}^{n-2}(\op{sing} M) > 0$.
\end{enumerate}
In particular, the multiplicity function $\theta$ associated with $M$ satisfies either $\theta(P) = 2$ for all $P \in M$, i.e.~$M$ is a single-valued graph with multiplicity two, or $\theta(P) = 1$ for $\mathcal{H}^{n-1}$-a.e.~$P \in M$.  Furthermore: 
\begin{enumerate}
	\item[(ii)] \cite[Lemma 3.2, Lemma 5.14, Theorem 7.1 and Theorem 7.4]{SimWic16}  $u_a \in C^{1,1}(B_1(0),\mathcal{A}_2(\mathbb{R}^m))$ and $u_s \in C^{1,1/2}(B_1(0),\mathcal{A}_2(\mathbb{R}^m)) \cap W^{2,2}_{\rm loc}(B_1(0),\mathcal{A}_2(\mathbb{R}^m))$ and satisfy the estimates 
\begin{align} 
		\label{minimal avg schauder} &\sup_{B_{\rho/2}(Y)} |u_a| + \rho \sup_{B_{\rho/2}(Y)} |Du_a| + \rho^2 \sup_{B_{\rho/2}(Y)} |D^2 u_a| 
			\leq C \rho^{-n/2} \|u_a - u_a(Y)\|_{L^2(B_{\rho}(Y))}, \\ 
		\label{minimal sym schauder} &\sup_{B_{\rho/2}(Y)} |u_s| + \rho \sup_{B_{\rho/2}(Y)} |Du_s| + \rho^{3/2} [Du_s]_{1/2,B_{\rho/2}(Y)}  
			\\&\hspace{30mm} + \rho^{2-n/2} \|D^2 u_s\|_{L^2(B_{\rho/2}(Y) \setminus \mathcal{K}_u)} \leq C \rho^{-n/2} \|u_s\|_{L^2(B_{\rho}(Y))} \nonumber 
	\end{align}
	for each open ball $B_{\rho}(Y)$ with $\overline{B_{\rho}(Y)} \subset B_1(0)$ and some constant $C = C(n,m,\mu) \in (0,\infty)$ and 
	\begin{equation} \label{minimal sym decay} 
		|u_s(X)| + d(X) |Du_s(X)| + d(X)^2 |D^2 u_s(X)| \leq C d(X)^{3/2} \|u_s\|_{L^2(B_1(0))} 
	\end{equation}
	for each $X \in B_{1/2}(0)$ with $d(X) = \op{dist}(X,\mathcal{K}_u) \leq 1/4$ and some constant $C = C(m,n,\mu) \in (0,\infty)$. 
\end{enumerate} 
The approach of~\cite{SimWic16} involves approximating two-valued stationary graphs by the graphs of $C^{1,\mu}$ two-valued harmonic functions and proving monotonicity formulas for various frequency functions associated with a two-valued stationary graph.  We will describe these monotonicity formulas in Section~\ref{sec:frequency sec}.  First let us discuss in the next section how we will represent a two-valued stationary graph $M$ as the graph of a two-valued function $\widetilde{u}_P$ over the tangent plane at each point $P \in \mathcal{K}_M$ (including branch points), as we will define frequency functions and blow-ups in terms of the two-valued functions $\widetilde{u}_P$.

\section{Local graphical representation relative to tangent planes at branch points} \label{sec:rotation sec}

Given an $n$-dimensional linear subspace $\Pi \subseteq \mathbb{R}^{n+m}$, $X_0 \in \Pi$, and $\rho > 0$, we define the open ball $\mathbf{B}_{\rho}(X_0,\Pi)$ and open cylinder $\mathbf{C}_{\rho}(X_0,\Pi)$ by 
\begin{align*}
	\mathbf{B}_{\rho}(X_0,\Pi) &= \{ X \in \Pi : |X - X_0| < \rho \} , \\
	\mathbf{C}_{\rho}(X_0,\Pi) &= \{ X + Y : X \in \Pi, \,|X - X_0| < \rho, \,Y \in \Pi^{\perp} \} ,
\end{align*}
where $\Pi^{\perp}$ is the orthogonal complement of $\Pi$.

Let $\Omega \subseteq \mathbb{R}^n$ be open and $M$ be the stationary graph of a two-valued function $u \in C^{1,1/2}(\Omega,\mathcal{A}_2(\mathbb{R}^m))$.  For each $P \in \mathcal{K}_M$, $M$ has a unique tangent plane, which we denote by $T_P M$.  We let $\pi_P : \mathbb{R}^{n+m} \rightarrow T_P M$ denote the orthogonal projection map onto $T_P M$ and $T_P M^{\perp}$ denote the orthogonal complement of $T_P M$ in $\mathbb{R}^{n+m}$.  We let $\pi_0 : \mathbb{R}^{n+m} \rightarrow \mathbb{R}^n$ denote the orthogonal projection map onto $\mathbb{R}^n \cong \mathbb{R}^n \times \{0\}$ and $\pi_0^{\perp} : \mathbb{R}^{n+m} \rightarrow \mathbb{R}^m$ denote the orthogonal projection onto $\mathbb{R}^m \cong \{0\} \times \mathbb{R}^m$.  Notice that for each $P \in \mathcal{K}_M$, 
\begin{equation} \label{tilt tangent planes}
	\|\pi_P - \pi_0\| \leq C(n,m) \,|Du(\pi_0 P)| .
\end{equation}

We define 
$$\mathcal{A}_2(T_P M^{\perp}) = \{ a = \{a_1,a_2\} \in \mathcal{A}_2(\mathbb{R}^{n+m}) : a_1,a_2 \in T_P M^{\perp} \}.$$  
Given any open set $\widetilde{\Omega} \subseteq T_P M$ and a  function $\widetilde{u} : \widetilde{\Omega} \rightarrow \mathcal{A}_2(T_P M^{\perp})$, the graph of $\widetilde{u}$ is 
\begin{equation*}
	{\rm graph}\, \widetilde{u} = \{ X + Y : X \in \widetilde{\Omega}, \,Y = \widetilde{u}_{1}(X) \text{ or } Y = \widetilde{u}_{2}(X) \} , 
\end{equation*}
where $\widetilde{u}(X) = \{\widetilde{u}_{1}(X),\widetilde{u}_{2}(X)\}$ for each $X \in \widetilde{\Omega}$. 

\begin{definition} \label{tildeu defn}
Given a two-valued stationary graph $M$ (as in Definition~\ref{twoval minimal graph}), $P \in \mathcal{K}_M$, and $\rho > 0$, we let $\widetilde{u}_P \in C^{1,1/2}(\mathbf{B}_{\rho}(0,T_P M), \mathcal{A}_2(T_P M^{\perp}))$ be the two-valued function such that 
\begin{equation} \label{tildeu graph} 
	{\rm graph}\, \widetilde{u}_P = (M - P) \cap \mathbf{C}_{\rho}(0,T_P M) 
\end{equation}
assuming $\rho$ is small enough that such a two-valued function $\widetilde{u}_P$ exists. 
\end{definition}
\begin{rmk}\label{local-rep}
Notice that, since $M$ is a $C^{1,1/2}$ two-valued graph, for each $P \in \mathcal{K}_{M}$ there always exists $\rho > 0$ such that $\widetilde{u}_P$ as in Definition~\ref{tildeu defn} exists and that for each such $\rho$, $\widetilde{u}_P$ is uniquely defined by \eqref{tildeu graph}.  Moreover, for every $\gamma \in (0,1)$ there is a $\varepsilon = \varepsilon(n,m,\gamma) > 0$ such that if $\|Du\|_{C^0(\Omega)} \leq \varepsilon$, then for each $P \in \mathcal{K}_M$ and $\rho_0 > 0$ such that $B^{n+m}_{\rho_0}(P) \subseteq \Omega \times \mathbb{R}^n$ there exists a two-valued function $\widetilde{u}_P \in C^{1,1/2}(\mathbf{B}_{\gamma \rho_0}(0,T_P M), \mathcal{A}_2(T_P M^{\perp}))$ such that \eqref{tildeu graph} holds true with $\rho = \gamma \rho_0$ and 
\begin{gather*}
	\rho_0^{-1} \sup_{\mathbf{B}_{\gamma \rho_0}(0,T_P M)} |\widetilde{u}_P| + \sup_{\mathbf{B}_{\gamma \rho_0}(0,T_P M)} |D\widetilde{u}_P| 
		\leq C(n,m) \sup_{B_{\rho_0}(\pi_0 P)} |Du| \leq C(n,m) \,\varepsilon, \\
	\rho_0^{1/2} [D\widetilde{u}_P]_{1/2,\mathbf{B}_{\gamma \rho_0}(0,T_P M)} 
		\leq C(n,m,\gamma) \left( \sup_{B_{\rho_0}(\pi_0 P)} |Du| + \rho_0^{1/2} [Du]_{1/2,B_{\rho_0}(\pi_0 P)} \right) . 
\end{gather*}
\end{rmk}
For each $X \in \mathbf{B}_{\rho}(0,T_P M)$ we write $\widetilde{u}_P(X) = \{\widetilde{u}_{P,1}(X),\widetilde{u}_{P,2}(X)\}$ where $\widetilde{u}_{P,1}(X),\widetilde{u}_{P,2}(X) \in T_P M^{\perp}$.  We let $$\widetilde{u}_{P,a}(X) = (\widetilde{u}_{P,1}(X) + \widetilde{u}_{P,2}(X))/2$$ denote the average of $\widetilde{u}_P$ and $$\widetilde{u}_{P,s}(X) = \{ \pm (\widetilde{u}_{P,1}(X) - \widetilde{u}_{P,2}(X))/2 \}$$ denote the symmetric part of $\widetilde{u}_P$ (as in \eqref{avg and free defn}). 

In order to compare $\widetilde{u}_P$ and $u$, which might have domains in different planes, we define the following function $\widehat{u}_P$.

\begin{definition} \label{hatu defn}
Let $M$ be a two-valued stationary graph (as in Definition~\ref{twoval minimal graph}), $P \in \mathcal{K}_M$, and $\rho > 0$.  Let $Q_P$ be a rotation of $\mathbb{R}^{n+m}$ such that 
\begin{equation} \label{hatu rotation}
	Q_P (T_P M) = \mathbb{R}^n \times \{0\}, \quad \|Q_P - I\| \leq C(n,m) \,|Du(\pi_0 P)| .
\end{equation}
We associate $Q_P$ with a two-valued function $\widehat{u}_P \in C^{1,1/2}(B_{\rho}(\pi_0 P), \mathcal{A}_2(\mathbb{R}^m))$ such that 
\begin{equation} \label{hatu graph}
	{\rm graph}\, \widehat{u}_P = P + (Q_P(M - P)) \cap B_{\rho}(0) \times \mathbb{R}^m 
\end{equation}
assuming $\rho$ is small enough that such a two-valued function $\widehat{u}_P$ exists. 
\end{definition}

Note that the rotation $Q_P$ is not uniquely defined by \eqref{hatu rotation} and thus we must associate $\widehat{u}_P$ with a particular rotation $Q_P$.  If $\rho > 0$ is small enough that $\widetilde{u}_P$ as in Definition~\ref{tildeu defn} exists, then $\widehat{u}_P$ also exists.  
For each $X \in B_{\rho}(\pi_0 P)$, we express $\widehat{u}_P$ as $\widehat{u}_P(X) = \{\widehat{u}_{P,1}(X),\widehat{u}_{P,2}(X)\}$ where $\widehat{u}_{P,1}(X),\widehat{u}_{P,2}(X) \in \mathbb{R}^m$.  We let $\widehat{u}_{P,a}(X) = (\widehat{u}_{P,1}(X) + \widehat{u}_{P,2}(X))/2$ denote the average of $\widehat{u}_P$ and $\widehat{u}_{P,s}(X) = \{ \pm (\widehat{u}_{P,1}(X) - \widehat{u}_{P,2}(X))/2 \}$ denote the symmetric part of $\widehat{u}_P$ (as in \eqref{avg and free defn}). 

We have the following result, Lemma~\ref{rotate lemma}, which we will use to compare the symmetric parts of $\widehat{u}_P$ and $u$.  (Note that we state Lemma~\ref{rotate lemma} in terms of general two-valued functions $u, \widehat{u}$.  We will often apply Lemma~\ref{rotate lemma} with $\widehat{u} = \widehat{u}_P$, but in a few places we also apply Lemma~\ref{rotate lemma} with $\widehat{u}_P,u$ in place of $u,\widehat{u}$.) 

\begin{lemma} \label{rotate lemma}
There is $\varepsilon = \varepsilon(n,m) \in (0,1)$ such that the following holds true.  Let $u \in C^{1,1/2}(B_1(0),$ $\mathcal{A}_2(\mathbb{R}^m))$ such that $M = {\rm graph}\,u$ is stationary in $B_1(0) \times \mathbb{R}^m$ and $\|u\|_{C^{1,1/2}(B_1(0))} \leq \varepsilon$.  Let $P \in \mathcal{K}_M \cap B_{1/2}(0) \times \mathbb{R}^m$, $Q$ be a rotation of $\mathbb{R}^{n+m}$ such that $\|Q-I\| \leq \varepsilon$ and $\widehat{u} \in C^{1,1/2}(B_{1/4}(\pi_0 P),$ $\mathcal{A}_2(\mathbb{R}^m))$ such that 
\begin{equation} \label{rotate graph}
	{\rm graph}\, \widehat{u} = P + (Q (M - P)) \cap B_{1/4}(0) \times \mathbb{R}^m. 
\end{equation}
Then 
\begin{equation} \label{rotate concl} 
	\sup_{B_{1/4}(\pi_0 P)} \mathcal{G}(\widehat{u}_s, u_s) \leq C(n,m) \,\|Q - I\| \,\|u_s\|_{L^2(B_1(0))}, 
\end{equation}
where $u_s$ and $\widehat{u}_s$ are the symmetric parts of $u$ and $\widehat{u}$ (as in \eqref{avg and free defn}).
\end{lemma}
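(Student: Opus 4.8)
The plan is to compare the two varifolds $M$ and $P + Q(M-P)$ directly, writing both as graphs over $\mathbb{R}^n \times \{0\}$ and controlling the difference of their symmetric parts by the smallness of $\|Q-I\|$. First I would set up coordinates: fix $P \in \mathcal{K}_M$, so $M$ has a unique tangent plane there, and by \eqref{tilt tangent planes} (applied to $u$) the plane $T_P M$ is within $C|Du(\pi_0 P)| \le C\varepsilon$ of $\mathbb{R}^n$. The rotated surface $M' = P + Q(M-P)$ is the graph of $\widehat{u}$ over $B_{1/4}(\pi_0 P)$ by hypothesis \eqref{rotate graph}, and is again a stationary two-valued graph (stationarity is rotation-invariant), so $\widehat u$ satisfies the same Schauder and decay estimates \eqref{minimal avg schauder}--\eqref{minimal sym decay} as $u$; in particular $\widehat{u}_s \in C^{1,1/2}$ and both $u_s, \widehat u_s$ vanish to order $3/2$ along their respective branch sets. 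The key observation is that the branch set of $M'$ near $P$ is precisely $Q$ applied to the branch set of $M$ (these are intrinsic), and since $P \in \mathcal K_M$ both $\mathcal K_u$ and $\mathcal K_{\widehat u}$ contain $\pi_0 P$.

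Next I would quantify the closeness of the two graphs as point sets. For a point $X \in \mathbf{B}_{\rho}(0, T_P M)$ the point $P + X + \widetilde u_{P,i}(X)$ lies on $M$, and applying $Q$ and projecting via $\pi_0$ moves it to a graph point of $\widehat u$ over a nearby base point $\pi_0 Q(X + \widetilde u_{P,i}(X))$; the displacement in base and in fibre is $O(\|Q-I\|)$ times $(|X| + |\widetilde u_P(X)|)$, hence $O(\|Q-I\|(\rho + \varepsilon\rho)) = O(\|Q - I\|\rho)$ on balls of radius $\rho$. Combining this with the $C^1$ bound $|D\widehat u| \le C\varepsilon$ converts the ``unordered pair of perturbed graph points'' into a bound
\begin{equation*}
	\mathcal{G}(\widehat u(X), u(X + \text{small shift})) \le C\|Q-I\|(|X| + \varepsilon |X|),
\end{equation*}
and since a shift of size $\delta = O(\|Q-I\||X|)$ in the base changes $u_s$ by $C[Du_s]\,\delta + \ldots$, which via the $3/2$-decay estimate \eqref{minimal sym decay} is controlled by $C\|Q-I\| |X|^{1/2}\|u_s\|_{L^2(B_1(0))}$, one gets a pointwise comparison of $\widehat u_s$ and $u_s$ at scale $|X|$. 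The average parts cancel out in $\mathcal G(\widehat u_s, u_s)$ (they are $C^{1,1}$ and shift-comparison for them is even easier), so only the symmetric parts enter. Taking the supremum over $X \in B_{1/4}(\pi_0 P)$ and absorbing the harmless factors of $\varepsilon$ yields \eqref{rotate concl}.

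The main obstacle is the bookkeeping near the branch set: on $B_{1/4}(\pi_0 P) \setminus \mathcal{K}_u$ the two values of $u$ (and of $\widehat u$) locally split into single-valued harmonic-like pieces, but one must match the correct branch of $\widehat u$ with the image under $Q$ of the correct branch of $u$, which is exactly what the metric $\mathcal{G}$ is designed to finesse — so the argument should proceed by estimating $\mathcal G$ directly rather than choosing branches. The quantitative input that makes the $\|Q-I\|$ factor appear with the right power of $|X|$ (rather than merely $\|Q-I\|\|u_s\|_{C^0}$) is the $3/2$-order vanishing \eqref{minimal sym decay}: a shift of the base point by $\delta$ in a region where $d(X) \approx |X|$ changes $u_s$ by an amount controlled by $d(X)^{1/2}\delta \le C|X|^{1/2}\cdot\|Q-I\||X|$, and one checks this also holds uniformly as $X \to \mathcal{K}_u$ since there $u_s$ and its shift both tend to $0$. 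I expect the remaining steps — verifying $\widehat u$ satisfies the stationary-graph estimates, handling the difference in base planes $T_P M$ versus $\mathbb{R}^n$, and the elementary triangle-inequality manipulations with $\mathcal{G}$ — to be routine given the preliminaries already established.
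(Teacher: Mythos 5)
There is a genuine gap at the central quantitative step. Your chain of estimates bounds the distance between the \emph{full} two-valued functions, $\mathcal{G}(\widehat u(X), u(X+\text{shift})) \le C\|Q-I\|\,|X|$, and then tries to pass to the symmetric parts; but the conclusion \eqref{rotate concl} requires the error to carry the factor $\|u_s\|_{L^2(B_1(0))}$, and $C\|Q-I\|\,|X|$ is not controlled by $C\|Q-I\|\,\|u_s\|_{L^2(B_1(0))}$ when $u_s$ is small (which is exactly the regime in which the lemma is applied, e.g.\ in Lemma~\ref{doubling condition lemma3} and Corollary~\ref{noncon_at_origin_cor}). If $M$ is nearly a multiplicity-two plane with tiny sheet separation, the rotation genuinely displaces the graph by $\approx \|Q-I\|\,|X|$, yet $\widehat u_s - u_s$ must be of size $\|Q-I\|$ times the (tiny) separation; this cancellation lives entirely in the average part and cannot be recovered after you have already passed to $\mathcal G$ of the full functions, since the inequality $\mathcal G(a_s,b_s)\le \mathcal G(a,b)$ discards it. You assert that ``the average parts cancel out,'' but none of the estimates you actually write down produces a factor $|u_s|$ or $\|u_s\|_{L^2}$ in front of $\|Q-I\|$ except the base-shift term, which is only one piece of the error. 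The scale-sensitive refinements you invoke ($|X|^{1/2}$ from \eqref{minimal sym decay}, bounds ``at scale $|X|$'') do not address this and are in fact unnecessary for the stated conclusion.

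What is missing, concretely, is how the rotation acts on the \emph{pair} of sheets. The paper's proof works away from $\mathcal Z_{\widehat u}$, chooses the local single-valued decomposition $\widehat u=\{\widehat u_1,\widehat u_2\}$, pulls each sheet back through $Q$ to base points $\xi_1(X),\xi_2(X)$ and values of $u$, and extracts from the block decomposition of $Q$ the two identities \eqref{rotate eqn7}--\eqref{rotate eqn8}: the difference of the pulled-back base points satisfies $|\xi_1-\xi_2|\le C\varepsilon|\widehat u_s(X)|\le C\varepsilon\,d^{3/2}$ (so both sheets of $u$ live over a common ball and replacing $\xi_1,\xi_2$ by their average $\xi_a$ costs only $C\varepsilon\|Q-I\|\,|u_s|$), and $\widehat u_s(X)$ equals a matrix within $C\|Q-I\|$ of the identity applied to the separation of $u$ at those base points, whence $\mathcal G(\widehat u_s(X), u_s(\xi_a(X)))\le C\|Q-I\|\,|u_s(\xi_a(X))|$. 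Only after this does the crude shift estimate enter: $|\xi_a(X)-X|\le C\|Q-I\|$ gives $\mathcal G(u_s(\xi_a(X)),u_s(X))\le C\|Q-I\|\sup|Du_s|$, and \eqref{minimal sym schauder} converts $\sup(|u_s|+|Du_s|)$ into $C\|u_s\|_{L^2}$; the estimate then extends to $\mathcal Z_{\widehat u}$ by continuity. Note also that your proposed strategy of ``estimating $\mathcal G$ directly rather than choosing branches'' points the wrong way: the local branch choice off $\mathcal Z_{\widehat u}$ is precisely what makes the two identities available, and the difficulty you set aside as bookkeeping is the heart of the proof.
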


\begin{proof}
Assume that $\mathcal{K}_M \neq M$, as otherwise $u_s, \widehat{u}_s$ are identically zero and thus \eqref{rotate concl} holds true.  By translating we may assume that $P = 0$.  This translates the domain of $u$, so we must now restrict $u$ to $B_{1/2}(0)$ and show that 
\begin{equation*} 
	\sup_{B_{1/4}(0)} \mathcal{G}(\widehat{u}_s, u_s) \leq C(n,m) \,\|Q - I\| \,\|u_s\|_{L^2(B_{1/2}(0))}.
\end{equation*}

Express $Q$ as a block matrix 
\begin{equation*} 
	Q = \left(\begin{matrix} Q_{11} & Q_{12} \\ Q_{21} & Q_{22} \end{matrix}\right)
\end{equation*}
where $Q_{11}$, $Q_{12}$, $Q_{21}$ and $Q_{22}$ are matrices of dimensions $n \times n$, $n \times m$, $m \times n$, and $m \times m$ respectively.  Let $X \in B_{1/4}(0) \setminus \mathcal{Z}_{\widehat{u}}$.  Set $d = d(X) = \op{dist}(X,\mathcal{Z}_{\widehat{u}})$ and express $\widehat{u} = \{\widehat{u}_1,\widehat{u}_2\}$ in $B_{d}(X)$ for single-valued functions $\widehat{u}_1, \widehat{u}_2 \in C^{1,1/2}(B_{d}(X);\mathbb{R}^m)$.  For each $l = 1,2$ define $\xi_l : B_{d}(X) \rightarrow \mathbb{R}^n$ by 
\begin{equation} \label{rotate eqn2} 
	\xi_l(Y) = \pi_0 Q^{-1} (Y,\widehat{u}_l(Y)) 
\end{equation}
for all $Y \in B_{d}(X)$.  
For $l = 1,2$ let $u_l : \xi_l(B_d(X)) \rightarrow \mathbb{R}^m$ be functions such that  
\begin{equation} \label{rotate eqn4} 
	\left(\begin{matrix} Y \\ \widehat{u}_l(Y) \end{matrix}\right) 
		= Q \left(\begin{matrix} \xi_l(Y) \\ u_l(\xi_l(Y)) \end{matrix}\right) 
		= \left(\begin{matrix} Q_{11} \,\xi_l(Y) + Q_{12} \,u_l(\xi_l(Y)) \\ 
			Q_{21} \,\xi_l(Y) + Q_{22} \,u_l(\xi_l(Y)) \end{matrix}\right) 
\end{equation}
for each $Y \in B_d(X)$.  By taking the difference of \eqref{rotate eqn4} for $l = 1,2$, 
\begin{align}
	\label{rotate eqn5} 0 &= Q_{11} \,\frac{\xi_1(Y) - \xi_2(Y)}{2} + Q_{12} \,\frac{u_1(\xi_1(Y)) - u_2(\xi_2(Y))}{2}, \\
	\label{rotate eqn6} \frac{\widehat{u}_1(Y) - \widehat{u}_2(Y)}{2} &= Q_{21} \,\frac{\xi_1(Y) - \xi_2(Y)}{2} 
		+ Q_{22} \,\frac{u_1(\xi_1(Y)) - u_2(\xi_2(Y))}{2} .
\end{align}
By solving for $(\xi_1(Y) - \xi_2(Y))/2$ in \eqref{rotate eqn5}, 
\begin{equation} \label{rotate eqn7} 
	\frac{\xi_1(Y) - \xi_2(Y)}{2} = -Q_{11}^{-1} \,Q_{12} \,\frac{u_1(\xi_1(Y)) - u_2(\xi_2(Y))}{2}, 
\end{equation}
which substituted into \eqref{rotate eqn6} gives us 
\begin{equation} \label{rotate eqn8} 
	\frac{\widehat{u}_1(Y) - \widehat{u}_2(Y)}{2} = (Q_{22} - Q_{21} \,Q_{11}^{-1} \,Q_{12}) \,\frac{u_1(\xi_1(Y)) - u_2(\xi_2(Y))}{2}.  
\end{equation}
By \eqref{rotate eqn7} and \eqref{rotate eqn8}, 
\begin{equation*}
	|\xi_1(X) - \xi_2(X)| \leq C(n,m) \,\varepsilon \,|\widehat{u}_s(X)| \leq C(n,m) \,\varepsilon \,d^{3/2} < d/8
\end{equation*}
provided $\varepsilon$ is sufficiently small.  Hence \eqref{rotate eqn4} holds true for all $Y \in B_{d/4}(\xi_1(X))$ and all $l = 1,2$.  Moreover, ${\rm graph}\,\widetilde{u} \cap (B_{d}(X) \times \mathbb{R}^m)$ is the union of the mutually disjoint graphs of the single-valued functions $\widetilde{u}_1,\widetilde{u}_2$.  Rotating by $Q$ gives us that ${\rm graph}\,u \cap (B_{d/4}(\xi_1(X)) \times \mathbb{R}^m)$ is the union of the mutually disjoint graphs of the single-valued functions $u_1,u_2$ and thus $u(Y) = \{u_1(Y),u_2(Y)\}$ for all $Y \in B_{d/4}(\xi_1(X))$. 

By \eqref{rotate eqn7} (with $Y = X$) and $\|Du\|_{C^0(B_{1/2}(0))} \leq \varepsilon$, 
\begin{align} \label{rotate eqn9} 
	&\left| \frac{u_1(\xi_1(X)) - u_2(\xi_2(X))}{2} - \frac{u_1(\xi_a(X)) - u_2(\xi_a(X))}{2} \right|
	\\&\hspace{15mm} \leq C(n,m) \,\varepsilon \,\frac{|\xi_1(X) - \xi_2(X)|}{2} 
	\leq C(n,m) \,\varepsilon \,\|Q - I\| \,\frac{|u_1(\xi_1(X)) - u_2(\xi_2(X))|}{2}, \nonumber 
\end{align}
where $\xi_a(X) = (\xi_1(X) + \xi_2(X))/2$.  Thus provided $\varepsilon$ is sufficiently small, by \eqref{rotate eqn8}, \eqref{rotate eqn9} and $\|Q - I\| \leq \varepsilon$, 
\begin{align} \label{rotate eqn10} 
	\mathcal{G}(\widehat{u}_s(X), u_s(\xi_a(X))) 
	&\leq \left| \frac{\widehat{u}_1(X) - \widehat{u}_2(X)}{2} - \frac{u_1(\xi_a(X)) - u_2(\xi_a(X))}{2} \right| 
	\\&\leq C(n,m) \,\|Q - I\| \,|u_s(\xi_a(X))| . \nonumber 
\end{align}

By summing \eqref{rotate eqn4} for $l = 1,2$, 
\begin{align}
	\label{rotate eqn11} X &= Q_{11} \,\xi_a(X) + Q_{12} \,\frac{u_1(\xi_1(X)) + u_2(\xi_2(X))}{2}, \\
	\label{rotate eqn12} \widehat{u}_a(X) &= Q_{21} \,\xi_a(X) + Q_{22} \,\frac{u_1(\xi_1(X)) + u_2(\xi_2(X))}{2}. 
\end{align}
Assuming $\varepsilon$ is sufficiently small, solving for $\xi_a(X)$ using \eqref{rotate eqn11} and \eqref{rotate eqn12} gives us 

\begin{equation} \label{rotate eqn13} 
	\xi_a(X) = (Q_{11} - Q_{12} \,Q_{22}^{-1} \,Q_{21})^{-1} (X - Q_{12} \,Q_{22}^{-1} \,\widehat{u}_a(X)) . 
\end{equation}
In particular, \eqref{rotate eqn13} expresses $\xi_a(X)$ as a $C^{1,1}$-function of $X \in B_{1/4}(0)$ for which 
\begin{equation} \label{rotate eqn14} 
	|\xi_a(X) - X| + |D\xi_a(X) - I| \leq C(n,m) \,\|Q - I\| .
\end{equation}
Using \eqref{rotate eqn14}, 
\begin{equation*} 
	\mathcal{G}(u_s(\xi_a(X)), u_s(X)) \leq |\xi_a(X) - X| \sup_{B_{3/8}(0)} |Du_s| \leq C(n,m) \,\|Q - I\| \sup_{B_{3/8}(0)} |Du_s| ,
\end{equation*}
which together with \eqref{rotate eqn10} gives us 
\begin{equation*} 
	\mathcal{G}(\widehat{u}_s(X), u_s(X)) \leq C(n,m) \,\|Q - I\| \left( \sup_{B_{3/8}(0)} |u_s| + \sup_{B_{3/8}(0)} |Du_s| \right) .
\end{equation*}
By the estimates \eqref{minimal sym schauder}, we obtain 
\begin{equation} \label{rotate eqn15} 
	\mathcal{G}(\widehat{u}_s(X), u_s(X)) \leq C(n,m) \,\|Q - I\| \,\|u_s\|_{L^2(B_{1/2}(0))} 
\end{equation}
for all $X \in B_{1/4}(0) \setminus \mathcal{Z}_u$.  By continuity, \eqref{rotate eqn15} holds true for all $X \in B_{1/4}(0)$.
\end{proof}

\section{Monotonicity of frequency functions and doubling conditions} \label{sec:frequency sec}

Here we present the basic theory of frequency functions of  two-valued stationary graphs as developed in~\cite{SimWic16}.  Let $M$ be a two-valued stationary graph (as in Definition~\ref{twoval minimal graph}) and assume that $\mathcal{K}_M \neq M$.  Consider any point $P \in \mathcal{K}_M$.  Let $\rho_0 > 0$ and $\widetilde{u}_P \in C^{1,1/2}(\mathbf{B}_{\rho_0}(0,T_P M),\mathcal{A}_2(T_P M^{\perp}))$ such that 
\begin{equation*}
	{\rm graph}\, \widetilde{u}_P = (M - P) \cap \mathbf{C}_{\rho_0}(0,T_P M) 
\end{equation*}
as in Definition~\ref{tildeu defn}.  Let $\widetilde{u}_{P,a}$ and $\widetilde{u}_{P,s}$ be the average and symmetric part of $\widetilde{u}_P$ (as in \eqref{avg and free defn}).   We will define the frequency function, frequency, and tangent functions of $M$ at $P$ in terms of $\widetilde{u}_{P,s}$.  

\begin{definition} \label{freqfn defn}
Let $M$ be a two-valued stationary graph (as in Definition~\ref{twoval minimal graph}) such that $\mathcal{K}_M \neq M$.  Let $P \in \mathcal{K}_M$, $\widetilde{u}_P \in C^{1,1/2}(\mathbf{B}_{\rho_0}(0,T_P M),\mathcal{A}_2(T_P M^{\perp}))$ be as in Definition~\ref{tildeu defn}, and $\widetilde{u}_{P,s}$ be the symmetric part of $\widetilde{u}_P$ (as in \eqref{avg and free defn}).  The \emph{frequency function} $N_{M,P}$ associated with $M$ and $P$ is defined by 
\begin{equation*}
	N_{M,P}(\rho) = \frac{D_{M,P}(\rho)}{H_{M,P}(\rho)}
\end{equation*}
for all $\rho \in (0,\rho_0)$ with $H_{M,P}(\rho) \neq 0$, where 
\begin{equation*}
	D_{M,P}(\rho) = \rho^{2-n} \int_{B_{\rho}(0)} |D\widetilde{u}_{P,s}|^2 \text{ and } 
	H_{M,P}(\rho) = \rho^{1-n} \int_{\partial B_{\rho}(0)} |\widetilde{u}_{P,s}|^2
\end{equation*}
for all $\rho \in (0,\rho_0)$.  The \emph{frequency} $\mathcal{N}_{M}(p)$ of $M$ at $P$ is defined by 
\begin{equation*}
	\mathcal{N}_M(P) = \lim_{\rho \downarrow 0} N_{M,P}(\rho)
\end{equation*}
whenever the limit exists.
\end{definition}

\begin{definition} \label{tan fn defn}
Let $M$ be a two-valued stationary graph (as in Definition~\ref{twoval minimal graph}) such that $\mathcal{K}_M \neq M$.  Let $P \in \mathcal{K}_M$, $\widetilde{u}_P \in C^{1,1/2}(\mathbf{B}_{\rho_0}(0,T_P M),\mathcal{A}_2(T_P M^{\perp}))$ be as in Definition~\ref{tildeu defn}, and $\widetilde{u}_{P,s}$ be the symmetric part of $\widetilde{u}_P$ (as in \eqref{avg and free defn}).  For each $\rho \in (0,\rho_0]$, we define $\widetilde{u}_{P,s,\rho} \in C^{1,1/2}(\mathbf{B}_{\rho_0/\rho}(0,T_P M),$ $\mathcal{A}_2(T_P M^{\perp}))$ by 
\begin{equation} \label{tan fn scaling}
	\widetilde{u}_{P,s,\rho}(X) = \frac{\widetilde{u}_{P,s}(\rho X)}{\rho^{-n/2} \|\widetilde{u}_{P,s}\|_{L^2(\mathbf{B}_{\rho}(0,T_P M))}} 
\end{equation}
for each $X \in \mathbf{B}_{\rho_0/\rho}(0,T_P M)$.  We say $\varphi \in C^{1,1/2}(T_P M, \mathcal{A}_2(T_P M^{\perp}))$ is a \emph{tangent function} to $M$ at $P$ if there exists a sequence of positive numbers $\rho_j$ with $\rho_j \rightarrow 0^+$ such that 
\begin{equation*}
	\widetilde{u}_{P,s,\rho_j} \rightarrow \varphi
\end{equation*}
in the $C^1$-topology on compact subsets of $T_P M$.
\end{definition}

We claim that $N_{M,P}$ satisfies a monotonicity formula and consequently the frequency $\mathcal{N}_M(P)$ exists and at least one tangent function $\varphi$ of $M$ at $P$ exist.  (Note that we do not assert that $\varphi$ is unique, i.e.\  independent of the sequence $\rho_j$.)

\begin{theorem} \label{freq_mono_thm}
Let $M$ be a $C^{1,1/2}$ two-valued stationary graph (as in Definition~\ref{twoval minimal graph}) and assume that $\mathcal{K}_M \neq M$.  For every $P \in \mathcal{K}_M$ there exists $\rho_0 > 0$ (depending on $M$ and $P$) such that: 
\begin{enumerate}
	\item[(i)] there exists $\widetilde{u}_P \in C^{1,1/2}(\mathbf{B}_{\rho_0}(0,T_P M), \mathcal{A}_2(T_P M^{\perp}))$ such that 
\begin{equation*}
	M \cap \mathbf{C}_{\rho_0}(0,T_P M) = {\rm graph}\, \widetilde{u}_P;
\end{equation*}
	\item[(ii)] $H_{M,0}(\rho) \neq 0$ for all $\rho \in (0,\rho_0]$ and $e^{C \rho^{1/2}} N_{M,0}(\rho)$ is monotone non-decreasing as a function of $\rho \in (0,\rho_0]$, where $C \in (0,\infty)$ is a constant (depending on $M$ and $P$ and independent of $\rho$); 
	\item[(iii)] the frequency $\mathcal{N}_M(P) = \lim_{\rho \downarrow 0} N_{M,P}(\rho)$ exists and $\mathcal{N}_M(P) \geq 3/2$; 
	\item[(iv)] for each sequence of positive numbers $\rho_j$ with $\rho_j \rightarrow 0^+$ there exists a subsequence $(\rho_{j'}) \subset (\rho_j)$ and a non-zero, symmetric, homogeneous degree $\mathcal{N}_M(P)$, harmonic two-valued function $\varphi \in C^{1,1/2}(T_P M,$ $\mathcal{A}_2(T_P M^{\perp}))$ such that $\widetilde{u}_{P,s,\rho_{j'}} \rightarrow \varphi$ in the $C^1$-topology on compact subsets of $\mathbb{R}^n$.
\end{enumerate}
\end{theorem}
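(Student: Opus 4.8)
The plan is to mimic the classical argument of Simon and Wickramasekera for the frequency function of two-valued harmonic functions (\cite{SimWic16}), transplanted to $\widetilde{u}_{P,s}$ and corrected by the error terms coming from the fact that $\widetilde{u}_P$ solves the minimal surface system rather than Laplace's equation. First I would use Remark~\ref{local-rep} together with the $C^{1,1/2}$ regularity of two-valued stationary graphs to produce the radius $\rho_0>0$ and the function $\widetilde{u}_P$ of part~(i); after a rotation bringing $T_P M$ to $\mathbb{R}^n\times\{0\}$, we may work with $\widehat{u}_P$ and apply Lemma~\ref{rotate lemma} to transfer between the symmetric parts of $\widehat{u}_P$ and $u$ with an error $\leq C\|Q_P-I\|\,\|u_s\|_{L^2}$, which by \eqref{tilt tangent planes} is controlled by $|Du(\pi_0 P)|$. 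Since $P\in\mathcal{K}_M$ forces $Du(\pi_0 P)$ to be the (single) derivative of the average, after localising further this rotation error is as small as we like. The key point here is that $\widetilde{u}_{P,s}$ satisfies an equation of the form \eqref{mss3}, i.e.\ $\Delta w^\kappa = f_\kappa$ with $f_\kappa$ quadratically small in $Du$ and involving $D^2 w$, so after scaling to make $\|Du\|_{C^{1,\mu}}$ small the operator is a small perturbation of the Laplacian in exactly the regime handled in \cite{SimWic16}.

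Next I would establish the first-variation (``outer variation'') and domain-variation (``inner variation'') identities for $\widetilde{u}_{P,s}$. Plugging the vector field $\zeta(X)=X\,\psi(|X|)$ into the stationarity condition of Definition~\ref{twoval minimal graph} — equivalently differentiating the energy $D_{M,P}(\rho)=\rho^{2-n}\int_{B_\rho}|D\widetilde{u}_{P,s}|^2$ and the boundary integral $H_{M,P}(\rho)=\rho^{1-n}\int_{\partial B_\rho}|\widetilde{u}_{P,s}|^2$ — gives, modulo error terms estimated via \eqref{mss b est}, \eqref{mss3 f est} and the Schauder bounds \eqref{minimal sym schauder}, \eqref{minimal sym decay}:
\begin{equation*}
	H_{M,P}'(\rho) = \frac{2}{\rho}\,D_{M,P}(\rho) + (\text{error}), \qquad
	D_{M,P}'(\rho) = \frac{2}{\rho}\int_{\partial B_\rho}\Big|\frac{\partial \widetilde{u}_{P,s}}{\partial r}\Big|^2 + (\text{error}).
\end{equation*}
The error terms are bounded (using \eqref{minimal sym decay}, the crucial $3/2$-decay near $\mathcal{K}_u$) by $C\rho^{1/2}$ times the corresponding main terms, so that $\log N_{M,P}(\rho)$ has derivative $\geq -C\rho^{-1/2}$, hence $e^{C\rho^{1/2}}N_{M,P}(\rho)$ is monotone non-decreasing; this is part~(ii), and along the way one checks $H_{M,P}(\rho)\neq 0$ for $\rho\in(0,\rho_0]$ by a unique-continuation/Cauchy--Schwarz argument (if $H$ vanished at some radius the monotonicity of a logarithmic derivative of $H$ would force $\widetilde{u}_{P,s}\equiv 0$ near $0$, contradicting $\mathcal{K}_M\neq M$ after a further localisation, or else $P$ would have a neighbourhood on which the graph splits). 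Part~(iii) is then immediate: the limit $\mathcal{N}_M(P)=\lim_{\rho\downarrow 0}N_{M,P}(\rho)$ exists by monotonicity, and the lower bound $\mathcal{N}_M(P)\geq 3/2$ follows because any tangent function is a non-zero symmetric homogeneous two-valued \emph{harmonic} function vanishing to second order at $0$ (as $0\in\mathcal{K}_\varphi$), for which \cite[Lemma~4.1]{SimWic16} gives homogeneity degree $\geq 3/2$.

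For part~(iv) I would run the standard blow-up compactness argument on the rescalings $\widetilde{u}_{P,s,\rho_j}$ defined in \eqref{tan fn scaling}: the normalisation makes $\|\widetilde{u}_{P,s,\rho_j}\|_{L^2(B_1)}=1$, the frequency bound transfers to a uniform energy bound $\int_{B_R}|D\widetilde{u}_{P,s,\rho_j}|^2\leq C(R)$ on every compact set via the doubling condition implied by (ii), and the Schauder estimates \eqref{minimal sym schauder} upgrade this to uniform $C^{1,1/2}_{\mathrm{loc}}$ bounds; Arzel\`a--Ascoli extracts a subsequential $C^1_{\mathrm{loc}}$ limit $\varphi$. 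Since the rescaled equations have coefficients $b^{ij}_{\kappa\lambda}(Du_a,Du_s)$ that tend to zero (by \eqref{mss b est} and the scaling, the coefficients at scale $\rho_j$ are $O(\rho_j^{1/2})$ after renormalising, using \eqref{minimal avg schauder} and \eqref{minimal sym decay}), the limit $\varphi$ is two-valued harmonic; it is symmetric and non-zero ($\|\varphi\|_{L^2(B_1)}=1$ passes to the limit because the $L^2$ convergence is not lost — here one uses that $H_{M,P}$ does not degenerate, a consequence of (ii)); and it is homogeneous of degree $\mathcal{N}_M(P)$ because the (almost-)monotone frequency is constant $=\mathcal{N}_M(P)$ in the limit, so by \cite[Remark~2.4(a)]{SimWic16} $\varphi$ is homogeneous of that degree.

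The main obstacle is the careful bookkeeping of the error terms in the variation formulas: unlike the harmonic case, the first variation of the mass functional produces terms quadratic in $Du$ coupled to $D^2\widetilde{u}_{P,s}$, and one must show these are absorbable — that is, bounded by $C\rho^{1/2}$ times the main terms uniformly as $\rho\downarrow 0$. This is exactly where the sharp $3/2$-frequency lower bound and the decay estimate \eqref{minimal sym decay} (giving $|D^2\widetilde{u}_{P,s}(X)|\leq Cd(X)^{-1/2}\|\widetilde{u}_{P,s}\|_{L^2}$ near $\mathcal{K}_u$) are indispensable: they guarantee the error integrals converge and carry the right power of $\rho$. A secondary subtlety is that $\widetilde{u}_{P,s}$ is only defined and smooth on the complement of $\mathcal{K}_u$, so all integrations by parts must be justified by an exhaustion/cutoff argument away from $\mathcal{K}_u$, using that $\mathcal{H}^{n-2}(\mathcal{K}_u)$ is locally finite (from \cite[Theorem~8.10]{SimWic16}) together with the $W^{2,2}_{\mathrm{loc}}$ membership recorded in \eqref{minimal sym schauder}, so that the boundary contributions over $\partial(\mathcal{K}_u)_\epsilon$ vanish as $\epsilon\to 0$.
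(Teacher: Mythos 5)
There is a genuine gap, and it sits exactly where you wave your hands: the claim that the error terms in the differentiated identities for $D_{M,P}$ and $H_{M,P}$ are ``bounded by $C\rho^{1/2}$ times the corresponding main terms.'' The only available pointwise/integral bounds on $f_\kappa$ come from \eqref{mss3 f est} together with the Schauder-type estimates \eqref{minimal sym schauder} and the decay estimate \eqref{minimal sym decay}, and these control $|Du_s|,|D^2u_s|$ on $B_{\rho/2}$ (or near $\mathcal{K}_u$) in terms of $\|u_s\|_{L^2(B_\rho)}$, i.e.\ in terms of the $L^2$ norm at a strictly \emph{larger} scale than the one at which the main terms $D_{M,P}(\rho)$, $H_{M,P}(\rho)$ live. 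To absorb the errors uniformly as $\rho\downarrow 0$ (and likewise to rule out $H_{M,P}(\rho)=0$) you must be able to compare $\int_{B_\rho}|u_s|^2$ with $\int_{B_{\rho/2}}|u_s|^2$, i.e.\ you need an a priori $L^2$ doubling condition for $u_s$ at $P$. Without it the ratio of error to main term can a priori blow up (e.g.\ if $u_s$ decayed faster than polynomially at $P$), so the ``logarithmic derivative $\geq -C\rho^{-1/2}$'' step and your unique-continuation argument for $H\neq 0$ are circular: they presuppose control that only the monotonicity itself would provide.

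This is precisely how the paper structures the proof: Theorem~\ref{freq_mono_thm} is deduced from Lemma~\ref{freq_mono_lemma} (the perturbed-frequency monotonicity, which explicitly carries the doubling condition \eqref{freq_mono_doubling_hyp} as a \emph{hypothesis}, used in place of (11) of \cite[Lemma 6.6]{SimWic16}), and the doubling condition is established first, by an independent argument (Lemma~\ref{doubling condition lemma}): one rewrites the equation for $u_s$ in divergence form \eqref{mss4}, performs the bi-Lipschitz change of coordinates $y=\Gamma(x)$ with metric $\widehat g$ in the style of Alessandrini/AKS, and applies the Garofalo--Lin monotonicity \cite{GL87}, \cite[Lemma 8.6]{SimWic16} to the transformed frequency $\widehat N_{\widehat v}$, which needs no doubling as input; integrating the resulting differential inequality for $\mathcal{H}$ yields \eqref{doubling condition concl}. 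Your proposal contains no substitute for this step, so as written parts (ii)--(iv) do not follow. The remainder of your outline (localisation via Remark~\ref{local-rep} and Lemma~\ref{rotate lemma}, the variational identities, and the blow-up compactness for (iv) using \eqref{minimal sym schauder}) does match the paper's route once the doubling condition is in hand, so the fix is to insert the change-of-variables frequency argument (or an equivalent a priori doubling estimate) before running your monotonicity computation.
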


To prove Theorem~\ref{freq_mono_thm}, we may translate and rotate so that $P = 0$ and $T_P M = \mathbb{R}^n \times \{0\}$ and thereby assume that $M = {\rm graph}\,u$ where $u \in C^{1,1/2}(B_1(0),\mathcal{A}_2(\mathbb{R}^m))$ such that $u(0) = \{0,0\}$ and $Du(0) = \{0,0\}$.  We have the following monotonicity formula~\cite[Lemma 6.6]{SimWic16}: 

\begin{lemma} \label{freq_mono_lemma}
For every $\gamma \in (0,\infty)$ there exists $\varepsilon = \varepsilon(n,m,\gamma) > 0$ such that if $u \in C^{1,1/2}(B_1(0),\mathcal{A}_2(\mathbb{R}^m))$ is a two-valued function such that $M = {\rm graph}\,u$ is stationary in $B_1(0) \times \mathbb{R}^m$, $\mathcal{K}_M \neq M$, $u(0) = \{0,0\}$, $Du(0) = \{0,0\}$, $[Du]_{1/2,B_1(0)} \leq \varepsilon$ and 
\begin{equation} \label{freq_mono_doubling_hyp}
	\int_{B_{\rho}(0)} |u_s|^2 \leq \gamma \int_{B_{\rho/2}(0)} |u_s|^2 
\end{equation}
for all $\rho \in (0,1]$ (where $u_s$ is as in \eqref{avg and free defn}), then: 
\begin{enumerate}
	\item[(i)] $H_{M,0}(\rho) \neq 0$ for all $\rho \in (0,1/2]$ and $e^{C \rho^{1/2}} N_{M,0}(\rho)$ is monotone non-decreasing as a function of $\rho \in (0,1/2]$, where $C = C_1(n,m,\gamma) \,[Du]_{1/2,B_1(0)}$ is a constant (independent of $\rho$); 
	\item[(ii)] the frequency $\mathcal{N}_M(0) = \lim_{\rho \downarrow 0} N_{M,0}(\rho)$ exists and $\mathcal{N}_M(0) \geq 3/2$; 
	\item[(iii)] for each sequence of positive numbers $\rho_j$ with $\rho_j \rightarrow 0^+$ there exists a subsequence $(\rho_{j'}) \subset (\rho_j)$ and a non-zero, symmetric, homogeneous degree $\mathcal{N}_M(0)$, harmonic two-valued function $\varphi \in C^{1,1/2}(\mathbb{R}^n,$ $\mathcal{A}_2(\mathbb{R}^m))$ such that 
\begin{equation*}
	\frac{u_s(\rho_{j'} X)}{\rho_{j'}^{-n/2} \|u_s\|_{L^2(B_{\rho_{j'}}(0))}} \rightarrow \varphi
\end{equation*}
in the $C^1$-topology on compact subsets of $\mathbb{R}^n$.
\end{enumerate}
\end{lemma}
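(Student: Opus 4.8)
The approach is the Almgren--Garofalo--Lin frequency-function method, applied to the symmetric part $w:=u_s$ of $u$ viewed as a solution of an elliptic system which is a small perturbation of $\Delta w=0$. After translating and rotating so that $P=0$ and $T_PM=\mathbb{R}^n\times\{0\}$, so that $M={\rm graph}\,u$ with $u(0)=\{0,0\}$, $Du(0)=\{0,0\}$ (and $\|u\|_{C^{1,1/2}(B_1(0))}\le C\varepsilon$), recall from \eqref{mss3} that on $B_1(0)\setminus\mathcal{B}_u$ the function $w$ solves the divergence-form system $D_i\big(D_jw^\kappa+b^{ij}_{\kappa\lambda}(Du_a,Du_s)\,D_jw^\lambda\big)=0$ with $b$ and $Db$ obeying \eqref{mss b est}. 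First I would record the pointwise smallness of the perturbation near the origin: since $Du(0)=\{0,0\}$ and, by \eqref{minimal avg schauder}, $u_a\in C^{1,1}$, one has $|Du_a(X)|\le C|X|$ and $|D^2u_a|\le C$ on $B_{1/2}(0)$, while $[Du]_{1/2,B_1(0)}\le\varepsilon$ gives $|Du_s(X)|\le\varepsilon|X|^{1/2}$; combined with the Schauder estimate \eqref{minimal sym schauder} and the decay estimate \eqref{minimal sym decay}, which controls the (unbounded) $D^2u_s$ near $\mathcal{K}_u$ in terms of $d(\cdot,\mathcal{K}_u)$, this shows that on $B_\rho(0)$ the coefficient $b$, its derivative $Db$ and the associated forcing are of size $\varepsilon$ times a positive power of $\rho$, with $D^2w$ entering only through $L^2$-quantities.

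Next I would derive the two differential identities on Euclidean balls $B_\rho(0)$. Testing the system against $\eta(|X|)\,w$ and letting $\eta\to\chi_{B_\rho(0)}$ gives the outer-variation identity $\int_{\partial B_\rho(0)}w\cdot\partial_\nu w=\int_{B_\rho(0)}|Dw|^2+\mathcal{E}_1(\rho)$, where $\mathcal{E}_1$ collects the $b$-contributions; together with the elementary polar-coordinate identity $\tfrac{d}{d\rho}\log H_{M,0}(\rho)=2\big(\int_{\partial B_\rho(0)}w\cdot\partial_\nu w\big)/\big(\int_{\partial B_\rho(0)}|w|^2\big)$ this controls $\tfrac{d}{d\rho}\log H_{M,0}$. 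Multiplying the system by $X_lD_lw^\kappa$ and integrating by parts over $B_\rho(0)$ gives a Rellich--Pohozaev identity of the form $\tfrac{d}{d\rho}D_{M,0}(\rho)=2\rho^{2-n}\int_{\partial B_\rho(0)}|\partial_\nu w|^2+\mathcal{E}_2(\rho)$. Using the bounds of the previous paragraph, the Schauder estimates \eqref{minimal sym schauder}, \eqref{minimal avg schauder}, the decay estimate \eqref{minimal sym decay}, and the doubling hypothesis \eqref{freq_mono_doubling_hyp} (to pass between $\int_{B_\rho}$-, $\int_{\partial B_\rho}$- and $B_{\rho/2}$-quantities, and as a by-product to bound $N_{M,0}$ above and below), one shows that after forming $\tfrac{d}{d\rho}\log N_{M,0}$ these errors contribute a term bounded below by $-C_1(n,m,\gamma)\,[Du]_{1/2,B_1(0)}\,\rho^{-1/2}$. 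All the integrations by parts are legitimate across $\mathcal{B}_u\subseteq\mathcal{K}_u$ because $\dim_{\mathcal{H}}\mathcal{K}_u\le n-2$ and $w\in C^{1,1/2}\cap W^{2,2}_{\mathrm{loc}}$, via a standard capacity/cut-off argument; and $H_{M,0}(\rho)\ne0$ on $(0,1/2]$ since otherwise \eqref{minimal sym schauder} together with \eqref{freq_mono_doubling_hyp} would force $w\equiv0$, contradicting $\mathcal{K}_M\ne M$.

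With these in hand the monotonicity follows routinely: $\tfrac{d}{d\rho}\log N_{M,0}(\rho)=2\big(\int_{\partial B_\rho}|\partial_\nu w|^2\int_{\partial B_\rho}|w|^2-(\int_{\partial B_\rho}w\cdot\partial_\nu w)^2\big)\big/\big((\int_{\partial B_\rho}w\cdot\partial_\nu w)(\int_{\partial B_\rho}|w|^2)\big)+(\text{error})$, the first term is $\ge0$ by Cauchy--Schwarz (its denominators being positive for $\varepsilon$ small since $D_{M,0}>0$), so $\tfrac{d}{d\rho}\log N_{M,0}(\rho)\ge-C_1[Du]_{1/2,B_1(0)}\rho^{-1/2}$ and hence $e^{2C_1[Du]_{1/2,B_1(0)}\rho^{1/2}}N_{M,0}(\rho)$ is non-decreasing on $(0,1/2]$, which is (i). Then (ii): the limit $\mathcal{N}_M(0)=\lim_{\rho\downarrow0}N_{M,0}(\rho)$ exists by (i) and positivity. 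For (iii): given $\rho_j\downarrow0$, the rescaled form of \eqref{minimal sym schauder} shows the normalised blow-ups $w_{\rho_j}(X)=w(\rho_jX)\big/\big(\rho_j^{-n/2}\|w\|_{L^2(B_{\rho_j}(0))}\big)$ are bounded in $C^{1,1/2}\cap W^{2,2}$ on compact subsets of $\mathbb{R}^n$ with $\|w_{\rho_j}\|_{L^2(B_1(0))}=1$; pass to a $C^1_{\mathrm{loc}}$ subsequential limit $\varphi\in C^{1,1/2}(\mathbb{R}^n,\mathcal{A}_2(\mathbb{R}^m))$, which is symmetric, has $\|\varphi\|_{L^2(B_1(0))}=1$ (hence nonzero) and is harmonic away from $\mathcal{K}_\varphi$ because $Du(\rho_jX)\to\{0,0\}$ makes the perturbation in the rescaled system vanish; finally the frequency of $w_{\rho_j}$ on $B_\sigma(0)$ equals $N_{M,0}(\rho_j\sigma)\to\mathcal{N}_M(0)$, so $N_{\varphi,0}(\sigma)\equiv\mathcal{N}_M(0)$ and $\varphi$ is homogeneous of degree $\mathcal{N}_M(0)$ by \cite[Remark~2.4(a)]{SimWic16}; and since $\varphi(0)=0$, $D\varphi(0)=\{0,0\}$, \cite[Lemma~4.1]{SimWic16} gives $\mathcal{N}_M(0)\ge3/2$.

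I expect the one genuinely difficult step to be the error analysis in the two differential identities: showing that \emph{every} perturbation term --- those from the coefficients $b$ and their derivatives, from the coupling of $w$ with the average $u_a$, and, most delicately, those containing $D^2u_s$, which is unbounded near $\mathcal{K}_u$ --- is of order $[Du]_{1/2}\,\rho^{-1/2}$ relative to the frequency quotient. This needs the Schauder estimates, the decay estimate \eqref{minimal sym decay} (so that pairings such as $|Du_s|\,|D^2u_s|$ are integrable with the $d(\cdot,\mathcal{K}_u)$-weights cancelling), and the doubling hypothesis \eqref{freq_mono_doubling_hyp} (which upgrades interior $B_{\rho/2}$-bounds to $\partial B_\rho$- and $B_\rho$-comparisons and keeps $N_{M,0}$ bounded) to be used in concert; indeed without such smallness one cannot even guarantee that $H_{M,0}$ is nonvanishing. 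By comparison, the Cauchy--Schwarz step, the blow-up compactness, and the classification inputs from \cite{SimWic16} are routine once the error estimates are in hand.
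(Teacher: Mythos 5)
Your proposal is correct and follows essentially the same route as the paper: the paper proves this lemma simply by citing the frequency-monotonicity argument of \cite[Lemma 6.6]{SimWic16} with $w=u_s$, substituting the hypotheses $\|u\|_{C^{1,1/2}}\leq\varepsilon$, \eqref{mss b est}, \eqref{minimal sym schauder} and the doubling condition \eqref{freq_mono_doubling_hyp} for the corresponding inputs there, and your sketch is precisely a reconstruction of that argument (variational ``squash/squeeze'' identities for the perturbed system \eqref{mss3}, error control via the Schauder and decay estimates plus doubling giving an $e^{C\rho^{1/2}}$-almost-monotone frequency, then blow-up compactness and the classification facts from \cite{SimWic16} for (ii)--(iii)).
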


\begin{proof} 
See~\cite[Lemma 6.6]{SimWic16} (with $w = u_s$).  We modify their argument slightly, using $\|u\|_{C^{1,1/2}(B_1(0))} \leq \varepsilon$, \eqref{mss b est}  and \eqref{minimal sym schauder} in place of 6.2, 6.3, and 6.4 of~\cite{SimWic16} and using the doubling condition \eqref{freq_mono_doubling_hyp} in place of (11) of~\cite[Lemma 6.6]{SimWic16}. 
\end{proof}

To apply Lemma~\ref{freq_mono_lemma}, we need to prove the doubling condition \eqref{freq_mono_doubling_hyp}.  This can be accomplished using~\cite[Lemma 8.6]{SimWic16}.  

\begin{lemma} \label{doubling condition lemma} 
There exists $\varepsilon = \varepsilon(n,m) > 0$ such that if $u \in C^{1,1/2}(B_1(0),\mathcal{A}_2(\mathbb{R}^m))$ is a two-valued function such that $M = {\rm graph}\,u$ is stationary in $B_1(0) \times \mathbb{R}^m$, $u(0) = \{0,0\}$, $Du(0) = \{0,0\}$ and $[Du]_{1/2,B_1(0)} \leq \varepsilon$, then there exist a constant $\gamma \in (0,\infty)$ (depending on $M$ and $p$) such that 
\begin{equation} \label{doubling condition concl}
	\int_{B_{\rho}(0)} |u_s|^2 \leq \gamma \int_{B_{\rho/2}(0)} |u_s|^2 
\end{equation}
for all $\rho \in (0,1/4]$. 
\end{lemma}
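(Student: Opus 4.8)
The plan is to dispose of the degenerate case where $u_{s}$ vanishes near $0$ by unique continuation, to reduce the doubling inequality to small scales, and to obtain it on those scales from \cite[Lemma~8.6]{SimWic16}.

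\emph{Degenerate case.} Suppose $u_{s}$ vanishes identically on some ball $B_{r_{0}}(0)$. Away from $\mathcal{B}_{u}$ the two sheets $u_{1},u_{2}$ are real-analytic single-valued solutions of \eqref{mss1a}--\eqref{mss1b}, so $|u_{s}|^{2}=|u_{1}-u_{2}|^{2}/4$ is a well-defined real-analytic function on $B_{1/4}(0)\setminus\mathcal{B}_{u}$. By \cite[Theorem~8.10]{SimWic16}, either $u_{s}\equiv\{0,0\}$ on $B_{1}(0)$, or $\dim_{\mathcal{H}}\mathcal{B}_{u}\le n-2$ and hence $B_{1/4}(0)\setminus\mathcal{B}_{u}$ is connected; in the latter case the identity theorem forces $|u_{s}|^{2}\equiv0$ on $B_{1/4}(0)\setminus\mathcal{B}_{u}$, so by continuity $u_{s}\equiv\{0,0\}$ on $B_{1/4}(0)$. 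In either situation $\int_{B_{\rho}(0)}|u_{s}|^{2}=0$ for all $\rho\in(0,1/4]$ and \eqref{doubling condition concl} holds with $\gamma=1$. We may therefore assume from now on that $\phi(\rho):=\int_{B_{\rho}(0)}|u_{s}|^{2}>0$ for every $\rho\in(0,1/4]$.

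\emph{Reduction to small scales, and the main input.} Since $\phi$ is continuous, positive and non-decreasing on $(0,1/4]$, the ratio $\rho\mapsto\phi(\rho)/\phi(\rho/2)$ is continuous and finite on $(0,1/4]$, hence bounded on each $[\delta,1/4]$, $\delta>0$; so it suffices to produce $\rho_{*}\in(0,1/4]$ and $\gamma_{*}\in(0,\infty)$, depending on $M$ and $P$, with $\phi(\rho)\le\gamma_{*}\,\phi(\rho/2)$ for all $\rho\in(0,\rho_{*}]$, after which one takes $\gamma=\max\{\gamma_{*},\sup_{[\rho_{*},1/4]}\phi(\rho)/\phi(\rho/2)\}$. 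This small-scale bound is the substance of \cite[Lemma~8.6]{SimWic16}, which we invoke; its mechanism is as follows. If no such $\rho_{*},\gamma_{*}$ existed, pick $\rho_{j}\downarrow0$ with $\phi(\rho_{j})/\phi(\rho_{j}/2)\to\infty$ and set $\widetilde{u}_{j}(X)=u_{s}(\rho_{j}X)\big/\big(\rho_{j}^{-n/2}\|u_{s}\|_{L^{2}(B_{\rho_{j}}(0))}\big)$, so that $\|\widetilde{u}_{j}\|_{L^{2}(B_{1}(0))}=1$ while $\|\widetilde{u}_{j}\|_{L^{2}(B_{1/2}(0))}\to0$. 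Since $Du_{a}(0)=Du_{s}(0)=\{0,0\}$ and $[Du]_{1/2,B_{1}(0)}\le\varepsilon$, each $\widetilde{u}_{j}$ solves a rescaled form of \eqref{mss3} whose coefficients are uniformly small on $B_{1}(0)$, so by the interior estimate \eqref{minimal sym schauder} a subsequence converges in $C^{1}$ on compact subsets of $B_{1}(0)$ to a symmetric two-valued function $\varphi$ that is locally harmonic on $B_{1}(0)\setminus\mathcal{B}_{\varphi}$ and vanishes on $B_{1/2}(0)$; by unique continuation for two-valued harmonic functions (cf.\ Section~\ref{sec:prelims harmonic subsec}), $\varphi\equiv\{0,0\}$ on $B_{1}(0)$. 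The contradiction is supplied by the non-concentration estimate of \cite[Lemma~8.6]{SimWic16}: comparing $u_{s}$ with its two-valued harmonic approximations across the dyadic scales between $\rho_{j}$ and a fixed scale, and using the monotonicity of the frequency of the harmonic approximation (which needs no doubling hypothesis, cf.\ Section~\ref{sec:prelims harmonic subsec}), one shows that the unit $L^{2}$ mass of $\widetilde{u}_{j}$ cannot escape to $\partial B_{1}(0)$, so that $\liminf_{j}\|\widetilde{u}_{j}\|_{L^{2}(B_{1/2}(0))}>0$.

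\emph{Main obstacle.} The single-scale compactness step above does not by itself close the argument, because there is no a priori bound on $\|\widetilde{u}_{j}\|_{L^{2}(B_{2}(0))}$ --- such a bound would already be a doubling statement --- and so the $C^{1}$-convergence on compact subsets of $B_{1}(0)$ cannot be upgraded to $L^{2}(B_{1}(0))$-convergence, i.e.\ one cannot conclude $\|\varphi\|_{L^{2}(B_{1}(0))}=1$. The genuine work, carried out in \cite[Lemma~8.6]{SimWic16}, is to construct a harmonic approximation of $u_{s}$ that is uniform across all small scales and to control the approximation errors accumulated over the dyadically many scales involved without losing smallness; this is the crux. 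Finally, the dependence of $\gamma$ on $M$ and $P$ is genuine and enters through $\rho_{*}$, equivalently through the size of the frequency of the harmonic approximation of $u_{s}$ at scale $1/4$.
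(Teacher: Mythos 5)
Your reduction steps are fine (the degenerate case via analyticity and the dichotomy of \cite[Theorem~8.10]{SimWic16}, and the observation that it suffices to prove doubling on small scales), but the heart of the lemma is missing: you never actually prove the small-scale doubling estimate. You invoke \cite[Lemma~8.6]{SimWic16} as if it directly supplied it, sketch a blow-up/compactness contradiction, and then concede in your final paragraph that this sketch cannot be closed because there is no a priori bound preventing the normalized $L^2$ mass from concentrating near $\partial B_1(0)$ and that ``the genuine work \ldots is the crux.'' That concession is exactly where the proof is. Moreover, the mechanism you attribute to \cite[Lemma~8.6]{SimWic16} (a scale-uniform harmonic approximation with dyadic error accumulation and a non-concentration estimate) is not what that lemma provides, and a dyadic harmonic-approximation scheme of the kind you describe would itself need frequency or doubling control at every scale, i.e.\ it is circular: the monotonicity of the graph frequency $N_{M,0}$ in Lemma~\ref{freq_mono_lemma} has the doubling condition \eqref{freq_mono_doubling_hyp} as a \emph{hypothesis}, so it cannot be used to produce it.

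The actual argument is different and does not pass through a compactness contradiction at all. One takes differences in \eqref{mss1a}--\eqref{mss1b} to write $u_s$ as a solution of the weakly coupled divergence-form system \eqref{mss4}, with $A^{ij}(0)=\delta_{ij}$, $\op{Lip}A^{ij}\leq C\varepsilon$ and $|E^l_{\kappa\lambda}|\leq C\varepsilon$; then, following \cite{AKS}, one performs a bi-Lipschitz change of variables $y=\Gamma(x)$ and introduces a metric $\widehat{g}_{ij}$ satisfying \eqref{doubling Gamma hatg est}, so that for $\widehat{v}=u_s\circ\Gamma$ the Almgren--Garofalo--Lin frequency $\widehat{N}_{\widehat{v}}(\rho)=\mathcal{D}(\rho)/\mathcal{H}(\rho)$ (with the $\sqrt{\widehat g}$ weights) satisfies, as in \cite[Lemma~8.6]{SimWic16} and \cite{GL87}, $\mathcal{H}(\rho)\neq 0$ on $(0,1/2]$ and almost-monotonicity of $e^{C\varepsilon\rho}\widehat{N}_{\widehat{v}}(\rho)$ --- crucially, \emph{without} any doubling hypothesis (this monotonicity also subsumes your separate unique-continuation step). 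Doubling is then read off directly: the identity $\rho\mathcal{H}'(\rho)/(2\mathcal{H}(\rho))\leq(1+C\varepsilon\rho)\widehat{N}_{\widehat{v}}(1/2)$ integrates over $[\sigma,\rho]$ to $(\sigma/\rho)^{4\widehat{N}_{\widehat{v}}(1/2)}\mathcal{H}(\rho)\leq\mathcal{H}(\sigma)$, a second integration converts this into the solid-ball inequality, and undoing the change of variables via \eqref{doubling Gamma hatg est} yields \eqref{doubling condition concl} with the explicit constant $\gamma=2^{3n+12\widehat{N}_{\widehat{v}}(1/2)+1}$; the dependence of $\gamma$ on $M$ and the base point enters through $\widehat{N}_{\widehat{v}}(1/2)$, not through a scale $\rho_*$. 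Without this (or an equivalent scale-uniform frequency bound), your argument does not establish the lemma.
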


\begin{proof}
We will use the results of~\cite[Chapter 8]{SimWic16}, to which we refer the reader for further details.   Without loss of generality assume that $u_s$ is not identically zero.  By taking differences in \eqref{mss1a} and \eqref{mss1b} like in~\cite[Chapters 5 and 8]{SimWic16}, we can show that $u_s$ is a solution to the weakly coupled differential system 
\begin{equation} \label{mss4} 
	D_i (A^{ij} \,D_j u_s^{\kappa}) + E^l_{\kappa\lambda} \,D_l u_s^{\lambda} = 0 
		\text{ in } B_{3/4}(0) \setminus \mathcal{B}_u \text{ for } \kappa = 1,\ldots,m 
\end{equation}
(as in 8.1 of~\cite{SimWic16} with $B_{3/4}(0)$ in place of $B_{1/2}(0)$), where $A^{ij}$ and $E^l_{\lambda}$ are real-valued functions such that 
\begin{equation} \label{mss4 A E est} 
	A^{ij}(0) = \delta_{ij}, \quad \op{Lip} A^{ij} \leq C \varepsilon, \quad |E^l_{\kappa\lambda}| \leq C \varepsilon 
\end{equation}
on $B_{3/4}(0) \setminus \mathcal{B}_u$ for some constant $C = C(n,m) \in (0,\infty)$.  Here $|E^l_{\kappa\lambda}| \leq C \varepsilon$ follows using Lemmas~7.1 and 7.4 and the construction of $E^l_{\kappa\lambda}$ in~\cite{SimWic16}.  Following an argument originally from~\cite{AKS}, we construct a bi-Lipschitz change of coordinates $y = \Gamma(x)$ and metric $\sum_{i,j=1}^n \widehat{g}_{ij}(y) \,dy_i \,dy_j$ (as in~\cite{SimWic16}) which by~\eqref{mss4 A E est} satisfy 

\begin{equation}
	\label{doubling Gamma hatg est} \sup_{X \in B_{1/2}(0)} (|\Gamma(X) - X| + |D\Gamma(X) - I|) \leq C \varepsilon , \quad
		\sup_{X \in B_{1/2}(0)} (|\widehat{g}_{ij} - \delta_{ij}| + |D_r \widehat{g}_{ij}|) \leq C \varepsilon , 
\end{equation}
where $r = |y|$ and $C = C(n,m) \in (0,\infty)$ is a constant.  Set $\widehat{v} = u_s \circ \Gamma : B_{1/2}(0) \rightarrow \mathcal{A}_2(\mathbb{R}^m)$.  We define the frequency function $\widehat{N}_{\widehat{v}}(\rho) = \mathcal{D}(\rho)/\mathcal{H}(\rho)$
for each $\rho \in (0,1/2]$ where 
\begin{equation*}
	\mathcal{D}(\rho) = \rho^{2-n} \int_{\partial B_{\rho}(0)} \widehat{v} \cdot D_r \widehat{v} \,\sqrt{\widehat{g}} \text{ and } 
	\mathcal{H}(\rho) = \rho^{1-n} \int_{\partial B_{\rho}(0)} |\widehat{v}|^2 \,\sqrt{\widehat{g}} 
\end{equation*}
for each $\rho \in (0,1/2]$, where $\widehat{g} = \det(\widehat{g}_{ij})$.  Arguing as in~\cite[Lemma 8.6]{SimWic16}, which itself uses~\cite{GL87}, and assuming $\varepsilon$ is sufficiently small, $\mathcal{H}(\rho) \neq 0$ for all $\rho \in (0,1/2]$ and $e^{C \varepsilon \rho} \widehat{N}_{\widehat{v}}(\rho)$ is non-decreasing as a function of $\rho \in (0,1/2]$.  Moreover,
\begin{equation} \label{doubling condition eqn1}
	\frac{\rho \,\mathcal{H}'(\rho)}{2 \,\mathcal{H}(\rho)} \leq (1 + C(n,m) \varepsilon \rho) \,\widehat{N}_{\widehat{v}}(1/2) 
		\leq 2 \,\widehat{N}_{\widehat{v}}(1/2)
\end{equation}
for all $\rho \in (0,1/2]$.  By integrating both sides of \eqref{doubling condition eqn1} over $[\sigma,\rho]$ we obtain 
\begin{equation*}
	\Big(\frac{\sigma}{\rho}\Big)^{4 \,\widehat{N}_{\widehat{v}}(1/2)} \mathcal{H}(\rho) \leq \mathcal{H}(\sigma)
\end{equation*}
for all $0 < \sigma < \rho \leq 1/2$, which by integrating again gives us 
\begin{equation*}
	\Big(\frac{\sigma}{\rho}\Big)^{4 \widehat{N}_{\widehat{v}}(1/2)} \rho^{-n} \int_{B_{\rho}(0)} |\widehat{v}|^2 \,\sqrt{\widehat{g}}
		\leq \sigma^{-n} \int_{B_{\sigma}(0)} |\widehat{v}|^2 \,\sqrt{\widehat{g}}
\end{equation*}
for all $0 < \sigma < \rho \leq 1/2$.  Since $\widehat{v} = u_s \circ \Gamma$ and $\Gamma$ and $(\widehat{g}_{ij})$ satisfy \eqref{doubling Gamma hatg est}, 
\begin{equation} \label{doubling condition eqn3}
	\Big(\frac{\sigma}{\rho}\Big)^{4 \widehat{N}_{\widehat{v}}(1/2)} \rho^{-n} \int_{B_{\rho/2}(0)} |u_s|^2 
		\leq (1 + C \varepsilon) \,\sigma^{-n} \int_{B_{2\sigma}(0)} |u_s|^2
\end{equation}
for all $0 < \sigma < \rho \leq 1/2$ and some constant $C = C(n,m) \in (0,\infty)$ provided $\varepsilon$ is sufficiently small.  In particular, by \eqref{doubling condition eqn3} with $\rho/4,2\rho$ in place of $\sigma,\rho$, we obtain \eqref{doubling condition concl} with $\gamma = 2^{3 n + 12 \widehat{N}_{\widehat{v}}(1/2) + 1}$. 
\end{proof}

\begin{proof}[Proof of Theorem~\ref{freq_mono_thm}]
Without loss of generality, assume that $P = 0$ and $T_P M = \mathbb{R}^n \times \{0\}$.
Apply Lemma~\ref{freq_mono_lemma}, after using Lemma~\ref{doubling condition lemma} to establish the doubling condition \eqref{freq_mono_doubling_hyp}.  
\end{proof}

Lemma~\ref{doubling condition lemma} only establishes the doubling condition at a single point $P \in \mathcal{K}_M$.  Often we will want a doubling condition with a uniform constant $\gamma$ for each point $P \in \mathcal{K}_M$ in an open neighborhood.  
For our purposes, we obtain such a doubling condition via the following lemma.

\begin{lemma} \label{doubling condition lemma3}
For every $\alpha \in (0,\infty)$ there exists an $\varepsilon = \varepsilon(n,m,\alpha) \in (0,1)$ such that if $\varphi \in C^{1,1/2}(\mathbb{R}^n,\mathcal{A}_2(\mathbb{R}^m))$ is a non-zero, symmetric, homogeneous, harmonic two-valued function with $\mathcal{N}_{\varphi}(0) \leq \alpha$ and if $u \in C^{1,1/2}(B_1(0),\mathcal{A}_2(\mathbb{R}^m))$ is a two-valued function such that $M = {\rm graph}\,u$ is stationary in $B_1(0) \times \mathbb{R}^m$, $\|u\|_{C^{1,1/2}(B_1(0))} \leq \varepsilon$ and 
\begin{equation} \label{doubling condition3 hyp}
	\int_{B_1(0)} \mathcal{G}\left( \frac{u_s}{\|u_s\|_{L^2(B_1(0))}}, \varphi \right)^2 < \varepsilon^2, 
\end{equation}
then 
\begin{equation} \label{doubling condition3 concl}
	\int_{\mathbf{B}_{\rho}(0,T_P M)} |\widetilde{u}_{P,s}|^2 \leq 2^{3 n + 12 \alpha + 13} \int_{\mathbf{B}_{\rho/2}(0,T_P M)} |\widetilde{u}_{P,s}|^2 
\end{equation}
for all $P \in \mathcal{K}_M \cap B_{1/2}(0) \times \mathbb{R}^m$ and $\rho \in (0,1/16]$, where we let $\widetilde{u}_P \in C^{1,1/2}(\mathbf{B}_{1/4}(0,T_P M), \mathcal{A}_2(T_P M^{\perp}))$ such that $M \cap \mathbf{C}_{1/4}(0,T_P M) = {\rm graph}\,\widetilde{u}_P$ (as in Definition~\ref{tildeu defn}) and $\widetilde{u}_{P,s}$ is the symmetric part of $\widetilde{u}_P$ (as in \eqref{avg and free defn}).  Consequently, there exists a constant $C = C_1(n,m,N_{\varphi,0}(1)) \,\|u\|_{C^{1,1/2}(B_1(0))}$ such that $e^{C \rho^{1/2}} N_{M,P}(\rho)$ is monotone non-decreasing in $\rho \in (0,1/32]$ for all $P \in \mathcal{K}_M \cap B_{1/2}(0) \times \mathbb{R}^m$. 
\end{lemma}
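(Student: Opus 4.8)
\smallskip
\noindent\textbf{Proof proposal.} The plan is a contradiction–and–compactness argument whose point is to make the doubling constant in Lemma~\ref{doubling condition lemma} uniform across $P\in\mathcal{K}_M\cap B_{1/2}(0)\times\mathbb{R}^m$ by controlling, along a blow-up sequence, the frequency that governs that constant. Suppose the first assertion fails: there are $\alpha>0$, $\varepsilon_k\downarrow 0$, nonzero symmetric homogeneous harmonic $\varphi_k\in C^{1,1/2}(\mathbb{R}^n,\mathcal{A}_2(\mathbb{R}^m))$ with $\mathcal{N}_{\varphi_k}(0)\le\alpha$, stationary two-valued graphs $M_k={\rm graph}\,u_k$ with $\|u_k\|_{C^{1,1/2}(B_1(0))}\le\varepsilon_k$ and $\int_{B_1(0)}\mathcal{G}(u_{k,s}/\|u_{k,s}\|_{L^2(B_1(0))},\varphi_k)^2<\varepsilon_k^2$, together with points $P_k\in\mathcal{K}_{M_k}\cap B_{1/2}(0)\times\mathbb{R}^m$ and radii $\rho_k\in(0,1/16]$ with $\int_{\mathbf{B}_{\rho_k}(0,T_{P_k}M_k)}|\widetilde{u}_{P_k,s}|^2>2^{3n+12\alpha+13}\int_{\mathbf{B}_{\rho_k/2}(0,T_{P_k}M_k)}|\widetilde{u}_{P_k,s}|^2$. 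Put $w_k=u_{k,s}/\|u_{k,s}\|_{L^2(B_1(0))}$, so $\|\varphi_k\|_{L^2(B_1(0))}\to1$, and by the interior estimates \eqref{minimal sym schauder} (resp.\ \eqref{harmonic schauder}) the families $\{w_k\}$ and $\{\varphi_k\}$ are bounded in $C^{1,1/2}(B_{3/4}(0),\mathcal{A}_2(\mathbb{R}^m))$. Passing to a subsequence, $w_k\to\varphi$ and $\varphi_k\to\varphi$ in $C^1(B_{3/4}(0))$ for a common limit $\varphi$, which — homogeneity, harmonicity and the frequency bound all passing to the limit — is a nonzero, symmetric, homogeneous, harmonic two-valued function with $\mathcal{N}_\varphi(0)\le\alpha$; also $\pi_0P_k\to Z_0$ with $|Z_0|\le1/2$, $T_{P_k}M_k\to\mathbb{R}^n\times\{0\}$ by \eqref{tilt tangent planes}, $\rho_k\to\rho_*\in[0,1/16]$, and $Z_0\in\mathcal{K}_\varphi$ since $w_k(\pi_0P_k)=0$, $Dw_k(\pi_0P_k)=0$.

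Next I would identify the limit of the normalized tangent-plane representations $\widetilde{w}_k:=\widetilde{u}_{P_k,s}/\|u_{k,s}\|_{L^2(B_1(0))}$. Using Remark~\ref{local-rep} to produce $\widetilde{u}_{P_k}$ on a fixed ball $\mathbf{B}_{1/4}(0,T_{P_k}M_k)$ with $C^{1,1/2}$-norm $\le C(n,m)\varepsilon_k$, then Lemma~\ref{rotate lemma} together with one more application of \eqref{minimal sym schauder} (to control $Du_{k,s}$ and so pass from $\widehat{u}_{P_k,s}$ to $u_{k,s}(\pi_0P_k+\cdot)$ with error $\le C(n,m)\varepsilon_k\|u_{k,s}\|_{L^2(B_1(0))}$), and finally upgrading via the interior estimates for the stationary graph ${\rm graph}\,\widetilde{u}_{P_k}=(M_k-P_k)\cap\mathbf{C}_{1/4}(0,T_{P_k}M_k)$, one obtains $\widetilde{w}_k\to\psi:=\varphi(Z_0+\cdot)$ in $C^1$ on compact subsets of $\mathbf{B}_{1/8}(0)$ (after identifying $T_{P_k}M_k$ with $\mathbb{R}^n$). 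Here $\psi$ is symmetric and harmonic, $\psi\not\equiv0$ by unique continuation for two-valued harmonic functions, $0\in\mathcal{K}_\psi$ since $\widetilde{u}_{P_k,s}(0)=0$ and $D\widetilde{u}_{P_k,s}(0)=0$, and $\mathcal{N}_\psi(0)=\mathcal{N}_\varphi(Z_0)\le\deg\varphi\le\alpha$, the key inequality being that the Almgren frequency of a nonzero homogeneous harmonic function at any point is at most its degree of homogeneity (monotonicity of the frequency function together with the fact that $\varphi$ is the tangent function of $\varphi(Z_0+\cdot)$ at infinity). Since $\psi$ is harmonic and nonzero, $H_{\psi,0}(\rho)>0$ for all $\rho$, so the $C^1_{\rm loc}$ convergence gives $N_{M_k,P_k}(\rho)\to N_{\psi,0}(\rho)\le\alpha$ uniformly for $\rho$ in compact subsets of $(0,1/8]$.

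The contradiction is reached by applying Lemma~\ref{doubling condition lemma} at $P_k$. Rescaling $\widetilde{u}_{P_k}$ from $\mathbf{B}_{1/4}(0,T_{P_k}M_k)$ to the unit ball gives a stationary two-valued graph vanishing to first order at the origin (because $P_k\in\mathcal{K}_{M_k}$) with $[D(\cdot)]_{1/2}\le C(n,m)\varepsilon_k<\varepsilon(n,m)$ for $k$ large, so Lemma~\ref{doubling condition lemma} yields $\int_{\mathbf{B}_\rho}|\widetilde{u}_{P_k,s}|^2\le 2^{3n+12\widehat{N}_{\widehat{v}_k}(1/2)+1}\int_{\mathbf{B}_{\rho/2}}|\widetilde{u}_{P_k,s}|^2$ for all $\rho\le1/16$, with $\widehat{N}_{\widehat{v}_k}$ the perturbed frequency of that proof. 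Combined with the assumed failure of \eqref{doubling condition3 concl} this forces $\widehat{N}_{\widehat{v}_k}(1/2)>\alpha+1$. On the other hand, exactly as in the proof of Lemma~\ref{doubling condition lemma}, with $\Gamma$ and the metric $\widehat{g}$ being $C(n,m)\varepsilon_k$-close to the identity and the flat metric by \eqref{doubling Gamma hatg est}, one has $\widehat{N}_{\widehat{v}_k}(1/2)\le(1+C(n,m)\varepsilon_k)\,N_{M_k,P_k}(\rho'_k)$ for some $\rho'_k$ in a fixed compact subinterval of $(0,1/8]$, and by the previous paragraph the right side is $\le\alpha+o(1)<\alpha+1$ for $k$ large — a contradiction. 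This proves \eqref{doubling condition3 concl}.

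For the final (``consequently'') assertion, feed \eqref{doubling condition3 concl} into Lemma~\ref{freq_mono_lemma}: for $P\in\mathcal{K}_M\cap B_{1/2}(0)\times\mathbb{R}^m$, rescale $\widetilde{u}_P$ from $\mathbf{B}_{1/4}(0,T_PM)$ by the factor $16$, so that \eqref{doubling condition3 concl} becomes precisely the doubling hypothesis \eqref{freq_mono_doubling_hyp} with $\gamma=2^{3n+12\alpha+13}$ on the whole interval $(0,1]$; since $[D\widetilde{u}_P]_{1/2}\le C(n,m)\|u\|_{C^{1,1/2}(B_1(0))}\le C(n,m)\varepsilon\le\varepsilon(n,m,\gamma)$ once $\varepsilon=\varepsilon(n,m,\alpha)$ is small, Lemma~\ref{freq_mono_lemma}(i) gives that $e^{C\rho^{1/2}}N_{M,P}(\rho)$ is monotone non-decreasing on $(0,1/2]$ in the rescaled variable, i.e.\ on $(0,1/32]$, with $C=C_1(n,m,N_{\varphi,0}(1))\,\|u\|_{C^{1,1/2}(B_1(0))}$ (using $N_{\varphi,0}(1)=\deg\varphi$ by homogeneity, the relevant doubling constant near $P$ being controlled by $\deg\varphi$). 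I expect the main obstacle to be the bookkeeping in the second and third paragraphs: tracking the representations $\widetilde{u}_{P_k}$ over the moving tangent planes $T_{P_k}M_k$ carefully enough that the normalized $\widetilde{w}_k$ converge in $C^1_{\rm loc}$ to a genuine translate of $\varphi$ (so that the frequency functions $N_{M_k,P_k}$ truly converge), and matching $\widehat{N}_{\widehat{v}_k}(1/2)$ to the ordinary frequency at a comparable scale; both rely on the comparison estimates (Remark~\ref{local-rep}, Lemma~\ref{rotate lemma}) and the internal structure of \cite{SimWic16}. A secondary subtlety is that \eqref{doubling condition3 hyp} only provides closeness to a varying sequence $\varphi_k$, so $\varphi$ must be extracted by compactness and its nonvanishing verified through the $L^2$-normalization.
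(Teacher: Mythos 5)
Your argument is correct in substance and uses the same toolbox, but it is organized differently from the paper's proof. The paper splits the statement into two steps: first a compactness lemma at a single normalized base point (Lemma~\ref{doubling condition lemma2}, which shows that the auxiliary frequency $\widehat{N}_{\widehat{v}}(1/2)$ appearing in the proof of Lemma~\ref{doubling condition lemma} satisfies $\widehat{N}_{\widehat{v}}(1/2)<\alpha+1$), and then a short \emph{deterministic} transfer to an arbitrary $P\in\mathcal{K}_M\cap B_{1/2}(0)\times\mathbb{R}^m$: pass to $\widehat{u}_P$, use the triangle inequality and Lemma~\ref{rotate lemma} to carry \eqref{doubling condition3 hyp} over to $\widehat{u}_{P,s}/\|\widehat{u}_{P,s}\|_{L^2(B_{1/4}(\pi_0 P))}$, and then use $N_{\varphi,\pi_0 P}(1/4)\le\lim_{\rho\to\infty}N_{\varphi,\pi_0 P}(\rho)=N_{\varphi,0}(1)=\mathcal{N}_{\varphi}(0)\le\alpha$ before quoting Lemma~\ref{doubling condition lemma2}. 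You instead run one global contradiction/compactness argument with moving points $P_k$ and radii $\rho_k$, re-deriving the content of Lemma~\ref{doubling condition lemma2} inside the blow-up; the same two ingredients reappear (Lemma~\ref{rotate lemma} to compare $\widetilde{u}_{P_k,s}$ with $u_{k,s}(\pi_0 P_k+\cdot)$, and the translated-frequency inequality $\mathcal{N}_{\varphi(Z_0+\cdot)}(0)\le\deg\varphi\le\alpha$), so the mechanism is identical; what the paper's factorization buys is precisely the bookkeeping you flag as the main obstacle, namely $C^1$ convergence of graph representations over moving tangent planes and identification of $\lim_k\widehat{N}_{\widehat{v}_k}(1/2)$ at varying centers, while your version buys a single self-contained argument at the cost of that extra work. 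Two points to tighten: the inequality $\widehat{N}_{\widehat{v}_k}(1/2)\le(1+C\varepsilon_k)N_{M_k,P_k}(\rho'_k)$ is not literally available and should be replaced by what the paper itself uses in Lemma~\ref{doubling condition lemma2}, namely $\widehat{N}_{\widehat{v}_k}(1/2)\to N_{\psi,0}(\cdot)$ via the $C^1$ convergence of the normalized solutions together with \eqref{doubling Gamma hatg est} (this suffices since the limit is $\le\alpha<\alpha+1$); and to evaluate $\widehat{N}_{\widehat{v}_k}$ at the fixed scale you need the convergence up to the relevant sphere (radius $1/8$ before rescaling), so it should be stated on compact subsets of $\mathbf{B}_{1/4}(0,T_{P_k}M_k)$ rather than of $\mathbf{B}_{1/8}(0)$, which Remark~\ref{local-rep} and \eqref{minimal sym schauder} permit. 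Your final step for the ``consequently'' clause (rescale by $16$ and apply Lemma~\ref{freq_mono_lemma} with $\gamma=2^{3n+12\alpha+13}$) is exactly how the paper obtains the monotonicity on $(0,1/32]$.
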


First let us prove the following simpler lemma, which is based on Lemma~\ref{doubling condition lemma}. 

\begin{lemma} \label{doubling condition lemma2}
For every $\alpha \in (0,\infty)$ there exists an $\varepsilon = \varepsilon(n,m,\alpha) \in (0,1)$ such that if $\varphi \in C^{1,1/2}(\overline{B_1(0)},\mathcal{A}_2(\mathbb{R}^m))$ is a non-zero, symmetric, harmonic two-valued function with $N_{\varphi,0}(1) \leq \alpha$ and if $u \in C^{1,1/2}(B_1(0),\mathcal{A}_2(\mathbb{R}^m))$ is a two-valued function such that $M = {\rm graph}\,u$ is stationary in $B_1(0) \times \mathbb{R}^m$, $\mathcal{K}_M \neq M$, and 
\begin{gather} 
	\label{doubling condition2 hyp1} u(0) = \{0,0\}, \quad Du(0) = \{0,0\}, \quad [Du]_{1/2,B_1(0)} \leq \varepsilon , \\ 
	\label{doubling condition2 hyp2} \int_{B_1(0)} \mathcal{G}\left( \frac{u_s}{\|u_s\|_{L^2(B_1(0))}}, \varphi \right)^2 < \varepsilon^2, 
\end{gather}
then 
\begin{equation} \label{doubling condition2 concl}
	\int_{B_{\rho}(0)} |u_s|^2 \leq 2^{3 n + 12 \alpha + 13} \int_{B_{\rho/2}(0)} |u_s|^2 
\end{equation}
for all $\rho \in (0,1/4]$. 
\end{lemma}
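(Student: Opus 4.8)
The plan is to re-examine the proof of Lemma~\ref{doubling condition lemma}, tracking where the doubling constant is produced, and then to use the $L^{2}$-proximity hypothesis \eqref{doubling condition2 hyp2} to pin that constant down to the explicit value $2^{3n+12\alpha+13}$. Note first that $u_{s}\not\equiv 0$ since $\mathcal{K}_{M}\neq M$, and that by \eqref{doubling condition2 hyp1} we have $\|u\|_{C^{1,1/2}(B_{1}(0))}\leq C(n)\varepsilon$, so the hypotheses of Lemma~\ref{doubling condition lemma} are in force (with $\varepsilon$ now also allowed to depend on $\alpha$). As in its proof, $u_{s}$ solves the system \eqref{mss4} on $B_{3/4}(0)\setminus\mathcal{B}_{u}$ with coefficients satisfying \eqref{mss4 A E est}, there are a bi-Lipschitz change of variables $y=\Gamma(X)$ and a metric $(\widehat{g}_{ij})$ satisfying \eqref{doubling Gamma hatg est}, and, with $\widehat{v}=u_{s}\circ\Gamma$ and its frequency function $\widehat{N}_{\widehat{v}}$, one obtains $e^{C\varepsilon\rho}\widehat{N}_{\widehat{v}}(\rho)$ non-decreasing on $(0,1/2]$ together with the doubling inequality $\int_{B_{\rho}(0)}|u_{s}|^{2}\leq 2^{\,3n+12\widehat{N}_{\widehat{v}}(1/2)+1}\int_{B_{\rho/2}(0)}|u_{s}|^{2}$ for all $\rho\in(0,1/4]$. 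Thus it suffices to prove that, after shrinking $\varepsilon=\varepsilon(n,m,\alpha)$, we have $\widehat{N}_{\widehat{v}}(1/2)\leq\alpha+1$, for then the constant in the displayed inequality is at most $2^{\,3n+12(\alpha+1)+1}=2^{\,3n+12\alpha+13}$, which is precisely \eqref{doubling condition2 concl}.

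I would establish the bound $\widehat{N}_{\widehat{v}}(1/2)\leq\alpha+1$ by a compactness argument. Suppose it fails: then there are sequences $u^{(j)},\varphi^{(j)}$ satisfying all the hypotheses with $\varepsilon=\varepsilon_{j}\downarrow 0$ but with $\widehat{N}_{\widehat{v}^{(j)}}(1/2)>\alpha+1$, where $\widehat{v}^{(j)},\Gamma^{(j)},\widehat{g}^{(j)}$ are built from $u^{(j)}$ as above. Normalise so that $\|u_{s}^{(j)}\|_{L^{2}(B_{1}(0))}=1$; then \eqref{doubling condition2 hyp2} forces $\|u_{s}^{(j)}-\varphi^{(j)}\|_{L^{2}(B_{1}(0))}<\varepsilon_{j}\to 0$, so $\|\varphi^{(j)}\|_{L^{2}(B_{1}(0))}\to 1$. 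Using the estimates \eqref{minimal sym schauder} for $u_{s}^{(j)}$ (whose governing coefficients in \eqref{mss4} tend, by \eqref{mss4 A E est}, to those of the Laplacian as $\varepsilon_{j}\to 0$) and \eqref{harmonic schauder} for $\varphi^{(j)}$, both families are bounded in $C^{1,1/2}$ on compact subsets of $B_{1}(0)$; after passing to a subsequence, $u_{s}^{(j)}\to w$ and $\varphi^{(j)}\to w$ in $C^{1}(\overline{B_{3/4}(0)})$ — the two limits agreeing since $u_{s}^{(j)}-\varphi^{(j)}\to 0$ in $L^{2}$ — and $w$ is a non-zero ($\|w\|_{L^{2}(B_{1}(0))}=1$) symmetric two-valued function, locally harmonic in $B_{3/4}(0)$. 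Since $w$ is non-trivial, $H_{w,0}(1/2)=(1/2)^{1-n}\int_{\partial B_{1/2}(0)}|w|^{2}\neq 0$ (non-vanishing of the frequency denominator for harmonic two-valued functions; cf.~\cite{SimWic16}), so $N_{w,0}(1/2)$ is well defined. On the one hand, monotonicity of the harmonic frequency function and the hypothesis give $N_{\varphi^{(j)},0}(1/2)\leq N_{\varphi^{(j)},0}(1)\leq\alpha$, and passing to the limit (using $\varphi^{(j)}\to w$ in $C^{1}$ near $\partial B_{1/2}(0)$ and $H_{w,0}(1/2)\neq 0$) yields $N_{w,0}(1/2)\leq\alpha$. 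On the other hand, since $\Gamma^{(j)}\to\operatorname{id}$ and $\widehat{g}^{(j)}\to(\delta_{ij})$ by \eqref{doubling Gamma hatg est}, we have $\widehat{v}^{(j)}=u_{s}^{(j)}\circ\Gamma^{(j)}\to w$ in $C^{1}$ near $\partial B_{1/2}(0)$, whence $\mathcal{D}^{(j)}(1/2)\to(1/2)^{2-n}\int_{\partial B_{1/2}(0)}w\cdot D_{r}w=(1/2)^{2-n}\int_{B_{1/2}(0)}|Dw|^{2}$ (the identity by harmonicity of $w$) and $\mathcal{H}^{(j)}(1/2)\to(1/2)^{1-n}\int_{\partial B_{1/2}(0)}|w|^{2}$, so $\widehat{N}_{\widehat{v}^{(j)}}(1/2)\to N_{w,0}(1/2)\leq\alpha$, contradicting $\widehat{N}_{\widehat{v}^{(j)}}(1/2)>\alpha+1$.

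This compactness argument thus yields $\widehat{N}_{\widehat{v}}(1/2)\leq\alpha+1$ for $\varepsilon=\varepsilon(n,m,\alpha)$ small enough, and with the reduction of the first paragraph this proves \eqref{doubling condition2 concl}. I expect the compactness step to be the main obstacle: one must ensure the convergences of $u_{s}^{(j)}$, $\varphi^{(j)}$ and $\widehat{v}^{(j)}$ are strong enough ($C^{1}$ up to and across $\partial B_{1/2}(0)$) to pass the numerators and denominators of the frequency quotients to the limit, that the limit $w$ is genuinely non-trivial so that $H_{w,0}(1/2)\neq 0$ and $N_{w,0}(1/2)$ is defined, and that the perturbed geometry $(\Gamma^{(j)},\widehat{g}^{(j)})$ of Lemma~\ref{doubling condition lemma} collapses cleanly to the Euclidean one. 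All of this is standard given the elliptic estimates of Sections~\ref{sec:preliminaries sec} and~\ref{sec:frequency sec}, but it relies essentially on combining the smallness \eqref{doubling condition2 hyp1} with the $L^{2}$-proximity \eqref{doubling condition2 hyp2}, which is exactly what the statement provides.
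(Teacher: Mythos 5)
Your proposal is correct and follows essentially the paper's own argument: reduce, via the explicit constant $2^{3n+12\widehat{N}_{\widehat{v}}(1/2)+1}$ produced in the proof of Lemma~\ref{doubling condition lemma}, to showing $\widehat{N}_{\widehat{v}}(1/2)\leq\alpha+1$, and prove this by the same contradiction/compactness argument (Schauder estimates, the $L^2$-proximity \eqref{doubling condition2 hyp2}, monotonicity of $N_{\varphi,0}$, and convergence of the perturbed frequency to the harmonic frequency of the limit). The only cosmetic difference is that the paper anchors the limit by first extracting $\varphi_k\to\varphi$ via the compactness lemma for harmonic two-valued functions of \cite{KrumWic1}, which is also how it secures non-triviality of the limit on $B_{3/4}(0)$ (the point you flag), whereas you pass to the limit through the normalized $u^{(j)}_s$; the frequency bound $N_{\varphi^{(j)},0}(1)\leq\alpha$ that you already assume supplies the needed non-concentration, so this is a presentational rather than substantive difference.
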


\begin{proof}
By Lemma~\ref{doubling condition lemma}, it suffices to show that $\widehat{N}_{\widehat{v}}(1/2) < \alpha + 1$, where $\widehat{v}$ and $\widehat{N}_{\widehat{v}}$ are as in the proof of Lemma~\ref{doubling condition lemma}.  Suppose to the contrary that for each $k = 1,2,3,\ldots$ there exists a non-zero, symmetric, harmonic two-valued function $\varphi_k \in C^{1,1/2}(B_1(0),\mathcal{A}_2(\mathbb{R}^m))$, a two-valued function $u_k \in C^{1,1/2}(B_1(0),\mathcal{A}_2(\mathbb{R}^m))$ and $P_k \in \mathcal{K}_{M_k}$ such that $M_k = {\rm graph}\,u_k$ is stationary in $B_1(0) \times \mathbb{R}^m$, $\mathcal{K}_{M_k} \neq M_k$, \eqref{doubling condition2 hyp1} and \eqref{doubling condition2 hyp2} hold true with $\varepsilon = 1/k$, $\varphi=\varphi_k$ and $u=u_k$ and 
\begin{equation*} 
	N_{\varphi_k,0}(1) \leq \alpha, \quad \widehat{N}_{\widehat{v}_k}(1/2) \geq \alpha + 1, 
\end{equation*}
where $u_{k,s}(X)$ is the symmetric part of $u_k$ (as in \eqref{avg and free defn}) and $\widehat{v}_k$ and $\widehat{N}_{\widehat{v}_k}(1/2)$ are as in the proof of Lemma~\ref{doubling condition lemma} with $u_k$ in place of $u$.  By \eqref{doubling condition2 hyp2}, $1/2 \leq \|\varphi_k\|_{L^2(B_1(0))} \leq 2$ for all sufficiently large $k$.  Thus by $N_{\varphi_k,0}(1) \leq \alpha$ and \cite[Lemma 3.1(b)]{KrumWic1}, after passing to a subsequence $\varphi_k \rightarrow \varphi$ in $C^1(B_{3/4}(0),\mathcal{A}_2(\mathbb{R}^m))$ for some non-zero, symmetric, harmonic two-valued function $\varphi \in C^{1,1/2}(B_{3/4}(0),\mathcal{A}_2(\mathbb{R}^m))$.  By \eqref{doubling condition2 hyp1}, \eqref{doubling condition2 hyp2}, and the estimates \eqref{minimal sym schauder}, $u_{k,s}/\|u_{k,s}\|_{L^2(B_1(0))} \rightarrow \varphi$ in $C^1(B_{3/4}(0),\mathcal{A}_2(\mathbb{R}^m))$.  This together with \eqref{doubling Gamma hatg est} and \eqref{doubling condition2 hyp1} implies that $\widehat{N}_{\widehat{v}_k}(1/2) \rightarrow N_{\varphi,0}(1/2)$.  In particular, by $N_{\varphi_k,0}(1) \leq \alpha$, $\widehat{N}_{\widehat{v}_k}(1/2) \geq \alpha + 1$ and the monotonicity of $N_{\varphi_k,0}$, 
\begin{equation*} 
	N_{\varphi,0}(1/2) = \lim_{k \rightarrow \infty} N_{\varphi_k,0}(1/2) \leq \limsup_{k \rightarrow \infty} N_{\varphi_k,0}(1) \leq \alpha < \alpha + 1 
		\leq \lim_{k \rightarrow \infty} \widehat{N}_{\widehat{v}_k}(1/2) = N_{\varphi,0}(1/2) 
\end{equation*}
which is impossible. 
\end{proof}

\begin{proof}[Proof of Lemma~\ref{doubling condition lemma3}] 
Fix $P \in \mathcal{K}_M \cap B_{1/2}(0) \times \mathbb{R}^m$.  Let $Q_P$ be a rotation of $\mathbb{R}^{n+m}$ satisfying \eqref{hatu rotation} and let $\widehat{u}_P \in C^{1,1/2}(B_{1/4}(\pi_0 P), \mathcal{A}_2(\mathbb{R}^m))$ satisfy \eqref{hatu graph} with $\rho = 1/4$.
By Definitions~\ref{tildeu defn} and \ref{hatu defn}, \eqref{doubling condition3 concl} is equivalent to 
\begin{equation} \label{doubling condition3 concl2}
	\int_{B_{\rho}(\pi_0 P)} |\widehat{u}_{P,s}|^2 \leq 2^{3 n + 12 \alpha + 13} \int_{B_{\rho/2}(\pi_0 P)} |\widehat{u}_{P,s}|^2 
\end{equation}
for all $\rho \in (0,1/16]$.  By the triangle inequality, Lemma~\ref{rotate lemma} (with $u, \widehat{u}_P$ in place of $u, \widehat{u}$) and \eqref{doubling condition3 hyp}, 
\begin{align} \label{doubling condition3 eqn1}
	&\int_{B_{1/4}(\pi_0 P)} \mathcal{G}\left( \frac{\widehat{u}_{P,s}}{\|u_s\|_{L^2(B_1(0))}}, \varphi \right)^2
		\\&\hspace{5mm} \leq 2 \int_{B_{1/4}(\pi_0 P)} \mathcal{G}\left( \frac{u_s}{\|u_s\|_{L^2(B_1(0))}}, \frac{\widehat{u}_{P,s}}{\|u_s\|_{L^2(B_1(0))}} 
			\right)^2 + 2 \int_{B_{1/4}(\pi_0 P)} \mathcal{G}\left( \frac{u_s}{\|u_s\|_{L^2(B_1(0))}}, \varphi \right)^2 \nonumber 
		\\&\hspace{5mm} \leq C(n,m) \,|Du(\pi_0 P)|^2 + 2 \int_{B_1(0)} \mathcal{G}\left( \frac{u_s}{\|u_s\|_{L^2(B_1(0))}}, \varphi \right)^2 
		\leq C(n,m) \,\varepsilon^2 . \nonumber
\end{align}
By \eqref{doubling condition3 hyp}, $| \|\varphi\|_{L^2(B_1(0))} - 1| \leq \varepsilon$.  Hence by \eqref{doubling condition3 eqn1} and the triangle inequality, 
\begin{equation*}
	\left| \frac{\|\widehat{u}_{P,s}\|_{L^2(B_{1/4}(\pi_0 P))}}{\|u_s\|_{L^2(B_1(0))}} - 1 \right| \leq C(n,m) \,\varepsilon
\end{equation*}
and thus by again applying \eqref{doubling condition3 eqn1} and the triangle inequality 
\begin{equation} \label{doubling condition3 eqn2}
	\int_{B_{1/4}(\pi_0 P)} \mathcal{G}\left( \frac{\widehat{u}_{P,s}}{\|\widehat{u}_{P,s}\|_{L^2(B_{1/4}(\pi_0 P))}}, \varphi \right)^2 \leq C(n,m) \,\varepsilon^2 .
\end{equation}
By applying Lemma~\ref{doubling condition lemma2} using \eqref{doubling condition3 eqn2} and 
\begin{equation*}
	N_{\varphi,\pi_0 P}(1/4) \leq \lim_{\rho \rightarrow \infty} N_{\varphi,\pi_0 P}(\rho) 
		= \lim_{\rho \rightarrow \infty} N_{\varphi,\pi_0 P/\rho}(1) = N_{\varphi,0}(1) =  \mathcal{N}_{\varphi}(0) \leq \alpha, 
\end{equation*}
we obtain \eqref{doubling condition3 concl2}. 
\end{proof}

A simple consequence of Lemma~\ref{doubling condition lemma3} is the following: 

\begin{theorem} \label{freq_mono_thm2}
Let $M$ be a $C^{1,1/2}$ two-valued stationary graph (as in Definition~\ref{twoval minimal graph}) and assume that $\mathcal{K}_M \neq M$.  For every $P_0 \in \mathcal{K}_M$ there exists $\rho_0 > 0$ (depending on $M$ and $P_0$) such that for every $P \in \mathcal{K}_M \cap B^{n+m}_{\rho_0}(P_0)$:
\begin{enumerate}
	\item[(i)] there exists $\widetilde{u}_P \in C^{1,1/2}(\mathbf{B}_{\rho_0}(0,T_P M), \mathcal{A}_2(T_P M^{\perp}))$ such that 
\begin{equation*}
	M \cap \mathbf{C}_{\rho_0}(0,T_P M) = {\rm graph}\, \widetilde{u}_P;
\end{equation*}
	\item[(ii)] for every $\rho \in (0,\rho_0]$, 
\begin{equation} \label{freq_mono_doubling}
	\int_{B_{\rho}(0)} |\widetilde{u}_{P,s}|^2 \leq 2^{3n + 12 \mathcal{N}_M(P_0) + 13} \int_{B_{\rho/2}(0)} |\widetilde{u}_{P,s}|^2 ,
\end{equation}
where $\widetilde{u}_{P,s}$ is the symmetric part of $\widetilde{u}_P$ (as in \eqref{avg and free defn});  
	\item[(iii)] $H_{M,P}(\rho) \neq 0$ for all $\rho \in (0,\rho_0/2]$ and $e^{C \rho^{1/2}} N_{M,0}(\rho)$ is monotone non-decreasing as a function of $\rho \in (0,\rho_0/2]$, where $C = C(n,m,\mathcal{N}_M(P_0)) \in (0,\infty)$ is a constant (depending on $M$ and $P_0$ and independent of $P$ and $\rho$).
\end{enumerate}
\end{theorem}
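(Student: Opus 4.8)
The plan is to deduce Theorem~\ref{freq_mono_thm2} from Lemma~\ref{doubling condition lemma3} by passing to one fixed small scale at $P_0$ at which $M$ is graphically close to a homogeneous tangent function, and then undoing the dilation. First I would normalize: after translating and rotating, assume $P_0 = 0$ and $T_{P_0} M = \mathbb{R}^n \times \{0\}$, so that $M = {\rm graph}\,u$ for some $u \in C^{1,1/2}(\Omega,\mathcal{A}_2(\mathbb{R}^m))$ with $u(0) = \{0,0\}$ and $Du(0) = \{0,0\}$; since $\mathcal{K}_M \neq M$, $u_s \not\equiv 0$. By Theorem~\ref{freq_mono_thm}, $M$ has a tangent function $\varphi$ at $0$, a non-zero, symmetric, homogeneous harmonic two-valued function on $\mathbb{R}^n$ of degree $\alpha := \mathcal{N}_M(P_0)$; in particular $\mathcal{N}_\varphi(0) = N_{\varphi,0}(1) = \alpha$. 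Let $\varepsilon = \varepsilon(n,m,\alpha)$ be the constant furnished by Lemma~\ref{doubling condition lemma3}.

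Next I would fix the scale. For $r > 0$ small put $u_r(X) = r^{-1} u(rX)$, so that $M_r := {\rm graph}\,u_r$ is the dilation of $M$ by the factor $r^{-1}$ and is still stationary in $B_1(0) \times \mathbb{R}^m$. Using $u(0) = Du(0) = \{0,0\}$, continuity of $u$ and $Du$, and the scaling relations $\sup_{B_1(0)}|u_r| \le \sup_{B_r(0)}|Du|$, $\sup_{B_1(0)}|Du_r| = \sup_{B_r(0)}|Du|$, $[Du_r]_{1/2,B_1(0)} = r^{1/2}[Du]_{1/2,B_r(0)}$, one obtains $\|u_r\|_{C^{1,1/2}(B_1(0))} \to 0$ as $r \to 0^+$. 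Since $\varphi$ is a tangent function to $M$ at $0$, along some sequence $r_j \to 0^+$ we have $u_{r_j,s}/\|u_{r_j,s}\|_{L^2(B_1(0))} \to \varphi$ in $C^1$ on compact subsets of $\mathbb{R}^n$, hence in $L^2(B_1(0))$, so $\|\varphi\|_{L^2(B_1(0))} = 1$. Fixing $r := r_j$ for $j$ large, $u_r$ satisfies all hypotheses of Lemma~\ref{doubling condition lemma3} with this $\alpha$ and $\varphi$: $\|u_r\|_{C^{1,1/2}(B_1(0))} \le \varepsilon$, $M_r$ is stationary, $\varphi$ is homogeneous harmonic with $\mathcal{N}_\varphi(0) = \alpha$, and $\int_{B_1(0)}\mathcal{G}(u_{r,s}/\|u_{r,s}\|_{L^2(B_1(0))},\varphi)^2 < \varepsilon^2$.

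Then I would apply Lemma~\ref{doubling condition lemma3} to $M_r$ and undilate. It provides, for every $P' \in \mathcal{K}_{M_r} \cap B_{1/2}(0) \times \mathbb{R}^m$, a representation $\widetilde{u}_{P'}$ of $M_r$ on $\mathbf{C}_{1/4}(0, T_{P'}M_r)$, the inequality $\int_{B_\sigma(0)}|\widetilde{u}_{P',s}|^2 \le 2^{3n+12\alpha+13}\int_{B_{\sigma/2}(0)}|\widetilde{u}_{P',s}|^2$ for $\sigma \in (0,1/16]$, and the monotonicity of $e^{C\sigma^{1/2}}N_{M_r,P'}(\sigma)$ on $(0,1/32]$ with $C = C_1(n,m,\alpha)\|u_r\|_{C^{1,1/2}(B_1(0))}$. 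For $P \in \mathcal{K}_M$ set $P' := r^{-1}P \in \mathcal{K}_{M_r}$; then $T_{P'}M_r = T_P M$ and $\widetilde{u}_P(X) = r\,\widetilde{u}_{P'}(X/r)$, whence the scaling identities $D_{M,P}(r\sigma) = r^2 D_{M_r,P'}(\sigma)$, $H_{M,P}(r\sigma) = r^2 H_{M_r,P'}(\sigma)$, and thus $N_{M,P}(r\sigma) = N_{M_r,P'}(\sigma)$. Setting $\rho_0 := r/16$, and noting $B^{n+m}_{\rho_0}(0) \subseteq B_{r/2}(0) \times \mathbb{R}^m$, the three conclusions of Theorem~\ref{freq_mono_thm2} follow for every $P \in \mathcal{K}_M \cap B^{n+m}_{\rho_0}(0)$: (i) $\widetilde{u}_P$ exists on $\mathbf{B}_{\rho_0}(0,T_P M)$ because $\rho_0 < r/4$; (ii) after rescaling the integrals, the doubling inequality for $\rho \in (0,\rho_0]$ is exactly the $M_r$-inequality at $\sigma = \rho/r \in (0,1/16]$, with constant $2^{3n+12\mathcal{N}_M(P_0)+13}$; (iii) $H_{M,P}(\rho) \neq 0$ on $(0,\rho_0/2]$ and $e^{C'\rho^{1/2}}N_{M,P}(\rho)$ is non-decreasing there, with $C' = r^{-1/2}C_1(n,m,\mathcal{N}_M(P_0))\|u_r\|_{C^{1,1/2}(B_1(0))}$, a constant depending only on $M$ and $P_0$ (through $r$) and not on $P$ or $\rho$.

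I expect the only real work to be the scale bookkeeping of the last step: simultaneously arranging $C^{1,1/2}$-smallness of $u_r$ and $L^2$-closeness of $u_{r,s}/\|u_{r,s}\|_{L^2(B_1(0))}$ to the homogeneous function $\varphi$ at one fixed scale $r$, and then carrying the doubling inequality and the monotonicity of $N_{M_r,P'}$ back to $N_{M,P}$ under the dilation, using that $D_{M,P}$ and $H_{M,P}$ are homogeneous of degree $2$ in the radius variable. Everything else is a direct citation of Theorem~\ref{freq_mono_thm} and Lemma~\ref{doubling condition lemma3}.
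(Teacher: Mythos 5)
Your proposal is correct and follows essentially the same route as the paper: translate/rotate to $P_0$, pass to a small scale at which a tangent function $\varphi$ at $P_0$ (degree $\mathcal{N}_M(P_0)$, from Theorem~\ref{freq_mono_thm}) makes the hypotheses of Lemma~\ref{doubling condition lemma3} hold, and then obtain (i)--(iii) by undoing the dilation, with (iii) coming from Lemma~\ref{freq_mono_lemma} (equivalently the ``consequently'' clause of Lemma~\ref{doubling condition lemma3}) and the scale-invariance $N_{M,P}(r\sigma)=N_{M_r,P'}(\sigma)$. The scaling bookkeeping you supply is exactly what the paper's terse proof leaves implicit.
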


\begin{proof}
Translate and rotate so that $P_0 = 0$ and $T_{P_0} M = \mathbb{R}^n \times \{0\}$.  We can choose $\rho_0$ so that (i) holds true and moreover for a given $\varepsilon \in (0,1)$ we have $[D\widetilde{u}_{P,s}]_{1/2,\mathbf{B}_{\rho_0}(0,T_P M)} \leq \varepsilon$.  By applying Lemma~\ref{doubling condition lemma3} with $\varphi$ as a tangent function to $M$ at $P_0$, we can choose $\rho_0$ so that (ii) holds true.  (iii) follows by applying Lemma~\ref{freq_mono_lemma} using the doubling condition from (ii). 
\end{proof}

\section{Further standard consequences of the monotonicity of frequency functions} \label{sec:frequency sec2}

The monotonicity formula for frequency functions of two-valued stationary graphs have several standard consequences which play an important role in studying regularity.  The first is a decay rate for $H_{M,P}(\rho)$ and the $L^2$-norms of $\widetilde{u}_{P,s}$.  For simplicity and without loss of generality we assume that $P = 0$ and $T_P M = \mathbb{R}^n \times \{0\}$. 

\begin{lemma}
For each $\gamma \in (0,\infty)$ there exists $\varepsilon = \varepsilon(n,m,\gamma) > 0$ such that the following holds true.  Let $u \in C^{1,1/2}(B_1(0),\mathcal{A}_2(\mathbb{R}^m))$ be a two-valued function such that $M = {\rm graph}\,u$ is stationary in $B_1(0) \times \mathbb{R}^m$, $\mathcal{K}_M \neq M$, $u(0) = \{0,0\}$, $Du(0) = \{0,0\}$ and $[Du]_{1/2,B_1(0)} \leq \varepsilon$.  Suppose that there exists a constant $\gamma \in (0,\infty)$ such that 
\begin{equation*} 
	\int_{B_{\rho}(0)} |u_s|^2 \leq \gamma \int_{B_{\rho/2}(0)} |u_s|^2
\end{equation*}
for all $\rho \in (0,1]$.  Then there exists constants $C = C(n,m,\gamma) \in [1,\infty)$ such that 
\begin{align} \label{H_decay}
	&\left( \frac{\sigma}{\rho} \right)^{2 N_{M,0}(\rho) + C \varepsilon \rho^{1/2}} \rho^{1-n} \int_{\partial B_{\rho}(0)} |u_s|^2 
		\\&\hspace{20mm} \leq \sigma^{1-n} \int_{\partial B_{\sigma}(0)} |u_s|^2 \leq 
			e^{C \varepsilon \rho^{1/2}} \left( \frac{\sigma}{\rho} \right)^{2\mathcal{N}_M(0)} \rho^{1-n} \int_{\partial B_{\rho}(0)} |u_s|^2 \nonumber 
\end{align}
for each $0 < \sigma < \rho \leq 1/2$ and 
\begin{align} \label{L2_decay}
	&\left( \frac{\sigma}{\rho} \right)^{2N_{M,0}(\rho) + C \varepsilon \rho^{1/2}} \rho^{-n} \int_{B_{\rho}(0)} |u_s|^2 
		\\&\hspace{20mm} \leq \sigma^{-n} \int_{B_{\sigma}(0)} |u_s|^2 \leq 
			e^{C \varepsilon \rho^{1/2}} \left( \frac{\sigma}{\rho} \right)^{2\mathcal{N}_M(0)} \rho^{-n} \int_{B_{\rho}(0)} |u_s|^2. \nonumber
\end{align}
each $0 < \sigma < \rho \leq 1/2$. 
\end{lemma}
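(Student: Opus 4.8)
The plan is to integrate the logarithmic derivative of $H_{M,0}$ against the frequency monotonicity from Section~\ref{sec:frequency sec}, and then to deduce \eqref{L2_decay} from \eqref{H_decay} by a radial integration. First I would record what the proof of Lemma~\ref{freq_mono_lemma} (equivalently \cite[Lemma~6.6]{SimWic16}) yields under the present hypotheses: $H_{M,0}(\rho)\neq 0$ for $\rho\in(0,1/2]$; the logarithmic derivative identity
\begin{equation*}
	\left| \frac{\rho\,H_{M,0}'(\rho)}{2\,H_{M,0}(\rho)} - N_{M,0}(\rho) \right| \leq C\varepsilon\rho^{1/2}, \qquad C = C(n,m,\gamma),
\end{equation*}
obtained by differentiating $H_{M,0}$, integrating by parts on spheres, and estimating the resulting error via the right-hand side of \eqref{mss3 f est} together with \eqref{minimal avg schauder}, \eqref{minimal sym schauder}, \eqref{minimal sym decay} and the doubling hypothesis; and the monotonicity of $e^{C'\rho^{1/2}}N_{M,0}(\rho)$ on $(0,1/2]$ with $C' = C'(n,m,\gamma)\,\varepsilon$. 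I would also note the a priori bound $N_{M,0}(\rho)\leq C(n,m,\gamma)$, hence $\mathcal{N}_M(0)\leq C(n,m,\gamma)$, for $\rho\in(0,1/2]$: the doubling hypothesis bounds $N_{M,0}(1/2)$ from above (as in the computation of the doubling constant $\gamma$ in the proof of Lemma~\ref{doubling condition lemma}), and the monotonicity propagates this down to all $\rho\in(0,1/2]$.

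Next, from the monotonicity of $e^{C'\rho^{1/2}}N_{M,0}$ together with $e^x\leq 1+2x$ for small $x$, $e^{-x}\geq 1-x$, and the uniform frequency bound (to absorb the factors $N_{M,0}(\rho)$ and $\mathcal{N}_M(0)$ into constants), I obtain for $0<t\leq\rho\leq 1/2$ that $\mathcal{N}_M(0) - C\varepsilon\rho^{1/2}\leq N_{M,0}(t)\leq N_{M,0}(\rho) + C\varepsilon\rho^{1/2}$. I then write $\log H_{M,0}(\rho) - \log H_{M,0}(\sigma) = \int_\sigma^\rho \frac{2}{t}\big(N_{M,0}(t)+E(t)\big)\,dt$ with $|E(t)|\leq C\varepsilon t^{1/2}$, bound the error by $\int_\sigma^\rho\frac{2|E(t)|}{t}\,dt\leq 4C\varepsilon(\rho^{1/2}-\sigma^{1/2})$, and, where this needs to enter the exponent, convert it using the elementary inequality $\rho^{1/2}-\sigma^{1/2}\leq\tfrac{1}{2}\rho^{1/2}\log(\rho/\sigma)$ (equivalent to $1-v\leq-\log v$ for $v\in(0,1]$). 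Feeding in the two-sided bounds on $N_{M,0}(t)$ above, together with $\int_\sigma^\rho t^{-1}(\rho^{1/2}-t^{1/2})\,dt\leq\rho^{1/2}\log(\rho/\sigma)$, yields
\begin{equation*}
	\big(2N_{M,0}(\rho) + C\varepsilon\rho^{1/2}\big)\log(\sigma/\rho) \leq \log H_{M,0}(\sigma) - \log H_{M,0}(\rho) \leq 2\mathcal{N}_M(0)\log(\sigma/\rho) + C\varepsilon\rho^{1/2},
\end{equation*}
which is \eqref{H_decay} after exponentiating.

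Finally, \eqref{L2_decay} I would deduce from \eqref{H_decay}: since $\int_{\partial B_t(0)}|u_s|^2 = t^{n-1}H_{M,0}(t)$, the function $G(\rho) = \rho^{-n}\int_{B_\rho(0)}|u_s|^2 = \rho^{-n}\int_0^\rho t^{n-1}H_{M,0}(t)\,dt$ satisfies $\rho\,G'(\rho)/G(\rho) = H_{M,0}(\rho)/G(\rho) - n$, and integrating \eqref{H_decay} in the radial variable gives $e^{-C\varepsilon\rho^{1/2}}(n+2\mathcal{N}_M(0))\leq H_{M,0}(\rho)/G(\rho)\leq n+2N_{M,0}(\rho)+C\varepsilon\rho^{1/2}$, so $2\mathcal{N}_M(0) - C\varepsilon\rho^{1/2}\leq\rho\,G'(\rho)/G(\rho)\leq 2N_{M,0}(\rho)+C\varepsilon\rho^{1/2}$; integrating this over $[\sigma,\rho]$ exactly as for $H_{M,0}$ produces \eqref{L2_decay}. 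I expect the main obstacle to be the bookkeeping in the middle step: checking that both the genuine PDE error (contributing after integration a term of size $\varepsilon(\rho^{1/2}-\sigma^{1/2})$ with no logarithm) and the small scale-to-scale drift of the frequency (contributing a term of size $\varepsilon\rho^{1/2}\log(\rho/\sigma)$) can be absorbed into the single quantity $C\varepsilon\rho^{1/2}$ in the exponent of \eqref{H_decay} (resp.\ into the prefactor $e^{C\varepsilon\rho^{1/2}}$), which relies in an essential way on the uniform frequency bound $N_{M,0}(\rho)\leq C(n,m,\gamma)$ and on the conversion $\rho^{1/2}-\sigma^{1/2}\leq\tfrac{1}{2}\rho^{1/2}\log(\rho/\sigma)$.
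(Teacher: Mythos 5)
Your proposal is correct and follows essentially the same route as the paper: the approximate logarithmic-derivative identity $\bigl|\tfrac{\rho H_{M,0}'(\rho)}{2H_{M,0}(\rho)}-N_{M,0}(\rho)\bigr|\leq C\varepsilon\rho^{1/2}$ coming from the \cite[Lemma 6.6]{SimWic16}-type computation under the doubling hypothesis, the uniform frequency bound, the almost-monotonicity of $N_{M,0}$, integration in the radial variable to get \eqref{H_decay}, and a further radial integration for \eqref{L2_decay}. The only differences are bookkeeping: the paper bounds $\tfrac{\tau H_{M,0}'(\tau)}{2H_{M,0}(\tau)}$ above by the $\tau$-independent quantity $N_{M,0}(\rho)+C\varepsilon\rho^{1/2}$ before integrating, whereas you keep the additive error and absorb it via $\rho^{1/2}-\sigma^{1/2}\leq\tfrac{1}{2}\rho^{1/2}\log(\rho/\sigma)$, and your derivation of \eqref{L2_decay} through $G(\rho)=\rho^{-n}\int_{B_\rho}|u_s|^2$ is just a more explicit version of the paper's ``integrating \eqref{H_decay}'' step.
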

\begin{proof}
Arguing as in~\cite[Lemma 6.6]{SimWic16} (modifying the argument as in the proof of Lemma~\ref{freq_mono_lemma}), $1/C \leq N_{M,0}(\rho) \leq C$ and 
\begin{equation*}
	H'_{M,0}(\rho) = 2 \rho^{1-n} \int_{\partial B_{\rho}(0)} w^{\kappa} D_R w^{\kappa} = \frac{2}{\rho} D_{M,0}(\rho) + E \quad\text{where}\quad 
	|E| \leq C \varepsilon \rho^{-1/2} D_{M,0}(\rho) 
\end{equation*}
for all $0 < \rho < 1/2$ and some constant $C = C(n,m,\gamma) \in [1,\infty)$.  It follows that 
\begin{equation*} 
	(1 - C \varepsilon \tau^{1/2}) \,N_{M,0}(\tau) \leq \frac{\tau \,H'_{M,0}(\tau)}{2 \,H_{M,0}(\tau)} \leq (1 + C \varepsilon \tau^{1/2}) \,N_{M,0}(\tau)
\end{equation*} 
for all $0 < \tau < 1/2$ and some constant $C = C(n,m,\gamma) \in (0,\infty)$.  By applying the monotonicity formula for $N_{M,0}$ from Lemma~\ref{freq_mono_lemma} and $1/C \leq N_{M,0}(\tau) \leq C$ for all $\tau \in (0,1/2)$ and some constant $C = C(n,m,\gamma) \in [1,\infty)$, 
\begin{equation} \label{H decay eqn}
	\mathcal{N}_M(0) - C \varepsilon \tau^{1/2} \leq \frac{\tau H'_{M,0}(\tau)}{2 H_{M,0}(\tau)} \leq N_{M,0}(\rho) + C \varepsilon \rho^{1/2}
\end{equation} 
for all $0 < \tau \leq \rho < 1/2$ and some constant $C = C(n,m,\gamma) \in (0,\infty)$.  By integrating \eqref{H decay eqn} over $\tau \in [\sigma,\rho]$ we obtain \eqref{H_decay}.  Then integrating \eqref{H_decay} gives us \eqref{L2_decay}.
\end{proof}

Another important consequence is the upper semi-continuity of frequency.  We have two types of semi-continuity results: the first, Lemma~\ref{semicont freq lemma1}, concerns a sequence of two-valued stationary graphs $M_k$ converging to another two-valued stationary graph $M$, and the second, Lemma~\ref{semicont freq lemma2},  concerns a sequence two-valued stationary graphs $M_k$ which converge to a plane and blow-up to a two-valued harmonic function.  An important special case of Lemma~\ref{semicont freq lemma1} is when $M_k = M$, in which case we conclude that $\mathcal{N}_M : \mathcal{K}_M \rightarrow [3/2,\infty)$ is upper semi-continuous for every $C^{1,1/2}$ two-valued stationary graph $M$. 

\begin{lemma} \label{semicont freq lemma1}
Let $\Omega \subset \mathbb{R}^n$ be a bounded open set and $u_k,u \in C^{1,1/2}(\Omega,\mathcal{A}_2(\mathbb{R}^m))$ such that $M_k = {\rm graph}\,u_k$ and $M = {\rm graph}\,u$ are stationary in $\Omega \times \mathbb{R}^m.$ 
Suppose that $\mathcal{K}_M \neq M$, $u_k \rightarrow u$ in $C^1(\Omega,\mathcal{A}_2(\mathbb{R}^m))$ and that 
for some $\mu \in (0,1]$, 
\begin{equation} \label{semicont freq1 hyp}
	\sup_{k} [Du_k]_{\mu,\Omega} < \infty .
\end{equation}
Let $P_k \in \mathcal{K}_{M_k}$ and $P \in \mathcal{K}_M$ such that $P_k \rightarrow P$.  Then 
\begin{equation} \label{semicont freq1 concl}
	\mathcal{N}_M(P) \geq \limsup_{k \rightarrow \infty} \mathcal{N}_{M_k}(P_k).
\end{equation}
\end{lemma}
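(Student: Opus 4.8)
The plan is to deduce this from the almost-monotonicity of frequency together with its continuity under $C^1$-convergence, the only genuine difficulty being to make the almost-monotonicity constant uniform along the sequence. First I would dispose of trivialities: if $\mathcal{K}_{M_k}=M_k$ for infinitely many $k$ then $u_{k,s}\equiv 0$ for those $k$, and since $u_k\to u$ in $C^1$ forces $u_{k,s}\to u_s$ uniformly this would give $\mathcal{K}_M=M$, contrary to hypothesis; hence $\mathcal{K}_{M_k}\neq M_k$ for all large $k$ and, by Theorem~\ref{freq_mono_thm}, $\mathcal{N}_{M_k}(P_k)$ is defined for such $k$. Next I would normalise: after a translation and rotation of $\mathbb{R}^{n+m}$ and restriction to a smaller common domain, we may assume $P=0$, $T_P M=\mathbb{R}^n\times\{0\}$, $u(0)=\{0,0\}$, $Du(0)=\{0,0\}$, that $M,M_k$ are graphs of $u,u_k$ over a fixed ball $B_r(0)$ with $u_k\to u$ in $C^1(B_r(0))$ and $\sup_k[Du_k]_{\mu,B_r(0)}<\infty$; in particular $\widetilde{u}_0=u$ near $0$. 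Fix a homogeneous, symmetric, harmonic tangent function $\varphi$ to $M$ at $0$ of degree $\alpha:=\mathcal{N}_M(0)$ (Theorem~\ref{freq_mono_thm}(iv)), so $\varphi\in C^{1,1/2}(\mathbb{R}^n,\mathcal{A}_2(\mathbb{R}^m))$ with $\mathcal{N}_\varphi(0)=N_{\varphi,0}(1)=\alpha$, and let $\varepsilon_0=\varepsilon_0(n,m,\alpha)$ be the constant of Lemma~\ref{doubling condition lemma3}.

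The key step is to produce a single scale $\sigma>0$ and a constant $C_0$, both independent of $k$, with $\mathcal{N}_{M_k}(P_k)\le e^{C_0\tau^{1/2}}\,N_{M_k,P_k}(\tau)$ for all $\tau\in(0,\sigma/32]$ and all large $k$. To this end write $Z_k=\pi_0 P_k\to 0$ and set, on $B_1(0)$, $v_k(X)=\sigma^{-1}\big(u_k(Z_k+\sigma X)-\pi_0^\perp P_k\big)$ and $v(X)=\sigma^{-1}u(\sigma X)$, so that ${\rm graph}\,v_k=\sigma^{-1}(M_k-P_k)$ near $0$ is stationary in $B_1(0)\times\mathbb{R}^m$ for $\sigma$ small and $k$ large, $v_k(0)=\{0,0\}$, $Dv_k(0)=\{0,0\}$ (using $P_k\in\mathcal{K}_{M_k}$), and—by the vanishing of $v_k$ and its derivative at $0$, the uniform $C^{1,\mu}$-bound, and the Schauder estimate \eqref{minimal sym schauder}—$\|v_k\|_{C^{1,1/2}(B_1(0))}\le C\sigma^\mu$. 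Now $v_{k,s}(X)=\sigma^{-1}u_{k,s}(Z_k+\sigma X)$, so $v_{k,s}\to v_s$ in $C^1(B_1(0))$; moreover $v_s/\|v_s\|_{L^2(B_1(0))}=\widetilde{u}_{0,s,\sigma}$ in the notation of \eqref{tan fn scaling}, which by Theorem~\ref{freq_mono_thm}(iv) converges in $C^1(\overline{B_1(0)})$ to $\varphi$ along a suitable sequence of scales. Choosing $\sigma$ from that sequence small enough that $\|v_k\|_{C^{1,1/2}(B_1(0))}\le\varepsilon_0$ and $\int_{B_1(0)}\mathcal{G}\big(v_s/\|v_s\|_{L^2(B_1(0))},\varphi\big)^2<(\varepsilon_0/2)^2$, and then $k$ large enough that $\int_{B_1(0)}\mathcal{G}\big(v_{k,s}/\|v_{k,s}\|_{L^2(B_1(0))},\varphi\big)^2<\varepsilon_0^2$, Lemma~\ref{doubling condition lemma3} applies to ${\rm graph}\,v_k$ with this $\varphi$ and this $\alpha$; since $0\in\mathcal{K}_{{\rm graph}\,v_k}\cap B_{1/2}(0)\times\mathbb{R}^m$ it yields that $e^{C\rho^{1/2}}N_{{\rm graph}\,v_k,0}(\rho)$ is non-decreasing on $(0,1/32]$ with $C=C_1(n,m,\alpha)\|v_k\|_{C^{1,1/2}(B_1(0))}\le C_1(n,m,\alpha)\varepsilon_0$, a bound independent of $k$. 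By scale-invariance of the frequency function at a point, $N_{{\rm graph}\,v_k,0}(\rho)=N_{M_k,P_k}(\sigma\rho)$ and $\mathcal{N}_{{\rm graph}\,v_k}(0)=\mathcal{N}_{M_k}(P_k)$, and letting $\rho\downarrow0$ in the monotone quantity gives the claimed inequality with $\tau=\sigma\rho$ and $C_0=C_1(n,m,\alpha)\varepsilon_0\sigma^{-1/2}$.

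To finish, fix $\tau\in(0,\sigma/32]$ small enough that $H_{M,0}(\tau)\neq 0$ (possible by Theorem~\ref{freq_mono_thm}(ii) applied to $M$). Since $u_k\to u$ in $C^1$ and $Z_k\to 0$, the Dirichlet and spherical $L^2$ integrals of $u_{k,s}(Z_k+\cdot)$ on $B_\tau(0)$ and $\partial B_\tau(0)$ converge to the corresponding integrals of $u_s$, so $N_{M_k,P_k}(\tau)\to N_{M,0}(\tau)$; combining with the previous step, $\limsup_{k\to\infty}\mathcal{N}_{M_k}(P_k)\le e^{C_0\tau^{1/2}}N_{M,0}(\tau)$. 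Letting $\tau\downarrow0$ and using $\lim_{\tau\downarrow0}N_{M,0}(\tau)=\mathcal{N}_M(0)$ (Definition~\ref{freqfn defn}, Theorem~\ref{freq_mono_thm}(iii)) yields $\limsup_{k\to\infty}\mathcal{N}_{M_k}(P_k)\le\mathcal{N}_M(P)$; in particular the limsup is automatically finite.

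The main obstacle is exactly the uniformity of $C_0$ over the sequence $\{M_k\}$: a priori the constant in the frequency monotonicity for $M_k$ at $P_k$ depends on a doubling constant at that point, with no reason to be bounded in $k$. The device above—transporting the single fixed tangent function $\varphi$ of the limit $M$ at $P$ onto all the rescaled $M_k$ via the $C^1$-convergence, and then invoking the quantitative doubling estimate of Lemma~\ref{doubling condition lemma3}, whose constant depends only on $n,m$ and $\mathcal{N}_M(P)$—is what circumvents this. The remaining ingredients (the exclusion $\mathcal{K}_{M_k}\neq M_k$, the $C^1$-convergence of the rescaled symmetric parts, the identity $v_s/\|v_s\|_{L^2(B_1)}=\widetilde{u}_{0,s,\sigma}$, and the smallness of $\|v_k\|_{C^{1,1/2}(B_1(0))}$) are routine consequences of $\sup_k[Du_k]_{\mu,\Omega}<\infty$ together with the Schauder estimates \eqref{minimal sym schauder}.
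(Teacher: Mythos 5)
Your argument is in substance the paper's own: you transfer a fixed tangent function $\varphi$ of $M$ at $P$ to the rescaled graphs via the $C^1$-convergence and then invoke the quantitative doubling statement whose constants depend only on $n,m$ and $\mathcal{N}_M(P)$ (you use Lemma~\ref{doubling condition lemma3}; the paper uses Lemma~\ref{doubling condition lemma2} together with Lemma~\ref{freq_mono_lemma}) to obtain an almost-monotonicity constant for $N_{M_k,P_k}$ that is uniform in $k$, after which the conclusion follows by passing to the limit at a fixed scale and letting the scale tend to zero. Two points in your write-up are inaccurate and need repair, though neither destroys the argument. First, $P_k\in\mathcal{K}_{M_k}$ does \emph{not} give $Dv_k(0)=\{0,0\}$: membership in $\mathcal{K}_{M_k}$ only says the two values of $Du_k(Z_k)$ coincide, not that they vanish (they vanish only if $T_{P_k}M_k=\mathbb{R}^n\times\{0\}$, which you have not arranged). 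What you do have is $|Du_k(Z_k)|\rightarrow 0$, from $u_k\rightarrow u$ in $C^1$, $Z_k\rightarrow 0$ and $Du(0)=\{0,0\}$ after your normalization of the limit; this still gives $\|v_k\|_{C^{1,1/2}(B_1(0))}\leq\varepsilon_0$ for $\sigma$ small and then $k$ large, so the application of Lemma~\ref{doubling condition lemma3} survives, but the stated uniform bound $C\sigma^{\mu}$ and its justification are wrong as written. Second, in the final step $N_{M_k,P_k}(\tau)$ is by Definition~\ref{freqfn defn} computed from $\widetilde{u}_{P_k,s}$, the symmetric part of the representation of $M_k-P_k$ as a graph over the \emph{tilted} plane $T_{P_k}M_k$, not from $u_{k,s}(Z_k+\cdot)$; since you did not rotate the $M_k$ (the paper normalizes $T_{P_k}M_k=\mathbb{R}^n\times\{0\}$ at the outset precisely to avoid this), the claimed convergence $N_{M_k,P_k}(\tau)\rightarrow N_{M,0}(\tau)$ requires an additional comparison between the tilted and horizontal graphical representations, e.g.\ via Lemma~\ref{rotate lemma} and the estimates \eqref{minimal sym schauder}, using that the tilt $|Du_k(Z_k)|\rightarrow 0$. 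Both fixes are routine, so the proof is correct in substance and follows the paper's route.
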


\begin{proof}[Proof of Lemma~\ref{semicont freq lemma1}]
Note that $\mathcal{K}_{M_k} \neq M_k$ for each sufficiently large $k,$ since $\mathcal{K}_M \neq M$ and $u_k \rightarrow u$ in $C^1(\Omega,\mathcal{A}_2(\mathbb{R}^m)).$ By translating and rotating slightly, 
we may assume without loss of generality that $P_k = P = 0$ and $T_{P_k} M_k = T_P M = \mathbb{R}^n \times \{0\}$. 
Let $\varepsilon$ be as in Lemma~\ref{freq_mono_lemma} with $\gamma = 2^{3 n + 12 \mathcal{N}_M(P) + 13}$.  Let $\varphi$ be a tangent function of $M$ at $0$.  By \eqref{semicont freq1 hyp}, the estimates \eqref{minimal avg schauder} and \eqref{minimal sym schauder} and the definition of tangent function, we can rescale so that $\Omega = B_1(0)$, \eqref{doubling condition2 hyp1} holds true with $u_k$ and $u$ in place of $u$, and \eqref{doubling condition2 hyp2} holds true for $u$.  Since $u_k \rightarrow u$ in $C^1(B_1(0),\mathcal{A}_2(\mathbb{R}^m))$, \eqref{doubling condition2 hyp2} holds true with $u_k$ in place of $u$. 
Hence by Lemma~\ref{doubling condition lemma2} and our choice of $\varepsilon$, \eqref{doubling condition2 concl} holds true for all $\rho \in (0,1/4]$ with $u_k$ in place of $u$. 
Thus we may apply Lemma~\ref{freq_mono_lemma} 
to conclude that there exists a constant $C_0 = C_0(n,m,\mathcal{N}_M(0)) \in (0,\infty)$ (independent of $k$) such that $e^{C_0 \varepsilon \rho^{1/2}} N_{M_k,0}(\rho)$ and $e^{C_0 \varepsilon \rho^{1/2}} N_{M,0}(\rho)$ are monotone nondecreasing in $\rho \in (0,1/8]$. 
\eqref{semicont freq1 concl} (with $P_k = P = 0$) follows as a standard consequence of the monotonicity of $e^{C_0 \varepsilon \rho^{1/2}} N_{M_k,0}(\rho)$ and $e^{C_0 \varepsilon \rho^{1/2}} N_{M,0}(\rho)$. 
\end{proof}

\begin{lemma} \label{semicont freq lemma2}
Let $u_k \in C^{1,1/2}(B_1(0),\mathcal{A}_2(\mathbb{R}^m))$ be two-valued functions whose graphs $M_k$ are stationary in $B_1(0) \times \mathbb{R}^m$ and assume that $\mathcal{K}_{M_k} \neq M_k$.  Let $\varphi \in C^{1,1/2}(B_1(0),\mathcal{A}_2(\mathbb{R}^m))$ be a non-zero, symmetric, harmonic two-valued function.  Suppose that for some $\mu \in (0,1]$, 
\begin{gather} 
	\label{semicont freq2 hyp1} \lim_{k \rightarrow \infty} \|u_k\|_{C^1(B_1(0))} = 0, \quad \sup_{k} [Du_k]_{\mu,B_1(0)} < \infty, \\
	\label{semicont freq2 hyp2} \lim_{k \rightarrow \infty} \int_{B_1(0)} \mathcal{G}\left( \frac{u_{k,s}}{\|u_{k,s}\|_{L^2(B_1(0))}}, \varphi \right)^2 = 0 ,  
\end{gather}
where $u_{k,s}$ is the symmetric part of $u_{k}$ (as in \eqref{avg and free defn}).  Let $P_k \in \mathcal{K}_{M_k}$ and $Y \in \mathcal{K}_{\varphi}$ such that $P_k \rightarrow (Y,0)$.  Then 
\begin{equation} \label{semicont freq2 concl}
	\mathcal{N}_{\varphi}(Y) \geq \limsup_{k \rightarrow \infty} \mathcal{N}_{M_k}(P_k). 
\end{equation}
\end{lemma}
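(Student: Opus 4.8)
The plan is to run the argument of Lemma~\ref{semicont freq lemma1}, the difference being that the limiting object is now the harmonic function $\varphi$ itself (after renormalization of the symmetric parts) rather than a limiting stationary graph. Concretely I will, for large $k$, establish a doubling inequality for the relevant symmetric part with a constant independent of $k$, deduce uniform monotonicity of $e^{C\rho^{1/2}}N_{M_k,P_k}(\rho)$, identify $\lim_k N_{M_k,P_k}(\rho)$ with $N_{\varphi,Y}(\rho)$, and let $\rho\downarrow 0$. As standard reductions I translate $M_k$ so that $P_k\to 0$ and correspondingly replace $\varphi$ by its translate $\varphi(\,\cdot\,+Y)$, which is again non-zero, symmetric and harmonic with $0\in\mathcal{K}_\varphi$ and $\mathcal{N}_\varphi(0)$ equal to the original $\mathcal{N}_\varphi(Y)$; after a harmless fixed rescaling I may assume $Y=0$, $\varphi$ harmonic on $B_1(0)$, and that \eqref{semicont freq2 hyp1} and \eqref{semicont freq2 hyp2} continue to hold (after replacing $\varphi$ by a positive multiple, which changes neither $\mathcal{K}_\varphi$ nor the frequency). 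Since $\|Du_k\|_{C^0}\to 0$ and $P_k\to 0$, by \eqref{tilt tangent planes} the tangent planes $T_{P_k}M_k$ approach $\mathbb{R}^n\times\{0\}$, so the representation $\widehat{u}_{P_k}$ of Definition~\ref{hatu defn} (with rotation $Q_{P_k}$, $\|Q_{P_k}-I\|\leq C|Du_k(\pi_0P_k)|\to 0$) exists on a fixed ball for large $k$, is again the graph of a stationary varifold with $\mathcal{K}\neq M$, vanishes together with its derivative at the origin, and --- being obtained from $M_k-P_k$ by the ambient isometry $Q_{P_k}$ --- has symmetric part $\widehat{u}_{P_k,s}$ a rigid copy of $\widetilde{u}_{P_k,s}$, so that $N_{M_k,P_k}$ is computed equally well from $\widehat{u}_{P_k,s}$.

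By Lemma~\ref{rotate lemma}, $\widehat{u}_{P_k,s}$ agrees with a translate of $u_{k,s}$ up to an error bounded by $C\|Q_{P_k}-I\|\,\|u_{k,s}\|_{L^2}$; combining this with \eqref{semicont freq2 hyp2}, with the Schauder estimates \eqref{minimal sym schauder} and \eqref{minimal avg schauder} for $u_k$ (which upgrade $L^2$- to $C^1_{\mathrm{loc}}$-convergence and, using $\|u_k\|_{C^0}\to 0$, force $[Du_k]_{1/2}\to 0$ and likewise $[D\widehat{u}_{P_k}]_{1/2}\to 0$ on a fixed ball), and with \cite[Lemma 3.1(b)]{KrumWic1} for the regularity of $\varphi$, I obtain that after a fixed rescaling $\widehat{u}_{P_k,s}/\|\widehat{u}_{P_k,s}\|_{L^2(B_1(0))}$ converges in $C^1(B_{3/4}(0),\mathcal{A}_2(\mathbb{R}^m))$ to $\varphi/\|\varphi\|_{L^2(B_1(0))}$, and that for large $k$ the function $\widehat{u}_{P_k}$ meets the hypotheses of Lemma~\ref{doubling condition lemma2} with $\alpha=N_{\varphi,0}(1)$ and $\varphi$ as above --- crucially, Lemma~\ref{doubling condition lemma2}, unlike Lemma~\ref{doubling condition lemma3}, does not require $\varphi$ to be homogeneous. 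Hence $\int_{B_\rho(0)}|\widehat{u}_{P_k,s}|^2\leq 2^{3n+12N_{\varphi,0}(1)+13}\int_{B_{\rho/2}(0)}|\widehat{u}_{P_k,s}|^2$ for all $\rho$ in a fixed interval and all large $k$. Feeding this uniform constant into Lemma~\ref{freq_mono_lemma} gives that $e^{C\rho^{1/2}}N_{M_k,P_k}(\rho)$ is non-decreasing on a fixed interval $(0,\rho_1]$ with $C=C(n,m,N_{\varphi,0}(1))\,\varepsilon$ independent of $k$.

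To conclude, fix $\rho\in(0,\rho_1]$. The $C^1$-convergence just established gives $N_{M_k,P_k}(\rho)\to N_{\varphi,Y}(\rho)$ (divide numerator and denominator of the quotient in Definition~\ref{freqfn defn} by $\|\widehat{u}_{P_k,s}\|_{L^2}^2$; here $\int_{\partial B_\rho(0)}|\varphi|^2>0$ because $\varphi$ is a non-zero real-analytic map away from $\mathcal{K}_\varphi$), while uniform monotonicity gives $\mathcal{N}_{M_k}(P_k)=\lim_{\sigma\downarrow 0}N_{M_k,P_k}(\sigma)\leq e^{C\rho^{1/2}}N_{M_k,P_k}(\rho)$. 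Hence $\limsup_{k\to\infty}\mathcal{N}_{M_k}(P_k)\leq e^{C\rho^{1/2}}N_{\varphi,Y}(\rho)$; letting $\rho\downarrow 0$ and using $N_{\varphi,Y}(\rho)\to\mathcal{N}_\varphi(Y)$ by the monotonicity of \cite[Lemma 2.2]{SimWic16} yields \eqref{semicont freq2 concl}.

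I expect the main obstacle to be the bookkeeping in the second paragraph: transporting \eqref{semicont freq2 hyp2} from $u_{k,s}$ to the symmetric part $\widehat{u}_{P_k,s}$ of the tangent-plane representation --- controlling the rotation error through Lemma~\ref{rotate lemma}, absorbing the shift of domain $\pi_0P_k\to 0$, and verifying that $\|\widehat{u}_{P_k,s}\|_{L^2}/\|u_{k,s}\|_{L^2}$ converges to a positive limit (which uses $\varphi\not\equiv 0$) --- so that the smallness hypotheses of Lemma~\ref{doubling condition lemma2} hold uniformly in $k$. Granting that, the remainder is a routine consequence of the frequency monotonicity formula, exactly as in Lemma~\ref{semicont freq lemma1}.
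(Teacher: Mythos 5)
Your proposal is correct and follows essentially the same route as the paper's proof: translate/rescale so that $Y=0$, $P_k \to 0$ while preserving the $L^2$-closeness hypothesis, rotate to the tangent plane via Definition~\ref{hatu defn} and Lemma~\ref{rotate lemma}, upgrade to local $C^1$-convergence of the normalized symmetric parts using \eqref{minimal avg schauder}--\eqref{minimal sym schauder}, invoke Lemma~\ref{doubling condition lemma2} for a $k$-independent doubling constant and Lemma~\ref{freq_mono_lemma} for uniform almost-monotonicity of the frequency, and conclude by the standard limit argument letting $\rho \downarrow 0$. The points you flag as delicate (transporting \eqref{semicont freq2 hyp2} through the rotation and normalization) are exactly the ones the paper handles, and your treatment of them is the same.
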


\begin{proof}[Proof of Lemma~\ref{semicont freq lemma2}] 
Let $\varepsilon  = \varepsilon(n,m,\mathcal{N}_{\varphi}(0)) > 0$ to be later determined.  First we want to translate and rescale so that $Y = 0$, $P_k = 0$, $\sup_{k} [Du_k]_{1/2,B_1(0)} \leq \varepsilon$ and $N_{\varphi,0}(1) \leq \mathcal{N}_{\varphi}(0) + 1$.  This is straightforward except we need to make sure that \eqref{semicont freq2 hyp2} is preserved.  By \eqref{semicont freq2 hyp1} and the monotonicity of $N_{\varphi,Y}$, there exists $\rho \in (0,1 - |Y|)$ such that 
\begin{equation*}
	\sup_{k} \rho^{\mu} [Du_k]_{\mu,B_1(0)} \leq \varepsilon, \quad N_{\varphi,Y}(\rho) \leq \mathcal{N}_{\varphi}(0) + 1.
\end{equation*}
Notice that \eqref{semicont freq2 hyp2} and $P_k \rightarrow (Y,0)$ implies that 
\begin{equation} \label{semicont freq2 eqn1}
	\lim_{k \rightarrow \infty} \int_{B_{\rho}(0)} \mathcal{G}\left( \frac{u_{k,s}(\pi_0 P_k + X)}{\|u_{k,s}\|_{L^2(B_1(0))}}, \varphi(Y + X) \right)^2 = 0. 
\end{equation}
Thus by the triangle inequality and \eqref{semicont freq2 eqn1}, 
\begin{equation*}
	\lim_{k \rightarrow \infty} \left| \frac{\|u_{k,s}\|_{L^2(B_{\rho}(\pi_0 P_k))}}{\|u_{k,s}\|_{L^2(B_1(0))}} - \|\varphi\|_{L^2(B_{\rho}(Y))} \right| = 0 ,
\end{equation*}
which by applying the triangle inequality and \eqref{semicont freq2 eqn1} again gives us 
\begin{equation*}
	\lim_{k \rightarrow \infty} \int_{B_{\rho}(0)} \mathcal{G}\left( \frac{u_{k,s}(\pi_0 P_k + X)}{\|u_{k,s}\|_{L^2(B_{\rho}(\pi_0 P_k))}}, 
		\frac{\varphi(Y + X)}{\|\varphi\|_{L^2(B_{\rho}(Y))}} \right)^2 = 0. 
\end{equation*}
Thus by replacing $u_k$ and $\varphi$ with 
\begin{equation*}
	\frac{u_k(\pi_0 P_k + \rho X) - u_k(\pi_0 P_k)}{\rho}, \quad \frac{\varphi(Y + \rho X)}{\rho^{-n/2} \|\varphi\|_{L^2(B_{\rho}(Y))}}
\end{equation*}
(where $u_k(\pi_0 P_k + \rho X) - u_k(\pi_0 P_k)$ means we subtract the single value of $u_k(\pi_0 P_k)$ from both values of $u_k(\pi_0 P_k + \rho X)$), we may assume that $Y = 0$, $P_k = 0$, and $\sup_{k} [Du_k]_{\mu,B_1(0)} \leq \varepsilon$ and $N_{\varphi,0}(1) \leq \mathcal{N}_{\varphi}(0) + 1$.  By using \eqref{minimal avg schauder} and \eqref{minimal sym schauder} and rescaling as we did above, we can further assume that $\mu = 1/2$ so that $\sup_{k} [Du_k]_{1/2,B_1(0)} \leq \varepsilon$. 

Next we want to rotate $M_k$ slightly and rescale so that additionally $T_{P_k} M_k = \mathbb{R}^n \times \{0\}$, and in particular $u_k(0) = \{0,0\}$ and $Du_k(0) = \{0,0\}$.  Let $Q_k$ be a rotation of $\mathbb{R}^{n+m}$ such that $Q_k (T_{P_k} M_k) = \mathbb{R}^n \times \{0\}$ and $\|Q_k - I\| \rightarrow 0$.  Let $\widehat{u}_k \in C^{1,1/2}(B_{1/2}(0), \mathcal{A}_2(\mathbb{R}^m))$ such that ${\rm graph}\,\widehat{u}_k = Q_k (M_k)$.  Provided we assume that $\sup_{k} [Du_k]_{1/2,B_1(0)} \leq \varepsilon$ for $\varepsilon$ sufficiently small, by appyling the triangle inequality and Lemma~\ref{rotate lemma} (with $u_k, \widehat{u}_k$ in place of $u, \widehat{u}$) as we did above 
\begin{equation*}
	\lim_{k \rightarrow \infty} \int_{B_{1/2}(0)} \mathcal{G}\left( \frac{\widehat{u}_{k,s}}{\|\widehat{u}_{k,s}\|_{L^2(B_{1/2}(0))}}, \varphi \right)^2 = 0 , 
\end{equation*}
where $\widehat{u}_{k,s}$ is the symmetric part of $\widehat{u}_k$ (as in \eqref{avg and free defn}).

Thus we may replace $u_k$ with $2 \widehat{u}_k(X/2)$ in order to assume that $T_{P_k} M_k = \mathbb{R}^n \times \{0\}$ and thus $u_k(0) = \{0,0\}$ and $Du_k(0) = \{0,0\}$. 

By \eqref{semicont freq2 hyp2} and \eqref{minimal sym schauder}, $u_{k,s}/\|u_{k,s}\|_{L^2(B_1(0))} \rightarrow \varphi$ in the $C^1$-topology on compact subsets of $B_1(0)$.  Provided we assume that $\sup_{k} [Du_k]_{1/2,B_1(0)} \leq \varepsilon$ for $\varepsilon$ sufficiently small, we may apply Lemma~\ref{doubling condition lemma2} and Lemma~\ref{freq_mono_lemma} to show that there is a constant $C_0 = C_0(n,m,\mathcal{N}_{\varphi}(0)) \in (0,\infty)$ independent of $k$ such that $e^{C_0 \varepsilon \rho^{1/2}} N_{M_k,0}(\rho)$ is non-decreasing in $\rho \in (0,1/8]$.  Then using the monotonicity of $N_{M_k,P_k}$ and $N_{\varphi,Y}$ and the local $C^1$-convergence of $u_{k,s}/\|u_{k,s}\|_{L^2(B_1(0))} \rightarrow \varphi$ as we did in the proof of Lemma~\ref{semicont freq lemma2}, we obtain \eqref{semicont freq2 concl} (with $Y = 0$ and $P_k = 0$). 
\end{proof}

Now let $M$ be a two-valued stationary graph (as in Definition~\ref{twoval minimal graph}) and assume that $\mathcal{K}_M \neq M$.  Recalling the discussion from Subsection~\ref{sec:prelims harmonic subsec}, whenever $\varphi \in C^{1,1/2}(T_P M, T_P M^{\perp})$ is a tangent function of $M$ at $P \in \mathcal{K}_M$, the set 
\begin{equation*}
	S(\varphi) = \{ Z \in T_P M : \mathcal{N}_{\varphi}(Z) = \mathcal{N}_{\varphi}(0) \} 
\end{equation*}
is a linear subspace of $T_P M$ with the property that $\varphi(Z+X) = \varphi(X)$ for all $X \in T_P M$ and $Z \in S(\varphi)$.  For $j = 0,1,2,\ldots,n-2$, define 
\begin{equation*}
	\mathcal{K}^{(j)}_M = \{ P \in \mathcal{K}_M : \dim S(\varphi) \leq j \text{ for every tangent function $\varphi$ of $M$ at $P$} \} .
\end{equation*}

Observe that 
\begin{equation*}
	\mathcal{K}_M = \mathcal{K}^{(n-2)}_M \supseteq \mathcal{K}^{(n-3)}_M \supseteq \cdots \supseteq \mathcal{K}^{(1)}_M \supseteq \mathcal{K}^{(0)}_M. 
\end{equation*}

\begin{lemma} \label{stratification lemma} 
Let $M$ be a two-valued stationary graph (as in Definition~\ref{twoval minimal graph}) and assume that $\mathcal{K}_M \neq M$.  For each $j = 1,2,\ldots,n$, $\mathcal{K}^{(j)}_M$ has Hausdorff dimension at most $j$.  For $\alpha > 0$, $\{ P \in \mathcal{K}^{(0)}_M : \mathcal{N}_M(P) = \alpha \}$ is discrete.
\end{lemma}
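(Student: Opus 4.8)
\emph{(Proposal.)} Both statements are instances of a Federer--Almgren dimension reduction argument, carried out as for the corresponding results on two-valued harmonic functions in \cite{KrumWic1}. The ingredients are: the almost-monotonicity of the frequency function, with the attendant existence of a homogeneous harmonic tangent function $\varphi$ at every $P \in \mathcal{K}_M$ (Theorems~\ref{freq_mono_thm} and~\ref{freq_mono_thm2}); the upper semi-continuity of $\mathcal{N}_M$ on $\mathcal{K}_M$ (the case $M_k = M$ of Lemma~\ref{semicont freq lemma1}); the structure of the spine $S(\varphi) = \{Z \in T_P M : \mathcal{N}_\varphi(Z) = \mathcal{N}_\varphi(0)\}$, a linear subspace of $T_P M$ with $\dim S(\varphi) \le n-2$ (Subsection~\ref{sec:prelims harmonic subsec}); and the graphical comparison estimates of Section~\ref{sec:rotation sec} relating the graph of $M$ over $T_P M$ to its graph over a nearby plane. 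Since $\dim S(\varphi) \le n-2$ always, one has $\mathcal{K}^{(j)}_M = \mathcal{K}_M$ for $j \ge n-2$, and for such $j$ the bound $\dim_{\mathcal{H}} \mathcal{K}^{(j)}_M \le j$ is already contained in $\dim_{\mathcal{H}} \mathcal{K}_M \le n-2$ (\cite[Theorem 8.10]{SimWic16}); so it suffices to treat $1 \le j \le n-2$.

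\emph{Spine confinement (the key step).} Fix $P_0 \in \mathcal{K}_M$ and, after translating and rotating, assume $P_0 = 0$, $T_{P_0} M = \mathbb{R}^n \times \{0\}$, so that $u(0) = Du(0) = \{0,0\}$; write $\alpha_0 = \mathcal{N}_M(0)$. Let $\sigma_k \downarrow 0$ be such that $v_k := \widetilde{u}_{0,s,\sigma_k} \to \varphi$ in $C^1_{\rm loc}$ for some tangent function $\varphi$ at $0$ (such $\sigma_k$ exist by Theorem~\ref{freq_mono_thm}(iv)), and suppose $P_k \in \mathcal{K}_M$ with $P_k \to 0$ and $\sigma_k^{-1}\,\pi_0 P_k \to Q$. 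I claim that if $\liminf_k \mathcal{N}_M(P_k) \ge \alpha_0 - \delta$ then $\mathcal{N}_\varphi(Q) \ge \alpha_0 - \delta$; in particular $Q \in S(\varphi)$ when $\liminf_k \mathcal{N}_M(P_k) \ge \alpha_0$. Indeed, the graphical comparison estimates of Section~\ref{sec:rotation sec} (using \eqref{tilt tangent planes} and Lemma~\ref{rotate lemma}; note $Du(\pi_0 P_k) \to Du(0) = 0$, so $T_{P_k} M \to T_{P_0} M$) give $N_{M,P_k}(\sigma_k \rho) = N_{v_k,\, \sigma_k^{-1}\pi_0 P_k}(\rho) + o(1)$ as $k \to \infty$ for each fixed $\rho > 0$; the right side tends to $N_{\varphi, Q}(\rho)$ by local $C^1$ convergence, while by the monotonicity of Theorem~\ref{freq_mono_thm2}(iii), $N_{M,P_k}(\sigma_k\rho) \ge e^{-C(\sigma_k\rho)^{1/2}}\,\mathcal{N}_M(P_k)$. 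Letting $k \to \infty$ yields $N_{\varphi,Q}(\rho) \ge \alpha_0 - \delta$ for every $\rho$, hence $\mathcal{N}_\varphi(Q) = \lim_{\rho\downarrow 0} N_{\varphi,Q}(\rho) \ge \alpha_0 - \delta$; and since $\varphi$ is homogeneous of degree $\alpha_0$ the dilations of $\varphi$ about $Q$ converge to $\varphi$ as the scale $\to \infty$, so monotonicity also gives $N_{\varphi,Q}(\rho) \le \mathcal{N}_\varphi(0) = \alpha_0$ for all $\rho$; when $\delta = 0$ these force $N_{\varphi,Q} \equiv \alpha_0 = \mathcal{N}_\varphi(0)$, i.e.\ $Q \in S(\varphi)$.

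\emph{Proof of the dimension bound.} Fix $1 \le j \le n-2$ and suppose for contradiction that $\mathcal{H}^s(\mathcal{K}^{(j)}_M) > 0$ for some $s > j$; choose $E \subseteq \mathcal{K}^{(j)}_M$ with $0 < \mathcal{H}^s(E) < \infty$. For each $\delta > 0$, partitioning $E$ according to the value of $\mathcal{N}_M$ to within $\delta$ produces a subset $E_\delta \subseteq E$ with $\mathcal{H}^s(E_\delta) > 0$ on which $\mathcal{N}_M$ varies by at most $\delta$; by the standard density properties of Hausdorff content we may pick $P_0 = P_0^\delta \in E_\delta$ with $\limsup_{r\downarrow 0} r^{-s}\,\mathcal{H}^s_\infty(E_\delta \cap \overline{B_r(P_0)}) \ge c(n,s) > 0$ (here $\mathcal{H}^s_\infty$ denotes Hausdorff content), and then every point of $E_\delta$ has $\mathcal{N}_M \ge \alpha_0 - \delta$, where $\alpha_0 = \alpha_0^\delta := \mathcal{N}_M(P_0)$. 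Normalizing as above and blowing up at $P_0$ along density-realizing scales $\sigma_k$ (with $v_k \to \varphi^\delta$, a tangent function at $P_0$, so $\dim S(\varphi^\delta) \le j$), we obtain along a subsequence a Hausdorff limit $\Sigma^\delta$ of the rescaled sets $\sigma_k^{-1}\pi_0(Q_{P_0}(E_\delta - P_0))$ with $\mathcal{H}^s_\infty(\Sigma^\delta \cap \overline{B_1}) \ge c(n,s) > 0$ (by the uniform content lower bound and the upper semi-continuity of content under Hausdorff convergence), and by spine confinement $\Sigma^\delta \subseteq \{Z : \mathcal{N}_{\varphi^\delta}(Z) \ge \alpha_0^\delta - \delta\}$. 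Now let $\delta \downarrow 0$: after a further subsequence $\varphi^\delta \to \varphi^\ast$ in $C^1_{\rm loc}$ (a homogeneous harmonic two-valued function with $\dim S(\varphi^\ast) \le j$, by the upper semi-continuity of the spine dimension under such limits, cf.\ \cite{KrumWic1}) and $\Sigma^\delta \to \Sigma^\ast$ in the local Hausdorff distance, with $\mathcal{H}^s_\infty(\Sigma^\ast \cap \overline{B_1}) \ge c(n,s) > 0$; the argument of the spine confinement step (now applied with $\varphi^\delta, \Sigma^\delta$ in place of $v_k$ and the points $P_k$, using monotonicity of the harmonic frequency functions and $C^1_{\rm loc}$ convergence) shows $\Sigma^\ast \subseteq S(\varphi^\ast)$. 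But $S(\varphi^\ast)$ is a subspace of dimension $\le j < s$, so $\mathcal{H}^s_\infty(S(\varphi^\ast) \cap \overline{B_1}) = 0$ --- a contradiction. Hence $\dim_{\mathcal{H}} \mathcal{K}^{(j)}_M \le j$ for all $j$.

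\emph{Discreteness, and the main difficulty.} If $\{P \in \mathcal{K}^{(0)}_M : \mathcal{N}_M(P) = \alpha\}$ were not discrete, there would be a point $P_0$ in it and distinct points $P_k$ in it with $P_k \to P_0$. Normalizing $P_0 = 0$, $T_{P_0} M = \mathbb{R}^n \times \{0\}$ and taking $\sigma_k := |\pi_0 P_k| \downarrow 0$, pass to a subsequence along which $v_k = \widetilde{u}_{0,s,\sigma_k} \to \varphi$ (a tangent function at $0$) and $\sigma_k^{-1}\pi_0 P_k \to Q$ with $|Q| = 1$ (here $|\pi_0^\perp P_k| = o(\sigma_k)$ since $M$ is a $C^{1}$ graph over $T_{P_0} M$ with $D\widetilde{u}_0(0) = 0$). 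Since $\mathcal{N}_M(P_k) = \alpha = \mathcal{N}_M(0)$, spine confinement (with $\delta = 0$) gives $Q \in S(\varphi)$; but $P_0 \in \mathcal{K}^{(0)}_M$ forces $\dim S(\varphi) = 0$, i.e.\ $S(\varphi) = \{0\}$, contradicting $|Q| = 1$. The main obstacle in all of this is the spine confinement step: extracting, from frequency monotonicity alone, that nearby points of the a priori merely $(n-2)$-dimensional set $\mathcal{K}_M$ that carry frequency $\ge \mathcal{N}_M(P_0)$ are in fact pinned to the (at most $j$-dimensional) spine of the blow-up. This hinges on the comparison, via the machinery of Section~\ref{sec:rotation sec}, between the frequency $N_{M,P_k}$ defined over the moving tangent planes $T_{P_k} M$ and the frequency of the fixed limit $\varphi$; granting it, the remainder is the standard Federer iteration, which here terminates after a single blow-up because membership of $P_0$ in $\mathcal{K}^{(j)}_M$ already bounds $\dim S(\varphi)$ by $j$.
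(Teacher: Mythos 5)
Your overall strategy is the one the paper itself intends: its proof of this lemma is a one-line citation of Almgren's dimension-reduction argument together with Lemma~\ref{semicont freq lemma2}, and your ``spine confinement'' step is essentially a re-derivation of Lemma~\ref{semicont freq lemma2} via the graphical comparison of Section~\ref{sec:rotation sec}. The discreteness half of your argument (where all the points carry frequency exactly $\alpha$, so the pinching parameter is $\delta=0$) is correct as written.

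There is, however, a genuine gap in the dimension bound. You blow up at $\delta$-dependent points $P_0^{\delta}$, obtain $\Sigma^{\delta}\subseteq\{\mathcal{N}_{\varphi^{\delta}}\ge\alpha_0^{\delta}-\delta\}$, and then let $\delta\downarrow 0$, asserting that the limit $\varphi^{*}$ of the tangent functions $\varphi^{\delta}$ satisfies $\dim S(\varphi^{*})\le j$ ``by upper semi-continuity of the spine dimension under such limits.'' That claim is false: under $C^1_{\rm loc}$ limits of non-zero homogeneous symmetric harmonic two-valued functions (of a fixed degree) the spine dimension is \emph{lower}, not upper, semicontinuous --- symmetry can increase under degeneration. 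For instance, with $n\ge 4$, $\varphi^{\delta}=\{\pm(\op{Re}((x_1+ix_2)^2)+\delta\,\op{Re}((x_3+ix_4)^2))\}$ is homogeneous of degree $2$ with $\dim S(\varphi^{\delta})=n-4$, while the limit $\{\pm\op{Re}((x_1+ix_2)^2)\}$ has spine of dimension $n-2$. Moreover $\varphi^{*}$ is a limit of tangent functions based at varying points, so it need not be a tangent function at any point of $\mathcal{K}^{(j)}_M$, and nothing in your setup forces $\dim S(\varphi^{*})\le j$; without that, the containment $\Sigma^{*}\subseteq S(\varphi^{*})$ gives no contradiction with $\mathcal{H}^s_{\infty}(\Sigma^{*}\cap\overline{B_1})>0$ when $j<s\le n-2$. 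The classical argument avoids this by keeping the base point fixed: for $\mathcal{H}^s$-a.e.\ $P_0$ in a finite-measure subset one can arrange (via the countable decomposition by frequency, applied for a sequence $\delta_i\downarrow 0$ simultaneously) that $P_0$ is a point of positive upper content density of $E_{\delta_i}=\{P:|\mathcal{N}_M(P)-\mathcal{N}_M(P_0)|<\delta_i\}$ for every $i$; blowing up at this single $P_0$ along a diagonal sequence of scales adapted to $\delta_i\downarrow 0$, your spine-confinement step (with the pinching tending to $0$ along the same sequence) places the limit set inside $S(\varphi)$ for a tangent function $\varphi$ at $P_0$ itself, and there $\dim S(\varphi)\le j$ holds by the very definition of $\mathcal{K}^{(j)}_M$. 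Equivalently, one proves the standard ``for each $\epsilon>0$ and each $P_0$ there is $\rho_0(P_0,\epsilon)$ such that the nearly-top-frequency points in $B_\rho(P_0)$ lie $\epsilon\rho$-close to a $j$-plane'' lemma and combines it with a countable decomposition and covering argument, as in Almgren and Simon; either repair restores the proof.
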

\begin{proof}
The conclusion follows from a well-known argument, originally due to Almgren~\cite[Theorem 2.26]{Almgren} in the context of stationary varifolds, which uses Lemma~\ref{semicont freq lemma2} (semi-continuity of frequency) and the fact that $\mathcal{N}_{\varphi}(0) = \mathcal{N}_M(P)$ for any tangent function $\varphi$ of $M$ at $P$. 
\end{proof}

Thus in order to show that $\mathcal{K}_M$ is countably $(n-2)$-rectifiable, it suffices to show that $\mathcal{K}_M \setminus \mathcal{K}_M^{(n-3)}$ is countably $(n-2)$-rectifiable.  The points $P \in \mathcal{K}_M \setminus \mathcal{K}_M^{(n-3)}$ have the property that there is a tangent function $\varphi$ of $M$ at $P$ such that, for coordinates $X = (x_1,x_2,\ldots,x_n)$ with respect to some orthonormal basis for $T_P M$, $\varphi$ has the form 
\begin{equation*}
	\varphi(X) = \{ \pm \op{Re}((a+ib) \,(x_1+ix_2)^{\alpha}) \} 
\end{equation*}
on $T_P M$ for some constants $a,b \in T_P M^{\perp}$ and some $\alpha = k/2$ where $k \geq 3$ is an integer.  Recall from Subsection~\ref{sec:prelims harmonic subsec} that we call any such function $\varphi$ \emph{cylindrical}.

We will need the following lemma, which is an analogue to Lemma 2.4 of~\cite{Sim93} and will be used to prove Theorem~\ref{theorem2}. 

\begin{lemma} \label{lemma2_4} 
Let $3/2 \leq \alpha < K < \infty$ be given.  There are functions $\delta : (0,1) \rightarrow (0,1)$ and $R : (0,1) \rightarrow (4,\infty)$ depending only on $n$, $m$ and $K$ such that the following holds true.  Let $\varepsilon \in (0,1)$ and $u \in C^{1,1/2}(B_{R(\varepsilon)}(0),\mathbb{R}^m)$ be a two-valued function such that $M = {\rm graph}\,u$ is stationary in $B_{R(\varepsilon)}(0) \times \mathbb{R}^m$, $\mathcal{K}_M \neq M$, 
\begin{gather}
	\label{lemma2_4 tangent hyp1} u(0) = \{0,0\}, \quad Du(0) = \{0,0\}, \quad R(\varepsilon)^{1/2} [Du]_{1/2,B_{R(\varepsilon)}(0)} < \delta(\varepsilon), \\
	N_{M,0}(R(\varepsilon)/32) - \alpha < \delta(\varepsilon). \nonumber
\end{gather}
Assume that there exists a symmetric, homogeneous degree $\alpha$ harmonic two-valued function $\varphi^{(0)} \in C^{1,1/2}(\mathbb{R}^n, \mathcal{A}_2(\mathbb{R}^m))$ such that 
\begin{equation} \label{lemma2_4 tangent hyp2} 
	\int_{B_{R(\varepsilon)}(0)} \mathcal{G}\left( \frac{u_s}{R(\varepsilon)^{-n/2} \|u_s\|_{L^2(B_{R(\varepsilon)}(0)}}, \varphi^{(0)} \right)^2 
		< \delta(\varepsilon), 
\end{equation}
where $u_s$ is the symmetric part of $u$ (as in \eqref{avg and free defn}).  Let $P_1 \in \mathcal{K}_M$ such that $\mathcal{N}_M(P_1) \geq \alpha$.  

Then: 
\begin{enumerate}
	\item[(i)] $|N_{M,P_1}(\rho) - \alpha| \leq \varepsilon$ for all $0 < \rho \leq R(\varepsilon)/128$; 
	
	\item[(ii)] for every $\rho \in (0,1]$, there is a non-zero, symmetric, homogeneous, harmonic two-valued function $\varphi \in C^{1,1/2}(T_{P_1} M,$ $\mathcal{A}_2(T_{P_1} M^{\perp}))$ (depending on $\rho$) such that $|\mathcal{N}_{\varphi}(0) - \alpha| \leq \varepsilon^2$ and 
\begin{equation*}
	\int_{\mathbf{B}_1(0,T_{P_1} M)} \mathcal{G}(\widetilde{u}_{P_1,s,\rho},\varphi)^2 < \varepsilon^2, 
\end{equation*}
where $\widetilde{u}_{P_1,s,\rho}$ is as in \eqref{tan fn scaling};

	\item[(iii)] for every $\rho \in (0,1]$, either there is a non-zero, symmetric, homogeneous, harmonic two-valued function $\varphi \in C^{1,1/2}(T_{P_1} M, \mathcal{A}_2(T_{P_1} M^{\perp}))$ (depending on $\rho$) such that $|\mathcal{N}_{\varphi}(0) - \alpha| \leq \varepsilon^2$, $\dim S(\varphi) = n-2$, and 
\begin{equation*}
	\int_{\mathbf{B}_1(0,T_{P_1} M)} \mathcal{G}(\widetilde{u}_{P_1,s,\rho},\varphi)^2 < \varepsilon^2, 
\end{equation*}
or there is a $(n-3)$-dimensional subspace $L \subseteq T_{P_1} M$
\begin{equation*}
	\hspace{10mm}\pi_{P_1} \big\{ P \in (\mathcal{K}_M - P_1) \cap \overline{\mathbf{C}_{\rho}(0,T_{P_1} M)} : \mathcal{N}_M(P) \geq \alpha \big\} 
		\subset \big\{ X \in T_{P_1} M : \op{dist}(X,L) < \varepsilon \rho \big\} , 
\end{equation*}
where $\pi_{P_1}$ is the orthogonal projection map onto $T_{P_1} M$. 
\end{enumerate} 
\end{lemma}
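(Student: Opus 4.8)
\emph{Proof strategy.} The plan is a single compactness/blow-up argument by contradiction yielding all three conclusions at once. Fix $\varepsilon\in(0,1)$ and $3/2\le\alpha<K$ and suppose no admissible pair $\delta,R$ exists. Then for each $k$ there are a two-valued stationary graph $M_k={\rm graph}\,u_k$ over $B_{R_k}(0)$ with $R_k\to\infty$, a homogeneous degree $\alpha$ harmonic two-valued function $\varphi_k^{(0)}$, and a point $P_1^{(k)}\in\mathcal{K}_{M_k}$ with $\mathcal{N}_{M_k}(P_1^{(k)})\ge\alpha$, satisfying \eqref{lemma2_4 tangent hyp1} and \eqref{lemma2_4 tangent hyp2} with a parameter $\delta_k\to 0$ that we are free to take as small as we like relative to $R_k$, but for which (passing to a subsequence) a fixed one of (i), (ii), (iii) fails for all $k$. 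Recentering everything at $P_1^{(k)}$ and writing the local graphical representation $\widehat u_k:=\widetilde u_{P_1^{(k)}}$ over $T_{P_1^{(k)}}M_k$, we have $\widehat u_k(0)=\{0,0\}$ and $D\widehat u_k(0)=\{0,0\}$ (because $P_1^{(k)}\in\mathcal{K}_{M_k}$), while \eqref{tilt tangent planes}, \eqref{lemma2_4 tangent hyp1} and Remark~\ref{local-rep} give $\|u_k\|_{C^1(B_{R_k})}\to 0$ and $\|D\widehat u_k\|_{C^{0,1/2}}\to 0$.

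\emph{Frequency pinching and blow-up.} The first key point is that $N_{M_k,P_1^{(k)}}(\sigma)\to\alpha$ uniformly over the range of scales $\sigma$ for which the conclusions are non-vacuous (i.e.\ $B_\sigma(P_1^{(k)})$ lies in the domain, in particular $\sigma\le R_k/128$ for (i)). Indeed: monotonicity of $e^{C_k\sigma^{1/2}}N_{M_k,P_1^{(k)}}(\sigma)$ (Lemma~\ref{freq_mono_lemma}, with the doubling of Lemma~\ref{doubling condition lemma}, or Lemma~\ref{doubling condition lemma3} via \eqref{lemma2_4 tangent hyp2}, furnishing the constant) combined with $\mathcal{N}_{M_k}(P_1^{(k)})\ge\alpha$ gives the lower bound $N_{M_k,P_1^{(k)}}(\sigma)\ge\alpha-o(1)$, the error being $o(1)$ because $[Du_k]_{1/2,B_{R_k}}\le\delta_kR_k^{-1/2}$ makes $C_k\sigma^{1/2}$ small on the whole range; for the upper bound, \eqref{lemma2_4 tangent hyp2} persists on each sub-ball $B_\sigma(\pi_0P_1^{(k)})$ of $B_{R_k}(0)$, so by the Schauder estimates \eqref{minimal sym schauder} and continuity of the frequency under $C^1$ perturbation, $N_{M_k,P_1^{(k)}}(\sigma)$ is within $o(1)$ of $N_{\varphi_k^{(0)},\pi_0P_1^{(k)}}(\sigma)\le\alpha$ (a homogeneous degree $\alpha$ function has frequency $\le\alpha$ at every point). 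Having pinned the frequency, a suitable rescaling of $\widehat u_{k,s}$ has almost-constant frequency $\alpha$ over a widening range of scales; by the standard consequence of the frequency monotonicity formula (as in~\cite{SimWic16}) it is $o(1)$-close, in $C^1_{\rm loc}$, to a nonzero \emph{homogeneous degree $\alpha$ harmonic} two-valued function $\psi$ on $\mathbb{R}^n$ (here $\psi$ is identified with a limit of the $\varphi_k^{(0)}$, shifted to $\pi_0P_1^{(k)}$, using the compactness of such functions from \cite[Lemma 3.1(b)]{KrumWic1}), with $0\in S(\psi)$.

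\emph{Deriving the contradictions.} If (i) fails there are scales $\rho_k$ in the non-vacuous range, $\rho_k\le R_k/128$, with $|N_{M_k,P_1^{(k)}}(\rho_k)-\alpha|>\varepsilon$, directly contradicting the uniform pinching. If (ii) fails with witnessing scales $\rho_k\in(0,1]$, then $\widetilde u_{P_1^{(k)},s,\rho_k}$ (cf.\ \eqref{tan fn scaling}) is, for large $k$, within $\varepsilon^2$ in $L^2(\mathbf B_1(0,T_{P_1^{(k)}}M_k))$ of $\psi/\|\psi\|_{L^2(\mathbf B_1)}$, a nonzero homogeneous harmonic two-valued function with $|\mathcal{N}(0)-\alpha|=0\le\varepsilon^2$ — a contradiction. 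If (iii) fails, note $\dim S(\psi)\le n-2$ always (a homogeneous degree $\alpha\ge 3/2$ harmonic function cannot depend on $\le 1$ variable). If $\dim S(\psi)=n-2$ then $\psi$ is cylindrical and, as for (ii), $\widetilde u_{P_1^{(k)},s,\rho_k}$ is close to a homogeneous harmonic function with $(n-2)$-dimensional symmetry set: the first alternative of (iii). If $\dim S(\psi)\le n-3$, fix an $(n-3)$-plane $L\supseteq S(\psi)$; since $\{Z:\mathcal{N}_\psi(Z)\ge\alpha\}=S(\psi)$, semi-continuity of frequency (Lemma~\ref{semicont freq lemma2}) applied to any sequence of points of $\mathcal{K}_{M_k}$ of frequency $\ge\alpha$ inside $\overline{\mathbf C_{\rho_k}(0,T_{P_1^{(k)}}M_k)}$ forces these to lie within $\varepsilon\rho_k$ of $L$ for large $k$: the second alternative of (iii). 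Either way we contradict the failure of (iii).

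\emph{Main obstacle.} The heart of the matter is the frequency pinching step — controlling $N_{M_k,P_1^{(k)}}(\sigma)$ to within $o(1)$ of $\alpha$ \emph{uniformly over all scales up to $R(\varepsilon)/128$}, not merely as $\sigma\downarrow 0$ — which forces one to interlock the frequency monotonicity formula (with its error constant tamed through the doubling Lemmas~\ref{doubling condition lemma} and~\ref{doubling condition lemma3} and the scaling $[Du_k]_{1/2,B_{R_k}}\le\delta_kR_k^{-1/2}$), the persistence of \eqref{lemma2_4 tangent hyp2} on sub-balls, and the maximality of a homogeneous harmonic function's frequency at its vertex. The passage from almost-constant frequency to $C^1_{\rm loc}$-closeness to a genuine homogeneous harmonic two-valued function, and the precompactness of such functions, are available from \cite{SimWic16} and \cite[Lemma 3.1(b)]{KrumWic1}; assembling the three conclusions from the single limit object $\psi$ is then routine.
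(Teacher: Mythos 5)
Your strategy is in essence the paper's: a compactness/contradiction argument in which the doubling condition (Lemma~\ref{doubling condition lemma3}, via \eqref{lemma2_4 tangent hyp1}--\eqref{lemma2_4 tangent hyp2}) and the almost-monotonicity of Lemma~\ref{freq_mono_lemma} pin $N_{M_k,P_1^{(k)}}$ near $\alpha$, blow-ups of $\widetilde u_{P_1,s}$ at any scale are then nonzero symmetric homogeneous degree $\alpha$ harmonic functions (giving (ii) and the first alternative of (iii)), and upper semicontinuity of frequency (Lemma~\ref{semicont freq lemma2}) together with $\{Z:\mathcal{N}_\psi(Z)\ge\alpha\}=S(\psi)$ gives the second alternative of (iii), exactly as in Simon's Lemma 2.4. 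But the step you yourself call the heart of the matter is justified by a claim that is false as stated: hypothesis \eqref{lemma2_4 tangent hyp2} does \emph{not} ``persist on each sub-ball $B_\sigma(\pi_0 P_1^{(k)})$''. $L^2$-closeness to $\varphi^{(0)}$ at the top scale gives no control of the renormalized excess at scales $\sigma\ll R_k$; near $P_1$ at unit and smaller scales $u_s$ resembles its own tangent function at $P_1$, not a translate of $\varphi^{(0)}$, so the comparison $N_{M_k,P_1^{(k)}}(\sigma)\approx N_{\varphi^{(0)}_k,\pi_0P_1^{(k)}}(\sigma)$ has no justification for small $\sigma$. The repair is immediate and is what the paper does: you only need the upper bound at the single top scale $\sigma=R_k/128$ (there the balls $B_{R_k/128}(\pi_0P_1^{(k)})$ and $B_{R_k}(0)$ are comparable, so your comparison with $\varphi^{(0)}_k$ is legitimate; alternatively use the hypothesis $N_{M,0}(R(\varepsilon)/32)-\alpha<\delta(\varepsilon)$, which you never use, via a blow-up at the origin as in the paper), and then the almost-monotonicity you already invoke for the lower bound propagates the upper bound to every $\sigma\le R_k/128$ with multiplicative error tending to $1$, since $R(\varepsilon)^{1/2}[Du]_{1/2,B_{R(\varepsilon)}(0)}<\delta(\varepsilon)$. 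Note also that comparing $N_{M_k,P_1^{(k)}}$, which is defined through the graph over the tilted plane $T_{P_1^{(k)}}M_k$, with quantities computed for $u_{k,s}$ over $\mathbb{R}^n$ requires Lemma~\ref{rotate lemma}; you should invoke it explicitly rather than only Remark~\ref{local-rep}.

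Two further corrections. First, the blow-up $\psi$ at $P_1^{(k)}$ at scales $\rho_k\le 1$ is in general \emph{not} ``identified with a limit of the $\varphi^{(0)}_k$ shifted to $\pi_0P_1^{(k)}$'': the frequency pinching only forces $\psi$ to be a nonzero, symmetric, homogeneous degree $\alpha$ harmonic function, and it may be quite different from $\varphi^{(0)}$ (in particular it need not be cylindrical, which is exactly why (iii) has two alternatives). Fortunately nothing in your derivation of (ii) or (iii) uses this identification, so simply drop it. Second, since $\delta$ and $R$ are required to depend only on $n$, $m$ and $K$, you cannot fix $\alpha$ at the outset of the contradiction argument; take counterexample sequences with $\alpha_k\in[3/2,K]$ and pass to a subsequence with $\alpha_k\to\alpha$, as the paper does. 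With these repairs your argument coincides with the paper's proof, which establishes (i) by precisely this contradiction/blow-up scheme and then deduces (ii) and (iii) from the resulting doubling and frequency pinching via \eqref{minimal sym schauder}, the monotonicity of frequency, and Lemma~\ref{semicont freq lemma2}.
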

\begin{proof} 
First observe that by \eqref{lemma2_4 tangent hyp1} and \eqref{lemma2_4 tangent hyp2}, we can apply Lemma~\ref{doubling condition lemma3} to conclude that, provided $\delta(\varepsilon)$ are sufficiently small, 
\begin{gather} 
	\label{lemma2_4_doubling} \int_{B_{\rho}(0)} |u_s|^2 \leq 2^{3n+12K+13} \int_{B_{\rho/2}(0)} |u_s|^2, \\
	\label{lemma2_4_doubling2} \int_{\mathbf{B}_{\rho}(0,T_{P} M)} |\widetilde{u}_{P_1,s}|^2 
		\leq 2^{3n+12K+13} \int_{\mathbf{B}_{\rho/2}(0,T_{P} M)} |\widetilde{u}_{P_1,s}|^2  
\end{gather}
for all $0 < \rho \leq R(\varepsilon)/16$, where we let $\widetilde{u}_{P_1} \in C^{1,1/2}(\mathbf{B}_{R(\varepsilon)/4}(0,T_{P_1} M), \mathcal{A}_2(T_{P_1} M^{\perp}))$ such that $M \cap \mathbf{C}_{R(\varepsilon)/4}(0,T_{P_1} M) = {\rm graph}\,\widetilde{u}_{P_1}$ (as in Definition~\ref{tildeu defn}) and $\widetilde{u}_{P_1,s}$ is the symmetric part of $\widetilde{u}_{P_1}$ (as in \eqref{avg and free defn}).  By \eqref{lemma2_4_doubling}, \eqref{lemma2_4_doubling2} and Lemma~\ref{freq_mono_lemma}, there exists a constant $C = C_1(n,m,K) \,\delta(\varepsilon) R(\varepsilon)^{-1/2}$ such that $e^{C \rho^{1/2}} N_{M,0}(\rho)$ and $e^{C \rho^{1/2}} N_{M,P_1}(\rho)$ are monotone nondecreasing in $\rho \in (0,R(\varepsilon)/32]$.  

Now to see (i), observe that by monotonicity of $N_{M,P_1}$ and $\mathcal{N}_M(P_1) \geq \alpha$, it suffices to show that $N_{M,P_1}(R(\varepsilon)/128) \leq \alpha + \varepsilon/2$.  We shall show this by contradiction.  Suppose that for some $K \in [3/2,\infty)$ and $\varepsilon \in (0,1)$ there exists $\delta_k \downarrow 0$, $R_k \rightarrow \infty$, $\alpha_k \in [3/2,K]$, $u_k \in C^{1,1/2}(B_{R_k}(0),\mathbb{R}^m)$ with $M_k = {\rm graph}\,u_k$ and $P_k \in \mathcal{K}_{M_k} \cap B_1(0) \times \mathbb{R}^m$ with $\mathcal{N}_{M_k}(P_k) \geq \alpha_k$ such that $M_k$ is stationary in $B_{R_k}(0) \times \mathbb{R}^m$, $\mathcal{K}_{M_k} \neq M_k$, 
\begin{gather}
	\label{lemma2_4_eqn1} u_k(0) = \{0,0\}, \quad Du_k(0) = \{0,0\}, \quad \|u_k\|_{C^{1,1/2}(B_{R_k}(0))} < \delta_k, \\
	\label{lemma2_4_eqn2}N_{M_k,0}(R_k/32) < \alpha_k + \delta_k, \\
	\label{lemma2_4_eqn3} N_{M_k,P_k}(R_k/128) \geq \alpha_k + \varepsilon^2, \\
	\label{lemma2_4_eqn4} \int_{B_{\rho}(0)} |u_{k,s}|^2 \leq 2^{3n+12 K+13} \int_{B_{\rho/2}(0)} |u_{k,s}|^2 \text{ for all } \rho \in (0,R_k/16], 
\end{gather} 
where $u_{k,s}$ is the symmetric part of $u_k$ (as in \eqref{avg and free defn}).  After passing to a subsequence, let $\alpha_k \rightarrow \alpha \in [3/2,K]$.  By \eqref{lemma2_4_eqn4} and the estimates \eqref{minimal sym schauder}, after passing to a subsequence is exists a non-zero symmetric two-valued function $\varphi \in C^{1,1/2}(B_{3/2}(0),$ $\mathcal{A}_2(\mathbb{R}^m))$ such that 
\begin{equation} \label{lemma2_4_eqn5}
	\frac{u_{k,s}(R_k X/32)}{(R_k/32)^{-n/2} \|u_{k,s}\|_{L^2(B_{R_k/32}(0))}} \rightarrow \varphi
\end{equation}
in $C^1(B_{3/2}(0),\mathcal{A}_2(\mathbb{R}^m))$.  By \eqref{mss3} with $u_k$ in place of $u$, \eqref{mss b est} and \eqref{lemma2_4_eqn1}, $\varphi$ is locally harmonic in $B_{3/2}(0) \setminus \mathcal{B}_{\varphi}$.  By \eqref{lemma2_4_eqn2} and the monotonicity of $N_{M_k,0}$, 
\begin{equation} \label{lemma2_4_eqn6}
	N_{\varphi,0}(1) = \lim_{k \rightarrow \infty} N_{M_k,0}(R_k/32) \leq \alpha. 
\end{equation}
By \eqref{lemma2_4_eqn1}, there exists a rotation $Q_k$ of $\mathbb{R}^{n+m}$ such that $Q_k (T_{P_k} M_k) = \mathbb{R}^n \times \{0\}$ and $\|Q_k - I\| \rightarrow 0$.  Let $\widehat{u}_k \in C^{1,1/2}(B_{R_k/4}(\pi_0 P_k), \mathcal{A}_2(\mathbb{R}^m))$ such that 
\begin{equation*}
	{\rm graph}\,\widehat{u}_k = P_k + (Q_k (M_k - P_k)) \cap B_{R_k/4}(\pi_0 P_k) \times \mathbb{R}^m
\end{equation*}
(as in Definition~\ref{hatu defn}) and let $\widehat{u}_k$ denote the symmetric part of $\widehat{u}_{k,s}$ (as in \eqref{avg and free defn}).  By using Lemma~\ref{rotate lemma} (with $u_k, \widehat{u}_k$ in place of $u, \widehat{u}$) and \eqref{lemma2_4_eqn5} and noting that $P_k/R_k \rightarrow 0$, 
\begin{equation} \label{lemma2_4_eqn7}
	\frac{\widehat{u}_{k,s}(\pi_0 P_k+R_k X/128)}{(R_k/128)^{-n/2} \|\widehat{u}_{k,s}\|_{L^2(B_{R_k/128}(0))}} \rightarrow \varphi
\end{equation}
in $C^0(B_{5/4}(0),\mathcal{A}_2(\mathbb{R}^m))$.  Thus by the estimates \eqref{minimal sym schauder}, we have the convergence in \eqref{lemma2_4_eqn7} in $C^1(B_{9/8}(0), \mathcal{A}_2(\mathbb{R}^m))$.  Hence it follows from \eqref{lemma2_4_eqn3} that 
\begin{equation*}
	N_{\varphi,0}(1) = \lim_{k \rightarrow \infty} N_{M_k,P_k}(R_k/128) \geq \alpha + \varepsilon^2, 
\end{equation*}
which contradicts \eqref{lemma2_4_eqn6}.  

To see (ii) and (iii), in light of the discussion above we may assume that $\widetilde{u}_{M,P_1}$ satisfies the doubling condition \eqref{lemma2_4_doubling2} and that $|N_{M,P_1}(1) - \alpha|$ is as small as needed.  Also, we may translate and rotate taking $P_1$ and $T_{P_1} M$ to $0$ and $\mathbb{R}^n \times \{0\}$.  Then (ii) and (iii) are easy consequences of the estimates \eqref{minimal sym schauder}, the monotonicity of frequency functions, and upper semi-continuity of frequency as in Lemma~\ref{semicont freq lemma2}.  In particular, the proof of (iii) is similar to that of~\cite[Lemma 2.4]{Sim93}. 
\end{proof}

\section{Statement of main results} \label{sec:results sec}

\begin{definition} \label{varphi0 defn}
Here and subsequently, we shall fix a number $\alpha$ such that $\alpha = k_0/2$ for some integer $k_0 \geq 3$ and a two-valued function $\varphi^{(0)} \in C^{1,1/2}(\mathbb{R}^n,\mathcal{A}_2(\mathbb{R}^m))$ defined by 
\begin{equation*} 
	\varphi^{(0)}(x_1,x_2,\ldots,x_n) = \{ \pm \op{Re}(c^{(0)} (x_1+ix_2)^{\alpha}) \}
\end{equation*}
where $c^{(0)} \in \mathbb{C}^m \setminus \{0\}$. 
\end{definition}

Notice that $\varphi^{(0)}$ is homogeneous degree $\alpha$ and locally harmonic in $\mathbb{R}^n \setminus \{0\} \times \mathbb{R}^{n-2}$.  Moreover, ${\rm graph}\, \varphi^{(0)} \cap (\mathbb{R}^n \setminus \{0\} \times \mathbb{R}^{n-2}) \times \mathbb{R}^m$ is a smoothly immersed submanifold of $(\mathbb{R}^n \setminus \{0\} \times \mathbb{R}^{n-2}) \times \mathbb{R}^m$. 

\begin{definition} \label{F defn}
Let $\alpha$ and $\varphi^{(0)}$ be as in Definition~\ref{varphi0 defn}.  For each $\varepsilon \in (0,1)$, we define $\mathcal{F}_{\varepsilon}(\varphi^{(0)})$ to be the set of all pairs $(u,\Lambda)$ consisting of a two-valued function $u \in C^{1,1/2}(B_1(0),\mathcal{A}_2(\mathbb{R}^m))$ and a positive number $\Lambda \in (0,\infty)$ such that: 
\begin{enumerate}
	\item[(i)] $M = {\rm graph}\, u$ is stationary in $B_1(0) \times \mathbb{R}^m$ (as in Definition~\ref{twoval minimal graph});
	\item[(ii)] $\|u\|_{C^{1,1/2}(B_1(0))} \leq \varepsilon$; 
	\item[(iii)] the symmetric part $u_s$ of $u$ (as in \eqref{avg and free defn}) satisfies 
	\begin{equation} \label{F tangent req}
		\int_{B_1(0)} \mathcal{G}\left( \frac{u_s}{\Lambda}, \varphi^{(0)} \right)^2 \leq \varepsilon^2. 
	\end{equation}
\end{enumerate}
\end{definition}

\begin{remark} \label{F defn rmk} {\rm 
By the triangle inequality, \eqref{F tangent req} implies that 
\begin{equation} \label{F defn rmk eqn1}
	\left| \frac{\|u_s\|_{L^2(B_1(0))}}{\Lambda} - \|\varphi^{(0)}\|_{L^2(B_1(0))} \right|^2 
		\leq \int_{B_1(0)} \mathcal{G}\left( \frac{u_s}{\Lambda}, \varphi^{(0)} \right)^2 \leq \varepsilon^2 
\end{equation}
and thus 
\begin{equation} \label{F defn rmk eqn2}
	\int_{B_1(0)} \mathcal{G}\left( \frac{u_s}{\|u_s\|_{L^2(B_1(0))}}, \frac{\varphi^{(0)}}{\|\varphi^{(0)}\|_{L^2(B_1(0))}} \right)^2 
		\leq C \varepsilon^2 
		\end{equation}
for some constant $C = C(\varphi^{(0)}) \in (0,\infty)$.  As a consequence of \eqref{F defn rmk eqn2} and Lemma~\ref{doubling condition lemma2}, if $(u,\Lambda) \in \mathcal{F}_{\varepsilon}(\varphi^{(0)})$ for $\varepsilon = \varepsilon(n,m,\varphi^{(0)})$ sufficiently small and $u(0) = \{0,0\}$ and $Du(0) = \{0,0\}$, then we have the doubling condition 
\begin{equation*}
	\int_{B_{\rho}(0)} |\widetilde{u}_{P,s}|^2 \leq 2^{3n + 12 \alpha + 13} \int_{B_{\rho/2}(0)} |\widetilde{u}_{P,s}|^2 
\end{equation*}
for each $P \in \mathcal{K}_M \cap B_{1/2}(0) \times \mathbb{R}^m$ and $\rho \in (0,1/16]$, where we let $\widetilde{u}_P \in C^{1,1/2}(\mathbf{B}_{1/4}(0, T_P M), \mathcal{A}_2(T_P M^{\perp}))$ such that $M \cap \mathbf{C}_{1/4}(0, T_P M) = {\rm graph}\,\widetilde{u}_P$ (as in Definition~\ref{tildeu defn}) and $\widetilde{u}_{P,s}$ is the symmetric part of $\widetilde{u}_P$ (as in \eqref{avg and free defn}).  In light of \eqref{F defn rmk eqn1} and \eqref{F defn rmk eqn2}, one might try to replace $\Lambda$ with $\|u_s\|_{L^2(B_1(0))}$.  However, as we will discuss in further detail in Remark~\ref{F defn rmk2}, \eqref{F defn rmk eqn1} and \eqref{F defn rmk eqn2} do not give us the necessary control to obtain the precise estimates needed to prove and apply Lemma~\ref{lemma1} below.
} \end{remark}

\begin{definition} \label{Phi defn}
Let $\alpha$ and $\varphi^{(0)}$ be as in Definition~\ref{varphi0 defn}.  For each $\varepsilon \in (0,1)$, $\Phi_{\varepsilon}(\varphi^{(0)})$ is the set of all symmetric two-valued functions $\varphi \in C^{1,1/2}(\mathbb{R}^n,\mathcal{A}_2(\mathbb{R}^m))$ such that 
\begin{equation*}
	\varphi(x_1,x_2,\ldots,x_n) = \{ \pm \op{Re}(c (x_1+ix_2)^{\alpha}) \}
\end{equation*}
for some $c \in \mathbb{C}^m$ and 
\begin{equation*}
	\int_{B_1(0)} \mathcal{G}(\varphi(X),\varphi^{(0)}(X))^2 dX \leq \varepsilon^2. 
\end{equation*}
\end{definition}

\begin{definition} \label{tilde Phi defn}
Let $\alpha$ and $\varphi^{(0)}$ be as in Definition~\ref{varphi0 defn}.  For each $\varepsilon \in (0,1)$, $\widetilde{\Phi}_{\varepsilon}(\varphi^{(0)})$ is the set of all symmetric two-valued functions $\varphi(e^A X)$, where $\varphi \in \Phi_{\varepsilon}(\varphi^{(0)})$ and $A = (A_{ij})$ is an $n \times n$ skew-symmetric matrix with $A_{ij} = 0$ if $1 \leq i,j \leq 2$, $A_{ij} = 0$ if $3 \leq i,j \leq n$, and $\|A\| \leq \varepsilon$. 
\end{definition}

\begin{remark} {\rm $\varphi^{(0)} \in \Phi_{\varepsilon}(\varphi^{(0)})$ and $\Phi_{\varepsilon}(\varphi^{(0)}) \subseteq \widetilde{\Phi}_{\varepsilon}(\varphi^{(0)})$.  Each $\varphi \in \widetilde{\Phi}_{\varepsilon}(\varphi^{(0)})$ is non-zero, homogeneous degree $\alpha$ and locally harmonic in $\mathbb{R}^n \setminus \mathcal{B}_{\varphi}$. 
} \end{remark}

\begin{remark} \label{graph_rmk} {\rm
Given $\varphi \in \Phi_{\varepsilon}(\varphi^{(0)})$, we can regard the graph of $\varphi$ as an immersed $C^{\infty}$ submanifold in $\mathbb{R}^n \setminus \{0\} \times \mathbb{R}^{n-2}$.  Thus given $\Omega \subseteq \mathbb{R}^n \setminus \{0\} \times \mathbb{R}^{n-2}$, we can consider single-valued functions $v : \op{graph} \varphi |_{\Omega} \rightarrow \mathbb{R}^m$.  In the case that $\alpha$ is a positive integer, the graph of $\varphi$ is the union of two embedded submanifolds, the graphs of $+\op{Re}(c (x_1+ix_2)^{\alpha})$ and $-\op{Re}(c (x_1+ix_2)^{\alpha})$.  Hence any function $v : \op{graph} \varphi |_{\Omega} \rightarrow \mathbb{R}^m$ consists to a function on the graph of $+\op{Re}(c (x_1+ix_2)^{\alpha})$ and another function on the graph of $-\op{Re}(c (x_1+ix_2)^{\alpha})$.  In the case that $\alpha = k_0/2$ for some odd integer $k_0 \geq 3$, the graph of $\varphi$ consists of a single branched submanifold parameterized by $(re^{i\theta},y,\op{Re}(c r^{\alpha} e^{i\theta}))$ for $r \geq 0$, $\theta \in [0,4\pi]$, and $y \in \mathbb{R}^{n-2}$.
} \end{remark}

The following is our main lemma.

\begin{lemma} \label{lemma1} 
Let $\alpha$ and $\varphi^{(0)}$ be as in Definition~\ref{varphi0 defn}.  For every $\vartheta \in (0,1/8)$ there exists $\delta_0, \varepsilon_0 \in (0,1)$ depending only on $n$, $m$, $\alpha$, $\varphi^{(0)}$, and $\vartheta$ such that if $(u,\Lambda) \in \mathcal{F}_{\varepsilon_0}(\varphi^{(0)})$ with $M = {\rm graph}\, u$ and $\varphi \in \widetilde{\Phi}_{\varepsilon_0}(\varphi^{(0)})$ such that $u(0) = \{0,0\}$, $Du(0) = \{0,0\}$ and $\mathcal{N}_M(0) \geq \alpha$, then either 
\begin{enumerate}
\item[(i)] $(B_{\delta_0}(0,y_0) \times \mathbb{R}^m) \cap \{ P \in \mathcal{K}_M \cap B_{1/2}(0) \times \mathbb{R}^m : \mathcal{N}_M(P) \geq \alpha \} = \emptyset$ for some $y_0 \in B^{n-2}_{1/2}(0)$ or 

\item[(ii)] there is a $\widetilde{\varphi} \in \widetilde{\Phi}_{\gamma \varepsilon_0}(\varphi^{(0)})$ such that 
\begin{equation*}
	\vartheta^{-n-2\alpha} \int_{B_{\vartheta}(0)} \mathcal{G}\left( \frac{u_s}{\Lambda}, \varphi \right)^2 
	\leq C\vartheta^{2\mu} \left( \int_{B_1(0)} \mathcal{G}\left( \frac{u_s}{\Lambda}, \varphi \right)^2 + \|Du\|_{C^0(B_1(0))} \right) , 
\end{equation*}
where $\gamma = \gamma(n,m,\alpha,\varphi^{(0)},\vartheta) \in [1,\infty)$, $\mu = \mu(n,m,\alpha,\varphi^{(0)}) \in (0,1)$ and $C = C(n,m,\alpha,\varphi^{(0)}) \in (0,\infty)$ are constants (and in particular $\mu$ and $C$ are independent of $\vartheta$). 
\end{enumerate}
\end{lemma}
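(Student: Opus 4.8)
The plan is to argue by contradiction, via a blow-up (compactness) scheme of the type introduced by Simon~\cite{Sim93} and carried out in the two-valued harmonic setting in~\cite{KrumWic1}. The essential new ingredient compared with~\cite{KrumWic1} is that the minimal surface system~\eqref{mss3} replaces Laplace's equation, so the blow-up will be harmonic only in the limit, the nonlinear error in~\eqref{mss3} being controlled after rescaling by $\|Du\|_{C^0(B_1(0))}$ --- which is why that quantity enters the right-hand side of~(ii). Fixing $\vartheta\in(0,1/8)$, I would suppose the conclusion fails for all small $\delta_0,\varepsilon_0$, producing $\varepsilon_k\downarrow 0$, pairs $(u_k,\Lambda_k)\in\mathcal{F}_{\varepsilon_k}(\varphi^{(0)})$ with $M_k={\rm graph}\,u_k$, and $\varphi_k\in\widetilde{\Phi}_{\varepsilon_k}(\varphi^{(0)})$, with $u_k(0)=\{0,0\}$, $Du_k(0)=\{0,0\}$, $\mathcal{N}_{M_k}(0)\geq\alpha$, such that alternative~(i) fails with $\delta_0=1/k$ (so that for each $y_0\in B^{n-2}_{1/2}(0)$ there is $P_k(y_0)\in\mathcal{K}_{M_k}\cap B_{1/2}(0)\times\mathbb{R}^m$ with $\mathcal{N}_{M_k}(P_k(y_0))\geq\alpha$ and $|P_k(y_0)-(0,y_0)|<1/k$), and the decay estimate in~(ii) fails for every $\widetilde{\varphi}\in\widetilde{\Phi}_{\gamma\varepsilon_k}(\varphi^{(0)})$, where $\gamma$ and $C$ are to be fixed at the end. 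Writing $\mathcal{E}_k=\int_{B_1(0)}\mathcal{G}(u_{k,s}/\Lambda_k,\varphi_k)^2+\|Du_k\|_{C^0(B_1(0))}$ for the excess, I note that $\mathcal{E}_k\to 0$ and $\varphi_k\to\varphi^{(0)}$ in $C^1_{\rm loc}$, and that Remark~\ref{F defn rmk} (via Lemma~\ref{doubling condition lemma3}) supplies the uniform doubling condition, hence uniform frequency monotonicity (Theorem~\ref{freq_mono_thm2}) at every point of $\mathcal{K}_{M_k}\cap B_{1/2}(0)\times\mathbb{R}^m$. Passing to a subsequence, set $\beta=\lim\int_{B_1(0)}\mathcal{G}(u_{k,s}/\Lambda_k,\varphi_k)^2/\mathcal{E}_k\in[0,1]$; the main case is $\beta>0$.

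For the blow-up, following~\cite{KrumWic1} I would lift $u_{k,s}$ and $\varphi_k$ to single-valued branches on ${\rm graph}\,\varphi^{(0)}$ away from the spine $S=\{0\}\times\mathbb{R}^{n-2}$ and set $w_k=(u_{k,s}/\Lambda_k-\varphi_k)/\mathcal{E}_k^{1/2}$. The Schauder estimates~\eqref{minimal avg schauder},~\eqref{minimal sym schauder} together with interior estimates for the harmonic functions $\varphi_k$ and the decay bound near $S$ from~\eqref{minimal sym schauder} bound $\{w_k\}$ in $C^{1,1/2}$ on compact subsets of ${\rm graph}\,\varphi^{(0)}|_{\mathbb{R}^n\setminus S}$, so after a further subsequence $w_k\to w$ in $C^1_{\rm loc}$ away from $S$ with $\|w\|_{L^2(B_1(0))}^2=\beta>0$. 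Dividing~\eqref{mss3} for $u_k$ by $\Lambda_k\mathcal{E}_k^{1/2}$ and using~\eqref{mss3 f est} with~\eqref{minimal avg schauder},~\eqref{minimal sym schauder} and $\|Du_k\|_{C^0(B_1(0))}\leq\varepsilon_k\to 0$, the right-hand side tends to $0$ in $L^2_{\rm loc}$ away from $S$; hence $w$ is harmonic on ${\rm graph}\,\varphi^{(0)}|_{\mathbb{R}^n\setminus S}$ and, by a removable-singularity/capacity argument as in~\cite{SimWic16,KrumWic1}, extends to a harmonic two-valued function on ${\rm graph}\,\varphi^{(0)}$ with $\|w\|_{L^2(B_1(0))}<\infty$.

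The core of the argument is to pin down the homogeneous components of $w$, and this is where I expect the main obstacle to lie. From $\mathcal{N}_{M_k}(0)\geq\alpha$, the uniform frequency monotonicity and upper semicontinuity of frequency (Lemma~\ref{semicont freq lemma2}) one gets $\mathcal{N}_{M_k}(0)\to\alpha$ and that the homogeneous-about-$0$ expansion of $w$ has no component of degree $<\alpha$. The harder point is to use the failure of~(i): the points $P_k(y_0)\to(0,y_0)$ with $\mathcal{N}_{M_k}(P_k(y_0))\geq\alpha$, available for every $y_0\in B^{n-2}_{1/2}(0)$, must be fed into a two-scale (``intermediate scale'') blow-up --- comparing the scale at which $\varphi_k$ and $\mathcal{E}_k^{1/2}w_k$ balance with unit scale, exactly as in Simon's cylindrical tangent cone argument and in~\cite{KrumWic1} --- to force the degree-$\alpha$ part $w^{(\alpha)}$ of $w$ to lie in the tangent space $T_{\varphi^{(0)}}\widetilde{\Phi}$; that is, $w^{(\alpha)}$ must be an infinitesimal deformation of $\varphi^{(0)}$ inside $\widetilde{\Phi}$ --- a change $\dot c\in\mathbb{C}^m$ of the coefficient $c^{(0)}$ together with an admissible infinitesimal rotation $e^{tA}$ (the dilation direction being absorbed by the normalization in $\Lambda_k$). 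Concretely, this rules out the ``bad'' degree-$\alpha$ harmonic pieces that are homogeneous about $0$ but whose frequency at the other spine points drops below $\alpha$; its proof relies on Theorem~\ref{freq_mono_thm2}, Lemmas~\ref{semicont freq lemma1}--\ref{semicont freq lemma2}, and the growth/decay estimates~\eqref{minimal sym schauder},~\eqref{minimal sym decay}, and is essentially the whole content of the lemma.

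To conclude, write $w^{(\alpha)}=\frac{d}{dt}\big|_{0}\varphi_t$ for a curve $t\mapsto\varphi_t\in\widetilde{\Phi}$ with $\varphi_0=\varphi^{(0)}$, and set $\widetilde{\varphi}_k=\varphi_{t_k}$ with $t_k\to 0$ chosen so that $\varphi_k$ and $\widetilde{\varphi}_k$ differ to first order by $\mathcal{E}_k^{1/2}w^{(\alpha)}$; since $\mathcal{E}_k^{1/2}\leq C\varepsilon_k$ and $\varphi_k\in\widetilde{\Phi}_{\varepsilon_k}(\varphi^{(0)})$, we get $\widetilde{\varphi}_k\in\widetilde{\Phi}_{\gamma\varepsilon_k}(\varphi^{(0)})$ for a fixed $\gamma=\gamma(n,m,\alpha,\varphi^{(0)},\vartheta)$. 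Then $w^{\perp}:=w-w^{(\alpha)}=\lim_k(u_{k,s}/\Lambda_k-\widetilde{\varphi}_k)/\mathcal{E}_k^{1/2}$ is harmonic on ${\rm graph}\,\varphi^{(0)}$ with lowest homogeneous degree $\geq\alpha+2\mu$, where $2\mu=2\mu(n,m,\alpha,\varphi^{(0)})>0$ is the gap above $\alpha$ in the admissible homogeneity degrees of harmonic two-valued functions on ${\rm graph}\,\varphi^{(0)}$; interior estimates and homogeneity give $\vartheta^{-n-2\alpha}\int_{B_\vartheta(0)}|w^{\perp}|^2\leq C'\vartheta^{4\mu}\|w^{\perp}\|_{L^2(B_{1/2}(0))}^2\leq C'\vartheta^{2\mu}$ with $C'=C'(n,m,\alpha,\varphi^{(0)})$ independent of $\vartheta$. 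Passing to the limit $k\to\infty$ in $\vartheta^{-n-2\alpha}\int_{B_\vartheta(0)}\mathcal{G}(u_{k,s}/\Lambda_k,\widetilde{\varphi}_k)^2/\mathcal{E}_k$ then contradicts the failure of~(ii) once we set $C:=2C'$. Finally, in the degenerate case $\beta=0$ one has $\int_{B_1(0)}\mathcal{G}(u_{k,s}/\Lambda_k,\varphi_k)^2=o(\|Du_k\|_{C^0(B_1(0))})$, so $u_{k,s}/\Lambda_k$ is already much closer to $\varphi_k$ than $\mathcal{E}_k^{1/2}$, and the decay in~(ii) with $\widetilde{\varphi}=\varphi_k\in\widetilde{\Phi}_{\varepsilon_k}\subseteq\widetilde{\Phi}_{\gamma\varepsilon_k}$ follows directly from~\eqref{minimal sym schauder} after rescaling, again contradicting the failure of~(ii).
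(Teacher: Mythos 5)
Your overall architecture (contradiction, blow-up of $u_{k,s}/\Lambda_k-\varphi_k$ normalized by the excess, harmonic limit $w$ on ${\rm graph}\,\varphi^{(0)}$, decay of $w$ after subtracting a degree-$\alpha$ piece tangent to $\widetilde{\Phi}$, transfer back) is the same as the paper's, but the two steps you defer are precisely the content of the lemma, and as sketched they do not go through. First, you use the failure of alternative (i) only to ``pin down'' the degree-$\alpha$ part of $w$ via an unspecified intermediate-scale blow-up. In the paper the failure of (i) enters much earlier and quantitatively: it is the hypothesis \eqref{cor3_2 nogaps} of Corollary~\ref{cor3_2} and, via Corollaries~\ref{lemma3_9} and \ref{thm3_1} (which rest on the stationarity identities of Lemma~\ref{modified freq est lemma}, Theorem~\ref{mainestthm} and the graphical representation of Lemma~\ref{lemma2_6}), it yields (a) non-concentration of excess near the spine, which is what justifies your implicit claims $\|w\|_{L^2(B_1)}^2=\beta$ and, crucially, the convergence of $\vartheta^{-n-2\alpha}\int_{B_\vartheta}\mathcal{G}(u_{k,s}/\Lambda_k,\widetilde{\varphi}_k)^2/\mathcal{E}_k$ at the fixed small scale $\vartheta$ (without it, excess can hide in the tubular neighborhood of $\{0\}\times\mathbb{R}^{n-2}$ and the decay of $w$ does not transfer to $u_k$); and (b) the weighted estimates \eqref{lemma1_eqn10}--\eqref{lemma1_eqn11}, in which the nearby points of $\mathcal{K}_{M_k}$ with $\mathcal{N}_{M_k}\geq\alpha$ may sit \emph{off} the spine at distance $O(\mathcal{E}_k^{1/2})$, producing the translation terms $D_x\varphi^{(0)}\cdot\lambda(z)$. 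Your sketch contains no substitute for either (a) or (b).

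Second, your concluding decay step is not correct as stated. The space of homogeneous degree-$\alpha$ symmetric harmonic two-valued functions on ${\rm graph}\,\varphi^{(0)}$ is strictly larger than the tangent space $\mathcal{L}$ of Definition~\ref{L defn}: for instance $\{\pm\op{Re}(b\,(x_1+ix_2)^{\alpha-1})\,y_j\}$ with $b\in\mathbb{C}^m$ not a multiple of $c^{(0)}$ is homogeneous of degree $\alpha$, harmonic and two-valued, but is neither a coefficient variation nor an infinitesimal rotation of $\varphi^{(0)}$. Such components are excluded only through the weighted integrability condition \eqref{lemma4_2_decay} of Lemma~\ref{lemma4_2}, i.e. exactly through the estimates in (b) above; a ``spectral gap above $\alpha$'' argument cannot see them. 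Moreover $w$ is a priori harmonic only away from the spine and is not homogeneous, so the decay at scale $\vartheta$ must be obtained after subtracting the \emph{scale-dependent} $L^2(B_\vartheta)$-projection $\psi_\vartheta$ onto $\mathcal{L}$, with constants independent of $\vartheta$; this is the content of Lemma~\ref{lemma4_14} (fed with \eqref{lemma1_eqn9}--\eqref{lemma1_eqn11}), not of interior estimates plus homogeneity. Finally, the comparison function $\widetilde{\varphi}_k$ must be built from $\psi_\vartheta$ by the explicit coefficient shift and rotation \eqref{lemma1_eqn13}, with a Taylor-expansion error estimate \eqref{lemma1_eqn14} showing the new blow-up is $w-\psi_\vartheta$; the resulting $\gamma$ necessarily degenerates like $\vartheta^{-n-2\alpha}$, which your ``fixed $\gamma$'' bookkeeping glosses over (harmlessly, since $\gamma$ may depend on $\vartheta$, but it signals that the verification is missing). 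In short, the proposal reproduces the scaffolding of the paper's proof but leaves out the a priori estimates of Sections~\ref{sec:graphicalrep sec}--\ref{sec:nonconcentration sec} and the linear theory of Section~\ref{sec:lineartheory sec}, which are where the lemma is actually proved.
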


By iteratively applying Lemma~\ref{lemma1} in a manner completely analogous to the corresponding argument in~\cite{Sim93}, we obtain the following result, from which Theorems A and B of the introduction will readily follow (see Section~\ref{sec:prove main result sec}).

\begin{theorem} \label{theorem2} 
Let $\Omega$ be an open subset of $\mathbb{R}^n$ and $M \subset \Omega \times \mathbb{R}^m$ be a two-valued stationary graph as in Definition~\ref{twoval minimal graph}.  Assume that $\mathcal{K}_M \neq M$.  Then $\mathcal{K}_M$ is countably $(n-2)$-rectifiable.  Moreover, if for each $\alpha \geq 3/2$ we let 
\begin{equation*}
	\mathcal{K}_{M,\alpha} = \{ P \in \mathcal{K}_M : \mathcal{N}_M(P) = \alpha \text{ and $M$ has a cylindrical tangent function at $P$} \},
\end{equation*}
then: 
\begin{enumerate}
	\item[(a)] For any compact set $K \subset \Omega \times \mathbb{R}^m$, $K \cap \mathcal{K}_{M,\alpha} \neq \emptyset$ for only finitely many $\alpha \geq 3/2$. 
	
	\item[(b)] For every $\alpha \geq 3/2$ and $\mathcal{H}^{n-2}$-a.e.~$P \in \mathcal{K}_{M,\alpha}$ there is a unique, non-zero, symmetric, cylindrical, homogeneous degree $\alpha$, harmonic two-valued function $\varphi^{(P)} \in C^{1,1/2}(T_P M, \mathcal{A}_2(T_P M^{\perp}))$, a number $\rho_P > 0$ and two-valued function $\widetilde{u}_P \in C^{1,1/2}(\mathbf{B}_{\rho_P}(0,T_P M), \mathcal{A}_2(T_P M^{\perp}))$ such that 
\begin{equation} \label{theorem2 conclb1}
	M \cap \mathbf{C}_{\rho_P}(0, T_P M) = {\rm graph}\,\widetilde{u}_P
\end{equation}
(as in Definition~\ref{tildeu defn}) and 
\begin{equation} \label{theorem2 conclb2}	
	\rho^{-n} \int_{\mathbf{B}_{\rho}(0, T_P M)} \mathcal{G}(\widetilde{u}_{P,s}, \varphi^{(p)})^2 \leq C_P \,\rho^{2\alpha + 2\mu_P}
\end{equation}
for all $\rho \in (0,\rho_P]$, where $\widetilde{u}_{P,s}$ is the symmetric part of $\widetilde{u}_P$ (as in \eqref{avg and free defn}) and $\mu_P \in (0,1)$ and $C_P \in (0,\infty)$ are constants depending only on $n$, $m$, $\alpha$, $M$ and $P$. 

	\item[(c)] For every $\alpha \geq 3/2$ there is an open set $V_{\alpha} \supset \mathcal{K}_{M,\alpha}$ such that $V_{\alpha} \cap \{ P \in \mathcal{K}_M : \mathcal{N}_M(P) \geq \alpha \}$ has locally finite $\mathcal{H}^{n-2}$-measure; i.e.~for each $P_1 \in V_{\alpha} \cap \{ P \in \mathcal{K}_M : \mathcal{N}_M(P) \geq \alpha \}$, there is $\rho > 0$ such that $\mathcal{H}^{n-2}(B^{n+m}_{\rho}(P_1) \cap \{ P \in \mathcal{K}_M : \mathcal{N}_M(P) \geq \alpha \}) < \infty$.  In particular, for each $\alpha \geq 3/2$, $\mathcal{K}_{M,\alpha}$ has locally finite $\mathcal{H}^{n-2}$-measure. 
\end{enumerate}
\end{theorem}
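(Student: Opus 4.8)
The plan is to run the iteration scheme of~\cite{Sim93}, with Lemma~\ref{lemma1} in the role of Simon's main decay lemma and Lemma~\ref{lemma2_4} in the role of his Lemma~2.4. First I would dispose of~(a): by the monotonicity in Theorem~\ref{freq_mono_thm2} one has $\mathcal{N}_M(P)\le e^{C\rho_0^{1/2}}N_{M,P}(\rho_0)$ with $N_{M,P}(\rho_0)$ bounded uniformly as $P$ ranges over a compact set at a fixed scale $\rho_0$, so $\mathcal{N}_M$ is bounded on compacta; since a point of $\mathcal{K}_{M,\alpha}$ carries a cylindrical tangent function, $\alpha=k/2$ for an integer $k\ge3$, and only finitely many such $\alpha$ lie below a given bound. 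Next I would note that every $P\in\mathcal{K}_M\setminus\mathcal{K}_M^{(n-3)}$ has a tangent function $\varphi$ with $0\in\mathcal{K}_\varphi$ and $\dim S(\varphi)=n-2$, hence cylindrical of degree $\alpha=k/2$, $k\ge3$, with $\mathcal{N}_M(P)=\mathcal{N}_\varphi(0)=\alpha$, so $\mathcal{K}_M\setminus\mathcal{K}_M^{(n-3)}=\bigcup_{k\ge3}\mathcal{K}_{M,k/2}$; since $\mathcal{K}_M^{(n-3)}$ has Hausdorff dimension $\le n-3$ by Lemma~\ref{stratification lemma} it is $\mathcal{H}^{n-2}$-null, so it suffices to establish (b), (c) and countable $(n-2)$-rectifiability for a single $\mathcal{K}_{M,\alpha}$, $\alpha=k/2$. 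Fixing $\alpha$ and covering the separable family of cylindrical degree-$\alpha$ harmonic two-valued functions by countably many neighbourhoods $\widetilde\Phi_{\varepsilon_0}(\varphi^{(0)})$, and using the graphical representations of Section~\ref{sec:rotation sec} together with \eqref{tilt tangent planes}, Theorem~\ref{freq_mono_thm2} and Lemma~\ref{lemma2_4}, I would reduce to a local statement for a fixed $(u,\Lambda)\in\mathcal{F}_{\varepsilon_0}(\varphi^{(0)})$ with $u(0)=Du(0)=\{0,0\}$, $\mathcal{N}_M(0)=\alpha$: prove (b) for, and the $\mathcal{H}^{n-2}$-content bound of (c) near, the portion of $\mathcal{K}_{M,\alpha}$ (resp.\ of $\{P:\mathcal{N}_M(P)\ge\alpha\}$) inside $B_{1/2}(0)\times\mathbb{R}^m$.

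The heart of the argument is the scale-by-scale iteration of Lemma~\ref{lemma1}. Fix $P_1$ in the localized part of $\mathcal{K}_{M,\alpha}$. By Lemma~\ref{lemma2_4}, for $\varepsilon_0$ small the frequency $N_{M,P_1}(\rho)$ stays within $\varepsilon_0$ of $\alpha$ at all small scales, and after the rescaling built into that lemma each dyadic rescaling of $\widetilde u_{P_1,s}$ is $L^2$-close to some cylindrical $\widetilde\varphi_j\in\widetilde\Phi_{\gamma\varepsilon_0}(\varphi^{(0)})$; in particular the rescaled pairs lie in $\mathcal{F}_{\varepsilon_0}(\varphi^{(0)})$ and Lemma~\ref{lemma1} applies at each scale $\vartheta^j$. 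Let $\mathcal{R}$ be the set of $P_1$ for which alternative~(ii) of Lemma~\ref{lemma1} holds at every $j\ge0$ (so the gap alternative~(i) never occurs). For $P_1\in\mathcal{R}$, writing $E_j$ for the rescaled $L^2$-distance of $\widetilde u_{P_1,s}$ to $\widetilde\varphi_j$ at scale $\vartheta^j$ and $T_j=\|D\widetilde u_{P_1}\|_{C^0(\mathbf{B}_{\vartheta^j}(0,T_{P_1}M))}$ (with $T_j\le C\vartheta^{j/2}$, since $D\widetilde u_{P_1}$ is $C^{0,1/2}$ and vanishes at $0$), iterating~(ii) gives $E_{j+1}\le C\vartheta^{2\mu}(E_j+T_j)$ and $\|\widetilde\varphi_{j+1}-\widetilde\varphi_j\|^2\le C(E_j+T_j)$. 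Choosing $\vartheta$ small with $C\vartheta^{2\mu}<1/2$, the approximants stay in a fixed small $\widetilde\Phi$-neighbourhood of $\varphi^{(0)}$ (legitimizing the repeated use of Lemma~\ref{lemma1}), the series $\sum_j\|\widetilde\varphi_{j+1}-\widetilde\varphi_j\|$ converges, $\widetilde\varphi_j\to\varphi^{(P_1)}$ with $\varphi^{(P_1)}$ the unique cylindrical degree-$\alpha$ harmonic tangent function, and $E_j+\|\widetilde\varphi_j-\varphi^{(P_1)}\|^2\le C\vartheta^{2\mu_P j}$; interpolating to dyadic $\rho$ via the Schauder estimate \eqref{minimal sym schauder} yields \eqref{theorem2 conclb1}--\eqref{theorem2 conclb2}, i.e.\ part~(b), for every $P_1\in\mathcal{R}$. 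Since $\dim S(\varphi^{(P_1)})=n-2$ and, from the decay rate, $S(\varphi^{(P_1)})$ depends Hölder-continuously on $P_1\in\mathcal{R}$, the estimate also forces $(\mathcal{K}_M-P_1)\cap\{\mathcal{N}_M\ge\alpha\}$ into a $C\rho^{1+\mu_P}$-neighbourhood of the affine $(n-2)$-plane $S(\varphi^{(P_1)})$ in each ball $\mathbf{B}_\rho$, so that $\mathcal{R}$ is locally a $C^{1,\mu_P}$ graph over an $(n-2)$-plane.

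It remains to prove the content bound $\mathcal{H}^{n-2}_\infty(\{P:\mathcal{N}_M(P)\ge\alpha\}\cap B_\rho(P_1))\le C_0\rho^{n-2}$ with $C_0$ independent of $\rho$; this gives~(c) (with $V_\alpha$ a neighbourhood of $\mathcal{K}_{M,\alpha}$ on which the hypotheses of Lemmas~\ref{lemma2_4} and~\ref{lemma1} hold) and shows $\mathcal{K}_{M,\alpha}$ has finite $\mathcal{H}^{n-2}$-measure; combined with the fact that $\mathcal{K}_{M,\alpha}$ admits an approximate tangent plane $\mathcal{H}^{n-2}$-a.e.\ (namely $S(\varphi^{(P)})$ on $\mathcal{R}$, which exhausts $\mathcal{K}_{M,\alpha}$ up to an $\mathcal{H}^{n-2}$-null set, the gap set being handled by the same covering), a set of finite $\mathcal{H}^{n-2}$-measure with an approximate tangent plane a.e.\ is $(n-2)$-rectifiable, giving the remaining claims. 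I would prove the content bound by the covering argument of~\cite[\S\S3--7]{Sim93}, run as an induction over dyadic scales: cover $\{\mathcal{N}_M\ge\alpha\}\cap B_\rho(P_1)$ by $\vartheta\rho$-balls centred on the set, and at each centre invoke Lemma~\ref{lemma2_4}(iii) together with the dichotomy of Lemma~\ref{lemma1}; in the branch where the set is $\varepsilon\vartheta\rho$-close to the spine of a nearby cylindrical function while alternative~(i) of Lemma~\ref{lemma1} holds, the gap $B_{\delta_0}(0,y_0)$ deletes a definite fraction of the sub-balls needed in the fibre directions; in the branch where the set is $\varepsilon\vartheta\rho$-close to an $(n-3)$-plane a cruder, more efficient count applies; and in the branch where alternative~(ii) holds the content is concentrated near $\mathcal{R}$ and feeds into the decaying-set analysis above. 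Summing the resulting recursion over scales produces a finite $C_0$. The main obstacle I anticipate is exactly this bookkeeping — calibrating $\vartheta$, $\delta_0$, $\varepsilon$ and the sub-ball radii so that in every case one branch yields a strict contraction of the relevant $(n-2)$-content, precisely as in~\cite[\S\S3--7]{Sim93}; all analytic inputs (frequency monotonicity, the decay/$\varepsilon$-regularity lemma, semicontinuity of frequency, and the spine structure of cylindrical harmonics) are already available from Lemmas~\ref{lemma1}, \ref{lemma2_4}, \ref{semicont freq lemma2}, Theorems~\ref{freq_mono_thm}, \ref{freq_mono_thm2} and Subsection~\ref{sec:prelims harmonic subsec}.
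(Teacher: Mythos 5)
Your proposal follows essentially the same route as the paper: reduce via the stratification (Lemma~\ref{stratification lemma}) to the top stratum of points with cylindrical tangent functions, use Lemma~\ref{lemma2_4} to control the frequency and obtain the dichotomy at each scale, and then run Simon's \cite{Sim93}-style dyadic iteration of the excess-decay Lemma~\ref{lemma1} (gap alternative versus decay alternative) together with the covering/content induction to get the decay estimate \eqref{theorem2 conclb2}, uniqueness of tangent functions, the locally finite $\mathcal{H}^{n-2}$-measure bound, and rectifiability. The only organizational difference is that the paper factors the core iteration through Theorem~\ref{theorem1} (your set $\mathcal{R}$ is its $\Upsilon_{\infty}$ and your gap points are its $\Upsilon_k$, $k<\infty$), whereas you run that iteration inline; the substance and the level of detail left to \cite{Sim93} are the same.
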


\section{Graphical representation relative to homogeneous harmonic functions} \label{sec:graphicalrep sec}

Let $\varphi^{(0)}$ be as in Definition~\ref{varphi0 defn}.  Suppose that $\varphi \in \Phi_{\varepsilon_0}(\varphi^{(0)})$ and that $(u,\Lambda) \in \mathcal{F}_{\varepsilon_0}(\varphi^{(0)})$ so that in particular the graph of $u$ is stationary and $w = u_s/\Lambda$ is $L^2$-close to $\varphi^{(0)}$.  We will show, in Lemma~\ref{lemma2_6} below, that outside a small tubular neighborhood of $\{0\} \times \mathbb{R}^{n-2}$, the graph of $w$ is the graph of a single-valued function $v$ over ${\rm graph}\,\varphi$ satisfying certain coarse estimates, giving a way to pair the values of $w(X)$ and $\varphi(X)$ for $X$ outside of that tubular neighborhood of $\{0\} \times \mathbb{R}^{n-2}$.  More precisely, 
we will construct a single-valued function $v \in C^2({\rm graph}\, \varphi |_U, \mathbb{R}^m)$ for some open set $U \subseteq B_1(0) \setminus \{0\} \times \mathbb{R}^{n-2}$ such that 
\begin{equation} \label{v graph rep}
	w(X) = u_s(X)/\Lambda = \{ \varphi_1(X) + v(X,\varphi_1(X)), -\varphi_1(X) + v(X,-\varphi_1(X)) \}
\end{equation}
for each $X \in U$, where we write $\varphi(X) = \{\pm \varphi_1(X)\}$.  Note that we regard ${\rm graph}\, \varphi |_U$ as an immersed submanifold of $U \times \mathbb{R}^m$ and thus the function $v$ is well-defined, see Remark~\ref{graph_rmk}.  We associate $v$ with a symmetric two-valued function 
\begin{equation} \label{hatv defn}
	\widehat{v}(X) = \{ v(X,\varphi_1(X)), v(X,-\varphi_1(X)) \} 
\end{equation}
for each $X \in U$.

To prove Lemma~\ref{lemma2_6}, we need the following elementary result. 

\begin{lemma} \label{separation_lemma} 
Let $\gamma \in (0,1)$.  Suppose $\psi \in C^{1,1/2}(B_1(0),\mathcal{A}_2(\mathbb{R}^m))$ is a symmetric two-valued harmonic function such that whenever $X \in \mathcal{Z}_{\psi}$ we have that $D\psi(X) = \{ \pm A \}$ for some rank one $m \times n$ matrix $A$.  There exists $\varepsilon = \varepsilon(n,m,\gamma,\psi) > 0$ such that the following holds true.  Let $u \in C^{1,1/2}(B_1(0),\mathcal{A}_2(\mathbb{R}^m))$ be a two-valued function whose graph is stationary in $B_1(0) \times \mathbb{R}^m$ and $\Lambda \in (0,\infty)$ be a positive number such that 
\begin{equation} \label{separation_hyp1} 
	\|Du\|_{C^{0,1/2}(B_1(0))} < \varepsilon, \quad \int_{B_1(0)} \mathcal{G}\left(\frac{u_s}{\Lambda},\psi\right)^2 < \varepsilon^2. 
\end{equation}
Let $v \in C^{1,1/2}(B_1(0),\mathcal{A}_2(\mathbb{R}^m))$ be a symmetric harmonic two-valued function such that 
\begin{equation} \label{separation_hyp2} 
	\int_{B_1(0)} \mathcal{G}(v,\psi)^2 < \varepsilon^2. 
\end{equation}
Then $u = \{u_1,u_2\}$ and $v = \{v_1,-v_1\}$ on $B_{\gamma}(0)$ for some single-valued functions $u_1,u_2,v_1 \in C^{\infty}(B_{\gamma}(0),\mathbb{R}^m)$ such that $u_1$ and $u_2$ solve the minimal surface system in $B_{\gamma}(0)$, $v_1$ is harmonic in $B_{\gamma}(0)$, and 
\begin{equation} \label{separation_concl} 
	\left\| \frac{u_1-u_2}{2 \Lambda} - v_1 \right\|_{C^{1,1/2}(B_{\gamma}(0))} 
	\leq C \left( \int_{B_1(0)} \mathcal{G}\left(\frac{u_s}{\Lambda},v\right)^2 + \|f\|_{L^{\infty}(B_{(1+\gamma)/2}(0))}^2 \right)^{1/2}  
\end{equation}
for some $C = C(n,m,\gamma) \in (0,\infty)$, where $f$ is as in \eqref{mss3} with $w = u_s/\Lambda$. 
\end{lemma}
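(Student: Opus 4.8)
\emph{Proof plan.} The plan is to use the rank-one condition on $\psi$ to force genuine separation of $\psi$, $v$ and $u$, and then to reduce the estimate \eqref{separation_concl} to an interior estimate for a Poisson equation. First I would record that since $D\psi(X)=\{\pm A\}$ with $A$ of rank one — hence $A\neq 0$ — at every $X\in\mathcal{Z}_{\psi}$, no point of $\mathcal{Z}_{\psi}$ lies in $\mathcal{K}_{\psi}$; thus $\mathcal{B}_{\psi}=\emptyset$, and since $B_{1}(0)$ is simply connected, $\psi=\{\pm\psi_{1}\}$ on $B_{1}(0)$ for a single-valued harmonic $\psi_{1}\in C^{\infty}(B_{1}(0),\mathbb{R}^{m})$ with $\mathcal{Z}_{\psi}=\{\psi_{1}=0\}$. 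Fixing $\gamma'=(3+\gamma)/4\in(\gamma,1)$, continuity and strict positivity of $|D\psi_{1}|$ on the compact set $\{\psi_{1}=0\}\cap\overline{B_{\gamma'}(0)}$ give $\eta,\beta>0$ with $|D\psi_{1}|\geq\eta$ on $\mathcal{U}:=\{|\psi_{1}|<\beta\}\cap\overline{B_{\gamma'}(0)}$ and $|\psi_{1}|\geq\beta/2$ on $\overline{B_{\gamma'}(0)}\setminus\mathcal{U}$; so over $\overline{B_{\gamma'}(0)}$ the graph of $\psi$ is two smooth sheets $\pm\psi_{1}$ crossing transversally along $\{\psi_{1}=0\}$.

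Next I would establish separation of $v$ and $u$ by a compactness argument: if $C^{1}$-closeness to $\psi$ on $\overline{B_{\gamma'}(0)}$ failed along $\varepsilon\downarrow 0$, then the Schauder estimates \eqref{harmonic schauder} for $v$ and \eqref{minimal sym schauder} for $u_{s}$, combined with the $L^{2}$-bounds \eqref{separation_hyp1} and \eqref{separation_hyp2}, would produce $C^{1}$-limits equal to $\psi$, a contradiction. Hence for $\varepsilon=\varepsilon(n,m,\gamma,\psi)$ small, both $v$ and $u_{s}/\Lambda$ are as $C^{1}$-close to $\psi$ on $\overline{B_{\gamma'}(0)}$ as I wish; a branch point of $v$ or of $u$ in $\overline{B_{\gamma'}(0)}$ would then force $D\psi_{1}$ to vanish somewhere on $\{\psi_{1}=0\}$, contradicting $|D\psi_{1}|\geq\eta$ on $\mathcal{U}$. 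So $\mathcal{B}_{v}\cap\overline{B_{\gamma'}(0)}=\mathcal{B}_{u}\cap\overline{B_{\gamma'}(0)}=\emptyset$, giving $v=\{v_{1},-v_{1}\}$ with $v_{1}$ harmonic and $u=\{u_{1},u_{2}\}$ with $u_{1},u_{2}$ smooth solutions of the minimal surface system on $B_{\gamma'}(0)\supset B_{\gamma}(0)$. Choosing the labels so that $w_{1}:=(u_{1}-u_{2})/(2\Lambda)$ and $v_{1}$ are both $C^{1}$-close to $\psi_{1}$, equation \eqref{mss3} with $w=u_{s}/\Lambda$ gives $\Delta(w_{1}-v_{1})=f$ on $B_{\gamma'}(0)$, with $\|f\|_{L^{\infty}}$ controlled via \eqref{mss3 f est}.

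It then remains to prove the estimate. Writing $h=w_{1}-v_{1}$, the interior estimate for $\Delta h=f$ (via $W^{2,p}$ and Sobolev embedding, using $f\in L^{\infty}$) yields $\|h\|_{C^{1,1/2}(B_{\gamma}(0))}\leq C\big(\|h\|_{L^{2}(B_{(1+\gamma)/2}(0))}+\|f\|_{L^{\infty}(B_{(1+\gamma)/2}(0))}\big)$, so \eqref{separation_concl} reduces to $\|h\|_{L^{2}(B_{(1+\gamma)/2}(0))}^{2}\leq C\big(\int_{B_{1}(0)}\mathcal{G}(u_{s}/\Lambda,v)^{2}+\|f\|_{L^{\infty}(B_{(1+\gamma)/2}(0))}^{2}\big)$. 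On the set $G=\{|h|\leq|w_{1}+v_{1}|\}$ one has $\mathcal{G}(u_{s}/\Lambda,v)^{2}=2|h|^{2}$, hence $\int_{G}|h|^{2}\leq\frac{1}{2}\int_{B_{1}(0)}\mathcal{G}(u_{s}/\Lambda,v)^{2}$; and since $w_{1},v_{1}$ are $C^{1}$-close to $\psi_{1}$, the complement of $G$ lies in the thin set $\{|\psi_{1}|\leq C\delta\}$ (with $\delta$ the $C^{1}$-closeness), on which $|w_{1}|,|v_{1}|$ and therefore $|h|$ are small while $\Delta h=f$ still holds with $\mathcal{G}$-controlled values on the boundary of that set. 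I would absorb the contribution of this thin set either by a Poisson/maximum-principle estimate over it — using $|D\psi_{1}|\geq\eta$ and a covering argument to control its geometry — or, more cleanly, by running the compactness scheme above directly for this $L^{2}$-bound (equation on $B_{\gamma'}(0)$, the convergence $w_{1},v_{1}\to\psi_{1}$ forcing any blow-up limit to vanish identically).

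The hard part, I expect, is this last step near $\mathcal{Z}_{\psi}$: the metric $\mathcal{G}$ controls only $\min(|w_{1}-v_{1}|,|w_{1}+v_{1}|)$, and along a neighbourhood of $\mathcal{Z}_{\psi}$ these can be comparable, so $|w_{1}-v_{1}|$ is not pointwise bounded by $\mathcal{G}(u_{s}/\Lambda,v)$. Controlling the discrepancy requires the transversal-crossing structure of $\psi$ coming from the rank-one hypothesis (the mechanism that makes $u$ and $v$ separate globally with consistently labelled sheets), the interior elliptic estimate for $\Delta h=f$, and the decay estimates \eqref{minimal sym schauder} and \eqref{minimal sym decay} to absorb the inhomogeneity — while keeping every error term inside $C\big(\int_{B_{1}(0)}\mathcal{G}(u_{s}/\Lambda,v)^{2}+\|f\|_{L^{\infty}(B_{(1+\gamma)/2}(0))}^{2}\big)^{1/2}$, with $\|f\|_{L^{\infty}}$ taken on exactly the ball $B_{(1+\gamma)/2}(0)$, which is the delicate bookkeeping.
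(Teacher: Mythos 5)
Your preparatory steps are sound and essentially match the paper's: the rank-one hypothesis gives $\mathcal{K}_{\psi}=\emptyset$, hence a global decomposition $\psi=\{\pm\psi_1\}$ with $|D\psi_1|$ bounded below near $\{\psi_1=0\}$; the $L^2$-hypotheses together with \eqref{harmonic schauder} and \eqref{minimal sym schauder} upgrade to $C^1$-closeness of $v$ and $u_s/\Lambda$ to $\psi$ on an intermediate ball, which kills $\mathcal{K}_u$ and $\mathcal{K}_v$ there and yields the single-valued sheets $u_1,u_2,v_1$ with consistent labelling; and interior elliptic estimates for $\Delta(w_1-v_1)=f$ (from \eqref{mss3}) reduce \eqref{separation_concl} to the $L^2$ bound $\int_{B_{(1+\gamma)/2}}|w_1-v_1|^2\leq C\big(\int_{B_1}\mathcal{G}(u_s/\Lambda,v)^2+\|f\|_{L^\infty}^2\big)$.

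The gap is exactly the step you flag and then leave open: the contribution of the bad set $\{|w_1-v_1|>|w_1+v_1|\}$, a thin tube around $\mathcal{Z}_{\psi}$ where $\mathcal{G}$ sees only $|w_1+v_1|$. Neither of your two proposed mechanisms closes it, and for the same structural reason: both ultimately anchor on the smallness $\delta$ of the distances of $w_1$ and $v_1$ to $\psi_1$ (via $|w_1-v_1|\leq 2\delta$ on the tube, or via sup bounds of boundary/derivative data re-estimated elliptically through $\|h\|_{L^2}$ of a larger ball), and any such additive term is an absolute constant depending on $\varepsilon$, not dominated by the right-hand side $\int\mathcal{G}(u_s/\Lambda,v)^2+\|f\|_\infty^2$, which can be arbitrarily smaller for fixed $\varepsilon$; the required estimate must be purely in terms of the mutual distance of $u_s/\Lambda$ and $v$, not their distances to $\psi$. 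The compactness variant fails for the same reason in normalized form: setting $\tilde h_k=h_k/\|h_k\|_{L^2(B_{(1+\gamma)/2})}$, the hypotheses give no control of $h_k$ on the bad set relative to the normalization (there $\mathcal{G}_k$ is blind to $h_k$), so $\tilde h_k$ is not bounded on any larger ball, precompactness in $L^2$ is unavailable, and the "limit vanishes, contradicting norm one" argument cannot be run. What is missing is precisely the quantitative heart of the paper's proof: after reducing by affine approximation and a covering to the model $\psi=\{\pm a x_1 e_1\}$, one observes that along each line in the $x_1$-direction the scalar function $x_1\mapsto w_1\cdot v_1$ is strictly convex (both sheets being $C^2$-close to $a x_1 e_1$), hence vanishes at most at two points $\xi<\zeta$ with $\zeta-\xi\lesssim\delta/a$, which are the endpoints of the bad interval; Taylor expansion from $\xi$ combined with the orthogonality relations $w_1\cdot v_1=0$ at $\xi$ and $\zeta$ gives the two-sided comparison \eqref{separation_eqn13}--\eqref{separation_eqn14}, hence $\int_\xi^\zeta|w_1-v_1|^2\leq 72\int_\xi^\zeta|w_1+v_1|^2$, i.e.\ on the bad interval the difference is controlled by $\mathcal{G}$ itself with a universal constant and no $\delta$-dependent remainder. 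Integrating over lines and the transverse variables yields \eqref{separation_eqn15}--\eqref{separation_eqn16}, and only then do the elliptic estimates finish. Some genuinely quantitative transversal-crossing argument of this kind is indispensable, and your plan does not contain it (nor the reduction of general $\psi$ to the affine model, though that part is routine).
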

\begin{proof} 
In the case $\mathcal{Z}_{\psi} = \emptyset$, by using \eqref{separation_hyp1}, \eqref{mss3}, standard elliptic estimates and the estimates \eqref{harmonic schauder} and \eqref{minimal sym schauder}, $u_s/\Lambda$ and $v$ are uniformly close to $\psi$ in $B_{(1+\gamma)/2}(0)$ and the conclusion of Lemma~7.1 readily follows.  
We will consider the special case of $\psi(X) = \{ \pm a x_1 e_1 \}$ on $B_1(0)$ where $a \in \mathbb{R}$, $a > 0$, and $e_1,e_2,\ldots,e_m$ denotes the standard basis for $\mathbb{R}^m$.  Then Lemma~\ref{separation_lemma} will hold true for general $\psi$ by approximation of $\psi$ by affine two-valued functions and a standard covering argument.  

Suppose that $\psi(X) = \{ \pm a x_1 e_1 \}$ on $B_1(0)$ for some $a > 0$ and set $\psi_1(X) = a x_1 e_1$.  Let $\delta = \delta(\gamma,\psi) \in (0,a/4)$ be a small.  Provided $\varepsilon > 0$ is sufficiently small (depending on $n,m,\gamma,\psi,\delta$), using \eqref{separation_hyp1}, \eqref{mss3}, standard elliptic estimates and \eqref{minimal sym schauder} we can show that
$u = \{u_1,u_2\}$ on $B_{(1+\gamma)/2}(0)$ for some single-valued solutions $u_1,u_2 \in C^{\infty}(B_{(7+\gamma)/8}(0),\mathbb{R}^m)$ to the minimal surface system such that 
\begin{equation} \label{separation_eqn2}
	\left\| \frac{u_1-u_2}{2 \Lambda} - \psi_1 \right\|_{C^3(B_{(1+\gamma)/2}(0))} < \delta . 
\end{equation}
Moreover, by \eqref{separation_hyp2}, standard elliptic estimates and the estimates \eqref{harmonic schauder}, 
$v = \{-v_1,+v_1\}$ on $B_{(3+5\gamma)/8}(0)$ for some single-valued harmonic function $v_1 \in C^{\infty}(B_{(3+5\gamma)/8}(0), \mathbb{R}^m)$ such that 
\begin{equation} \label{separation_eqn4} 
	\left\| v_1 - \psi_1 \right\|_{C^3(B_{(3+5\gamma)/8}(0))} < \delta . 
\end{equation}

Let $X = (x_1,x_2,\ldots,x_n) = (x_1,z)$ where $z = (x_2,x_3,\ldots,x_n)$.  Fix $z \in B^{n-1}_{(3+5\gamma)/8}(0)$.  We know that $|w_1(x_1,z) - v_1(x_1,z)| = |w_1(x_1,z) + v_1(x_1,z)|$ precisely when $v_1(x_1,z) \cdot w_1(x_1,z) = 0$.  Since by \eqref{separation_eqn2} and \eqref{separation_eqn4} we know that $w_1$ and $v_1$ are close to $a x_1 e_1$ in $C^2(B_{(3+\gamma)/4}(0))$ and $a_1^2 x_1^2$ is a strictly convex function of $x_1$, $v_1(x_1,z) \cdot w_1(x_1,z)$ is also a strictly convex function of $x_1$.  Hence $v_1(x_1,z) \cdot w_1(x_1,z) = 0$ for at most two $x_1$ with $(x_1,z) \in B_{(3+\gamma)/4}(0)$.  If $v_1(x_1,z) \cdot w_1(x_1,z) = 0$ for either zero or one $x_1$ with $(x_1,z) \in B_{(3+\gamma)/4}(0)$, then 
\begin{equation} \label{separation_eqn8} 
	\mathcal{G}(w(x_1,z),v(x_1,z)) = \sqrt{2} \,|w_1(x_1,z) - v_1(x_1,z)| \text{ for all } x_1 \text{ with } (x_1,z) \in B_{(3+\gamma)/4}(0). 
\end{equation}
Suppose $v_1(x_1,z) \cdot w_1(x_1,z) = 0$ for $x_1 = \zeta,\xi$ with $(\zeta,z), (\xi,z) \in B_{(3+\gamma)/4}(0)$ and 
$\xi < \zeta$.  Notice that using \eqref{separation_eqn2} and \eqref{separation_eqn4}, 
\begin{equation*}
	0 = | v_1(\xi,z) \cdot w_1(\xi,z) - v_1(\zeta,z) \cdot w_1(\zeta,z) | \geq a^2 |\xi^2 - \zeta^2| - 2 \delta a (|\xi|+|\zeta|) - 2 \delta^2 .  
\end{equation*}
It follows that $\zeta - \xi \leq 4\delta/a$, since either $|\xi| + |\zeta| \leq 2\delta/a$ or $|\xi| + |\zeta| > 2\delta/a$ and thus 
\begin{equation*} 
	\zeta - \xi \leq \frac{2 \delta a (|\xi|+|\zeta|) + 2 \delta^2}{a^2 (|\xi|+|\zeta|)} < \frac{3\delta}{a} . 
\end{equation*}
Observe that 
\begin{align} \label{separation_eqn10} 
	&\mathcal{G}(w(x_1,z),v(x_1,z)) = \sqrt{2} \,|w_1(x_1,z) - v_1(x_1,z)| \text{ if } x_1 \not\in [\xi,\zeta],  \\ 
	&\mathcal{G}(w(x_1,z),v(x_1,z)) = \sqrt{2} \,|w_1(x_1,z) + v_1(x_1,z)| \text{ if } x_1 \in [\xi,\zeta]. \nonumber
\end{align}

By the Taylor series expansion for $w_1$ and $v_1$ and \eqref{separation_eqn2} and \eqref{separation_eqn4}, 
\begin{align} \label{separation_eqn11} 
	| w_1(x_1,z) - w_1(\xi,z) - a (x_1-\xi) e_1 | \leq C \delta |x_1-\xi| , \\
	| v_1(x_1,z) - v_1(\xi,z) - a (x_1-\xi) e_1 | \leq C \delta |x_1-\xi| , \nonumber 
\end{align}
where we let $C \in (0,\infty)$ denotes universal constants. 
By substituting \eqref{separation_eqn11} with $x_1 = \zeta$ into $w_1(\zeta,z) \cdot v_1(\zeta,z) = 0$ and using $w_1(\xi,z) \cdot v_1(\xi,z) = 0$ 
we obtain 
\begin{align*}
	0 = | w_1(\zeta,z) \cdot v_1(\zeta,z) | \geq{}& | a (\zeta-\xi) e_1 \cdot (w_1(\xi,z) + v_1(\xi,z)) + a^2 (\zeta-\xi)^2 | 
		\\&\hspace{5mm} - C\delta (\zeta-\xi) (|w_1(\xi,z)| + |v_1(\xi,z)|) - C \delta a (\zeta-\xi)^2 
		\end{align*}
and thus 
\begin{equation*}
	| (w_1(\xi,z) + v_1(\xi,z)) \cdot e_1 + a (\zeta-\xi) | \leq (C\delta/a) (|w_1(\xi,z)| + |v_1(\xi,z)|) + C \delta (\zeta - \xi) 
\end{equation*}
Since $w_1(\xi,z) \cdot v_1(\xi,z) = 0$ implies that $|w_1(\xi,z)| + |v_1(\xi,z)| \leq \sqrt{2} \,|w_1(\xi,z) + v_1(\xi,z)|$, 
\begin{equation*}
	| (w_1(\xi,z) + v_1(\xi,z)) \cdot e_1 + a (\zeta-\xi) | \leq (C\delta/a) |w_1(\xi,z) + v_1(\xi,z)| + C \delta (\zeta - \xi) 
\end{equation*}
Hence by the triangle inequality,  
\begin{equation} \label{separation_eqn12} 
	| (w_1(\xi,z) + v_1(\xi,z)) \cdot e_1 + a (\zeta-\xi) | 
		\leq (C\delta/a) |w_1(\xi,z)^{\perp} + v_1(\xi,z)^{\perp}| + C \delta (\zeta - \xi) , 
\end{equation}
where we let $\eta^{\perp} = \eta - (\eta \cdot e_1) \,e_1$ for any $\eta \in \mathbb{R}^m$.

By \eqref{separation_eqn11} and \eqref{separation_eqn12} 
\begin{align} \label{separation_eqn13} 
	&\int_{\xi}^{\zeta} |w_1(x_1,z) + v_1(x_1,z)|^2 \,dx_1 \\
	&\hspace{15mm} \geq \frac{1}{2} \int_{\xi}^{\zeta} |w_1(\xi,z) + v_1(\xi,z) + 2 a (x_1-\xi) e_1|^2 \,dx_1 
		 - C \delta^2 (\xi-\zeta)^3 \nonumber \\
		 	&\hspace{15mm} = \frac{1}{2} \int_{\xi}^{\zeta} |(w_1(\xi,z) + v_1(\xi,z)) \cdot e_1 + 2 a (x_1-\xi)|^2 \,dx_1 \nonumber \\
		&\hspace{30mm} + \frac{1}{2} \int_{\xi}^{\zeta} |w_1(\xi,z)^{\perp} + v_1(\xi,z)^{\perp}|^2 \,dx_1 - C \delta^2 (\xi-\zeta)^3 \nonumber \\
		&\hspace{10mm} \geq \frac{1}{4} \int_{\xi}^{\zeta} a^2 (2x_1-\zeta-\xi)^2 \,dx_1 
		+ \frac{1}{4} \int_{\xi}^{\zeta} |w_1(\xi,z)^{\perp} + v_1(\xi,z)^{\perp}|^2 \,dx_1 - C \delta^2 (\xi-\zeta)^3 \nonumber \\ 
		&\hspace{10mm} \geq \frac{1}{24} a^2 \,(\zeta-\xi)^3 + \frac{1}{4} |w_1(\xi,z)^{\perp} + v_1(\xi,z)^{\perp}|^2 (\xi-\zeta). \nonumber 
\end{align}
On the other hand, by \eqref{separation_eqn11}, 

\begin{equation*}
	\int_{\xi}^{\zeta} |w_1(x_1,z) - v_1(x_1,z)|^2 \,dx_1 
	\leq 2 |w_1(\xi,z) - v_1(\xi,z)|^2 (\zeta-\xi) + C \delta^2 (\zeta-\xi)^3. \nonumber 
	\end{equation*}
By $|w_1(\xi,z) - v_1(\xi,z)| = |w_1(\xi,z) + v_1(\xi,z)|$ and \eqref{separation_eqn12}, 
\begin{align} \label{separation_eqn14} 
	&\int_{\xi}^{\zeta} |w_1(x_1,z) - v_1(x_1,z)|^2 \,dx_1 
	\\&\hspace{15mm} \leq 2 |w_1(\xi,z) + v_1(\xi,z)|^2 (\zeta-\xi) + C \delta^2 (\zeta-\xi)^3 \nonumber 
	\\&\hspace{15mm} = 2 |(w_1(\xi,z) + v_1(\xi,z)) \cdot e_1|^2 (\zeta-\xi) \nonumber
		\\&\hspace{30mm} + 2 |w_1(\xi,z)^{\perp} + v_1(\xi,z)^{\perp}|^2 (\zeta-\xi) + C \delta^2 (\zeta-\xi)^3 \nonumber 
	\\&\hspace{15mm} \leq 3 a^2 (\zeta-\xi)^3 + 3 |w_1(\xi,z)^{\perp} + v_1(\xi,z)^{\perp}|^2 (\zeta-\xi) . \nonumber
\end{align}

By combining \eqref{separation_eqn13} and \eqref{separation_eqn14}, 
\begin{equation*}
	\int_{\xi}^{\zeta} |w_1(x_1,z) - v_1(x_1,z)|^2 \,dx_1 \leq 72 \int_{\xi}^{\zeta} |w_1(x_1,z) + v_1(x_1,z)|^2 \,dx_1 
	\end{equation*}
for all $z \in B^{n-1}_{(3+5\gamma)/8}(0)$.  By \eqref{separation_eqn10} and $\xi - \zeta \leq 3\delta/a$, 
\begin{equation} \label{separation_eqn15} 
	\int_{-\rho(z)}^{\rho(z)} |w_1(x_1,z) - v_1(x_1,z)|^2 \,dx_1 
		\leq 36 \int_{-\rho(z)-3\delta/a}^{\rho(z)+3\delta/a} \mathcal{G}(w(x_1,z), v(x_1,z))^2 \,dx_1 
\end{equation}
for all $z \in B^{n-1}_{(3+5\gamma)/8}(0)$, where $\rho(z) = \sqrt{(3+5\gamma)^2/64 - |z|^2}$.  Integrating over $z \in B^{n-1}_{(3+5\gamma)/8}(0)$, requiring that $\delta \leq 5(1-\gamma)a/24$, 
\begin{equation} \label{separation_eqn16} 
	\int_{B_{(3+5\gamma)/8}(0)} |w_1 - v_1|^2 \leq C \int_{B_1(0)} \mathcal{G}(w, v)^2 
\end{equation}
for some constant $C = C(n,\gamma) \in (0,\infty)$.  Combining \eqref{separation_eqn16} with standard elliptic estimates for $\Delta (w_1^{\kappa} - v_1^{\kappa}) = -f_{1,\kappa}$ on $B_{(3+5\gamma)/8}(0)$, where $f_{1,\kappa} = -D_i (b_{\kappa \lambda}^{ij}(Du_a,Du_s) \,D_j w_1)$ ($b_{\kappa \lambda}^{ij}$ as in \eqref{mss b defn}), gives us \eqref{separation_concl}. 
\end{proof}

The following lemma is the analogue of~\cite[Lemma 2.6]{Sim93}.  Note that we use coordinates $X = (x,y)$ on $\mathbb{R}^n$, where $x = (x_1,x_2)$ and $y = (x_3,\ldots,x_n)$. 

\begin{lemma} \label{lemma2_6}  
Let $\alpha$ and $\varphi^{(0)}$ be as in Definition~\ref{varphi0 defn}.  Let $\beta, \gamma, \tau \in (0,1)$ with $\tau \leq (1-\gamma)/20$.  There is an $\varepsilon_0 = \varepsilon_0(n,m,\alpha,\varphi^{(0)},\gamma,\beta,\tau) \in (0,1)$ such that if $\varphi \in \Phi_{\varepsilon_0}(\varphi^{(0)})$ and $(u,\Lambda) \in \mathcal{F}_{\varepsilon_0}(\varphi^{(0)})$, there is an open set $U \subset B_1(0)$ such that 
\begin{gather}
	\label{lemma2_6 concl1} (x,y) \in U \Rightarrow (\widetilde{x},y) \in U \text{ whenever } |x| = |\widetilde{x}|, \\
	\label{lemma2_6 concl2} \{ (x,y) \in B_{\gamma}(0) : |x| > \tau \} \subset U,  
\end{gather}
and there is a single-valued function $v \in C^2(\op{graph} \varphi |_U,\mathbb{R}^m)$ and an associated two-valued harmonic function $\widehat{v} \in C^2(U,\mathbb{R}^m)$ given by \eqref{hatv defn} such that \eqref{v graph rep} holds true and 
\begin{equation*}
	\sup_{B_{\gamma}(0) \cap U} r^{-\alpha} |\widehat{v}| + \sup_{B_{\gamma}(0) \cap U} r^{1-\alpha} |D\widehat{v}| \leq \beta 
\end{equation*}
and 
\begin{align*}
	&\int_{B_{\gamma}(0) \cap U} (|\widehat{v}|^2 + r^2 |D\widehat{v}|^2) + \frac{1}{\Lambda^2} \int_{B_{\gamma}(0) \setminus U} (|u_s|^2 + r^2 |Du_s|^2) 
	\\&\hspace{35mm} \leq C \int_{B_1(0)} \mathcal{G}\left(\frac{u_s}{\Lambda}, \varphi\right)^2 + C \int_{B_{(3+\gamma)/4}(0)} r^4 |f|^2, 
\end{align*}
where $r = r(x,y) = |x|$ for $X = (x,y) \in B_1(0)$, $f$ is as in \eqref{mss3} with $w = u_s/\Lambda$ and $C = C(n,m,\alpha,\varphi^{(0)},\gamma,\beta) \in (0,\infty)$ is a constant.
\end{lemma}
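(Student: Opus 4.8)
\emph{Plan of proof (sketch).}

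\textbf{Overall strategy.} I would follow the scheme of~\cite[Lemma 2.6]{Sim93}: realize $\op{graph}(u_s/\Lambda)$ as a single-valued graph over $\op{graph}\varphi$ on a Whitney-type family of balls adapted to the scale $r=|x|$, applying the separation lemma (Lemma~\ref{separation_lemma}) on each ball, and then patch the local functions and sum the resulting scale-invariant estimates. The key preliminary is the scaling normalization: since $\varphi$ is homogeneous of degree $\alpha$ and independent of the last $n-2$ coordinates, for $X_0=(x_0,y_0)$ with $r_0=|x_0|>0$ and $\rho=r_0/8$ the dilation $\bar u(Y)=\rho^{-1}(u(X_0+\rho Y)-u(X_0))$ is again a two-valued stationary graph with $\|D\bar u\|_{C^{0,1/2}(B_1(0))}\le\|Du\|_{C^{0,1/2}(B_1(0))}$, and with $\bar\Lambda=\Lambda\rho^{\alpha-1}$ one has $\bar u_s/\bar\Lambda=\rho^{-\alpha}(u_s/\Lambda)(X_0+\rho\,\cdot\,)$ while $\bar v:=\rho^{-\alpha}\varphi(X_0+\rho\,\cdot\,)=\varphi(X_0/\rho+\,\cdot\,)$ is just $\varphi$ restricted near the point $8x_0/r_0$ of the sphere of radius $8$. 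Writing $x_0=r_0(\cos\theta_0,\sin\theta_0)$, this $\bar v$ equals, up to an $SO(2)$-rotation in the $x$-plane and an $O_{L^2}(\varepsilon_0)$-perturbation, a fixed member $\psi$ of the compact family $\{\{\pm\op{Re}(c^{(0)}e^{i\alpha\theta}(8+\zeta)^\alpha)\}_{|\zeta|<1}:\theta\in\mathbb{R}\}$ of symmetric two-valued harmonic functions, each of which is nonvanishing on a neighbourhood of $\overline{B_1(0)}$ except possibly on a set where $D\psi=\{\pm A\}$ with $A$ of rank one (indeed if $\op{Re}(cz^\alpha)=0$ then $cz^\alpha\in i\mathbb{R}^m$, say $cz^\alpha=it$, and the differential $dz\mapsto\op{Re}(\alpha\,it\,z^{-1}\,dz)$ has image in $\mathbb{R}t$, of dimension one). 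Hence Lemma~\ref{separation_lemma} applies with a single threshold $\varepsilon_0=\varepsilon_0(n,m,\alpha,\varphi^{(0)},\gamma)$ valid along the whole family (and in its $\gamma$-slot we put the present $\gamma$).

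\textbf{Construction of $U$ and $v$.} Define, for a small $\varepsilon_1=\varepsilon_1(n,m,\alpha,\varphi^{(0)},\gamma,\beta,\tau)$ to be chosen,
\[
U=\Big\{(x,y)\in B_{8/9}(0):|x|>0\ \text{and}\ |x|^{-n-2\alpha}\!\!\int_{\{\,|\tilde x|\in(\tfrac{7|x|}8,\tfrac{9|x|}8),\ |\tilde y-y|<\tfrac{|x|}8\,\}}\!\!\!\mathcal{G}\big(u_s/\Lambda,\varphi\big)^2<\varepsilon_1^2\Big\}.
\]
Then \eqref{lemma2_6 concl1} is immediate. Since $(u,\Lambda)\in\mathcal{F}_{\varepsilon_0}(\varphi^{(0)})$ and $\varphi\in\Phi_{\varepsilon_0}(\varphi^{(0)})$ give $\int_{B_1(0)}\mathcal{G}(u_s/\Lambda,\varphi)^2\le 4\varepsilon_0^2$, at radius $|x|>\tau$ the normalised slab integral is at most $\tau^{-n-2\alpha}\cdot4\varepsilon_0^2<\varepsilon_1^2$ provided $\varepsilon_0\le\varepsilon_0(\tau,\varepsilon_1)$, which yields \eqref{lemma2_6 concl2} (using $\tau\le(1-\gamma)/20$ to keep the relevant slabs inside $B_1(0)$). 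For $X_0\in U$ the ball $B_{r_0/8}(X_0)$ lies in the defining slab, so the rescaled data satisfy $\int_{B_1(0)}\mathcal{G}(\bar u_s/\bar\Lambda,\bar v)^2<C\varepsilon_1^2$ and $\int_{B_1(0)}\mathcal{G}(\bar v,\psi)^2<C\varepsilon_0^2$ (here $c\ne0$ with $|c|$ comparable to $|c^{(0)}|$); applying Lemma~\ref{separation_lemma} shows that $u$ separates on $B_{\gamma r_0/8}(X_0)$ (hence $U\cap\mathcal{Z}_u=\emptyset$ and $u_s/\Lambda$ is smooth on $U$), and produces a single-valued function over $\op{graph}\varphi$ there. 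By uniqueness of such a graphical representation on overlaps these patch to one $v\in C^2(\op{graph}\varphi|_U,\mathbb{R}^m)$; the associated $\widehat v$ of \eqref{hatv defn} then satisfies \eqref{v graph rep}, and locally $\Delta\widehat v=f$ with $f$ as in \eqref{mss3} for $w=u_s/\Lambda$.

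\textbf{The estimates.} In the rescaled picture Lemma~\ref{separation_lemma} gives $\|\widehat v\|_{C^{1,1/2}(B_\gamma)}\le C(\varepsilon_1+\|\bar f\|_{L^\infty(B_{(1+\gamma)/2})})$, and by \eqref{mss3 f est}, \eqref{mss b est}, \eqref{minimal avg schauder}, \eqref{minimal sym schauder} one checks $\|\bar f\|_{L^\infty(B_{(1+\gamma)/2})}\le C\|Du\|_{C^0(B_1(0))}$; unscaling and choosing $\varepsilon_0,\varepsilon_1$ small relative to $\beta$ yields $r^{-\alpha}|\widehat v|+r^{1-\alpha}|D\widehat v|\le\beta$ on $B_{\gamma r_0/8}(X_0)$. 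For the $L^2$ bound: a Caccioppoli inequality for $\Delta\widehat v=f$ gives $\int_{B_{\gamma r_0/16}(X_0)}r^2|D\widehat v|^2\le C\int_{B_{\gamma r_0/8}(X_0)}|\widehat v|^2+C\int_{B_{\gamma r_0/8}(X_0)}r^4|f|^2$; moreover $\mathcal{G}(u_s/\Lambda,\varphi)=\sqrt2\,|\widehat v_1|$ wherever $|\varphi_1|>|\widehat v_1|$, which holds outside a tubular neighbourhood of $\mathcal{Z}_\varphi$ of relative width $C\beta$ (there $|\varphi_1|\ge c\,\op{dist}(\cdot,\mathcal{Z}_\varphi)\,r^{\alpha-1}$, by the simple vanishing of $\op{Re}(cz^\alpha)$ along $\mathcal{Z}_\varphi$ for $c\ne0$), so combining this with interior elliptic estimates for $\Delta\widehat v=f$ inside that neighbourhood gives $\int_{B_{\gamma r_0/16}(X_0)}|\widehat v|^2\le C\int_{B_{r_0/8}(X_0)}\mathcal{G}(u_s/\Lambda,\varphi)^2+C\int_{B_{r_0/8}(X_0)}r^4|f|^2$. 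Every term is homogeneous of degree $n+2\alpha$ under the scaling, so summing over a bounded-overlap Whitney cover of $\{r>\tau\}\cap B_\gamma(0)$ with enlarged balls inside $B_{(3+\gamma)/4}(0)$ gives $\int_{B_\gamma(0)\cap U}(|\widehat v|^2+r^2|D\widehat v|^2)\le C\int_{B_1(0)}\mathcal{G}(u_s/\Lambda,\varphi)^2+C\int_{B_{(3+\gamma)/4}(0)}r^4|f|^2$. For the complementary term, on $B_\gamma(0)\setminus U$ the defining slab integral of $\mathcal{G}(u_s/\Lambda,\varphi)^2$ is $\ge\varepsilon_1^2 r_0^{n+2\alpha}$, whereas $\int_{\mathrm{slab}}(|\varphi|^2+r^2|D\varphi|^2)\le Cr_0^{n+2\alpha}$, so summing over slabs yields $\int_{B_\gamma(0)\setminus U}(|\varphi|^2+r^2|D\varphi|^2)\le C\varepsilon_1^{-2}\int_{B_{(3+\gamma)/4}(0)}\mathcal{G}(u_s/\Lambda,\varphi)^2$; since $|u_s|^2\le2\Lambda^2(\mathcal{G}(u_s/\Lambda,\varphi)^2+|\varphi|^2)$, and interior $W^{2,2}$-estimates for $u_s$ (solving \eqref{mss3}) bound $\Lambda^{-2}\!\int_{B_\gamma\setminus U}r^2|Du_s|^2$ by $\Lambda^{-2}\!\int_{B_{(3+\gamma)/4}\setminus U}|u_s|^2+C\int_{B_{(3+\gamma)/4}}r^4|f|^2$, the term $\Lambda^{-2}\!\int_{B_\gamma\setminus U}(|u_s|^2+r^2|Du_s|^2)$ is dominated by the right-hand side. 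Adding the two contributions completes the proof.

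\textbf{Expected main difficulty.} The delicate point is the uniform application of Lemma~\ref{separation_lemma}: identifying the compact family of rescaled model functions $\psi$, verifying the rank-one condition along their zero sets, and---most of all, in the case $\alpha=k_0/2$ with $k_0$ odd---keeping track of $\op{graph}\varphi|_U$ as a single branched immersed submanifold (cf.\ Remark~\ref{graph_rmk}) so that the locally defined single-valued functions over it patch consistently across $\mathcal{Z}_\varphi$. The Whitney summation and the hierarchical choice of the small parameters $\varepsilon_0$, $\varepsilon_1$ are then routine scale-invariant bookkeeping.
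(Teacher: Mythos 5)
Your proposal is correct and follows essentially the same route as the paper's proof: you define $U$ by smallness of the scale-invariant excess on rotationally symmetric annular regions at scale $r=|x|$, apply the separation lemma after rescaling to a compact family of model functions, patch the local single-valued graphs over ${\rm graph}\,\varphi$, and obtain the $L^2$ bounds from a Caccioppoli-type cutoff argument together with Vitali/Whitney summation, handling $B_{\gamma}(0)\setminus U$ via the lower excess bound on the bad regions and elliptic estimates for $w$ --- exactly the structure of the paper's argument. Two bookkeeping points to fix: the slab dimensions should carry a factor of $(1-\gamma)$ (as in the paper's regions $A_{\rho,\kappa}(\zeta)$) so that they remain inside $B_1(0)$ when $\gamma$ is close to $1$, and the threshold $\varepsilon_1$ must be chosen independently of $\tau$ (only $\varepsilon_0$ may depend on $\tau$), since otherwise the constant $C\sim\varepsilon_1^{-2}$ in the complement estimate would fail to be $\tau$-independent as the lemma requires.
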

\begin{proof} 
Set $w = u_s/\Lambda$.  For each $\zeta \in \mathbb{R}^n$ and $\rho \in (0,1]$, let 
\begin{equation*}
	A_{\rho,\kappa}(\zeta) = \{ (x,y) \in \mathbb{R}^2 \times \mathbb{R}^{n-2} : (|x| - \rho)^2 + |y - \zeta|^2 < \kappa^2 (1-\gamma)^2 \rho^2/4 \} .
\end{equation*}
Note that $A_{\rho,1}(\zeta) \subset B_1(0)$ whenever $\rho^2 + |\zeta|^2 < (1+\gamma)^2/4$.  By Lemma~\ref{separation_lemma}, there exists $\delta = \delta(n,m,\varphi^{(0)},\beta) > 0$ such that if $\rho^2 + |\zeta|^2 < (1+\gamma)^2/4$ and $\int_{A_{\rho,3/4}(\zeta)} \mathcal{G}(w,\varphi)^2 < \delta \rho^{n+2\alpha}$, then there exists a (unique) single-valued function $v_{\rho,\zeta} \in C^2(\op{graph} \varphi |_{A_{\rho,1/2}(\zeta)},\mathbb{R}^m)$ such that 
\begin{equation*}
	w(X) = \{ \varphi_1(X) + v_{\rho,\zeta}(X,\varphi_1(X)), -\varphi_1(X) + v_{\rho,\zeta}(X,-\varphi_1(X)) \}
\end{equation*}
for all $X \in A_{\rho,1/2}(\zeta)$, where $\varphi(X) = \{ \pm \varphi_1(X) \}$, and $\widehat{v}(X) = \{ v_{\rho,\zeta}(X,\varphi_1(X)), v_{\rho,\zeta}(X,-\varphi_1(X)) \}$ satisfies 
\begin{equation} \label{lemma2_6_eqn1}
	\rho^{-\alpha} \sup_{A_{\rho,1/2}(\zeta)} |\widehat{v}_{\rho,\zeta}| + \rho^{1-\alpha} \sup_{A_{\rho,1/2}(\zeta)} |D\widehat{v}_{\rho,\zeta}| 
		\leq 2^{-\alpha} \beta. 
\end{equation}
Let $U$ be the union of the $A_{\rho,1/2}(\zeta)$ such that $\rho^2 + |\zeta|^2 < (1+\gamma)^2/4$ and $\int_{A_{\rho,3/4}(\zeta)} \mathcal{G}(w,\varphi)^2 < \delta \rho^{n+2\alpha}$.  We then obtain a well-defined single-valued function $v \in C^2(\op{graph} \varphi |_U,\mathbb{R}^m)$ by requiring that $v = v_{\rho,\zeta}$ on ${\rm graph}\,\varphi |_{A_{\rho,1/2}(\zeta)}$ whenever $\rho^2 + |\zeta|^2 < (1+\gamma)^2/4$ and $\int_{A_{\rho,3/4}(\zeta)} \mathcal{G}(w,\varphi)^2 < \delta \rho^{n+2\alpha}$.  Since $\varphi \in \Phi_{\varepsilon_0}(\varphi^{(0)})$ and $(u,\Lambda) \in \mathcal{F}_{\varepsilon_0}(\varphi^{(0)})$, \eqref{lemma2_6 concl2} holds true 
provided $\varepsilon_0$ is sufficiently small.  By \eqref{lemma2_6_eqn1}, 
\begin{equation} \label{lemma2_6_eqn2}
	\sup_{B_{(1+\gamma)/2}(0) \cap U} r^{-\alpha} |\widehat{v}| + \sup_{B_{(1+\gamma)/2}(0) \cap U} r^{1-\alpha} |D\widehat{v}| \leq \beta.
\end{equation}

For each $(\xi,\zeta) \in B_{(1+\gamma)/2}(0) \cap \partial U$,  
\begin{equation} \label{lemma2_6_eqn3}
	\int_{A_{|\xi|,3/4}(\zeta)} \mathcal{G}(w,\varphi)^2 \geq \delta |\xi|^{n+2\alpha}
\end{equation}
otherwise $A_{|\xi|,1/2}(\zeta)$ is an open neighborhood of $(\xi,\zeta)$ with $A_{|\xi|,1/2}(\zeta) \subseteq U$.  Hence by \eqref{lemma2_6_eqn2} and \eqref{lemma2_6_eqn3}, for each $(\xi,\zeta) \in B_{(1+\gamma)/2}(0) \cap \partial U$, 
\begin{equation*}
	\int_{B_{10|\xi|}(0,\zeta) \cap B_{(1+\gamma)/2}(0) \cap U} (|\widehat{v}|^2 + r^2 |D\widehat{v}|^2) 
	\leq C \beta^2 \delta^{-1} \int_{A_{|\xi|,3/4}(\zeta)} \mathcal{G}(w,\varphi)^2
\end{equation*}
for some constant $C = C(n,\alpha) \in (0,\infty)$.  Since  
\begin{gather*}
	\{ X \in B_{(1+\gamma)/2}(0) \cap U : \op{dist}(X, B_{(1+\gamma)/2}(0) \cap \partial U) < r/2 \} \subset 
	\bigcup_{(\xi,\zeta) \in B_{(1+\gamma)/2} \cap \partial U} B_{2|\xi|}(0,\zeta) , \\
B_{2|\xi|}(0,\zeta) \cap B_{2|\xi'|}(0,\zeta') = \emptyset \hspace{3mm} \Rightarrow \hspace{3mm} 
	A_{|\xi|,3/4}(\zeta) \cap A_{|\xi'|,3/4}(\zeta') = \emptyset, 
\end{gather*}
by the Vitali covering lemma, 
\begin{equation} \label{lemma2_6_eqn4}
	\int_{\{ X \in B_{(1+\gamma)/2}(0) \cap U \,:\, \op{dist}(X, B_{(1+\gamma)/2} \cap \partial U) < r/2 \}} (|\widehat{v}|^2 + r^2 |D\widehat{v}|^2) 
	\leq C \int_{B_1(0)} \mathcal{G}(w,\varphi)^2 
\end{equation}
for $C = C(n,m,\alpha,\varphi^{(0)},\beta) \in (0,\infty)$.  Since $w$ satisfies \eqref{mss3} and $\varphi$ is harmonic, for each open ball $B \subset U$, $\widehat{v}$ satisfies 
\begin{equation} \label{lemma2_6 hatv pde}
	\Delta \widehat{v}_1^{\kappa} = f_{1,\kappa} 
\end{equation} 
in $B$, where in $B$ we let $\varphi(X) = \{\pm \varphi_1(X)\}$ for some single-valued harmonic function $\varphi_1 : B \rightarrow \mathbb{R}^m$, $\widehat{v}_1(X) = v(X,\varphi_1(X))$, and 
\begin{equation*} 
	f_{1,\kappa} = -D_i (b_{\kappa \lambda}^{ij}(Du_a,Du_s) \,D_j (\varphi_1^{\kappa} + \widehat{v}_1^{\kappa})) .
\end{equation*}
Thus $f_{\kappa} = \{ \pm f_{1,\kappa} \}$ is as in \eqref{mss3} with $w = u_s/\Lambda$.  
Define $d : U \rightarrow [0,\infty)$ by $d(X) = \op{dist}(X, \partial U \cap B_{(1+\gamma)/2}(0))$ and note that $d$ is Lipschitz with Lipschitz constant $1$.  Let $\psi : [0,\infty) \rightarrow [0,1]$ be a smooth function such that $\psi(t) = 0$ for $t \leq 1/4$, $\psi(t) = 1$ for $t \geq 1/2$, and $0 \leq \psi'(t) \leq 6$ for all $t$.  Let $\eta : B_1(0) \rightarrow [0,1]$ be a smooth function such that $\eta = 1$ on $B_{\gamma}(0)$, $\eta = 0$ on $B_1(0) \setminus B_{(1+\gamma)/2}(0)$ and $|D\eta| \leq 3/(1-\gamma)$.  By multiplying both sides of \eqref{lemma2_6 hatv pde} by $\widehat{v} r^2 \psi(d/r)^2 \eta^2$ and integrating by parts, 
\begin{align*}
	\int_U |D\widehat{v}|^2 r^2 \psi(d/r)^2 \eta^2 
	= {}& -2 \int_U \widehat{v}^{\kappa} D_i \widehat{v}^{\kappa} \left( x_i \psi(d/r)^2 \eta^2 
		+ r^2 \psi(d/r) \psi'(d/r) \eta^2 \left( \frac{D_i d}{r} - \frac{d x_i}{r^3} \right) \right) \\
	&\hspace{10mm} - 2 \int_U \widehat{v}^{\kappa} D_i \widehat{v}^{\kappa} r^2 \psi(d/r)^2 \eta D_i \eta 
		- \int_U f_{\kappa} \widehat{v}^{\kappa} r^2 \psi(d/r)^2 \eta^2
\end{align*}
Using Cauchy's inequality and the definition of $\psi$, 

\begin{align*}
	\int_U |D\widehat{v}|^2 r^2 \psi(d/r)^2 \eta^2 
	\leq {}& \int_U |\widehat{v}|^2 (13 \psi(d/r)^2 \eta^2 + 27 \psi'(d/r)^2 \eta^2 + 12 r^2 \psi(d/r)^2 |D\eta|^2) 
		\\&\hspace{10mm} + \int_U r^4 |f|^2 \psi(d/r)^2 \eta^2.
\end{align*}
Then using the definitions of $\psi$ and $\eta$, 
\begin{equation} \label{lemma2_6_eqn6}
	\int_{\{ X \in U \cap B_{\gamma}(0) \,:\, d(X) \geq r/2 \}} r^2 |D\widehat{v}|^2  
	\leq C \int_{U \cap B_{(1+\gamma)/2}(0)} |\widehat{v}|^2 + \int_{U \cap B_{(1+\gamma)/2}(0)} r^4 |f|^2  
\end{equation}
for some constant $C = C(\gamma) \in (0,\infty)$.  By the argument showing \eqref{separation_eqn15} in the proof of Lemma~\ref{separation_lemma}, provided $\varepsilon_0$ and $\delta$ are sufficiently small, whenever $r > \tau$ and $r^2 + |y-\zeta|^2 < (1+\gamma)^2/4$, 
\begin{equation*}
	\int_{\partial B^2_r(\xi) \times \{y\}} |\widehat{v}|^2 \leq C \int_{\partial B^2_r(\xi) \times \{y\}} \mathcal{G}(w,\varphi)^2 
\end{equation*}
for some constant $C \in (0,\infty)$.  Thus integrating over $\{ (r,y) : r^2 + |y-\zeta|^2 < (1+\gamma)^2/4 \}$,  
\begin{equation} \label{lemma2_6_eqn7}
	\int_{U \cap B_{(1+\gamma)/2}(0)} |\widehat{v}|^2 \leq C \int_{U \cap B_{(1+\gamma)/2}(0)} \mathcal{G}(w,\varphi)^2 
\end{equation}
for some constant $C = C(n,\gamma) \in (0,\infty)$.  Combining \eqref{lemma2_6_eqn6} and \eqref{lemma2_6_eqn7} gives us  
\begin{equation} \label{lemma2_6_eqn8}
	\int_{\{ X \in U \cap B_{\gamma}(0) \,:\, d(X) \geq r/2 \}} (|\widehat{v}|^2 + r^2 |D\widehat{v}|^2)  
		\leq C \int_{B_{(1+\gamma)/2}(0)} \mathcal{G}(w,v)^2 + C \int_{B_{(1+\gamma)/2}(0)} r^4 |f|^2  
\end{equation}
for some constant $C = C(n,\gamma) \in (0,\infty)$.  Therefore by \eqref{lemma2_6_eqn4} and \eqref{lemma2_6_eqn8}, 
\begin{equation*}
	\int_{U \cap B_{\gamma}(0)} (|\widehat{v}|^2 + r^2 |D\widehat{v}|^2) \leq C \int_{B_1(0)} \mathcal{G}(w,\varphi)^2 + \int_{B_{(1+\gamma)/2}(0)} r^4 |f|^2 
\end{equation*}
for some constant $C = C(n,m,\alpha,\varphi^{(0)},\gamma,\beta) \in (0,\infty)$. 

Observe that if $\rho^2 + |\zeta|^2 < (1+\gamma)^2/4$ such that $\int_{A_{\rho,3/4}(\zeta)} \mathcal{G}(w,\varphi)^2 \geq \delta \rho^{n+2\alpha}$, by elliptic estimates for \eqref{mss3}, i.e.~$\Delta w^{\kappa} = f_{\kappa}$ in $B_1(0)$, 
\begin{equation} \label{lemma2_6_eqn10} 
	\int_{A_{\rho,1/2}(\zeta)} (|w|^2 + r^2 |Dw|^2) \leq C \int_{A_{\rho,3/4}(\zeta)} |w|^2 + C \rho^4 \int_{A_{\rho,3/4}(\zeta)} |f|^2  
\end{equation}
for some constant $C = C(\gamma) \in (0,\infty)$.  By the triangle inequality and the homogeneity of $\varphi$,  
\begin{align*} 
	\int_{A_{\rho,3/4}(\zeta)} |w|^2 
	&\leq 2 \int_{A_{\rho,3/4}(\zeta)} \mathcal{G}(w,\varphi)^2 + 2 C \rho^{n+2\alpha} 
	\\&\leq 2 (1 + C \delta^{-1}) \int_{A_{\rho,3/4}(\zeta)} \mathcal{G}(w,\varphi)^2. 
\end{align*}
for $C = C(n,\alpha,\varphi^{(0)}) \in (0,\infty)$.  Hence \eqref{lemma2_6_eqn10} gives us 
\begin{equation} \label{lemma2_6_eqn11} 
	\int_{A_{\rho,1/2}(\zeta)} (|w|^2 + r^2 |Dw|^2)
	\leq C \int_{A_{\rho,3/4}(\zeta)} \mathcal{G}(w,\varphi)^2 + C \rho^4 \int_{A_{\rho,3/4}(\zeta)} |f|^2.  
\end{equation}
for $C = C(n,m,\alpha,\varphi^{(0)},\gamma,\beta) \in (0,\infty)$ for all $\zeta \in B^{n-2}_{\gamma}(0)$ and $\rho \in (0,1/2)$.  By \eqref{lemma2_6_eqn11} and an easy covering argument, it follows that 
\begin{equation*} 
	\int_{B_{\gamma}(0) \setminus U} (|w|^2 + r^2 |Dw|^2) 
		\leq C \int_{B_1(0)} \mathcal{G}(w,\varphi)^2 + C \int_{B_{(3+\gamma)/4}(0)} r^4 |f|^2 
\end{equation*}
for some constant $C = C(n,m,\alpha,\varphi^{(0)},\gamma,\beta) \in (0,\infty)$.  
\end{proof}

\section{A priori estimates: non-concentration of excess} \label{sec:nonconcentration sec}

Let $\varphi^{(0)}$ be as in Definition~\ref{varphi0 defn} and let $(u,\Lambda) \in \mathcal{F}_{\varepsilon_0}(\varphi^{(0)})$ with $u(0) = \{0,0\}$ and $Du(0) = \{0,0\}.$ Let $M = {\rm graph}\,u$ and $w = u_s/\Lambda$.  In particular, $0 \in \mathcal{K}_M$.  Set 
\begin{equation*}
	H_{w,0}(\rho) = \rho^{1-n} \int_{\partial B_{\rho}(0)} |w|^2, \quad 
	D_{w,0}(\rho) = \rho^{2-n} \int_{B_{\rho}(0)} |Dw|^2
\end{equation*}
for each $\rho \in (0,1]$.  (Thus $H_{w,0} = H_{M,0}/\Lambda$ and $D_{w,0} = D_{M,0}/\Lambda$ where $H_{M,0}$ and $D_{M,0}$ are as in Definition~\ref{freqfn defn}.)

\begin{lemma} \label{modified freq est lemma}
Let $\alpha \in \mathbb{R}$.  If $u \in C^{1,1/2}(B_1(0),\mathbb{R}^m)$ is a two-valued function such that $u(0) = \{0,0\}$, $Du(0) = \{0,0\}$, and the graph $M$ of $u$ is stationary in $B_1(0) \times \mathbb{R}^m$, then $w = u_s/\Lambda$ satisfies 
\begin{align} \label{modified freq est} 
	&\frac{d}{d\rho} (\rho^{-2\alpha} (D_{w,0}(\rho) - \alpha H_{w,0}(\rho))) 
		\\&\hspace{15mm} = 2\rho^{-n-2\alpha} \int_{\partial B_{\rho}(0)} |R D_R w - \alpha w|^2 
		- 2\rho^{1-n-2\alpha} \int_{B_{\rho}(0)} f_{\kappa} (R D_R w^{\kappa} - \alpha w^{\kappa}) \nonumber 
\end{align}
for each $\rho \in (0,1)$, where $R = R(X) = |X|$ and $f_{\kappa}$ is as in \eqref{mss3} (with $w = u_s/\Lambda$). 
\end{lemma}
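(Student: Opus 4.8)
The plan is the standard two-part computation behind frequency-monotonicity identities: differentiate $H_{w,0}$ and $D_{w,0}$ separately in $\rho$, then combine them algebraically and extract a perfect square. We may assume $u_s \not\equiv 0$, since otherwise both sides of \eqref{modified freq est} vanish. Write $w = u_s/\Lambda$; thus $w$ is a symmetric two-valued function which is $C^{1,1/2}$ and lies in $W^{2,2}_{\rm loc}(B_1(0),\mathcal{A}_2(\mathbb{R}^m))$, it solves $\Delta w^\kappa = f_\kappa$ off $\mathcal{B}_u$ in the sense of \eqref{mss3} (and $\mathcal{B}_u$, having Hausdorff dimension $\leq n-2$, is Lebesgue-null, so $\Delta w^\kappa = f_\kappa$ holds $\mathcal{L}^n$-a.e.\ in $B_1(0)$), and $H_{w,0} = H_{M,0}/\Lambda$, $D_{w,0} = D_{M,0}/\Lambda$. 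Since $w$ is $C^1$ up to each sphere $\partial B_\rho(0)$ and lies in $W^{2,2}_{\rm loc}$, both $H_{w,0}$ and $D_{w,0}$ are $C^1$ on $(0,1)$.

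For the height function I would write $H_{w,0}(\rho) = \int_{S^{n-1}} |w(\rho\omega)|^2\,d\omega$ and differentiate under the integral, getting $H_{w,0}'(\rho) = 2\rho^{1-n} \int_{\partial B_\rho(0)} w^\kappa D_R w^\kappa$; applying the divergence theorem to $w^\kappa Dw^\kappa$ on $B_\rho(0)$ and using $\Delta w^\kappa = f_\kappa$ then gives $H_{w,0}'(\rho) = \tfrac{2}{\rho} D_{w,0}(\rho) + 2\rho^{1-n} \int_{B_\rho(0)} w^\kappa f_\kappa$. For the Dirichlet quantity I would differentiate $D_{w,0}(\rho) = \rho^{2-n}\int_{B_\rho(0)} |Dw|^2$ and then insert the Rellich--Pohozaev identity obtained by testing $\Delta w^\kappa = f_\kappa$ against $X_i D_i w^\kappa$ and integrating by parts over $B_\rho(0)$ --- using $\op{div} X = n$, $X \cdot \nu = \rho$ on $\partial B_\rho(0)$, $X_i D_j w^\kappa D_{ij} w^\kappa = \tfrac12 X_i D_i|Dw|^2$, and $X_i D_i w^\kappa = R D_R w^\kappa$, with the two $\rho^{1-n}\int_{B_\rho(0)}|Dw|^2$ contributions cancelling --- to obtain $D_{w,0}'(\rho) = 2\rho^{2-n} \int_{\partial B_\rho(0)} |D_R w|^2 - 2\rho^{1-n} \int_{B_\rho(0)} f_\kappa R D_R w^\kappa$. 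These integrations by parts are legitimate because $w \in W^{2,2}_{\rm loc}$; alternatively one performs them on $B_\rho(0) \setminus N_\sigma(\mathcal{K}_u)$ and sends $\sigma \downarrow 0$, the extra boundary integrals over $\partial N_\sigma(\mathcal{K}_u)$ vanishing in the limit by \eqref{minimal sym decay} (which gives $|w| \leq C\sigma^{3/2}$ and $|Dw| \leq C\sigma^{1/2}$ near $\mathcal{K}_u$), the bound \eqref{mss3 f est} on $f$, and the local finiteness of $\mathcal{H}^{n-2}(\mathcal{K}_u)$.

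Finally I would set $E(\rho) = D_{w,0}(\rho) - \alpha H_{w,0}(\rho)$ and compute $\tfrac{d}{d\rho}(\rho^{-2\alpha}E) = -2\alpha\rho^{-2\alpha-1}E + \rho^{-2\alpha}(D_{w,0}' - \alpha H_{w,0}')$. Substituting the two formulas above, the $\rho^{-2\alpha-1}D_{w,0}$ terms coming from $-2\alpha\rho^{-2\alpha-1}E$ and from $-\rho^{-2\alpha}\alpha H_{w,0}'$ add up to $-4\alpha\rho^{-2\alpha-1}D_{w,0}$; rewriting this and the term $2\alpha^2\rho^{-2\alpha-1}H_{w,0}$ via $\rho^{n-2}D_{w,0}(\rho) = \int_{\partial B_\rho(0)} w^\kappa D_R w^\kappa - \int_{B_\rho(0)} w^\kappa f_\kappa$ and $\rho^{n-1}H_{w,0}(\rho) = \int_{\partial B_\rho(0)} |w|^2$ turns the surface part into $2\rho^{-n-2\alpha}\int_{\partial B_\rho(0)}(\rho^2|D_R w|^2 - 2\alpha\rho\, w^\kappa D_R w^\kappa + \alpha^2|w|^2)$, which equals $2\rho^{-n-2\alpha}\int_{\partial B_\rho(0)} |R D_R w - \alpha w|^2$ since $R = \rho$ on $\partial B_\rho(0)$; the remaining interior terms collect to $-2\rho^{1-n-2\alpha}\int_{B_\rho(0)} f_\kappa (R D_R w^\kappa - \alpha w^\kappa)$. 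This is precisely \eqref{modified freq est}. The computation is otherwise purely routine; the one point that requires genuine care is the integration by parts through the two-valued singular set, which is handled as above via the $W^{2,2}_{\rm loc}$ regularity of $w$ together with the decay estimate \eqref{minimal sym decay}.
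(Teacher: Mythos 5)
Your proposal is correct and follows essentially the same route as the paper: the paper derives the ``squash'' and ``squeeze'' identities \eqref{monotonicity_identity1}--\eqref{monotonicity_identity2} (justified by the $W^{2,2}_{\rm loc}$ regularity of $w$ and the a.e.\ product rules) and then lets the test function approximate the characteristic function of $B_\rho(0)$ with $\zeta^j = \zeta x_j$, which is exactly your divergence-theorem and Rellich--Pohozaev steps, before carrying out the same algebraic recombination into the perfect square $|R D_R w - \alpha w|^2$. One small caution: your fallback justification by excising $N_\sigma(\mathcal{K}_u)$ invokes local finiteness of $\mathcal{H}^{n-2}(\mathcal{K}_u)$, which is a conclusion of this paper (Theorem~C) and not available at this stage; the primary $W^{2,2}_{\rm loc}$ justification you give first is the one the paper uses and is sufficient.
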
 
\begin{proof}
Arguing as in~\cite[Lemma 6.6]{SimWic16}, observe that since $w \in W^{2,2}_{\rm loc}(\Omega,\mathcal{A}_2(\mathbb{R}^m))$, $w^{\kappa} D_i w^{\kappa} \in W^{1,2}(\Omega)$ and $D_i w^{\kappa} D_j w^{\kappa} \in W^{1,2}(\Omega)$ with $D_l (w^{\kappa} D_i w^{\kappa}) = D_l w^{\kappa} D_i w^{\kappa} + w^{\kappa} D_{il} w^{\kappa}$ and $D_l (D_i w^{\kappa} D_j w^{\kappa}) = D_{il} w^{\kappa} D_j w^{\kappa} + D_i w^{\kappa} D_{jl} w^{\kappa}$ a.e.~in $\Omega$ for all $1 \leq i,j,l \leq n$.  Hence by \eqref{mss3} and integration by parts 
\begin{gather} 
	\label{monotonicity_identity1} \int_{\Omega} |Dw|^2 \zeta = -\int_{\Omega} w^{\kappa} D_i w^{\kappa} D_i \zeta 
		- \int_{\Omega} f_{\kappa} w^{\kappa} \zeta, \\
	\label{monotonicity_identity2} \int_{\Omega} \left( \tfrac{1}{2} |Dw|^2 \delta_{ij} - D_i w^{\kappa} D_j w^{\kappa} \right) D_i \zeta^j 
		= \int_{\Omega} f_{\kappa} D_j w^{\kappa} \zeta^j, 
\end{gather}
for all $\zeta, \zeta^1,\ldots,\zeta^n \in C^1_c(\Omega,\mathbb{R})$.  (These identities are analogous to the ``squash'' and ``squeeze'' deformations of~\cite[Sections 2.4 and 2.5]{Almgren}.)  By a standard argument (c.f.~\cite[Lemma 6.6]{SimWic16}), we can let $\zeta$ approximate the characteristic function $B_{\rho}(0)$ in \eqref{monotonicity_identity1} and $\zeta^j(X) = \zeta(X) \,x_j$ in \eqref{monotonicity_identity2} and thereby show that  
\begin{align*}
	D_{w,0}(\rho) &= \rho^{2-n} \int_{B_{\rho}(0)} |Dw|^2 
		= \rho^{2-n} \int_{\partial B_{\rho}(0)} w^{\kappa} D_R w^{\kappa} - \rho^{2-n} \int_{B_{\rho}(0)} f_{\kappa} w^{\kappa}, \\
	D'_{w,0}(\rho) &= 2\rho^{2-n} \int_{\partial B_{\rho}(0)} |D_R w|^2 - 2\rho^{1-n} \int_{B_{\rho}(0)} f_{\kappa} R D_R w^{\kappa}, \nonumber \\
	H'_{w,0}(\rho) &= 2\rho^{1-n} \int_{\partial B_{\rho}(0)} w^{\kappa} D_R w^{\kappa}. \nonumber 
\end{align*}
Hence 
\begin{align*}
	&\frac{d}{d\rho} (\rho^{-2\alpha} (D_{w,0}(\rho) - \alpha H_{w,0}(\rho))) 
	\\&\hspace{15mm} = \rho^{-2\alpha} (D'_{w,0}(\rho) - \alpha H'_{w,0}(\rho)) - 2 \alpha \rho^{2\alpha-1} (D_{w,0}(\rho) - \alpha H_{w,0}(\rho))
	\\&\hspace{15mm} = 2\rho^{-n-2\alpha} \int_{\partial B_{\rho}(0)} (R^2 |D_R w|^2 - 2 \alpha w^{\kappa} R D_R w^{\kappa} + \alpha^2 |w|^2) 
		\\&\hspace{30mm} - 2\rho^{1-n-2\alpha} \int_{B_{\rho}(0)} f_{\kappa} (R D_R w^{\kappa} - \alpha w^{\kappa}) 
	\\&\hspace{15mm} = 2\rho^{-n-2\alpha} \int_{\partial B_{\rho}(0)} |R D_R w - \alpha w|^2 
		- 2\rho^{1-n-2\alpha} \int_{B_{\rho}(0)} f_{\kappa} (R D_R w^{\kappa} - \alpha w^{\kappa}) . \qedhere
\end{align*}
\end{proof}

\begin{theorem} \label{mainestthm} 
Let $\alpha$ and $\varphi^{(0)}$ be as in Definition~\ref{varphi0 defn}.  For each $\gamma \in (0,1)$ there exists $\varepsilon_0 = \varepsilon_0(n,m,\alpha,\varphi^{(0)},\gamma) \in (0,1)$ such that if $\varphi \in \Phi_{\varepsilon_0}(\varphi^{(0)})$ and $(u,\Lambda) \in \mathcal{F}_{\varepsilon_0}(\varphi^{(0)})$ such that $u(0) = \{0,0\}$, $Du(0) = \{0,0\}$ and $\mathcal{N}_M(0) \geq \alpha$, where $M = {\rm graph}\,u$, then 
\begin{align*}
	&(a) \hspace{5mm} \frac{1}{\Lambda^2} \int_{B_{\gamma}(0)} |D_y u_s|^2 
		\leq C \int_{B_1(0)} \mathcal{G}\left(\frac{u_s}{\Lambda}, \varphi\right)^2 + C \|Du\|_{C^0(B_1(0))}^2  
	\\&(b) \hspace{5mm} \frac{1}{\Lambda^2} \int_{B_{\gamma}(0)} R^{2-n} \left| \frac{\partial (u_s/R^{\alpha})}{\partial R} \right|^2 
		\leq C \int_{B_1(0)} \mathcal{G}\left(\frac{u_s}{\Lambda}, \varphi\right)^2 + C \|Du\|_{C^0(B_1(0))}^2  
\end{align*}
for some constant $C = C(n,m,\alpha,\varphi^{(0)},\gamma) \in (0,\infty)$, where $R = R(X) = |X|$.
\end{theorem}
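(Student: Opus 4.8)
The plan is to follow the scheme of Simon~\cite[§2--3]{Sim93}, adapted to the two-valued quasilinear setting, treating the two estimates by somewhat different mechanisms: in the ``good'' region the graphical representation of Lemma~\ref{lemma2_6} reduces matters to estimates for an auxiliary (almost) harmonic quantity, while the region near the spine $\{0\}\times\mathbb{R}^{n-2}$ is handled using the frequency monotonicity and the Schauder/decay estimates \eqref{minimal sym schauder}, \eqref{minimal sym decay}. Throughout write $w=u_s/\Lambda$. By Remark~\ref{F defn rmk} (via Lemma~\ref{doubling condition lemma3}) the hypotheses give the doubling property for $w$ at $0$, so by Lemma~\ref{freq_mono_lemma} the function $\rho\mapsto e^{C\rho^{1/2}}N_{M,0}(\rho)$ is monotone nondecreasing on $(0,1/2]$ with $C=C_1(n,m,\alpha)\|Du\|_{C^0(B_1(0))}$; moreover $\|w\|_{L^2(B_1(0))}\le C(\varphi^{(0)})$ by \eqref{F tangent req}, and, using \eqref{mss3 f est}, \eqref{minimal avg schauder}, \eqref{minimal sym schauder} and $u(0)=Du(0)=\{0,0\}$, the function $f$ in \eqref{mss3} associated with $w$ satisfies $\int_{B_{3/4}(0)}R^4|f|^2\le C\|Du\|_{C^0(B_1(0))}^2$.

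For part (b) the starting point is Lemma~\ref{modified freq est lemma}. Since $\partial(w/R^\alpha)/\partial R=R^{-\alpha-1}(R\partial_Rw-\alpha w)$, the coarea formula together with the identity \eqref{modified freq est} gives, for any $\sigma\in(0,1)$,
\[
	\int_{B_\sigma(0)}R^{2-n}\Big|\tfrac{\partial (w/R^\alpha)}{\partial R}\Big|^2
	=\tfrac12\big(\mathcal Q(\sigma)-\mathcal Q(0^+)\big)
	+\int_0^\sigma\rho^{1-n-2\alpha}\int_{B_\rho(0)}f_\kappa(R\partial_Rw^\kappa-\alpha w^\kappa)\,d\rho,
\]
where $\mathcal Q(\rho)=\rho^{-2\alpha}(D_{w,0}(\rho)-\alpha H_{w,0}(\rho))=\rho^{-2\alpha}H_{w,0}(\rho)(N_{M,0}(\rho)-\alpha)$. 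Because $N_{M,0}(\rho)\ge e^{-C\rho^{1/2}}\mathcal N_M(0)\ge e^{-C\rho^{1/2}}\alpha$ and $H_{w,0}(\rho)=O(\rho^{2\alpha-C\rho^{1/2}})$ near $0$ (the $H$-decay estimate accompanying the frequency monotonicity), the limit $\mathcal Q(0^+)$ exists and is $\ge 0$, so it is discarded with the favourable sign. As $\mathcal E(\sigma):=\int_{B_\sigma(0)}R^{2-n}|\partial(w/R^\alpha)/\partial R|^2$ is nondecreasing in $\sigma$, it suffices to bound $\mathcal Q(\sigma)$ for some $\sigma\in[\gamma,(1+\gamma)/2]$ and to absorb the $f$-integral. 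Since $\varphi$ is homogeneous of degree $\alpha$ and harmonic with $0$ on its spine, $D_{\varphi,0}-\alpha H_{\varphi,0}\equiv 0$; by the first-variation identity \eqref{monotonicity_identity1} one has $D_{w,0}(\sigma)-\alpha H_{w,0}(\sigma)=\sigma^{1-n}\int_{\partial B_\sigma(0)}w\cdot(\sigma\partial_Rw-\alpha w)-\sigma^{2-n}\int_{B_\sigma(0)}f\cdot w$, and $\sigma\partial_R\varphi=\alpha\varphi$ lets one replace $\sigma\partial_Rw-\alpha w$ by $\sigma\partial_R\delta-\alpha\delta$, with $\delta$ the difference of matched sheets of $w$ and $\varphi$ on the regular set. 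Integrating by parts over $\partial B_\sigma(0)$ and choosing $\sigma$ by the mean-value theorem so that the slice-integrals over $\partial B_\sigma(0)$ of $\mathcal G(w,\varphi)^2$, $|R\partial_Rw-\alpha w|^2$ and $|w|^2$ are $\le C(1-\gamma)^{-1}$ times their $B_{(1+\gamma)/2}(0)$-integrals, a Cauchy--Schwarz--Young argument yields $\mathcal Q(\sigma)\le \tfrac14\mathcal E((1+\gamma)/2)+C\int_{B_1(0)}\mathcal G(w,\varphi)^2+C\|Du\|_{C^0(B_1(0))}^2$; the first term is absorbed into the left side after noting $\mathcal E((1+\gamma)/2)\le\tfrac12\mathcal Q((1+\gamma)/2)+(\text{absorbable }f\text{-error})$ from the displayed identity again. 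The $f$-integral is estimated by Cauchy--Schwarz, pairing $f$ against $R\partial_Rw-\alpha w=R^{\alpha+1}\partial(w/R^\alpha)/\partial R$ with weights matched to $\mathcal E$ and $\int R^{2\alpha+n}|f|^2$, and is absorbed/bounded using $\int R^4|f|^2\le C\|Du\|_{C^0}^2$ and the smallness of $\|Du\|_{C^0}$, exactly as in the corresponding estimates of \cite{SimWic16}; this proves (b).

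For part (a), fix $\gamma'\in(\gamma,(1+\gamma)/2)$ and let $U\subset B_1(0)$, $v$ and $\widehat v$ be produced by Lemma~\ref{lemma2_6} applied to $\varphi$ (with $\beta$ and $\tau$ small, $\tau<(1-\gamma')/20$); since $\varphi(x,y)$ is $y$-independent, $D_yw=D_y\widehat v$ on $U$. On balls $B\subset U$ the component $\widehat v_1$ solves $\Delta\widehat v_1^\kappa=f_{1,\kappa}$, and multiplying this equation by $\widehat v\,\psi(d/r)^2\eta^2$ --- with $d(X)=\op{dist}(X,\partial U\cap B_{(1+\gamma')/2}(0))$ and cut-offs $\psi,\eta$ as in the proof of Lemma~\ref{lemma2_6}, but now exploiting that $\widehat v$ decays like $r^\alpha$ with $\alpha\ge 3/2$ near the spine so that the Hardy-type term $\int|\widehat v|^2|D(\psi(d/r))|^2$ is integrable \emph{without} the extra factor $r^2$, together with the Vitali-covering estimate of that proof for the boundary layer $\{d(X)<r/2\}$ --- yields
\[
	\int_{B_{\gamma}(0)\cap U}|D_y\widehat v|^2\le C\int_{B_1(0)}\mathcal G(w,\varphi)^2+C\int_{B_{(3+\gamma)/4}(0)}r^4|f|^2 .
\]
On the complement $B_\gamma(0)\setminus U$, which by \eqref{lemma2_6 concl2} lies in the tube $\{r\le\tau\}$ and hence contains $\mathcal K_u\cap B_\gamma(0)$, one bounds $\Lambda^{-2}\int_{B_\gamma(0)\setminus U}|D_yu_s|^2$ using the interior estimate \eqref{minimal sym schauder} (equivalently \eqref{minimal sym decay}) at scale comparable to $\op{dist}(\cdot,\mathcal K_u)$, together with the bound $\Lambda^{-2}\int_{B_\gamma(0)\setminus U}(|u_s|^2+r^2|Du_s|^2)\le C\int_{B_1(0)}\mathcal G(w,\varphi)^2+C\int_{B_{(3+\gamma)/4}(0)}r^4|f|^2$ from Lemma~\ref{lemma2_6}, the loss of $r^{-2}$ being compensated by the $r^\alpha$-decay coming from $\mathcal N_M(0)\ge\alpha\ge 3/2$. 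Summing the two contributions and using $\int r^4|f|^2\le C\|Du\|_{C^0(B_1(0))}^2$ gives (a).

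The hard part is (a): near the spine one must pass from the $r^2$-weighted gradient control furnished by Lemma~\ref{lemma2_6} (and by the $C^{1,1/2}$-theory of \cite{SimWic16}) to the \emph{unweighted} $y$-gradient bound, without reintroducing the factor $\|w\|_{L^2(B_1(0))}^2$ --- which is only $O(1)$ rather than $O(\int_{B_1(0)}\mathcal G(w,\varphi)^2)$; making this quantitative is exactly where the frequency lower bound $\mathcal N_M(0)\ge\alpha$ and the resulting $r^\alpha$-decay of $u_s$ near $\mathcal K_u$ are used, and it is the two-valued analogue of the non-concentration-of-excess estimate of \cite{Sim93}. As an alternative one could establish (a) by a contradiction/compactness argument: normalise by the right side, pass to a limit using \eqref{minimal avg schauder}, \eqref{minimal sym schauder} and the semicontinuity of frequency (Lemma~\ref{semicont freq lemma2}), and conclude that the limiting homogeneous harmonic function has frequency $\ge\alpha$ at $0$, hence --- being close to the cylindrical $\varphi^{(0)}$ --- is $y$-independent, contradicting the normalisation $\int_{B_\gamma(0)}|D_y(\cdot)|^2=1$.
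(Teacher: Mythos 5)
Your overall frame (Lemma~\ref{modified freq est lemma} for the radial estimate, Lemma~\ref{lemma2_6} for the graphical region, special handling near the spine) matches the paper's, but the mechanism you propose at the crux does not close, for either part. For (b): after discarding $\mathcal Q(0^+)\ge 0$ you must show that the boundary quantity $\mathcal Q(\sigma)=\sigma^{-2\alpha}(D_{w,0}(\sigma)-\alpha H_{w,0}(\sigma))$ is of size $C\int_{B_1(0)}\mathcal G(w,\varphi)^2+C\|Du\|_{C^0}^2$ (up to an absorbable multiple of $\mathcal E$). Writing $w=\varphi+\delta$ on the matched region, the dangerous contribution is the cross term, \emph{linear} in $\delta$ with the $O(1)$ coefficient $\varphi$: Cauchy--Schwarz--Young against the slice norm of $\varphi$ (or of $w$) leaves a remainder of order $\|\varphi\|_{L^2(\partial B_\sigma)}^2=O(1)$, not $O(\mathrm{excess})$, and ``integration by parts over $\partial B_\sigma$'' cannot act on the radial derivative in $\sigma\partial_R\delta-\alpha\delta$. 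The cancellation that rescues this term must exploit harmonicity and homogeneity of $\varphi$ through an integration by parts over a \emph{region}, and then the fact that $\delta$ (i.e.\ $v$ of Lemma~\ref{lemma2_6}) exists only on $U$ produces boundary terms along $\partial U$ and near the spine which need the non-concentration machinery. This is precisely what the paper does instead: it never bounds $\mathcal Q$ at a single radius, but integrates the identity against $\psi(\rho)^2$ to reach \eqref{mainest_eqn14}, and then controls the resulting combination $\int(|Dw|^2\psi^2+2\alpha R^{-1}|w|^2\psi\psi')$ via the squash/squeeze identities \eqref{monotonicity_identity1}--\eqref{monotonicity_identity2} with $\zeta=\alpha\,\psi(R)^2$ and $(\zeta^1,\dots,\zeta^n)=\psi(R)^2(x_1,x_2,0,\dots,0)$, where the $O(1)$ and $O(\sqrt{\mathrm{excess}})$ pieces cancel exactly by integration by parts in $\theta$ (using $D_{\theta\theta}\varphi_1=-\alpha^2\varphi_1$, homogeneity, and the $\theta$-symmetry \eqref{lemma2_6 concl1} of $U$), the complement of $U$ being handled by Lemma~\ref{lemma2_6}'s estimate; that is \eqref{mainest_eqn21}. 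Your absorption scheme is also circular as stated: $\mathcal Q(\sigma)\le\tfrac14\mathcal E((1+\gamma)/2)+\dots$ is to be closed via $\mathcal E((1+\gamma)/2)\le\tfrac12\mathcal Q((1+\gamma)/2)+\dots$, which requires the same unproved bound on $\mathcal Q$ at the larger radius.

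For (a) the same defect appears. The Hardy-type boundary-layer term $\int|\widehat v|^2|D(\psi(d/r))|^2\sim\int_{\{d\le r/2\}}|\widehat v|^2 r^{-2}$ is not made harmless by $|\widehat v|\le\beta r^{\alpha}$, $\alpha\ge 3/2$: that only gives finiteness, i.e.\ a bound $O(\beta^2)$ by a fixed constant, whereas the theorem demands $O(\int\mathcal G(w,\varphi)^2+\|Du\|_{C^0}^2)$. The Vitali argument inside the proof of Lemma~\ref{lemma2_6} converts the sup bound into an excess bound only for the scale-invariant quantity $|\widehat v|^2+r^2|D\widehat v|^2$; inserting an extra $r^{-2}$ loses a factor $|\xi|^{-2}$ against the bad-ball criterion $\int_{A_{|\xi|,3/4}(\zeta)}\mathcal G(w,\varphi)^2\ge\delta|\xi|^{n+2\alpha}$, which blows up near the spine. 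The identical loss undermines your treatment of $B_\gamma(0)\setminus U$: the decay coming from $\mathcal N_M(0)\ge\alpha$ is again only a fixed-constant bound, not excess-smallness, so ``compensating the loss of $r^{-2}$'' does not yield the right-hand side of (a). Finally, the compactness alternative is circular: under the contradiction hypothesis the normalized $y$-energy may concentrate on $\{0\}\times\mathbb R^{n-2}$, the blow-up converges only locally away from the spine, and the limit being $y$-independent gives no contradiction -- excluding exactly this concentration is the content of the estimate. In the paper, both (a) and (b) drop out simultaneously from combining \eqref{mainest_eqn14} with \eqref{mainest_eqn21}; this variational-identity step, with its $\theta$-integration-by-parts cancellation on the $\theta$-symmetric set $U$, is the missing idea in your proposal.
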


\begin{proof}
Set $w = u_s/\Lambda$.  Suppose that $\varepsilon$ is less than or equal to $\varepsilon_0$ in Lemma~\ref{lemma2_6} with $(1+\gamma)/2$ in place of $\gamma$, $\tau = (1-\gamma)/20$ and $\beta = 1/2$.  Let $U$, $v$ and $\widehat{v}$ be as in Lemma~\ref{lemma2_6}. 

By \eqref{modified freq est}, 
\begin{align} \label{mainest_eqn5}
	&\frac{d}{d\rho} (\rho^{n-2} (D_{w,0}(\rho) - \alpha H_{w,0}(\rho))) 
	= (n-2+2\alpha) \rho^{n-3+2\alpha} \cdot \rho^{-2\alpha} (D_{w,0}(\rho) - \alpha H_{w,0}(\rho)) \\
	&\hspace{10mm} + 2\rho^{n-2+2\alpha} \left( \rho^{-n-2\alpha} \int_{\partial B_{\rho}(0)} |R D_R w - \alpha w|^2 
	- \rho^{1-n-2\alpha} \int_{B_{\rho}(0)} f_{\kappa} (R D_R w^{\kappa} - \alpha w^{\kappa}) \right) \nonumber
\end{align}
for $\rho \in (0,1]$.  By $\mathcal{N}_M(0) \geq \alpha$ and \eqref{H_decay}, $\liminf_{\rho \downarrow 0} \rho^{-2\alpha} (D_{w,0}(\rho) - \alpha H_{w,0}(\rho)) \geq 0$.  Thus by integrating \eqref{modified freq est}, 
\begin{align} \label{mainest_eqn6}
	\rho^{-2\alpha} (D_{w,0}(\rho) - \alpha H_{w,0}(\rho)) 
	&\geq 2 \int_{B_{\rho}(0)} R^{-n-2\alpha} |R D_R w - \alpha w|^2 
	\\&\hspace{15mm} - 2 \int_0^{\rho} \tau^{1-n-2\alpha} \int_{B_{\tau}(0)} f_{\kappa} (R D_R w^{\kappa} - \alpha w^{\kappa}) d\tau \nonumber
\end{align}
for $\rho \in (0,1]$.  By \eqref{mainest_eqn5} and \eqref{mainest_eqn6}, 
\begin{align} \label{mainest_eqn7}
	&\frac{d}{d\rho} (\rho^{n-2} (D_{w,0}(\rho) - \alpha H_{w,0}(\rho))) 
	\geq 2 \frac{d}{d\rho} \left( \rho^{n-2-2\alpha} \int_{B_{\rho}(0)} R^{-n-2\alpha} |R D_R w - \alpha w|^2 \right) 
	\\&\hspace{15mm} - 2 \frac{d}{d\rho} \left( \rho^{n-2-2\alpha} \int_0^{\rho} \tau^{1-n-2\alpha} 
		\int_{B_{\tau}(0)} f_{\kappa} (R D_R w^{\kappa} - \alpha w^{\kappa}) d\tau \right) \nonumber 
\end{align}
for $\rho \in (0,1]$.  Let $\psi : [0,\infty) \rightarrow \mathbb{R}$ be a non-increasing smooth function with $\psi(t) = 1$ for $t \in [0,\gamma]$ and $\psi(t) = 0$ for $t \geq (1+\gamma)/2$.  Multiplying both sides of \eqref{mainest_eqn7} by $\psi(\rho)^2$ and integrating using integration by parts yields 
\begin{align} \label{mainest_eqn8}
	&\int_{0}^{\infty} \frac{d}{d\rho} (\rho^{n-2} D_{w,0}(\rho) - \alpha \rho^{n-2} H_{w,0}(\rho)) \,\psi(\rho)^2 \,d\rho
	\\&\hspace{15mm} \geq -4 \int_{0}^{\infty} \rho^{n-2-2\alpha} \int_{B_{\rho}(0)} R^{-n-2\alpha} |R D_R w - \alpha w|^2 \,\psi(\rho) \,\psi'(\rho) \,d\rho 
		\nonumber 
	\\&\hspace{30mm} + 4 \int_{0}^{\infty} \rho^{n-2-2\alpha} \int_0^{\rho} \tau^{1-n-2\alpha} 
		\int_{B_{\tau}(0)} f_{\kappa} (R D_R w^{\kappa} - \alpha w^{\kappa}) \psi(\rho) \psi'(\rho) \,d\tau \,d\rho \nonumber 
\end{align}
By Fubini's theorem and Cauchy-Schwarz, 
\begin{align*} 
	&\int_0^{\rho} \tau^{1-n-2\alpha} \int_{B_{\tau}(0)} |f| \,|R D_R w - \alpha w| \,dX \,d\tau
	\\&\hspace{15mm} = \int_{B_{\rho}(0)} \int_R^{\rho} \tau^{1-n-2\alpha} \,|f| \,|R D_R w - \alpha w| \,d\tau \,dX \nonumber 
	\\&\hspace{15mm} \leq \int_{B_{\rho}(0)} \frac{R^{2-n-2\alpha}}{n-2+2\alpha} |f| |R D_R w - \alpha w| \nonumber
	\\&\hspace{15mm} \leq \frac{1}{2} \int_{B_{\rho}(0)} R^{-n-2\alpha} |R D_R w - \alpha w|^2 
		+ \frac{1}{2(n-2+2\alpha)^2} \int_{B_{\rho}(0)} R^{4-n-2\alpha} |f|^2 \nonumber
\end{align*}
for each $\gamma \leq \rho \leq (1+\gamma)/2$.  Hence we can bound the last term on the right-hand side of \eqref{mainest_eqn8} by 
\begin{align} \label{mainest_eqn10}
	&\int_{0}^{\infty} \frac{d}{d\rho} (\rho^{n-2} D_{w,0}(\rho) - \alpha \rho^{n-2} H_{w,0}(\rho)) \,\psi(\rho)^2 \,d\rho 
	\\&\hspace{15mm} \geq -2 \int_{0}^{\infty} \rho^{n-2-2\alpha} \int_{B_{\gamma}(0)} R^{-n-2\alpha} |R D_R w - \alpha w|^2 \,\psi(\rho) \,\psi'(\rho) \,d\rho 
		\nonumber \\&\hspace{30mm} + \frac{2}{(n-2+2\alpha)^2} \int_{0}^{\infty} \rho^{n-2-2\alpha} \int_{B_{(1+\gamma)/2}(0)} R^{4-n-2\alpha} \,|f|^2 
			\,\psi(\rho) \,\psi'(\rho) \,d\rho \nonumber 
\end{align}
It follows from the definition of $\psi$ that $-2 \int_{0}^{\infty} \psi(\rho) \,\psi'(\rho) \,d\rho = 1$ and thus 
\begin{equation*}
	\gamma^{n-2-2\alpha} \leq -2 \int_{0}^{\infty} \rho^{n-2-2\alpha} \,\psi(\rho) \,\psi'(\rho) \,d\rho \leq \left(\frac{1+\gamma}{2}\right)^{n-2-2\alpha} . \end{equation*}
Hence \eqref{mainest_eqn10} gives us 
\begin{align} \label{mainest_eqn11}
	&\int_{0}^{\infty} \frac{d}{d\rho} (\rho^{n-2} D_{w,0}(\rho) - \alpha \rho^{n-2} H_{w,0}(\rho)) \,\psi(\rho)^2 \,d\rho 
	\\&\hspace{15mm} \geq 2 \gamma^{n-2-2\alpha} \int_{B_{\gamma}(0)} R^{-n-2\alpha} |R D_R w - \alpha w|^2 
		\nonumber \\&\hspace{30mm} - \frac{2}{(n-2+2\alpha)^2} \left(\frac{1+\gamma}{2}\right)^{n-2-2\alpha} 
			\int_{B_{(1+\gamma)/2}(0)} R^{4-n-2\alpha} \,|f|^2 \nonumber 
\end{align}
By the coarea formula 
\begin{equation} \label{mainest_eqn12}
	\int_{0}^{\infty} \frac{d}{d\rho} (\rho^{n-2} \,D_{w,0}(\rho)) \psi(\rho)^2 \,d\rho 
		= \int_{0}^{\infty} \int_{\partial B_{\rho}(0)} |Dw|^2 \psi(\rho)^2 \,d\rho 
		= \int |Dw|^2 \psi(R)^2 
\end{equation}
and by integration by parts and the coarea formula again 
\begin{align} \label{mainest_eqn13}
	&\int_{0}^{\infty} \frac{d}{d\rho} (\rho^{n-2} H_{w,0}(\rho)) \psi(\rho)^2 d\rho 
		= -2 \int_{0}^{\infty} \rho^{n-2} H_{w,0}(\rho) \psi(\rho) \psi'(\rho) d\rho 
		\\&\hspace{15mm} = -2 \int_{0}^{\infty} \int_{\partial B_{\rho}(0)} \rho^{-1} |w|^2 \psi(\rho) \psi'(\rho) \,d\rho 
		= -2 \int R^{-1} |w|^2 \psi(R) \psi'(R). \nonumber 
\end{align}
Hence by \eqref{mainest_eqn11}, \eqref{mainest_eqn12}, and \eqref{mainest_eqn13}, 
\begin{align} \label{mainest_eqn14}
	\int_{B_{\gamma}(0)} R^{2-n} \left| \frac{\partial (w/R^{\alpha})}{\partial R} \right|^2 
	\leq {}& C \int (|Dw|^2 \psi(R)^2 + 2\alpha R^{-1} |w|^2 \psi(R) \psi'(R))  
	\\&\hspace{10mm} + C \int_{B_{(\gamma+1)/2}(0)} R^{4-n-2\alpha} \,|f|^2 \nonumber
\end{align}
for $C = C(n,\alpha,\gamma) \in (0,\infty)$. 

Now let $(\zeta^1,\ldots,\zeta^n) = \psi(R)^2 \,(x_1,x_2,0,\ldots,0)$ in \eqref{monotonicity_identity2} to obtain  
\begin{align} \label{mainest_eqn15}
	&\int (|Dw|^2 - |D_x w|^2) \psi(R)^2 
	= -2 \int \left( \tfrac{1}{2} r^2 |Dw|^2 - r^2 |D_r w|^2 \right) R^{-1} \psi(R) \psi'(R) \\
	&\hspace{15mm} + 2 \int r D_r w^{\kappa} (y \cdot D_y w^{\kappa}) R^{-1} \psi(R) \psi'(R) 
		+ \int r f_{\kappa} D_r w^{\kappa} \psi(R)^2 \nonumber
\end{align}
where $r = r(X) = |(x_1,x_2)|$.  Let $\zeta = \alpha \,\psi(R)^2$ in \eqref{monotonicity_identity1} to obtain 
\begin{equation} \label{mainest_eqn16}
	\alpha \int |Dw|^2 \psi(R)^2 = -2 \alpha \int (r w^{\kappa} D_r w^{\kappa} + w^{\kappa} (y \cdot D_y w^{\kappa})) R^{-1} \psi(R) \psi'(R) 
	- \alpha \int f_{\kappa} w^{\kappa} \psi(R)^2.
\end{equation}
By adding \eqref{mainest_eqn15} and \eqref{mainest_eqn16} and adding $\int 2\alpha^2 R^{-1} |w|^2 \psi(R) \psi'(R)$ to both sides, we obtain 
\begin{align*}
	&\int ((\alpha |Dw|^2 + |D_y w|^2) \psi(R)^2 + 2\alpha^2 R^{-1} |w|^2 \psi(R) \psi'(R))
	\\&\hspace{10mm} = -2 \int \left( \tfrac{1}{2} r^2 |Dw|^2 - r D_r w^{\kappa} (r D_r w^{\kappa} - \alpha w^{\kappa}) - \alpha^2 |w|^2 \right) 
		R^{-1} \psi(R) \psi'(R) \nonumber 
	\\&\hspace{20mm} + 2 \int (y \cdot D_y w^{\kappa})(r D_r w^{\kappa} - \alpha w^{\kappa}) R^{-1} \psi(R) \psi'(R) 
	+ \int f_{\kappa} (r D_r w^{\kappa} - \alpha w^{\kappa}) \psi(R)^2. 
\end{align*}
By Cauchy's inequality, 
\begin{align} \label{mainest_eqn17}
	&\int \left( \left( \alpha |Dw|^2 + \tfrac{1}{2} |D_y w|^2 \right) \psi(R)^2 + 2\alpha^2 R^{-1} |w|^2 \psi(R) \psi'(R) \right)
	 \\&\hspace{10mm} \leq -2 \int \left( \tfrac{1}{2} r^2 |Dw|^2 - r D_r w^{\kappa} (r D_r w^{\kappa} - \alpha w^{\kappa}) 
		- \alpha^2 |w|^2 \right) R^{-1} \psi(R) \psi'(R) 
	\nonumber \\&\hspace{20mm}  + 2 \int |r D_r w - \alpha w|^2 (\psi(R)^2 + \psi'(R)^2) + \frac{1}{8} \int |f|^2 \psi(R)^2. \nonumber
\end{align}

Now recall that $\varphi(X) = \{\pm \varphi_1(X)\}$ for $\varphi_1(X) \in \mathbb{R}^m$ and that $w$ and $v$ satisfy \eqref{v graph rep} as in Lemma~\ref{lemma2_6}.  Note that, since $\varphi$ and $w$ are symmetric, $v(X,-\varphi_1(X)) = -v(X,\varphi_1(X))$ for each $X \in U$.  Let $v_1(X) = v(X,\varphi_1(X))$ for each $X \in U$.  Using the estimates of Lemma~\ref{lemma2_6} (with $(1+\gamma)/2$ in place of $\gamma$), 
\begin{align} \label{mainest_eqn18}
	&-2 \int_U \left( \tfrac{1}{2} r^2 |Dw|^2 - r D_r w^{\kappa} (r D_r w^{\kappa} - \alpha w^{\kappa}) 
		- \alpha^2 |w|^2 \right) R^{-1} \psi(R) \psi'(R) 
	 \\&\hspace{6mm} + 2 \int_U |r D_r w - \alpha w|^2 (\psi(R)^2 + \psi'(R)^2) 
	 \leq -4 \int_U \left( \tfrac{1}{2} r^2 |D\varphi_1|^2 - \alpha^2 |\varphi_1|^2 \right) R^{-1} \psi(R) \psi'(R) \nonumber
	 \\&\hspace{12mm} - 4 \int_U \left( r^2 D\varphi_1^{\kappa} \cdot Dv_1^{\kappa} - r D_r \varphi_1^{\kappa} (r D_r v_1^{\kappa} - \alpha v_1^{\kappa}) 
	 	- 2\alpha^2 \varphi_1^{\kappa} v_1^{\kappa} \right) R^{-1} \psi(R) \psi'(R) \nonumber 
	 \\&\hspace{18mm} + C \int_{B_1(0)} \mathcal{G}(w,\varphi) + C \int_{B_{(3+\gamma)/4}(0)} r^4 |f|^2 \nonumber
\end{align}
for some constant $C = C(n,m,\alpha,\varphi^{(0)},\gamma) \in (0,\infty)$.  Let $(re^{i\theta},y)$ denote cylindrical coordinates on $\mathbb{R}^n$, where $r \geq 0$, $\theta \in \mathbb{R}$, and $y \in \mathbb{R}^{n-2}$.  If $\alpha$ is a positive integer, then $\varphi(X) = \{ \pm \op{Re}(c r^{\alpha} e^{i\alpha\theta}) \}$ where $-\op{Re}(c r^{\alpha} e^{i\alpha\theta})$ and $+\op{Re}(c r^{\alpha} e^{i\alpha\theta})$ are both single-valued harmonic functions on $U$.  If instead $\alpha = k_0/2$ for some integer $k_0 \geq 3$, $\varphi(re^{i\theta},y) = \{ \pm \op{Re}(c r^{\alpha} e^{i\alpha\theta}) \}$ where $\op{Re}(c r^{\alpha} e^{i\alpha\theta})$ is $4\pi$-periodic as a function of $\theta$.  Thus by \eqref{lemma2_6 concl1} we can apply integration by parts with respect to $\theta$ and use the homogeneity of $\varphi$ and $D_{\theta\theta} \varphi + \alpha^2 \varphi = 0$ to obtain 
\begin{align*}
	\int_U \left( \tfrac{1}{2} r^2 |D\varphi_1|^2 - \alpha^2 |\varphi_1|^2 \right) R^{-1} \psi(R) \psi'(R) 
	&= \frac{1}{2} \int_U \left( |D_{\theta} \varphi_1|^2 - \alpha^2 |\varphi_1|^2 \right) R^{-1} \psi(R) \psi'(R) 
	\\&= \frac{1}{2} \int_U \left( -\varphi^{\kappa} D_{\theta\theta} \varphi_1^{\kappa} - \alpha^2 |\varphi_1|^2 \right) R^{-1} \psi(R) \psi'(R)  = 0 
\end{align*}
and 
\begin{align*}
	&\int_U \left( r^2 D\varphi_1^{\kappa} \cdot Dv_1^{\kappa} - r D_r \varphi_1^{\kappa} (r D_r v_1^{\kappa} - \alpha v_1^{\kappa}) 
	 	- 2 \alpha^2 \varphi_1^{\kappa} v_1^{\kappa} \right) R^{-1} \psi(R) \psi'(R) 
	\\&\hspace{15mm} = \int_U \left( D_{\theta} \varphi_1^{\kappa} D_{\theta} v_1^{\kappa} 
		- \alpha^2 \varphi_1^{\kappa} v_1^{\kappa} \right) R^{-1} \psi(R) \psi'(R) 
	\\&\hspace{15mm} = \int_U \left( -D_{\theta\theta} \varphi_1^{\kappa} v_1^{\kappa} - \alpha^2 \varphi_1^{\kappa} v_1^{\kappa} \right) 
		R^{-1} \psi(R) \psi'(R)  = 0. 
\end{align*}
Hence \eqref{mainest_eqn18} gives us 
\begin{align} \label{mainest_eqn19}
	&-2 \int_U \left( \tfrac{1}{2} r^2 |Dw|^2 - r D_r w^{\kappa} (r D_r w^{\kappa} - \alpha w^{\kappa}) 
		- \alpha^2 |w|^2 \right) R^{-1} \psi(R) \psi'(R) 
	 \\&\hspace{15mm} + 2 \int_U |r D_r w - \alpha w|^2 (\psi(R)^2 + \psi'(R)^2) 
	 \leq C \int_{B_1(0)} \mathcal{G}(w,\varphi)^2 + C \int_{B_{(3+\gamma)/4}(0)} r^4 |f|^2,  \nonumber
\end{align}
where $C = C(n,m,\alpha,\varphi^{(0)},\gamma) \in (0,\infty)$ is a constant.  Again using Lemma~\ref{lemma2_6}, 
\begin{align} \label{mainest_eqn20}
	&-2 \int_{B_{(1+\gamma)/2}(0) \setminus U} \left( \tfrac{1}{2} r^2 |Dw|^2 - r D_r w^{\kappa} (r D_r w^{\kappa} - \alpha w^{\kappa}) 
		- \alpha^2 |w|^2 \right) R^{-1} \psi(R) \psi'(R) 
	 \\&\hspace{15mm} + 2 \int_{B_{(1+\gamma)/2}(0) \setminus U} |r D_r w - \alpha w|^2 (\psi(R)^2 + \psi'(R)^2) \nonumber 
	 \\&\hspace{30mm} \leq C \int_{B_1(0)} \mathcal{G}(w,\varphi)^2 + C \int_{B_{(3+\gamma)/4}(0)} r^4 |f|^2 \nonumber
\end{align}
for some constant $C = C(n,m,\alpha,\varphi^{(0)},\gamma) \in (0,\infty)$.  Using \eqref{mainest_eqn19} and \eqref{mainest_eqn20} to bound the right-hand side of \eqref{mainest_eqn17},  
\begin{align} \label{mainest_eqn21}
	&\int \left( \left( \alpha |Dw|^2 + \tfrac{1}{2} |D_y w|^2 \right) \psi(R)^2 + 2\alpha^2 R^{-1} |w|^2 \psi(R) \psi'(R) \right)
	  \\&\hspace{15mm} \leq C \int_{B_1(0)} \mathcal{G}(w,\varphi)^2 + C \int_{B_{(3+\gamma)/4}(0)} |f|^2 \nonumber 
\end{align}
for some constant $C = C(n,m,\alpha,\varphi^{(0)},\gamma) \in (0,\infty)$.  Combining \eqref{mainest_eqn14} and \eqref{mainest_eqn21}, and noting that since $n \geq 2$ and $\alpha \geq 3/2$ we have $4-n-2\alpha < 0$, 
\begin{align} \label{mainest_eqn22}
	&\int_{B_{\gamma}(0)} R^{2-n} \left| \frac{\partial (w/R^{\alpha})}{\partial R} \right|^2 + \int_{B_{\gamma}(0)} |D_y w|^2 \\ 
	&\hspace{15mm} \leq C \int_{B_1(0)} \mathcal{G}(w,\varphi)^2 
		+ C \int_{B_{(3+\gamma)/4}(0)} R^{4-n-2\alpha} |f|^2 \nonumber
\end{align}
for $C = C(n,m,\alpha,\varphi^{(0)},\gamma) \in (0,\infty)$.  

By \eqref{mss3 f est}, \eqref{minimal avg schauder} and \eqref{minimal sym decay}, $|f| \leq C(n,m) \,\|Du\|_{C^0(B_1(0))}^2 \,(|Dw| + R |D^2 w|)$ and thus 
\begin{equation} \label{mainest_eqn23}
	\int_{B_{(3+\gamma)/4}(0)} R^{4-n-2\alpha} |f|^2 
	\leq C \|Du\|_{C^0(B_1(0))}^4 \int_{B_{(3+\gamma)/4}(0)} R^{4-n-2\alpha} (|Dw|^2 + R^2 |D^2 w|^2)
\end{equation}
for some constant $C = C(n,m) \in (0,\infty)$.  By \eqref{minimal sym schauder} and \eqref{L2_decay}, 
\begin{align*}
	&\int_{B_{(3+\gamma)/4}(0)} R^{4-n-2\alpha} (|Dw|^2 + R^2 |D^2 w|^2)
	\\&\hspace{15mm} \leq C \sum_{k=0}^{\infty} 2^{-(4-n-2\alpha) k} \int_{B_{2^{-k} (3+\gamma)/4}(0)} (|Dw|^2 + 2^{-2k} |D^2 w|^2) .
	\\&\hspace{15mm} \leq C \sum_{k=0}^{\infty} 2^{-(2-n-2\alpha) k} \int_{B_{2^{-k} (7+\gamma)/8}(0)} |w|^2
	\leq C \sum_{k=0}^{\infty} 2^{-2k} \int_{B_1(0)} |w|^2 
	\leq C ,
\end{align*}
where $C = C(n,m,\alpha,\varphi^{(0)},\gamma) \in (0,\infty)$ and the last step uses $\|w\|_{L^2(B_1(0))} \leq \|\varphi^{(0)}\|_{L^2(B_1(0))} + 1$ (by \eqref{F defn rmk eqn1}).  Hence \eqref{mainest_eqn23} gives us 
\begin{equation} \label{mainest_eqn24}
	\int_{B_{(3+\gamma)/4}(0)} R^{4-n-2\alpha} |f|^2 \leq C \|Du\|_{C^0(B_1(0))}^4 
\end{equation}
for some constant $C = C(n,m,\alpha,\gamma) \in (0,\infty)$.  Using \eqref{mainest_eqn24} to bound the last term on the right-hand side of \eqref{mainest_eqn22} yields the conclusion of Theorem~\ref{mainestthm}. 
\end{proof}

\begin{corollary} \label{noncon_at_origin_cor} 
Let $\alpha$ and $\varphi^{(0)}$ be as in Definition~\ref{varphi0 defn}.  For each $\gamma,\sigma \in (0,1)$ there exists $\varepsilon_0 = \varepsilon_0(n,m,\alpha,\varphi^{(0)},\gamma) \in (0,1)$ such that if $\varphi \in \Phi_{\varepsilon_0}(\varphi^{(0)})$ and $(u,\Lambda) \in \mathcal{F}_{\varepsilon_0}(\varphi^{(0)})$ such that $0 \in \mathcal{K}_M$ and $\mathcal{N}_M(0) \geq \alpha$, where $M = {\rm graph}\,u$, then 
\begin{equation} \label{ncao_concl}
	\int_{B_{\gamma}(0)} R^{-n+\sigma-2\alpha} \mathcal{G}\left(\frac{u_s}{\Lambda}, \varphi\right)^2
	\leq C \int_{B_1(0)} \mathcal{G}\left(\frac{u_s}{\Lambda}, \varphi\right)^2 + C \|Du\|_{C^0(B_1(0))}^2  
\end{equation}
for some constant $C = C(n,m,\alpha,\varphi^{(0)},\gamma,\sigma) \in (0,\infty)$. 
\end{corollary}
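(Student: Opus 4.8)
The plan is to obtain \eqref{ncao_concl} by integrating the radial--derivative estimate of Theorem~\ref{mainestthm}(b) along rays from the origin, exactly the device used for the analogous non-concentration estimates in \cite{Sim93} and \cite{KrumWic1}. Write points of $B_1(0)$ in polar form $X=\rho\omega$, $\rho=|X|$, $\omega\in\partial B_1(0)$, put $w=u_s/\Lambda$, and abbreviate $\mathcal{E}:=\int_{B_1(0)}\mathcal{G}(w,\varphi)^2+\|Du\|_{C^0(B_1(0))}^2$. We may assume $u(0)=\{0,0\}$ and $Du(0)=\{0,0\}$: the general case $0\in\mathcal{K}_M$ reduces to this by translating $M$ and rotating $\mathbb{R}^{n+m}$ by a $Q$ taking $T_0M$ to $\mathbb{R}^n\times\{0\}$ with $\|Q-I\|\le C(n,m)\|Du\|_{C^0(B_1(0))}$, which by Lemma~\ref{rotate lemma} together with $\|u_s\|_{L^2(B_1(0))}\le C(\varphi^{(0)})\Lambda$ (Remark~\ref{F defn rmk}) perturbs $\mathcal{G}(w,\varphi)$ by at most $C\|Du\|_{C^0(B_1(0))}$ on a slightly larger ball, hence changes $\mathcal{E}$ by only a harmless multiple. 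Choose $\varepsilon_0$ to be the constant of Theorem~\ref{mainestthm} with $\gamma':=(1+\gamma)/2$ in place of $\gamma$, and write $G(\omega):=\int_0^{\gamma'}\rho\,\big|\partial_\rho(\rho^{-\alpha}w(\rho\omega))\big|^2\,d\rho$. Then part (b) of that theorem is exactly the statement
\begin{equation*}
\int_{\partial B_1(0)}G(\omega)\,d\mathcal{H}^{n-1}(\omega)=\int_{B_{\gamma'}(0)}R^{2-n}\Big|\frac{\partial(w/R^{\alpha})}{\partial R}\Big|^2\le C\mathcal{E},
\end{equation*}
so in particular $G(\omega)<\infty$ for $\mathcal{H}^{n-1}$-a.e.\ $\omega$.

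Since $\varphi$ is homogeneous of degree $\alpha$ and the metric $\mathcal{G}$ is positively homogeneous of degree $1$, one has $\mathcal{G}(w(\rho\omega),\varphi(\rho\omega))=\rho^{\alpha}F(\rho,\omega)$ with $F(\rho,\omega):=\mathcal{G}(\rho^{-\alpha}w(\rho\omega),\varphi(\omega))$, where $\varphi(\omega)$ is the ($\rho$-independent) value of $\varphi$ along the ray. For $\omega$ with $G(\omega)<\infty$ the curve $\rho\mapsto\rho^{-\alpha}w(\rho\omega)$ is absolutely continuous in $\mathcal{A}_2(\mathbb{R}^m)$ on $(0,\gamma']$ (it is globally $C^{1,1/2}$, as $w=u_s$ is) with metric speed $\le|\partial_\rho(\rho^{-\alpha}w(\rho\omega))|$, so for $0<\rho<s\le\gamma'$ the triangle inequality for $\mathcal{G}$ and Cauchy--Schwarz give $|F(s,\omega)-F(\rho,\omega)|\le(\log(s/\rho))^{1/2}G(\omega)^{1/2}$. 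Averaging the resulting bound $F(\rho,\omega)\le F(s,\omega)+(\log(\gamma'/\rho))^{1/2}G(\omega)^{1/2}$ over $s\in(\gamma,\gamma')$ and squaring yields, for $\rho\in(0,\gamma)$,
\begin{equation*}
F(\rho,\omega)^2\le\frac{C}{\gamma'-\gamma}\int_{\gamma}^{\gamma'}F(s,\omega)^2\,ds+C\Big(\log\frac{\gamma'}{\rho}\Big)G(\omega).
\end{equation*}

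To conclude, multiply by $\rho^{\sigma-1}$ and integrate over $\rho\in(0,\gamma)$ and $\omega\in\partial B_1(0)$. Because $R^{-n+\sigma-2\alpha}\mathcal{G}(w,\varphi)^2\,dX=\rho^{\sigma-1}F(\rho,\omega)^2\,d\rho\,d\mathcal{H}^{n-1}(\omega)$, the left side equals $\int_{B_\gamma(0)}R^{-n+\sigma-2\alpha}\mathcal{G}(w,\varphi)^2$, and on the right $\int_0^\gamma\rho^{\sigma-1}\,d\rho$ and $\int_0^\gamma\rho^{\sigma-1}\log(\gamma'/\rho)\,d\rho$ are finite constants depending only on $\gamma$ and $\sigma$; hence
\begin{equation*}
\int_{B_\gamma(0)}R^{-n+\sigma-2\alpha}\mathcal{G}(w,\varphi)^2\le C(\gamma,\sigma)\bigg(\frac{1}{\gamma'-\gamma}\int_{\partial B_1(0)}\int_{\gamma}^{\gamma'}F(s,\omega)^2\,ds\,d\mathcal{H}^{n-1}(\omega)+\int_{\partial B_1(0)}G(\omega)\,d\mathcal{H}^{n-1}(\omega)\bigg).
\end{equation*}
The second term is $\le C\mathcal{E}$ by the first display; for the first, $\int_{\partial B_1(0)}F(s,\omega)^2\,d\mathcal{H}^{n-1}=s^{-2\alpha}s^{1-n}\int_{\partial B_s(0)}\mathcal{G}(w,\varphi)^2$, so averaging over $s\in(\gamma,\gamma')$ and applying the coarea formula bounds it by $C(\gamma)\int_{B_1(0)}\mathcal{G}(w,\varphi)^2\le C\mathcal{E}$. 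This gives \eqref{ncao_concl}, with $\varepsilon_0=\varepsilon_0(n,m,\alpha,\varphi^{(0)},\gamma)$ and $C$ also depending on $\sigma$.

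The only genuinely delicate point I anticipate is the two-valued bookkeeping behind the second paragraph: verifying that, for $\mathcal{H}^{n-1}$-a.e.\ $\omega$, $\rho\mapsto\rho^{-\alpha}w(\rho\omega)$ is a bona fide absolutely continuous $\mathcal{A}_2(\mathbb{R}^m)$-valued curve whose metric speed is dominated by precisely the integrand appearing in Theorem~\ref{mainestthm}(b) (so that the radial slicing of that estimate and the a.e.\ finiteness of $G$ are legitimate), and that the identity $\mathcal{G}(w(\rho\omega),\varphi(\rho\omega))=\rho^{\alpha}F(\rho,\omega)$ and the length bound for $F$ persist off a $\mathcal{H}^{n-1}$-null set of directions. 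This rests on the global $C^{1,1/2}$ regularity of $u_s$ and on $\mathcal{K}_u$ having locally finite $\mathcal{H}^{n-2}$-measure (so its radial projection is $\mathcal{H}^{n-1}$-null in $\partial B_1(0)$); everything else is the scaling and Fubini manipulations indicated above.
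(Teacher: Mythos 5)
Your argument for the normalized case ($u(0)=\{0,0\}$, $Du(0)=\{0,0\}$) is correct, and it is essentially a ray-sliced rendition of what the paper does: the paper obtains the same weighted bound from the divergence identity $\int D_i\zeta^i=0$ with $\zeta^i=\psi(R)^2\eta_{\delta}(R)R^{-n+\sigma-2\alpha}\mathcal{G}(w,\varphi)^2X_i$ combined with Theorem~\ref{mainestthm}(b), whereas you integrate the radial estimate along rays via the fundamental theorem of calculus and Cauchy--Schwarz; both hinge on exactly the same ingredient, and your bookkeeping (in particular the identity $R^{-n+\sigma-2\alpha}\mathcal{G}(w,\varphi)^2\,dX=\rho^{\sigma-1}F(\rho,\omega)^2\,d\rho\,d\mathcal{H}^{n-1}(\omega)$ and the averaging over $s\in(\gamma,\gamma')$) checks out. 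The ray-wise absolute-continuity points you flag are indeed fine given that $u_s\in C^{1,1/2}\cap W^{2,2}_{\rm loc}$ and is smooth off $\mathcal{K}_u$.

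The genuine gap is in your one-sentence reduction of the general case $0\in\mathcal{K}_M$ to this normalized case. After rotating by $Q$ with $\|Q-I\|\le C(n,m)|Du(0)|$ you prove the weighted estimate for the rotated graph function $\widehat{u}_s$, but the conclusion \eqref{ncao_concl} is about $u_s$, and the uniform comparison $\sup\mathcal{G}(\widehat{u}_s,u_s)\le C\|Q-I\|\,\|u_s\|_{L^2(B_1(0))}$ from Lemma~\ref{rotate lemma} cannot simply be integrated against the weight: $R^{-n+\sigma-2\alpha}$ is not integrable near the origin (since $\sigma<1\le 2\alpha$), so the statement that the rotation ``perturbs $\mathcal{G}(w,\varphi)$ by at most $C\|Du\|_{C^0}$'' controls the unweighted excess $\mathcal{E}$ but gives no control whatsoever on the weighted left-hand side for $u_s$. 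What is needed --- and what the paper supplies in Part~2 of its proof --- is a \emph{decay} estimate for the difference: applying Lemma~\ref{rotate lemma} at scale $\rho$ together with the decay \eqref{L2_decay} (i.e.\ the frequency monotonicity at $0$, which uses $\mathcal{N}_M(0)\ge\alpha$ and the doubling condition) gives $\int_{B_{\rho}(0)}\mathcal{G}(u_s/\Lambda,\widehat{u}_s/\Lambda)^2\le C\rho^{n+2\alpha}|Du(0)|^2$ for all $\rho$, and only then does dyadic summation against $R^{-n+\sigma-2\alpha}$ yield $\int_{B_{\gamma}(0)}R^{-n+\sigma-2\alpha}\mathcal{G}(u_s/\Lambda,\widehat{u}_s/\Lambda)^2\le C|Du(0)|^2$; this is exactly \eqref{ncao_eqn7}--\eqref{ncao_eqn9}. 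You cannot avoid the issue by running your ray argument on $u$ directly, since Theorem~\ref{mainestthm} requires $Du(0)=\{0,0\}$. With that transfer step inserted, your proof closes; without it, the reduction as written does not establish the stated inequality for $u_s$.
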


\begin{proof} 
\textit{Part 1.}  First we will consider the special case where $u(0) = \{0,0\}$ and $Du(0) = \{0,0\}$.  Set $w = u_s/\Lambda$.  Recall that for any single-valued vector fields $\zeta = (\zeta^1,\ldots,\zeta^n) \in C_c^{0,1}(\mathbb{R}^n)$ that 
\begin{equation} \label{ncao_eqn2}
	\int D_i \zeta^i = 0.  
\end{equation}
Let $\psi : [0,\infty) \rightarrow \mathbb{R}$ be a non-increasing smooth function such that $\psi(t) = 1$ if $t \in [0,\gamma]$ and $\psi(t) = 0$ if $t \geq (1+\gamma)/2$.  For each $\delta \in (0,1)$ let $\eta_{\delta} : [0,\infty) \rightarrow \mathbb{R}$ be a smooth function such that $\eta_{\delta}(t) = 0$ if $t \in [0,\delta/2]$, $\eta_{\delta}(t) = 1$ if $t \geq \delta$, and $0 \leq \eta'_{\delta} \leq 3/\delta$.  Letting $\zeta^i = \psi(R)^2 \eta_{\delta}(R) R^{-n+\sigma-2\alpha} \mathcal{G}(w,\varphi)^2 X_i$ in \eqref{ncao_eqn2} gives us 
\begin{align} \label{ncao_eqn3}
	&\sigma \int \psi(R)^2 \eta_{\delta}(R) R^{-n+\sigma-2\alpha} \mathcal{G}(w,\varphi)^2  
	= - \int \psi(R)^2 \eta_{\delta}(R) R^{1-n+\sigma} D_R (R^{-2\alpha} \mathcal{G}(w,\varphi)^2) 
		\\&\hspace{15mm} - \int 2 \psi(R) \psi'(R) \eta_{\delta}(R) R^{1-n+\sigma-2\alpha} \mathcal{G}(w,\varphi)^2 
		- \int \psi(R)^2 \eta'_{\delta}(R) R^{1-n+\sigma-2\alpha} \mathcal{G}(w,\varphi)^2 . \nonumber
\end{align}
Observe that 
\begin{equation*}
	|D_R (R^{-2\alpha} \mathcal{G}(w,\varphi)^2)| 
	= |D_R (\mathcal{G}(w/R^{\alpha}, \varphi/R^{\alpha})^2)| 
	= 4 \,\mathcal{G}(w/R^{\alpha}, \varphi/R^{\alpha}) \,|D_R (w/R^{\alpha})| 
\end{equation*}
a.e.~in $B_1(0)$.  Thus using the Cauchy inequality in \eqref{ncao_eqn3} and using Theorem~ \ref{mainestthm} (after dropping the last term on the right hand side) 
\begin{align} \label{ncao_eqn4}
	&\int \psi(R)^2 \eta_{\delta}(R) R^{-n+\sigma-2\alpha} \mathcal{G}(w,\varphi)^2 
	\\&\hspace{15mm} \leq \frac{36}{\sigma^2} \int \psi(R)^2 R^{2-n+\sigma} \left| \frac{\partial (w/R^{\alpha})}{\partial R} \right|^2  
		+ \frac{9}{\sigma^2} \int_{B_1(0)} \psi'(R)^2 R^{2-n+\sigma-2\alpha} \mathcal{G}(w,\varphi)^2 \nonumber 
	\\&\hspace{15mm} \leq C \int_{B_1(0)} \mathcal{G}(w,\varphi)^2 + C \|Du\|_{C^0(B_1(0))}^4 \nonumber 
\end{align}
for some constant $C = C(n,m,\alpha,\varphi^{(0)},\gamma,\sigma) \in (0,\infty)$.  Letting $\delta \downarrow 0$ in \eqref{ncao_eqn4} using the monotone
convergence theorem gives us \eqref{ncao_concl}.

\noindent \textit{Part 2.}  Now we consider the general case.  Let $M = {\rm graph}\, u$.  By translating assume that $u(0) = \{0,0\}$.  Let $Q$ be a rotation such that $Q(T_0 M) = \mathbb{R}^n \times \{0\}$ and $\|Q - I\| \leq C(n,m) \,|Du(0)|$ and, assuming $\varepsilon_0$ is sufficiently small, let $\widehat{u} \in C^{1,1/2}(B_{(1+\gamma)/2}(0), \mathcal{A}_2(\mathbb{R}^m))$ be the two-valued function such that $Q(M) \cap B_{(1+\gamma)/2}(0) \times \mathbb{R}^m = {\rm graph}\,\widehat{u}$ (as in Definition~\ref{hatu defn}).

Observe that $\|\widehat{u}\|_{C^{1,1/2}(B_{(3+\gamma)/4}(0))} \leq C(n,m,\gamma) \,\varepsilon_0$.  Moreover, by \eqref{F defn rmk eqn1} and Lemma~\ref{rotate lemma} (with $u, \widehat{u}_P$ in place of $u,\widehat{u}$) provided $\varepsilon_0$ is sufficiently small we have that 
\begin{equation} \label{ncao_eqn06}
	\int_{B_{(3+\gamma)/4}(0)} \mathcal{G}\left( \frac{\widehat{u}_s}{\Lambda}, \varphi^{(0)} \right)^2 \leq C \varepsilon_0^2 
\end{equation}
for some constant $C = C(n,m,\varphi^{(0)},\gamma) \in (0,\infty)$.  Hence we may use Part 1 and $\|D\widehat{u}\|_{C^0(B_{(1+\gamma)/2}(0))} \leq C(n,m) \|Du\|_{C^0(B_1(0))}$ to obtain 
\begin{equation} \label{ncao_eqn6} 
	\int_{B_{\gamma}(0)} R^{-n+\sigma-2\alpha} \mathcal{G}\left( \frac{\widehat{u}_s}{\Lambda}, \varphi \right)^2 
	\leq C \int_{B_{(1+\gamma)/2}(0)} \mathcal{G}\left( \frac{\widehat{u}_s}{\Lambda}, \varphi \right)^2 + C \|Du\|_{C^0(B_1(0))}^4  
\end{equation}
where $C = C(n,m,\alpha,\varphi^{(0)},\gamma,\sigma) \in (0,\infty)$.  By Lemma~\ref{rotate lemma} (with $\widehat{u}_P, u$ in place of $u,\widehat{u}$), \eqref{ncao_eqn06} and \eqref{L2_decay}, provided $\varepsilon_0$ is sufficiently small we have for each $\rho \in (0,(1+\gamma)/2]$ that 
\begin{align} \label{ncao_eqn7}
	\int_{B_{\rho}(0)} \mathcal{G}\left( \frac{u_s}{\Lambda}, \frac{\widehat{u}_s}{\Lambda} \right)^2
	&\leq \frac{C |Du(0)|^2}{\Lambda^2} \int_{B_{\beta \rho}(0)} |\widehat{u}_{P,s}|^2
	\\&\leq \frac{C \rho^{n+2\alpha} |Du(0)|^2}{\Lambda^2} \int_{B_{(3+\gamma)/4}(0)} |\widehat{u}_{P,s}|^2
	\leq C \rho^{n+2\alpha} |Du(0)|^2 , \nonumber 
\end{align}
where $\beta = (3+\gamma)/(2+2\gamma) > 1$ and $C = C(n,m,\alpha,\varphi^{(0)},\gamma) \in (0,\infty)$.  Hence by \eqref{ncao_eqn6} and \eqref{ncao_eqn7} with $\rho = (1+\gamma)/2$, 
\begin{equation} \label{ncao_eqn8}
	\int_{B_{\gamma}(0)} R^{-n+\sigma-2\alpha} \mathcal{G}\left( \frac{\widehat{u}_s}{\Lambda}, \varphi \right)^2 
	\leq C \int_{B_1(0)} \mathcal{G}\left( \frac{u_s}{\Lambda}, \varphi \right)^2 + C \|Du\|_{C^0(B_1(0))}^2  
\end{equation}
for some constant $C = C(n,m,\alpha,\varphi^{(0)},\gamma,\sigma) \in (0,\infty)$.  By \eqref{ncao_eqn7} with $\rho = 2^{-k} \gamma$ for $k = 0,1,2,\ldots$ 
\begin{align} \label{ncao_eqn9}
	\int_{B_{\gamma}(0)} R^{-n+\sigma-2\alpha} \mathcal{G}\left( \frac{u_s}{\Lambda}, \frac{\widehat{u}_s}{\Lambda} \right)^2 
	&\leq C \sum_{k=0}^{\infty} 2^{(n-\sigma+2\alpha) k} \int_{B_{2^{-k} \gamma}(0)} 
		\mathcal{G}\left( \frac{u_s}{\Lambda}, \frac{\widehat{u}_s}{\Lambda} \right)^2
	\\&\leq C \sum_{k=0}^{\infty} 2^{-\sigma k} |Du(0)|^2 = C |Du(0)|^2  \nonumber 
\end{align}
where $C = C(n,m,\alpha,\varphi^{(0)},\gamma,\sigma) \in (0,\infty)$.  Combining \eqref{ncao_eqn8} and \eqref{ncao_eqn9} using the triangle inequality yields \eqref{ncao_concl}.
\end{proof}

Now let $\varepsilon > 0$, $\varphi^{(0)}$ be as in Definition~\ref{varphi0 defn}, $\varphi \in \Phi_{\varepsilon}(\varphi^{(0)})$ and $(u,\Lambda) \in \mathcal{F}_{\varepsilon}(\varphi^{(0)})$.  Recall that we use coordinates $X = (x,y)$ on $\mathbb{R}^n$, where $x = (x_1,x_2)$ and $y = (x_3,\ldots,x_n)$.  Consider a point $Z = (\xi,\zeta) \in \mathcal{K}_u \cap B_{1/2}(0)$.  Observe that for each $X = (x,y) \in B_1(0) \setminus \{0\} \times \mathbb{R}^{n-2}$, $\varphi = \{\pm \varphi_1\}$ on $B^2_{|x|/2}(x) \times \mathbb{R}^{n-2}$ for some single-valued harmonic function $\varphi_1$.  By Taylor's theorem applied to $\varphi_1$ and the homogeneity of $\varphi$, if $|\xi| \leq |x|/2$ then
\begin{equation*} 
	\varphi(X-Z) = \{ \pm (\varphi_1(X) - D_x \varphi_1(X) \cdot \xi + \mathcal{R}(x,\xi)) \}
\end{equation*}
where $|\mathcal{R}(x,\xi)| \leq C |x|^{\alpha-2} |\xi|^2$ with $C = C(\alpha,\sup_{S^1} |D^2 \varphi|) \in (0,\infty)$.  Hence if $|\xi| \leq |x|/2$ then 
\begin{equation} \label{translate eqn1}
	\mathcal{G}(\varphi(X-Z), \varphi(X) - D_x \varphi(X) \cdot \xi) \leq C |x|^{\alpha-2} |\xi|^2, 
\end{equation}
where $C = C(\varphi^{(0)}) \in (0,\infty)$ and we define   
\begin{equation*}
	\varphi(X) - D_x \varphi(X) \cdot \xi = \{ \pm (\varphi_1(X) - D_x \varphi_1(X) \cdot \xi) \} . 
\end{equation*}
Notice that if $|\varphi(X)| \geq |D_x \varphi(X) \cdot \xi|$ then 
\begin{equation*}
	\mathcal{G}(\varphi(X), \varphi(X) - D_x \varphi(X) \cdot \xi) = |D_x \varphi(X) \cdot \xi|
\end{equation*}
and thus by the triangle inequality and \eqref{translate eqn1} if $|\xi| \leq |x|/2$ and $|\varphi(X)| \geq |D_x \varphi(X) \cdot \xi|$ then 
\begin{equation} \label{translate eqn3}
	\mathcal{G}(u(X),\varphi(X-Z)) \geq |D_x \varphi(X) \cdot \xi| - \mathcal{G}(u(X), \varphi(X)) - C |x|^{\alpha-2} |\xi|^2
\end{equation}
for some constant $C = C(\varphi^{(0)}) \in (0,\infty)$.  By the triangle inequality and the fundamental theorem of calculus, for all $X = (x,y) \in B_1(0)$, 
\begin{equation} \label{translate eqn4}
	|\mathcal{G}(u(X),\varphi(X-Z)) - \mathcal{G}(u(X),\varphi(X))| \leq \mathcal{G}(\varphi(X-Z),\varphi(X)) \leq C |\xi|
\end{equation}
for some constant $C = C(\varphi^{(0)}) \in (0,\infty)$.  It follows using \eqref{minimal sym schauder} that there exists $\tau(\varepsilon) \in (0,1)$ with $\tau(\varepsilon) \rightarrow 0$ as $\varepsilon \downarrow 0$ such that $|\xi| < \tau(\varepsilon)$ for every $Z = (\xi,\zeta) \in \mathcal{K}_u \cap B_{1/2}(0)$.  Hence by \eqref{translate eqn4} with $\varphi^{(0)}$ in place of $\varphi$, 
\begin{align*}
	&\int_{B_{1/2}(0)} \mathcal{G}\left( \frac{u(X-Z)}{\Lambda}, \varphi^{(0)}(X) \right)^2 dX
		\leq \int_{B_1(0)} \mathcal{G}\left( \frac{u(X)}{\Lambda}, \varphi^{(0)}(X-Z) \right)^2 dX
		\\&\hspace{15mm} \leq 2 \int_{B_1(0)} \mathcal{G}\left( \frac{u(X)}{\Lambda}, \varphi^{(0)}(X) \right)^2 dX + 2 C^2 |\xi|^2 
		\leq 2 \varepsilon^2 + 2 C^2 \tau^2(\varepsilon) 
\end{align*}
and thus $(2 u(Z + X/2), 2^{1-\alpha} \Lambda) \in \mathcal{F}_{C(\varepsilon + \tau(\varepsilon))}(\varphi^{(0)})$ for some constant $C = C(\varphi^{(0)}) \in (0,\infty)$.

\begin{corollary} \label{lemma3_9}
Let $\alpha$ and $\varphi^{(0)}$ be as in Definition~\ref{varphi0 defn}.  There exists $\varepsilon_0 = \varepsilon_0(n,m,\alpha,\varphi^{(0)}) \in (0,1)$ such that if $\varphi \in \Phi_{\varepsilon_0}(\varphi^{(0)})$, $(u,\Lambda) \in \mathcal{F}_{\varepsilon_0}(\varphi^{(0)})$ and $Z \in \mathcal{K}_u \cap B_{1/2}(0)$ such that $\mathcal{N}_M(Z,u_1(Z)) \geq \alpha$, where $M = {\rm graph}\,u$ and $u(Z) = \{u_1(Z), u_1(Z)\}$ for some $u_1(Z) \in \mathbb{R}^m$, then 
\begin{align*}
	&(a) \hspace{5mm} \op{dist}(Z, \{0\} \times \mathbb{R}^{n-2})^2 
		\leq C \int_{B_1(0)} \mathcal{G}\left( \frac{u_s}{\Lambda}, \varphi \right)^2 + C \|Du\|_{C^0(B_1(0))}^2 
	\\&(b) \hspace{5mm} \int_{B_1(0)} \mathcal{G}\left( \frac{u_s(X)}{\Lambda}, \varphi(X-Z) \right)^2 \,dX 
		\leq C \int_{B_1(0)} \mathcal{G}\left( \frac{u_s}{\Lambda}, \varphi \right)^2 + C \|Du\|_{C^0(B_1(0))}^2 
\end{align*}
for some constant $C = C(n,m,\varphi^{(0)},\alpha) \in (0,\infty)$. 
\end{corollary}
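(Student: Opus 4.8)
The plan is to prove part (a) first and then deduce (b) from (a) in one line. Write $Z=(\xi,\zeta)$ with $\xi\in\mathbb{R}^2$, $\zeta\in\mathbb{R}^{n-2}$, so that $|\xi|=\op{dist}(Z,\{0\}\times\mathbb{R}^{n-2})$; recall from the discussion preceding the statement that $|\xi|<\tau(\varepsilon_0)$ with $\tau(\varepsilon_0)\to 0$ as $\varepsilon_0\downarrow 0$, and that $(v,\widetilde\Lambda):=(2u(Z+X/2),2^{1-\alpha}\Lambda)\in\mathcal{F}_{C(\varepsilon_0+\tau(\varepsilon_0))}(\varphi^{(0)})$. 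First I would note that $0\in\mathcal{K}_{{\rm graph}\,v}$ (since $Z\in\mathcal{K}_u$), that the frequency of ${\rm graph}\,v$ at the point lying over $0$ equals $\mathcal{N}_M(Z,u_1(Z))\geq\alpha$ (because ${\rm graph}\,v=2(M-(Z,0))$ and the frequency is invariant under translations and dilations), and that, by \eqref{translate eqn4} together with \eqref{minimal sym schauder}, $v_s/\widetilde\Lambda$ is $L^2(B_1(0))$-close to $\varphi$. Hence, for $\varepsilon_0=\varepsilon_0(n,m,\alpha,\varphi^{(0)})$ small, Corollary~\ref{noncon_at_origin_cor} applies to $(v,\widetilde\Lambda)$ and $\varphi$ (take $\gamma=\tfrac12$, $\sigma=\tfrac12$). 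Changing variables $Y=Z+X/2$ in its conclusion turns it into a weighted non-concentration estimate centred at $Z$ and measured against $\varphi(\cdot-Z)$, namely
\begin{equation*}
	\int_{B_{1/4}(Z)}|Y-Z|^{-n+\sigma-2\alpha}\,\mathcal{G}(\tfrac{u_s(Y)}{\Lambda},\varphi(Y-Z))^2\,dY \leq C\int_{B_{1/2}(Z)}\mathcal{G}(\tfrac{u_s}{\Lambda},\varphi(\cdot-Z))^2 + C\|Du\|_{C^0(B_1(0))}^2,
\end{equation*}
and by \eqref{translate eqn4} its right-hand side is at most $C\int_{B_1(0)}\mathcal{G}(u_s/\Lambda,\varphi)^2+C|\xi|^2+C\|Du\|_{C^0(B_1(0))}^2$.

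Next I would extract the bound on $|\xi|$ by a coercivity argument on a \emph{small} annulus around $Z$. For a parameter $\delta\in(0,\tfrac1{16})$ to be fixed, set $A:=\{Y:\delta\le|Y-Z|\le 2\delta,\ |x_Y|\ge\delta/4\}$, where $x_Y\in\mathbb{R}^2$ denotes the first two coordinates of $Y$. For $\varepsilon_0$ small (so $|\xi|\ll\delta$) we have $A\subset B_{1/4}(Z)\subset B_1(0)$ and $|x_Y|>2|\xi|$ on $A$; moreover, off a thin neighbourhood $A'\subset A$ of $\{\varphi=0\}$ --- whose measure is $O(|\xi|)$ and whose contribution to $\int_A|D_x\varphi\cdot\xi|^2$ is therefore negligible against $\delta^{n+2\alpha-2}|\xi|^2$ --- one has $|\varphi(Y)|\ge|D_x\varphi(Y)\cdot\xi|$, so the reverse-triangle inequality underlying \eqref{translate eqn3} (applied with $u_s/\Lambda$ in place of $u$) gives, pointwise on $A\setminus A'$,
\begin{equation*}
	\mathcal{G}(\tfrac{u_s(Y)}{\Lambda},\varphi(Y-Z))^2 \geq \tfrac12|D_x\varphi(Y)\cdot\xi|^2 - 2\,\mathcal{G}(\tfrac{u_s(Y)}{\Lambda},\varphi(Y))^2 - C\,|x_Y|^{2\alpha-4}|\xi|^4 .
\end{equation*}
Integrating over $A\setminus A'$, using $\int_A|x_Y|^{2\alpha-4}\le C\delta^{n+2\alpha-4}$ and $|Y-Z|^{-n+\sigma-2\alpha}\ge(2\delta)^{-n+\sigma-2\alpha}$ on $A$, bounding the left-hand side by $(2\delta)^{n+2\alpha-\sigma}$ times the weighted estimate above, and invoking $\int_{A\setminus A'}|D_x\varphi(Y)\cdot\xi|^2\ge c_0\,\delta^{n+2\alpha-2}|\xi|^2$ with $c_0=c_0(\varphi^{(0)})>0$ --- which follows by scaling from positive-definiteness of $\xi\mapsto\int_{\{1\le|W|\le2,\,|x_W|\ge1/4\}}|D_x\varphi^{(0)}(W)\cdot\xi|^2\,dW$ (the map $\xi\mapsto D_x\varphi^{(0)}(\cdot)\cdot\xi=\op{Re}(\alpha c^{(0)}(x_1+ix_2)^{\alpha-1}(\xi_1+i\xi_2))$ being injective) and continuity in $\varphi\in\Phi_{\varepsilon_0}(\varphi^{(0)})$ --- I would arrive at
\begin{equation*}
	c_0\,\delta^{n+2\alpha-2}|\xi|^2 \leq C\,(2\delta)^{n+2\alpha-\sigma}\Big(\int_{B_1(0)}\mathcal{G}(\tfrac{u_s}{\Lambda},\varphi)^2+|\xi|^2+\|Du\|_{C^0}^2\Big) + C\int_{B_1(0)}\mathcal{G}(\tfrac{u_s}{\Lambda},\varphi)^2 + C\,\delta^{n+2\alpha-4}|\xi|^4 .
\end{equation*}

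The decisive step --- and the one I expect to be the main obstacle --- is the absorption. Since $n+2\alpha-\sigma=(n+2\alpha-2)+(2-\sigma)$ with $2-\sigma>0$, the coefficient $C(2\delta)^{n+2\alpha-\sigma}$ of the bad $|\xi|^2$-term on the right is smaller than the coercive coefficient $c_0\delta^{n+2\alpha-2}$ by the factor $\delta^{2-\sigma}$; choosing $\delta=\delta(n,m,\alpha,\varphi^{(0)})$ so small that $C(2\delta)^{n+2\alpha-\sigma}\le\tfrac12 c_0\delta^{n+2\alpha-2}$, dividing by $\delta^{n+2\alpha-2}$, and absorbing the remaining term $C\delta^{-2}|\xi|^4\le C\delta^{-2}\tau(\varepsilon_0)^2\,|\xi|^2$ for $\varepsilon_0$ small, yields (a) with $C=C(n,m,\alpha,\varphi^{(0)})$. (A naive triangle-inequality bound of the weighted estimate's right-hand side, instead of this annular argument, only gives $|\xi|^2\le C(\text{excess}+\|Du\|_{C^0}^2)+C|\xi|^2$ with $C$ not small, hence nothing; it is precisely the $\sigma>0$ in the non-concentration weight that provides the gain.) Finally, (b) is immediate: by the triangle inequality and \eqref{translate eqn4}, $\int_{B_1(0)}\mathcal{G}(u_s(X)/\Lambda,\varphi(X-Z))^2\,dX\le 2\int_{B_1(0)}\mathcal{G}(u_s/\Lambda,\varphi)^2+C|\xi|^2$, and the right-hand side is controlled by part (a).
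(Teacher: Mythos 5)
Your proposal is correct and follows essentially the same route as the paper's proof: you apply Corollary~\ref{noncon_at_origin_cor} to $(2u(Z+X/2),2^{1-\alpha}\Lambda)$ (using the discussion around \eqref{translate eqn1}--\eqref{translate eqn4}), combine the resulting weighted estimate with the reverse-triangle inequality \eqref{translate eqn3} and a coercivity lower bound for $\int |D_x\varphi\cdot\xi|^2$, and absorb the bad $|\xi|^2$- and $|\xi|^4$-terms using the $\delta^{2-\sigma}$ gain from the weight together with $|\xi|\leq\tau(\varepsilon_0)$, with (b) deduced from (a) and \eqref{translate eqn4} exactly as the paper does. The only (minor) deviation is in how the coercivity is obtained: the paper proves the measure-theoretic claim \eqref{lemma3_9_eqn1} by a compactness/contradiction argument at a small scale $\rho$ about $Z$, whereas you verify positive-definiteness of $\xi\mapsto\int|D_x\varphi\cdot\xi|^2$ directly on an annulus where $|x|\gtrsim\delta$ (which also lets you dispense with the paper's separate term over $\{|x|\leq|\xi|/2\}$) --- a sound variant of the same argument.
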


\begin{proof} 
We claim that there is a constant $\delta_1 = \delta_1(\varphi^{(0)}) > 0$ and, given $\rho \in (0,1/4)$, there is a constant $\varepsilon_0 = \varepsilon_0(\rho,\varphi^{(0)}) > 0$ such that for $\varphi \in \Phi_{\varepsilon_0}(\varphi^{(0)})$, $(u,\Lambda) \in \mathcal{F}_{\varepsilon_0}(\varphi^{(0)})$ and $Z = (\xi,\zeta) \in \mathcal{K}_u \cap B_{1/2}(0)$, 
\begin{align} \label{lemma3_9_eqn1}
	\mathcal{L}^n \{ X \in B_{\rho}(Z) : \delta_1 |\xi| |x|^{\alpha-1} \leq |D_x \varphi(X) \cdot \xi| \leq |\varphi(X)| \} \geq \delta_1 \rho^n. 
\end{align}
Suppose \eqref{lemma3_9_eqn1} is false; then for every $\delta_1 > 0$ there is a $\rho > 0$ such that with $\varepsilon_j = 1/j$, there exists $(u_j,\Lambda_j) \in \mathcal{F}_{\varepsilon_j}(\varphi^{(0)})$, $\varphi_j \in \Phi_{\varepsilon_j}(\varphi^{(0)})$ and $Z_j = (\xi_j,\zeta_j) \in \mathcal{K}_{u_j}$ such that 
\begin{equation*}
	\mathcal{L}^n \{ X \in B_{\rho}(Z_j) : \delta_1 |\xi_j| |x|^{\alpha-1} \leq |D_x \varphi_j(X) \cdot \xi_j| \leq |\varphi_j(X)| \} < \delta_1 \rho^n. 
\end{equation*}
After passing to a subsequence, $\varphi_j \rightarrow \varphi^{(0)}$ in $C^1(B_1(0))$, $\xi_j \rightarrow 0$, $\zeta_j \rightarrow \zeta$ for some $\zeta \in \overline{B^{n-2}_1(0)}$ and $\xi_j/|\xi| \rightarrow a$ for some $a \in S^1$ such that 
\begin{equation*} 
	\mathcal{L}^n \{ X \in B_{\rho}(0,\zeta) : \delta_1 |x|^{\alpha-1} < |D_x \varphi^{(0)}(X) \cdot a| 
		\text{ and } |\varphi^{(0)}(X)| > 0 \} \leq \delta_1 \rho^n.
\end{equation*}
Notice that $\varphi^{(0)}(X) \neq \{0,0\}$ for $\mathcal{L}^n$-a.e.~$X \in B_1(0)$ so we in fact have that 
\begin{equation} \label{lemma3_9_eqn2}
	\mathcal{L}^n \{ X \in B_{\rho}(0,\zeta) : \delta_1 |x|^{\alpha-1} < |D_x \varphi^{(0)}(X) \cdot a| \} \leq \delta_1 \rho^n.
\end{equation}
Thus we have shown that for every $\delta_1 > 0$ there is $\rho > 0$, $Z \in \{0\} \times \mathbb{R}^{n-2}$ and $a \in S^1$ such that \eqref{lemma3_9_eqn2} holds true.  By translating and rescaling, we may suppose that $Z = 0$ and $\rho = 1$.  Thus for $\delta_1 = 1/j$, there is an $a_j \in S^1$ such that 
\begin{equation*}
	\mathcal{L}^n \{ X \in B_1(0) : (1/j) |x|^{\alpha-1} < |D_x \varphi^{(0)}(X) \cdot a_j| \} < 1/j.
\end{equation*}
After passing to a subsequence, $a_j \rightarrow a$ with $a \in S^1$ such that 
\begin{equation*}
	D_x \varphi^{(0)}(X) \cdot a = 0 \text{ $\mathcal{L}^n$-a.e.~in } B_1(0),
\end{equation*}
which is obviously false since $\varphi^{(0)}(X) = \op{Re}(c^{(0)} (x_1+ix_2)^{\alpha})$.

Let $Z = (\xi,\zeta) \in \mathcal{K}_u \cap B_{1/2}(0)$ such that $\mathcal{N}_M(Z) \geq \alpha$.  Let $\rho \in (0,1/4)$ be chosen.  With $\delta_1$ as in the claim, choose $\kappa = \kappa(n) \in (0,1)$ such that $\mathcal{L}^n(B^2_{\kappa \delta_1^{1/2} \rho}(0) \times B^{n-2}_{\rho}(0)) < \delta_1 \rho^n/2$.  Using \eqref{lemma3_9_eqn1} and \eqref{translate eqn3} we can find some set $S \subset B_{\rho}(Z)$ with $\mathcal{L}^n(S) \geq \delta_1 \rho^n/2$ such that 
\begin{align} \label{lemma3_9_eqn5}
	&\frac{1}{2} \delta_1^{\alpha+1} \kappa^{2\alpha-2} \rho^{n+2\alpha-2} |\xi|^2 \leq \int_{S} |x|^{2\alpha-2} |\xi|^2 
		\leq \delta_1^2 \int_{S} |D_x \varphi(X) \cdot \xi|^2 
	\\&\hspace{10mm} \leq 3 \int_{B_{\rho}(Z)} \mathcal{G}\left( \frac{u_s(X)}{\Lambda}, \varphi(X-Z) \right)^2 \,dX 
		+ 3 \int_{B_{\rho}(Z)} \mathcal{G}\left( \frac{u_s(X)}{\Lambda}, \varphi(X) \right)^2 \,dX \nonumber 
		\\&\hspace{20mm}  + 3 C \int_{B_{\rho}(Z) \cap \{|x| \geq |\xi|/2\}} |x|^{2\alpha-4} |\xi|^4 
		+ \int_{B_{\rho}(Z) \cap \{|x| \leq |\xi|/2\}} |D_x \varphi(X)|^2 |\xi|^2 \nonumber 
\end{align}
for all $\rho \in (0,1/4)$ and some constant $C = C(\varphi^{(0)}) \in (0,\infty)$.  We need to bound the terms on the right-hand side of \eqref{lemma3_9_eqn5}.  For the first term on the right-hand side of \eqref{lemma3_9_eqn5}, by Corollary~\ref{noncon_at_origin_cor} with $(2 u(Z + X/2), 2^{1-\alpha} \Lambda)$ in place of $(u,\Lambda)$ and $\sigma = 1/2$ and by \eqref{translate eqn4}, 
\begin{align} \label{lemma3_9_eqn6}
	&\rho^{-n-2\alpha+\sigma} \int_{B_{\rho}(Z)} \mathcal{G}\left( \frac{u_s(X)}{\Lambda}, \varphi(X-Z) \right)^2 \,dX \\
	&\hspace{15mm} \leq C \int_{B_1(0)} \mathcal{G}\left( \frac{u_s(X)}{\Lambda}, \varphi(X-Z) \right)^2 \,dX + C \|Du\|_{C^0(B_1(0))}^2 
		\nonumber \\
	&\hspace{15mm} \leq C \int_{B_1(0)} \mathcal{G}\left( \frac{u_s(X)}{\Lambda}, \varphi(X) \right)^2 \,dX + C|\xi|^2 + C \|Du\|_{C^0(B_1(0))}^2 \nonumber
\end{align}
for some constant $C = C(n,m,\varphi^{(0)},\alpha) \in (0,\infty)$.  For the last two terms on the right-hand side of \eqref{lemma3_9_eqn5}, by direct computation
\begin{align} 
	\label{lemma3_9_eqn7} &\int_{B_{\rho}(Z) \cap \{|x| \geq |\xi|/2\}} |x|^{2\alpha-4} |\xi|^4 
		\leq C (|\xi|^{n+2\alpha-4} + \rho^{n+2\alpha-4}) |\xi|^4, \\
	\label{lemma3_9_eqn8} &\int_{B_{\rho}(Z) \cap \{|x| \leq |\xi|/2\}} |D_x \varphi(X)|^2 |\xi|^2 
		\leq C \rho^{n-2} \int_{B^2_{|\xi|/2}(0)} |x|^{2\alpha-2} |\xi|^2 \leq C \rho^{n-2} |\xi|^{2\alpha}
\end{align}
where $C = C(n,\alpha,\varphi^{(0)}) \in (0,\infty)$.  Therefore, by \eqref{lemma3_9_eqn5}, \eqref{lemma3_9_eqn6}, \eqref{lemma3_9_eqn7} and \eqref{lemma3_9_eqn8}, 
\begin{align} \label{lemma3_9_eqn9}
	\rho^{n+2\alpha-2} |\xi|^2 
	\leq {}& C \int_{B_1(0)} \mathcal{G}\left( \frac{u_s}{\Lambda}, \varphi \right)^2 + C \|Du\|_{C^0(B_1(0))}^2 \rho^{n+2\alpha-\sigma} \\
		&\hspace{10mm} + C (\rho^{2-\sigma} + \rho^{-2} |\xi|^2 + \rho^{-n-2\alpha+2} |\xi|^{n+2\alpha-2} + \rho^{-2\alpha} |\xi|^{2\alpha}) \rho^{n+2\alpha-2} |\xi|^2 \nonumber
\end{align}
for some constant $C = C(n,m,\varphi^{(0)},\alpha) \in (0,\infty)$.  Notice that given $\tau > 0$, we may choose $\varepsilon_0 = \varepsilon_0(\varphi^{(0)},\tau) > 0$ small enough to ensure that $|\xi| < \tau$.  Choose $\rho = \rho(n,m,\alpha,\varphi^{(0)}) \in $ and $\tau = \tau(n,m,\alpha,\varphi^{(0)})$ small enough that $C (\rho^{2-\sigma} + \rho^{-2} \tau^2 + \rho^{-n-2\alpha+2} \tau^{n+2\alpha-2} + \rho^{-2\alpha} \tau^{2\alpha}) < 1/2$.  Then whenever $\varepsilon_0 = \varepsilon_0(n,m,\alpha,\varphi^{(0)})$ is sufficiently small, under the hypotheses of the corollary we conclude from \eqref{lemma3_9_eqn9} that  
\begin{equation} \label{lemma3_9_eqn10}
	|\xi|^2 \leq C \int_{B_1(0)} \mathcal{G}\left( \frac{u_s}{\Lambda}, \varphi \right)^2 + C \|Du\|_{C^0(B_1(0))}^2 
\end{equation}
for some constant $C = C(n,m,\varphi^{(0)},\alpha) \in (0,\infty)$.  This is conclusion (a).  Conclusion (b) follows directly from \eqref{translate eqn4} and \eqref{lemma3_9_eqn10}.
\end{proof}

\begin{corollary} \label{thm3_1} 
Let $\alpha$ and $\varphi^{(0)}$ be as in Definition~\ref{varphi0 defn}.  For every $\gamma, \tau, \sigma \in (0,1)$ with $\tau \leq (1-\gamma)/20$ there exists $\varepsilon_0 = \varepsilon_0(n,m,\alpha,\varphi^{(0)},\gamma,\tau) \in (0,1)$ such that if $\varphi \in \Phi_{\varepsilon_0}(\varphi^{(0)})$, $(u,\Lambda) \in \mathcal{F}_{\varepsilon_0}(\varphi^{(0)})$, $v$ is as in Lemma~\ref{lemma2_6} with $\beta = 1/2$ and $Z = (\xi,\zeta) \in \mathcal{K}_u \cap B_{1/2}(0)$ such that $\mathcal{N}_M(Z,u_1(Z)) \geq \alpha$, where $M = {\rm graph}\,u$ and $u(Z) = \{u_1(Z), u_1(Z)\}$ for some $u_1(Z) \in \mathbb{R}^m$, then 

\begin{align*} 
	&\int_{B_{\gamma}(0)} \frac{\mathcal{G}(u_s/\Lambda,\varphi)^2}{|X-Z|^{n-\sigma}} \,dX
	+ \int_{B_{\gamma}(0) \cap \{ |x| > \tau \}} \frac{|v(X,\varphi(X)) + D_x \varphi(X) \cdot \xi|^2}{|X-Z|^{n+2\alpha-\sigma}} \,dX
	\\&\hspace{15mm} \leq C \int_{B_1(0)} \mathcal{G}\left( \frac{u_s}{\Lambda}, \varphi \right)^2 + C \|Du\|_{C^0(B_1(0))}^2, 
\end{align*}
where $C = C(n,m,\alpha,\varphi^{(0)},\gamma,\sigma) \in (0,\infty)$ is a constant (independent of $\tau$) and for each $X = (x,y) \in B_{\gamma}(0)$ we note that $\varphi = \{\pm \varphi\}$ in $B^2_{|x|}(x) \times \mathbb{R}^{n-2}$ for some single-valued harmonic function $\varphi_1$ and we let $v(X,\varphi(X)) + D_x \varphi(X) \cdot \xi = \{ \pm (v(X,\varphi_1(X)) + D_x \varphi_1(X) \cdot \xi) \}$. 
\end{corollary}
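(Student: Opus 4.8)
The plan is to deduce both terms of the Corollary from a single auxiliary estimate, the \emph{translated non-concentration} bound
\[
	\int_{B_{\gamma}(0)} \frac{\mathcal{G}(u_s/\Lambda,\varphi(\,\cdot\,-Z))^2}{|X-Z|^{n+2\alpha-\sigma}}\,dX \;\le\; C\,\mathcal{E},
	\qquad \mathcal{E}:=\int_{B_1(0)}\mathcal{G}\!\left(\tfrac{u_s}{\Lambda},\varphi\right)^2 + \|Du\|_{C^0(B_1(0))}^2 ,
\]
which I denote $(\star)$. To prove $(\star)$ I would rescale at $Z$: set $\widetilde{u}(Y)=2u(Z+Y/2)$, $\widetilde{\Lambda}=2^{1-\alpha}\Lambda$. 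By the computation preceding Corollary~\ref{lemma3_9} (using \eqref{translate eqn4} and that, after shrinking $\varepsilon_0$, $|\xi|$ is as small as we wish) one has $(\widetilde{u},\widetilde{\Lambda})\in\mathcal{F}_{\varepsilon_0'}(\varphi^{(0)})$ with $\varepsilon_0'$ arbitrarily small, while $0\in\mathcal{K}_{\widetilde{M}}$ and $\mathcal{N}_{\widetilde{M}}(0)=\mathcal{N}_M(Z,u_1(Z))\ge\alpha$ since $Z\in\mathcal{K}_u$. Applying Corollary~\ref{noncon_at_origin_cor} to $(\widetilde{u},\widetilde{\Lambda})$ and $\varphi$ and then changing variables $Y=2(X-Z)$ (so $\varphi(Y)=2^{\alpha}\varphi(X-Z)$ and $\widetilde{u}_s(Y)/\widetilde{\Lambda}=2^{\alpha}u_s(X)/\Lambda$) gives $\int_{B_{\gamma/2}(Z)}|X-Z|^{-n-2\alpha+\sigma}\mathcal{G}(u_s/\Lambda,\varphi(\,\cdot\,-Z))^2 \le C(\int_{B_1(0)}\mathcal{G}(u_s/\Lambda,\varphi(\,\cdot\,-Z))^2+\|Du\|_{C^0(B_1(0))}^2)$, and the right-hand side is $\le C\mathcal{E}$ by Corollary~\ref{lemma3_9}(b). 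Adding to this the trivial estimate on $B_{\gamma}(0)\setminus B_{\gamma/4}(Z)$, where the weight is bounded, together with one more use of Corollary~\ref{lemma3_9}(b), yields $(\star)$.

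Granting $(\star)$, I would treat the first term of the Corollary via $\mathcal{G}(u_s/\Lambda,\varphi)^2\le 2\,\mathcal{G}(u_s/\Lambda,\varphi(\,\cdot\,-Z))^2+2\,\mathcal{G}(\varphi(\,\cdot\,-Z),\varphi)^2$. The first piece satisfies $\int_{B_\gamma(0)}|X-Z|^{-n+\sigma}\mathcal{G}(u_s/\Lambda,\varphi(\,\cdot\,-Z))^2 \le 2^{2\alpha}\!\int_{B_\gamma(0)}|X-Z|^{-n-2\alpha+\sigma}\mathcal{G}(u_s/\Lambda,\varphi(\,\cdot\,-Z))^2 \le C\mathcal{E}$ by $(\star)$, using $|X-Z|\le 2$. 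For the second piece I would prove $\int_{B_\gamma(0)}|X-Z|^{-n+\sigma}\mathcal{G}(\varphi(\,\cdot\,-Z),\varphi)^2\le C|\xi|^2$, which with Corollary~\ref{lemma3_9}(a) gives $\le C\mathcal{E}$; this splits into $\{|x|\le 2|\xi|\}$, where $\mathcal{G}(\varphi(\,\cdot\,-Z),\varphi)\le C|\xi|^\alpha$ by homogeneity, and $\{|x|>2|\xi|\}$, where $\mathcal{G}(\varphi(\,\cdot\,-Z),\varphi)\le C|x|^{\alpha-1}|\xi|$ by \eqref{translate eqn1} together with $|x-\xi|\ge|x|/2$. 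In both regions one integrates the $n-2$ cylinder variables $y$ first, using $\int_{\mathbb{R}^{n-2}}(a^2+|y|^2)^{-(n-\sigma)/2}\,dy\le C\,a^{\sigma-2}$ --- whose exponent $\sigma-2$ is independent of $n$ --- after which the residual radial integral converges (since $\sigma>0$ and $\alpha\ge 3/2$) and leaves a power of $|\xi|$ at least $2$.

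For the second term, on $B_\gamma(0)\cap\{|x|>\tau\}$ --- which lies in the set $U$ on which Lemma~\ref{lemma2_6} constructs $v$ --- I would combine the graphical representation \eqref{v graph rep}, the symmetry relation $v(X,-\varphi_1(X))=-v(X,\varphi_1(X))$, and the expansion $\varphi_1(X-Z)=\varphi_1(X)-D_x\varphi_1(X)\cdot\xi+\mathcal{R}$ with $|\mathcal{R}|\le C|x|^{\alpha-2}|\xi|^2$ (valid since $|\xi|<\tau/2\le|x|/2$ after shrinking $\varepsilon_0$) to obtain $|v(X,\varphi_1(X))+D_x\varphi_1(X)\cdot\xi|\le C\,\mathcal{G}(u_s(X)/\Lambda,\varphi(X-Z))+C|x|^{\alpha-2}|\xi|^2$. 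Then the second integrand is bounded by $|X-Z|^{-n-2\alpha+\sigma}\mathcal{G}(u_s/\Lambda,\varphi(\,\cdot\,-Z))^2$, whose integral over $B_\gamma(0)$ is $\le C\mathcal{E}$ by $(\star)$, plus $C|\xi|^4|x|^{2\alpha-4}|X-Z|^{-n-2\alpha+\sigma}$; on $\{|x|>\tau\}$ one has $|X-Z|\ge\tau/2$ and $\int_{B_\gamma(0)\cap\{|x|>\tau\}}|x|^{2\alpha-4}\,dX\le C(n,\alpha,\gamma)$ uniformly in $\tau$ (as $2\alpha-4>-2$), so this contributes $\le C|\xi|^4\tau^{-n-2\alpha+\sigma}$, which is $\le C|\xi|^2\le C\mathcal{E}$ once $\varepsilon_0$ is chosen small enough, depending on $\tau$, that $|\xi|^2\le\tau^{n+2\alpha-\sigma}$; the constant $C$ produced this way is independent of $\tau$.

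The step I expect to be the main obstacle is the integrability bookkeeping in the two $|\xi|^2$-type estimates above (the $\mathcal{G}(\varphi(\,\cdot\,-Z),\varphi)$ piece in the first term and the Taylor-remainder piece in the second): the naive pointwise bound $|X-Z|^{-n+\sigma}\le C|x|^{-n+\sigma}$ is too wasteful in high dimension --- it fails to leave a power of $|\xi|$ as large as $2$, and can even diverge in the limit $|\xi|\to0$ --- so one really has to integrate the axial variables $y$ out first, which cancels the dimension-dependence of the kernel. A subsidiary technical point is to keep the $\tau$-dependence confined to the choice of $\varepsilon_0$ (legitimate since $\varepsilon_0$ is permitted to depend on $\tau$), so that the final constant $C$ depends only on $n$, $m$, $\alpha$, $\varphi^{(0)}$, $\gamma$, $\sigma$.
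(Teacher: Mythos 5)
Your overall architecture matches the paper's: the translated non-concentration estimate $(\star)$ is exactly the paper's \eqref{thm3_1_eqn2} (obtained, as you do, by applying Corollary~\ref{noncon_at_origin_cor} to the rescaled pair $(2u(Z+\cdot/2),2^{1-\alpha}\Lambda)$ and converting back via \eqref{translate eqn4} and Corollary~\ref{lemma3_9}), and your treatment of the Taylor remainder with $\varepsilon_0$ chosen small depending on $\tau$ is the same as \eqref{thm3_1_eqn7}. Two remarks, one minor and one serious. Minor: for the first term your elaborate two-region estimate of $\int |X-Z|^{-n+\sigma}\mathcal{G}(\varphi(\cdot-Z),\varphi)^2$ is unnecessary — the uniform bound $\mathcal{G}(\varphi(X-Z),\varphi(X))\le C|\xi|$ of \eqref{translate eqn4} together with the integrability of $|X-Z|^{-n+\sigma}$ over $B_{1/4}(Z)$ already yields $C|\xi|^2$, which is what the paper does in \eqref{thm3_1_eqn1}; the "integrate out $y$ first" issue you flag as the main obstacle is not actually where the difficulty lies.

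The genuine gap is in the second term, at the step where you claim
$|v(X,\varphi_1(X))+D_x\varphi_1(X)\cdot\xi|\le C\,\mathcal{G}(u_s(X)/\Lambda,\varphi(X-Z))+C|x|^{\alpha-2}|\xi|^2$.
The Taylor expansion correctly gives $|v(X,\varphi_1(X))+D_x\varphi_1(X)\cdot\xi|\le|\varphi_1(X)+v(X,\varphi_1(X))-\varphi_1(X-Z)|+C|x|^{\alpha-2}|\xi|^2$, but the passage from the specific difference $|\varphi_1+v-\varphi_1(\cdot-Z)|$ to $\mathcal{G}(u_s/\Lambda,\varphi(\cdot-Z))$ is false pointwise: both two-valued functions are symmetric pairs $\{\pm a\}$, $\{\pm b\}$ and $\mathcal{G}=\sqrt2\min\{|a-b|,|a+b|\}$, so at points where the "wrong" pairing achieves the minimum (which happens near the zero set of $(\varphi_1+v)\cdot\varphi_1(\cdot-Z)$, a set that persists inside $\{|x|>\tau\}$ since $\varphi_1$ has nodal half-planes there) one can have $\mathcal{G}$ arbitrarily small while $|\varphi_1+v-\varphi_1(\cdot-Z)|\sim 2|\varphi_1(X-Z)|\sim|x|^{\alpha-1}|\xi|$ — e.g.\ take $\varphi_1(X)=0$ and $v(X,\varphi_1(X))=-\varphi_1(X-Z)$. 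This is precisely why the paper does not argue pointwise: it establishes the comparison only in an integrated form over circles $\partial B^2_r(\xi)\times\{y\}$ (estimate \eqref{thm3_1_eqn5}), by re-running the convexity argument leading to \eqref{separation_eqn15} in the proof of Lemma~\ref{separation_lemma}, which shows the wrong pairing is selected only on a short arc of each circle and that the two quantities have comparable $L^2$ norms there; the weight $|X-Z|^{-n-2\alpha+\sigma}$ is then inserted because it is constant on each such circle. Your proof needs this circle-wise separation argument (or an equivalent substitute); without it the second term of the corollary is not established.
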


\begin{proof}
By \eqref{translate eqn4} and Corollary~\ref{lemma3_9}, 
\begin{align} \label{thm3_1_eqn1}
	&\int_{B_{\gamma}(0)} \frac{\mathcal{G}(u_s(X)/\Lambda,\varphi(X))^2}{|X-Z|^{n-\sigma}} \,dX
	\leq 2 \int_{B_{1/4}(Z)} \frac{\mathcal{G}(u_s(X)/\Lambda,\varphi(X-Z))^2}{|X-Z|^{n-\sigma}} \,dX \\&\hspace{30mm} 
		+ C \int_{B_{1/4}(Z)} \frac{|\xi|^2}{|X-Z|^{n-\sigma}}
		+ \int_{B_{\gamma}(0) \setminus B_{1/4}(Z)} \frac{\mathcal{G}(u_s(X)/\Lambda,\varphi(X))^2}{|X-Z|^{n-\sigma}} \,dX \nonumber 
	\\&\hspace{15mm} \leq 2 \int_{B_{1/4}(Z)} \frac{\mathcal{G}(u_s(X)/\Lambda,\varphi(X-Z))^2}{|X-Z|^{n-\sigma}} 
		+ C \int_{B_1(0)} \mathcal{G}\left( \frac{u_s}{\Lambda}, \varphi \right)^2 + C \|Du\|_{C^0(B_1(0))}^2 \nonumber
\end{align}
where $C = C(n,m,\alpha,\varphi^{(0)},\sigma) \in (0,\infty)$.  To bound the first term on the right hand side of \eqref{thm3_1_eqn1}, observe that by Corollary~\ref{noncon_at_origin_cor}, \eqref{translate eqn4} and Corollary~\ref{lemma3_9}, 
\begin{align} \label{thm3_1_eqn2}
	&\int_{B_{1/4}(Z)} \frac{\mathcal{G}(u_s(X)/\Lambda,\varphi(X-Z))^2}{|X-Z|^{n+2\alpha-\sigma}} \,dX \\
	&\hspace{15mm} \leq C \int_{B_{1/2}(Z)} \mathcal{G}\left( \frac{u_s(X)}{\Lambda}, \varphi(X-Z) \right)^2 \,dX  + C \|Du\|_{C^0(B_1(0))}^2 
		\nonumber \\
	&\hspace{15mm} \leq C \int_{B_1(0)} \mathcal{G}\left( \frac{u_s(X)}{\Lambda}, \varphi(X) \right)^2 \,dX + C \|Du\|_{C^0(B_1(0))}^2. \nonumber
\end{align}  
for some constant $C = C(n,\varphi^{(0)},\alpha,\sigma) \in (0,\infty)$.  Hence by \eqref{thm3_1_eqn1} and \eqref{thm3_1_eqn2},  
\begin{equation*}
	\int_{B_{\gamma}(0)} \frac{\mathcal{G}(u_s(X)/\Lambda,\varphi(X))^2}{|X-Z|^{n-\sigma}} 
	\leq C \int_{B_1(0)} \mathcal{G}\left( \frac{u_s}{\Lambda}, \varphi \right)^2 + C \|Du\|_{C^0(B_1(0))}^2
\end{equation*}
for $C = C(n,\varphi^{(0)},\alpha,\gamma,\sigma) \in (0,\infty)$.  

Recall that for each $X = (x,y) \in B_1(0) \setminus \{0\} \times \mathbb{R}^{n-2}$, $\varphi = \{\pm \varphi_1\}$ on $B^2_{|x|/2}(x) \times \mathbb{R}^{n-2}$ for some single-valued harmonic function $\varphi_1$.  By Taylor's theorem applied to $\varphi_1$ and the homogeneity of $\varphi$, if $|\xi| \leq |x|/2$ then
\begin{equation*}
	\varphi_1(X-Z) = \varphi_1(X) - D_x \varphi_1(X) \cdot \xi + \mathcal{R}(x,\xi) 
\end{equation*}
where $|\mathcal{R}(x,\xi)| \leq C |x|^{\alpha-2} |\xi|^2$ with $C = C(\varphi^{(0)}) \in (0,\infty)$.  Hence provided $\varepsilon_0$ is small enough that $|\xi| \leq \tau/2$, if $X = (x,y) \in B_{\gamma}(0)$ with $|x| \geq \tau$ then 
\begin{equation*} 
	|v(X,\varphi_1(X)) + D_x \varphi_1(X) \cdot \xi| \leq |\varphi_1(X) + v(X,\varphi_1(X)) - \varphi_1(X-Z)| + C |x|^{\alpha-2} |\xi|^2. 
\end{equation*}
Hence 
\begin{align} \label{thm3_1_eqn4}
	&\int_{B_{\gamma}(0) \cap \{ |x| > \tau \}} \frac{|v(X,\varphi(X)) - D_x \varphi(X) \cdot \xi|^2}{|X-Z|^{n+2\alpha-\sigma}}  
	\\&\hspace{10mm} \leq 2 \int_{B_{\gamma}(0) \cap B_{1/4}(Z) \cap \{ |x| > \tau \}} \frac{|\varphi(X) + v(X,\varphi(X)) - \varphi(X-Z)|^2}{
			|X-Z|^{n+2\alpha-\sigma}}  \nonumber
		\\&\hspace{20mm} + C \tau^{2\alpha-4} |\xi|^4 \int_{B_{1/4}(Z) \cap \{|x| > \tau\}} \frac{1}{|X-Z|^{n+2\alpha-\sigma}}  \nonumber 
		\\&\hspace{30mm}+ 2 \cdot 4^{n+2\alpha-\sigma} \int_{(B_{\gamma}(0) \setminus B_{1/4}(Z)) \cap \{ |x| > \tau \}} 
			(|v(X,\varphi(X))|^2 + C |x|^{2\alpha-2} |\xi|^2) , \nonumber
\end{align}
where $\varphi(X) + v(X,\varphi(X)) - \varphi(X-Z) = \{ \pm (\varphi_1(X) + v(X,\varphi_1(X)) - \varphi_1(X-Z)) \}$ for each $X = (x,y) \in B_{\gamma}(0)$ with $|x| \geq \tau$.  Let us bound the terms on the right-hand side of \eqref{thm3_1_eqn4}.  To bound the the first term, recall \eqref{v graph rep} and argue as we did to show \eqref{separation_eqn15} in the proof of Lemma~\ref{separation_lemma} to obtain that, assuming $\varepsilon_0$ is sufficiently small, whenever $r > \tau$ and $r^2 + |y-\zeta|^2 < 1/16$ 
\begin{equation} \label{thm3_1_eqn5}
	\int_{\partial B^2_r(\xi) \times \{y\}} |\varphi(X) + v(X,\varphi(X)) - \varphi(X-Z)|^2 
	\leq C \int_{\partial B^2_r(\xi) \times \{y\}} \mathcal{G}\left( \frac{u_s(X)}{\Lambda}, \varphi(X-Z) \right)^2 
\end{equation}
for some constant $C \in (0,\infty)$.  By multiplying \eqref{thm3_1_eqn5} by $r^{n-1} |X-Z|^{-n-2\alpha+\sigma}$ and integrating with respect to $r$ and $y$, 
\begin{equation*}
	\int_{B_{1/4}(Z) \cap \{ |x| > \tau \}} \frac{|\varphi(X) + v(X,\varphi(X)) - \varphi(X-Z)|^2}{|X-Z|^{n+2\alpha-\sigma}} 
	\leq C \int_{B_{1/4}(Z) \cap \{ |x| > \tau \}} \frac{\mathcal{G}(u(X)/\Lambda,\varphi(X-Z))^2}{|X-Z|^{n+2\alpha-\sigma}}. \nonumber 
\end{equation*}
Thus by \eqref{thm3_1_eqn2}, 
\begin{align} \label{thm3_1_eqn6}
	&\int_{B_{1/4}(Z) \cap \{ |x| > \tau \}} \frac{|\varphi(X) + v(X,\varphi(X)) - \varphi(X-Z)|^2}{|X-Z|^{n+2\alpha-\sigma}} 
	\\&\hspace{15mm} \leq C \int_{B_1(0)} \mathcal{G}\left( \frac{u_s}{\Lambda}, \varphi \right)^2 + C \|Du\|_{C^0(B_1(0))}^2.  \nonumber 
\end{align}
To bound the second term on the right-hand side of \eqref{thm3_1_eqn4}, observe that provided $\varepsilon_0$ is small enough that $|\xi| \leq \min\{\tau/4,\tau^2\}$, 
\begin{align} \label{thm3_1_eqn7}
	&\tau^{2\alpha-4} |\xi|^4 \int_{B_{1/4}(Z) \cap \{|x| > \tau\}} \frac{1}{|X-Z|^{n+2\alpha-\sigma}}
	\leq \tau^{2\alpha-4} |\xi|^4 \int_{B_{1/4}(Z) \setminus B_{\tau/2}(Z)} \frac{1}{|X-Z|^{n+2\alpha-\sigma}}
	\\&\hspace{15mm} \leq C \tau^{\sigma-4} |\xi|^4 \leq C |\xi|^2 \leq C \int_{B_1(0)} \mathcal{G}\left( \frac{u_s}{\Lambda}, \varphi \right)^2 
		+ C \|Du\|_{C^0(B_1(0))}^2 \nonumber 
\end{align}
for some constant $C = C(n,m,\alpha,\varphi^{(0)},\sigma) \in (0,\infty)$, where in the last step we used Corollary~\ref{lemma3_9}. To bound the third term on the right-hand side of \eqref{thm3_1_eqn4}, by Lemma~\ref{lemma2_6} we have that 
\begin{equation} \label{thm3_1_eqn8}
	\int_{B_{\gamma}(0) \cap \{ |x| > \tau \}} |v(X,\varphi(X))|^2 \leq C \int_{B_1(0)} \mathcal{G}\left( \frac{u_s}{\Lambda}, \varphi \right)^2 
		+ C \int_{B_{(3+\gamma)/4}(0)} r^4 |f|^2
\end{equation}
for some constant $C = C(n,m,\alpha,\varphi^{(0)},\gamma) \in (0,\infty)$.  By \eqref{mss3 f est}, \eqref{minimal avg schauder} and \eqref{minimal sym decay}, 
\begin{equation} \label{thm3_1_eqn10}
	|f| \leq C \|Du\|_{C^0(B_1(0))}^2 \Lambda^{-1} (|Du_s| + |D^2 u_s|)  
\end{equation}
on $B_{(3+\gamma)/4}(0)$ for some constant $C = C(n,m) \in (0,\infty)$.  Hence by \eqref{thm3_1_eqn10} and \eqref{minimal sym schauder},  
\begin{align} \label{thm3_1_eqn11}
	\int_{B_{(3+\gamma)/4}(0)} r^4 |f|^2 
		&\leq \frac{C \|Du\|_{C^0(B_1(0))}^4}{\Lambda^2} \int_{B_{(3+\gamma)/4}(0)} (|Du_s|^2 + |D^2 u_s|^2)
		\\&\leq \frac{C \|Du\|_{C^0(B_1(0))}^4}{\Lambda^2} \int_{B_1(0)} |u_s|^2 
		= C \|Du\|_{C^0(B_1(0))}^4 \nonumber 
\end{align}
for some constant $C = C(n,m) \in (0,\infty)$, where the last step uses $\Lambda^{-1} \|u_s\|_{L^2(B_1(0))} = 1$.  Using \eqref{thm3_1_eqn11} to bound the right-hand side of \eqref{thm3_1_eqn8}, 
\begin{equation} \label{thm3_1_eqn12}
	\int_{B_{\gamma}(0) \cap \{ |x| > \tau \}} |v(X,\varphi(X))|^2 \leq C \int_{B_1(0)} \mathcal{G}\left( \frac{u_s}{\Lambda}, \varphi \right)^2 
		+ C \,\|Du\|_{C^0(B_1(0))}^4
\end{equation}
for some constant $C = C(n,m,\alpha,\varphi^{(0)},\gamma) \in (0,\infty)$.  By Corollary~\ref{lemma3_9} and direction computation,  
\begin{equation} \label{thm3_1_eqn13}
	\int_{(B_{\gamma}(0) \setminus B_{1/4}(Z)) \cap \{ |x| > \tau \}} |x|^{2\alpha-2} |\xi|^2 
		\leq C \int_{B_1(0)} \mathcal{G}\left( \frac{u_s}{\Lambda}, \varphi \right)^2 + C \,\|Du\|_{C^0(B_1(0))}^2
\end{equation}
for some constant $C = C(n,m,\alpha,\varphi^{(0)}) \in (0,\infty)$.  Therefore, by bounding the right-hand side of \eqref{thm3_1_eqn4} using \eqref{thm3_1_eqn6}, \eqref{thm3_1_eqn7}, \eqref{thm3_1_eqn12} and \eqref{thm3_1_eqn13}, 
\begin{equation*}
	\int_{B_{\gamma}(0) \cap \{ |x| > \tau \}} \frac{|v(X,\varphi(X)) - D_x \varphi(X) \cdot \xi|^2}{|X-Z|^{n+2\alpha-\sigma}}  
	\leq C \int_{B_1(0)} \mathcal{G}\left( \frac{u_s}{\Lambda}, \varphi \right)^2 + C \|Du\|_{C^0(B_1(0))}^2. 
\end{equation*}
for some constant $C = C(n,m,\alpha,\varphi^{(0)},\gamma,\sigma) \in (0,\infty)$.  
\end{proof}

\begin{corollary} \label{cor3_2} 
Let $\alpha$ and $\varphi^{(0)}$ be as in Definition~\ref{varphi0 defn}.  For every $\sigma, \delta \in (0,1/4)$ there exists $\varepsilon_0 = \varepsilon_0(n,m,\alpha,\varphi^{(0)},\sigma) > 0$ such that if $\varphi \in \Phi_{\varepsilon_0}(\varphi^{(0)})$ and $(u,\Lambda) \in \mathcal{F}_{\varepsilon_0}(\varphi^{(0)})$ and if 
\begin{equation} \label{cor3_2 nogaps}
	B_{\delta}(0,y) \times \mathbb{R}^m \cap \{ P \in \mathcal{K}_M \cap B_{1/2}(0) \times \mathbb{R}^m : \mathcal{N}_M(P) \geq \alpha \} 
	\neq \emptyset \text{ for all } y \in B^{n-2}_{1/2}(0), 
\end{equation}
where $M = {\rm graph}\,u$, then 
\begin{equation} \label{cor3_2 concl}
	\int_{B_{1/2}(0)} \frac{\mathcal{G}(u_s/\Lambda,\varphi)^2}{r_{\delta}^{2-\sigma}} 
	\leq C \int_{B_1(0)} \mathcal{G}(u_s/\Lambda, \varphi)^2 + C \|Du\|_{C^0(B_1(0))}^2,
\end{equation}
where $C = C(n,m,\alpha,\varphi^{(0)},\sigma) > 0$, $r = |x|$ and $r_{\delta} = \max\{r,\delta\}$
\end{corollary}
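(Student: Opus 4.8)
The plan is to deduce the weighted estimate \eqref{cor3_2 concl} from the single-point estimate of Corollary~\ref{thm3_1} by a covering argument in which the ``no gaps'' hypothesis \eqref{cor3_2 nogaps} is used to fill the axis $\{0\}\times\mathbb{R}^{n-2}$ with a $\delta$-net of points of $\mathcal{K}_M$ to which Corollary~\ref{thm3_1} applies. First I would dispose of the region where $r=|x|$ is bounded away from zero: on $B_{1/2}(0)\cap\{r\ge c_0\}$, with $c_0=c_0(n)\in(0,1/2)$ a small fixed constant, the weight $r_\delta^{\sigma-2}$ is at most $C(n,\sigma)$, so that piece of the integral is trivially bounded by $C\int_{B_1(0)}\mathcal{G}(u_s/\Lambda,\varphi)^2$; it remains to control $\int_{B_{1/2}(0)\cap\{r<c_0\}} r_\delta^{\sigma-2}\,\mathcal{G}(u_s/\Lambda,\varphi)^2$. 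Using \eqref{cor3_2 nogaps}, I would then pick points $Z_k=(\xi_k,\zeta_k)\in\mathcal{K}_u\cap B_{1/2}(0)$ with $\mathcal{N}_M(Z_k,u_1(Z_k))\ge\alpha$ and $|\xi_k|<\delta$, chosen so that $\{\zeta_k\}$ is a maximal $\delta$-separated subset of $B^{n-2}_{1/2}(0)$ (hence $2\delta$-dense in $B^{n-2}_{1/2}(0)$); in particular $\#\{k\}\le C(n)\,\delta^{-(n-2)}$.

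The crux is the pointwise kernel inequality
\[
	r_\delta(X)^{\sigma-2}\ \le\ C(n,\sigma)\,\delta^{n-2}\sum_k \frac{1}{|X-Z_k|^{\,n-\sigma}}, \qquad X=(x,y)\in B_{1/2}(0)\cap\{r<c_0\}.
\]
When $r(X)\le 8\delta$, the $2\delta$-density of $\{\zeta_k\}$ gives a single index $k_0$ with $|X-Z_{k_0}|\le C_1\delta$, whence $\delta^{n-2}\sum_k|X-Z_k|^{-(n-\sigma)}\ge C_1^{-(n-\sigma)}\delta^{\sigma-2}\ge C_1^{-(n-\sigma)}r_\delta(X)^{\sigma-2}$. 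When $8\delta\le r(X)<c_0$, the density of $\{\zeta_k\}$ together with the elementary fact that $B^{n-2}_{r(X)}(y)\cap B^{n-2}_{1/2}(0)$ has $(n-2)$-volume at least $c(n)\,r(X)^{n-2}$ (valid even for $y$ near $\partial B^{n-2}_{1/2}(0)$) forces at least $c(n)\,(r(X)/\delta)^{n-2}$ of the $\zeta_k$ into $B^{n-2}_{r(X)}(y)$; since $|\xi_k|<\delta\le r(X)$, each such $k$ has $|X-Z_k|\le 3\,r(X)$, so $\delta^{n-2}\sum_k|X-Z_k|^{-(n-\sigma)}\ge c(n)3^{-(n-\sigma)}r(X)^{\sigma-2}=c(n)3^{-(n-\sigma)}r_\delta(X)^{\sigma-2}$. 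Thus the kernel inequality holds with $C(n,\sigma)$ the reciprocal of the smaller of these two constants. This is exactly where the hypothesis $\sigma<2$ is used (equivalently $n-\sigma>n-2$): it is the precise condition under which the kernel $|X-Z|^{-(n-\sigma)}$, summed over a $\delta$-net of the $(n-2)$-dimensional axis, reproduces the power $r^{\sigma-2}$ (the associated integral $\int_{\mathbb{R}^{n-2}}(1+|z|^2)^{-(n-\sigma)/2}\,dz$ being convergent precisely for $\sigma<2$).

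Granting the kernel inequality, I would multiply through by $\mathcal{G}(u_s/\Lambda,\varphi)(X)^2$, integrate over $B_{1/2}(0)$, and interchange the finite sum with the integral to obtain
\[
	\int_{B_{1/2}(0)} \frac{\mathcal{G}(u_s/\Lambda,\varphi)^2}{r_\delta^{\,2-\sigma}}\ \le\ C\,\delta^{n-2}\sum_k \int_{B_{1/2}(0)} \frac{\mathcal{G}(u_s/\Lambda,\varphi)^2}{|X-Z_k|^{\,n-\sigma}}\ +\ C\int_{B_1(0)}\mathcal{G}(u_s/\Lambda,\varphi)^2 .
\]
To each summand I would apply Corollary~\ref{thm3_1} with $\gamma=\tfrac12$, $\tau=\tfrac1{40}$, the given $\sigma$, and $v$ as in Lemma~\ref{lemma2_6} with $\beta=\tfrac12$, taking $Z=Z_k$ there --- this is legitimate since $Z_k\in\mathcal{K}_u\cap B_{1/2}(0)$ and $\mathcal{N}_M(Z_k,u_1(Z_k))\ge\alpha$ --- and discard the nonnegative $v$-term in its conclusion to get, for every $k$,
\[
	\int_{B_{1/2}(0)} \frac{\mathcal{G}(u_s/\Lambda,\varphi)^2}{|X-Z_k|^{\,n-\sigma}}\ \le\ C\int_{B_1(0)}\mathcal{G}(u_s/\Lambda,\varphi)^2 + C\,\|Du\|_{C^0(B_1(0))}^2 ,
\]
with $C=C(n,m,\alpha,\varphi^{(0)},\sigma)$ independent of both $k$ and $\delta$. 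Since $\#\{k\}\le C(n)\,\delta^{-(n-2)}$, the two powers $\delta^{n-2}$ and $\delta^{-(n-2)}$ cancel and \eqref{cor3_2 concl} follows; here $\varepsilon_0$ is chosen small enough (depending only on $n,m,\alpha,\varphi^{(0)}$, and on $\sigma$ if necessary) that Corollary~\ref{thm3_1}, and the results it rests on, Corollaries~\ref{noncon_at_origin_cor} and \ref{lemma3_9}, apply.

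The step needing the most care is the pointwise kernel inequality itself, namely verifying that its constants are uniform in $X$ and independent of $\delta$; this reduces to two packing facts about the net --- that $\#\{k\}\le C(n)\,\delta^{-(n-2)}$ (from $\delta$-separation) and that every $y\in B^{n-2}_{1/2}(0)$ has at least $c(n)(\rho/\delta)^{n-2}$ of the $\zeta_k$ within distance $\rho$ whenever $8\delta\le\rho\le c_0$ (from $2\delta$-density, via the volume bound for $B^{n-2}_\rho(y)\cap B^{n-2}_{1/2}(0)$, including near $\partial B^{n-2}_{1/2}(0)$). Beyond this, the argument is the purely structural one above, and the only genuinely essential analytic input is the restriction $\sigma<2$: without it the kernel $|X-Z_k|^{-(n-\sigma)}$ summed over an $(n-2)$-dimensional $\delta$-net would not reproduce the weight $r^{\sigma-2}$, and the cancellation $\delta^{n-2}\cdot\delta^{-(n-2)}$ on which the whole scheme rests would be lost.
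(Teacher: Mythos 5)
Your proposal is correct and is essentially the argument the paper intends: the paper's proof of this corollary is just the one-line reference to Simon's covering/summation argument (\cite[Corollary~3.2]{Sim93}) applied to Corollary~\ref{thm3_1}, which is exactly what you have written out — a $\delta$-net of points supplied by \eqref{cor3_2 nogaps}, the pointwise domination of $r_\delta^{\sigma-2}$ by $\delta^{n-2}\sum_k|X-Z_k|^{-(n-\sigma)}$, and the cancellation of $\delta^{\pm(n-2)}$ after summing the single-point estimates. The only cosmetic point is that the $\zeta_k$ should be taken as a maximal $\delta$-separated subset of the admissible projections (giving $2\delta$- or $3\delta$-density in $B^{n-2}_{1/2}(0)$ rather than literal maximality in that ball), which only changes constants.
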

\begin{proof} 
Follows from Corollary~\ref{thm3_1} using a argument similar to that of~\cite[Corollary~3.2]{Sim93}.
\end{proof}

\begin{remark} \label{F defn rmk2} {\rm 
As we previously noted in Remark~\ref{F defn rmk}, one might try to replace $\Lambda$ with $\Lambda_{0,1} = \|u_s\|_{L^2(B_1(0))}$.  If we did this, then in Step 2 of the above proof of Corollary~\ref{noncon_at_origin_cor}, when we rotate $M$ and apply Step 1 we obtain 
\begin{equation*}
	\int_{B_{\gamma}(0)} R^{-n+\sigma-2\alpha} \mathcal{G}(\widehat{u}_s/\widehat{\Lambda}_{0,\overline{\gamma}}, \varphi)^2 
	\leq C \int_{B_{\overline{\gamma}}(0)} \mathcal{G}(\widehat{u}_s/\widehat{\Lambda}_{0,\overline{\gamma}}, \varphi)^2 
		+ C \|Du\|_{C^0(B_1(0))}^2  
\end{equation*}
where $\overline{\gamma} = (1+\gamma)/2$, $\widehat{\Lambda}_{0,\overline{\gamma}} = \overline{\gamma}^{-n/2-1} \|\widehat{u}_s\|_{L^2(B_{\overline{\gamma}}(0))}$ and $C = C(n,m,\alpha,\varphi^{(0)},\gamma,\sigma) \in (0,\infty)$ is a constant.  
To then replace $\widehat{\Lambda}_{0,\overline{\gamma}}$ with $\Lambda_{0,1}$, we would need to know that $|\widehat{\Lambda}_{0,\overline{\gamma}}/\Lambda_{0,1} - 1|^2$ is bounded in terms of the excess $\int_{B_1(0)} \mathcal{G}(u_s/\Lambda_{0,1},\varphi)^2 +\|Du\|_{C^0(B_1(0))}^2$.  Such an estimate is much stronger than \eqref{F defn rmk eqn1}, which asserts that $|\Lambda_{0,1}/\Lambda - \|\varphi^{(0)}\|_{L^2(B_1(0))} |^2 \leq \varepsilon^2$.  The same issue arises in the proofs of Corollaries~\ref{lemma3_9} and \ref{thm3_1} when we translate $Z$ to the origin and apply Corollary~\ref{noncon_at_origin_cor} with $2 u(Z+X/2)$ in place of $u$ and also in the proof of Theorem~\ref{theorem1} below.  For this reason, we use a general parameter $\Lambda$ in the definition of $\mathcal{F}_{\varepsilon}(\varphi^{(0)})$.} 
\end{remark}

\section{The linear theory: asymptotic decay of blow-ups} \label{sec:lineartheory sec} 

In this section we recall some facts about the asymptotic decay of blow-ups from Section 7 of~\cite{KrumWic1}.  
Let $\varphi^{(0)}(X) = \{ \pm \op{Re}(c^{(0)}(x_1+ix_2)^{\alpha}) \}$ be as in Definition~\ref{varphi0 defn}.  Recall from Remark~\ref{graph_rmk} that $\varphi^{(0)}$ is an immersed submanifold.  More precisely, let $X = (re^{i\theta},y)$ denote cylindrical coordinates on $\mathbb{R}^n$, where $r \geq 0$, $\theta \in \mathbb{R}$ and $y \in \mathbb{R}^{n-2}$.  If $\alpha$ is an integer, ${\rm graph}\,\varphi^{(0)}$ is the union of the graphs of the single-valued harmonic functions $-\op{Re}(c^{(0)} r^{\alpha} e^{i\alpha\theta})$ and $+\op{Re}(c^{(0)} r^{\alpha} e^{i\alpha\theta})$.  If instead $\alpha = k_0/2$ for some odd integer $k_0 \geq 3$, ${\rm graph}\,\varphi^{(0)}$ is a single branched submanifold parameterized by $(re^{i\theta},y,\op{Re}(c^{(0)} r^{\alpha} e^{i\alpha\theta}))$ for $r \geq 0$, $\theta \in [0,4\pi]$ and $y \in \mathbb{R}^{n-2}$.  Hence for each Lebesgue measurable set $\Omega \subseteq \mathbb{R}^n$ we can consider Lebesgue measurable functions $w : {\rm graph}\,\varphi^{(0)} |_{\Omega} \rightarrow \mathcal{A}_2(\mathbb{R}^m)$.  Given Lebesgue measurable functions $w,v : {\rm graph}\,\varphi^{(0)} |_{\Omega} \rightarrow \mathcal{A}_2(\mathbb{R}^m)$ we let 
\begin{align*}
	&w(X,\varphi^{(0)}(X)) = \{w(X,\varphi^{(0)}_1(X)), w(X,-\varphi^{(0)}_1(X))\}, \\ 
	&w(X,\varphi^{(0)}(X)) - v(X,\varphi^{(0)}(X)) 
		\\&\hspace{15mm} = \{w(X,\varphi^{(0)}_1(X)) - v(X,\varphi^{(0)}_1(X)), w(X,-\varphi^{(0)}_1(X)) - v(X,-\varphi^{(0)}_1(X)) \}
\end{align*}
on each open ball $B \subset \Omega \setminus \{0\} \times \mathbb{R}^{n-2}$, where $\varphi^{(0)} = \{\pm \varphi^{(0)}_1\}$ on $B$ for some single-valued harmonic function $\varphi_1$.  We let $L^2({\rm graph}\, \varphi^{(0)} |_{\Omega}, \mathbb{R}^m)$ denote the Hilbert space of all Lebesgue measurable functions $w : {\rm graph}\, \varphi^{(0)} |_{\Omega} \rightarrow \mathbb{R}^m$ such that 
\begin{equation*}
	\int_{B_1(0)} |w(X,\varphi^{(0)}(X))|^2 \,dX < \infty .
\end{equation*}
(In particular, by definition $w \in L^2({\rm graph}\, \varphi^{(0)} |_{\Omega}, \mathbb{R}^m)$ is an $L^2$-function with respect to the Lebesgue measure on the domain $\Omega \subseteq \mathbb{R}^n$.)

\begin{lemma} \label{lemma4_2}
Let $\alpha$ and $\varphi^{(0)}$ be as in Definition~\ref{varphi0 defn}.  Let $\sigma \in (0,1)$.  If $w \in L^2(\op{graph} \varphi^{(0)} |_{B_1(0)}, \mathbb{R}^m) \cap C^2(\op{graph} \varphi^{(0)} |_{B_1(0) \setminus \{0\} \times \mathbb{R}^{n-2}},\mathbb{R}^m)$ such that $w(X,\varphi^{(0)}(X))$ is a two-valued function that is symmetric, homogeneous degree $\alpha$, and harmonic on $B_1(0) \setminus \{0\} \times \mathbb{R}^{n-2}$ and satisfies 
\begin{equation} \label{lemma4_2_decay}
	\int_{B_1(0)} \frac{|w(re^{i\theta},y,\varphi^{(0)}(re^{i\theta},y)) - \kappa(re^{i\theta},y,\varphi^{(0)}(re^{i\theta},y))|^2}{r^{2+2\alpha-\sigma}} < \infty 
\end{equation}
for some 
\begin{equation*}
	\kappa(re^{i\theta},y,\varphi^{(0)}(re^{i\theta},y)) = \kappa_1(r,y) \cdot D_1 \varphi^{(0)}(re^{i\theta},y) + \kappa_2(r,y) \cdot D_2 \varphi^{(0)}(re^{i\theta},y)
\end{equation*}
where $\kappa_1, \kappa_2 \in L^{\infty}(B^{n-1}_1(0),\mathbb{R}^m)$, then 
\begin{align} \label{lemma4_2_eqn1}
	w(re^{i\theta},y,\varphi^{(0)}(re^{i\theta},y)) ={}& a_0 r^{\alpha} \cos(\alpha \theta) + b_0 r^{\alpha} \sin(\alpha \theta) 
	\\& + \sum_{j=1}^{n-2} \left( a_j D_1 \varphi^{(0)}(re^{i\theta},y) \cdot y_j + b_j D_2 \varphi^{(0)}(re^{i\theta},y) \cdot y_j \right) \nonumber
\end{align}
for some $a_0,b_0 \in \mathbb{R}^m$ and $a_j,b_j \in \mathbb{R}$ for $j = 1,\ldots,n-2$. 
\end{lemma}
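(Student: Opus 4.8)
The plan is to reduce everything to a separation-of-variables computation on ${\rm graph}\,\varphi^{(0)}$, exactly as in the linear theory of \cite{KrumWic1}. First I would set $\Phi = w(X,\varphi^{(0)}(X))$, a symmetric, homogeneous degree $\alpha$, harmonic two-valued function on $B_1(0) \setminus \{0\} \times \mathbb{R}^{n-2}$ with the decay hypothesis \eqref{lemma4_2_decay}. Because $\Phi$ is harmonic and homogeneous of degree $\alpha$ on the complement of $\{0\} \times \mathbb{R}^{n-2}$, I would decompose it into cylindrical harmonics: writing $X = (re^{i\theta},y)$ and using that $\Phi$ is $2\pi$-periodic (if $\alpha \in \mathbb{Z}$) or $4\pi$-periodic (if $\alpha = k_0/2$, $k_0$ odd) in $\theta$, expand $\Phi$ in a Fourier series in $\theta$ with $y$-dependent coefficients. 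Harmonicity of $\Phi$ together with homogeneity degree $\alpha$ forces each Fourier mode to be of the form $r^{\beta}(\text{polynomial in }y)\,e^{\pm i k\theta/2}$ with $\beta$ determined by $k$ and the polynomial degree; the relevant family of admissible homogeneous harmonics is precisely $r^{\alpha}e^{\pm i\alpha\theta}$ (the ``top'' modes, with no $y$-dependence) and the modes $r^{\alpha - 1} y_j\, e^{\pm i(\alpha-1)\theta}$, which up to constants are $D_1\varphi^{(0)}\cdot y_j$ and $D_2\varphi^{(0)}\cdot y_j$ — all other homogeneous degree $\alpha$ harmonics on the branched graph either blow up too fast near $r=0$ to be $L^2$, or decay too slowly to satisfy \eqref{lemma4_2_decay} relative to $\kappa$.

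More precisely, the key step is the following dichotomy: the $L^2(B_1(0))$ hypothesis on $w$ rules out all homogeneous harmonic modes with negative exponent in $r$ (i.e.\ with $\beta < 0$), and the decay hypothesis \eqref{lemma4_2_decay} says that, after subtracting $\kappa$ (which is a linear combination of $D_1\varphi^{(0)}$ and $D_2\varphi^{(0)}$ with bounded coefficients, hence itself essentially of the ``top'' type in $r$ but with $L^\infty$, not harmonic, coefficients), the remainder decays faster than $r^{\alpha - 1 + \sigma/2}$ in an averaged sense. The modes that survive are exactly those whose $r$-exponent is $\geq \alpha$, i.e.\ $r^{\alpha}\cos(\alpha\theta)$, $r^{\alpha}\sin(\alpha\theta)$, and the degree-one-in-$y$ homogeneous harmonics $D_1\varphi^{(0)} \cdot y_j$, $D_2\varphi^{(0)}\cdot y_j$ (these have $r$-exponent $\alpha - 1$ but are multiplied by $y_j$, so are homogeneous of the right total degree and their difference from $\kappa$ can be absorbed). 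I would make this rigorous by testing $\Phi$ against the corresponding eigenfunctions on the link $\S^{n-1}$ (appropriately interpreted on the double cover when $\alpha$ is a half-integer), using the orthogonality of distinct cylindrical harmonics, and reading off that only the listed coefficients can be nonzero; this is the content of \cite[Section 7]{KrumWic1} and I would cite the relevant lemma there.

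The main obstacle, and the point that requires the most care, is handling the half-integer case $\alpha = k_0/2$ with $k_0 \geq 3$ odd: here ${\rm graph}\,\varphi^{(0)}$ is a genuinely branched submanifold (a connected double cover of $\mathbb{R}^n \setminus \{0\}\times\mathbb{R}^{n-2}$ branched over the axis), and ``harmonic'' for $w$ means harmonic with respect to the pullback metric on this branched sheet, equivalently $\Phi$ is a two-valued harmonic function on $B_1(0)$ in the sense of Subsection~\ref{sec:prelims harmonic subsec}, periodic of period $4\pi$ in $\theta$. One must be careful that the Fourier expansion is in half-integer frequencies, that the regularity $w \in C^2$ away from the axis plus the $L^2$ bound on all of $B_1(0)$ (which forces $\Phi \in C^{1,1/2}$ by \eqref{harmonic schauder}) genuinely excludes the would-be modes $r^{k/2}e^{ik\theta/2}$ with $k < k_0$ — these are the ones with small frequency, and they are excluded not by $L^2$ integrability but by the hypothesis that $\Phi$ is homogeneous of degree exactly $\alpha = k_0/2$ together with the decay relative to $\kappa$. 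Once the admissible exponents are pinned down, \eqref{lemma4_2_eqn1} follows by matching coefficients, and the remaining verification (that the written expression indeed satisfies the hypotheses, and that the coefficients $a_j, b_j$ are scalars because $D_1\varphi^{(0)}\cdot y_j$ already carries the vector $c^{(0)}$) is routine.
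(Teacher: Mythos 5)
The paper offers no argument of its own for this lemma: its proof is simply the citation to \cite[Lemma 7.1]{KrumWic1}, and your sketch (Fourier decomposition in $\theta$ on the branched graph, classification of the homogeneous degree-$\alpha$ two-valued harmonics, and elimination of all modes except $r^{\alpha}\cos(\alpha\theta)$, $r^{\alpha}\sin(\alpha\theta)$ and the $D_i\varphi^{(0)}\,y_j$ modes via the $L^2$ bound and the weighted decay \eqref{lemma4_2_decay}) is precisely the argument behind that citation, which you also invoke. So the proposal is correct and takes essentially the same route as the paper; the only point stated a bit loosely is that the frequency-$(\alpha-1)$ modes are pinned down to \emph{scalar} multiples of $D_1\varphi^{(0)} y_j$, $D_2\varphi^{(0)} y_j$ not for routine reasons but because \eqref{lemma4_2_decay} measures the deviation from $\kappa$, which only spans the $D_1\varphi^{(0)}$, $D_2\varphi^{(0)}$ directions, and this is exactly how the cited lemma uses the hypothesis.
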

\begin{proof} 
See~\cite[Lemma 7.1]{KrumWic1}.
\end{proof}

\begin{definition} \label{L defn}
$\mathcal{L} \subset L^2(\op{graph} \varphi^{(0)} |_{B_1(0)}, \mathbb{R}^m)$ is the span of all functions $a_0 r^{\alpha} \cos(\alpha \theta)$ and $b_0 r^{\alpha} \sin(\alpha \theta)$ for $a_0,b_0 \in \mathbb{R}^m$ and $D_k \varphi^{(0)}(re^{i\theta},y) \,y_j$ for $k \in \{1,2\}$ and $j \in \{1,2,\ldots,n-2\}$. 
\end{definition}

\begin{definition} \label{L projection defn}
Given $w \in L^2(\op{graph} \varphi^{(0)} |_{B_1(0)}, \mathbb{R}^m)$ and $\rho \in (0,1]$, we say $\psi_{\rho} \in \mathcal{L}$ is the \emph{orthogonal projection} of $w$ onto $\mathcal{L}$ in $B_{\rho}(0)$ if 
\begin{equation*}
	\int_{B_{\rho}(0)} |w(X,\varphi^{(0)}(X)) - \psi_{\rho}(X,\varphi^{(0)}(X))|^2 
	= \inf_{\psi \in \mathcal{L}} \int_{B_{\rho}(0)} |w(X,\varphi^{(0)}(X)) - \psi(X,\varphi^{(0)}(X))|^2 . 
\end{equation*}
\end{definition}

Whenever $w \in L^2(\op{graph} \varphi^{(0)} |_{B_1(0)}, \mathbb{R}^m)$ and $\psi_{\rho} \in \mathcal{L}$ is the orthogonal projection of $w$ onto $\mathcal{L}$ in $B_{\rho}(0)$, we let 
\begin{equation*}
	w_{\rho} = w - \psi_{\rho}. 
\end{equation*}
By standard Hilbert space theory, $\psi_{\rho}$ exists and is unique and moreover $w_{\rho}$ is orthogonal to $\mathcal{L}$ in $L^2(\op{graph} \varphi^{(0)} |_{B_{\rho}(0)}, \mathbb{R}^m)$. 

\begin{lemma} \label{lemma4_14} 
Let $\alpha$ and $\varphi^{(0)}$ be as in Definition~\ref{varphi0 defn}.  Let $\vartheta \in (0,1/16)$, $\sigma \in (0,1)$, and $\beta_1, \beta_2 \in (0,\infty)$.  If $w \in L^2(\op{graph} \varphi^{(0)} |_{B_1(0)}, \mathbb{R}^m) \cap C^2(\op{graph} \varphi^{(0)} |_{B_1(0) \setminus \{0\} \times \mathbb{R}^{n-2}},\mathbb{R}^m)$ such that $w(X,\varphi^{(0)}(X))$ is a harmonic symmetric 2-valued function on $B_1(0) \setminus \{0\} \times \mathbb{R}^{n-2}$ and for each $\rho \in [\vartheta,1/4]$, 
\begin{equation*} 
	\int_{B_{\rho/2}(0)} \frac{|w_{\rho}(X,\varphi^{(0)}(X)) - D_x \varphi^{(0)}(X) \cdot \lambda_{\rho}(z)|^2}{|X - (0,z)|^{n+2\alpha-\sigma}} 
		\leq \beta_1 \rho^{-n-2\alpha+\sigma}  \int_{B_{\rho}(0)} |w_{\rho}(X,\varphi^{(0)}(X))|^2
\end{equation*}
for every $z \in B^{n-2}_{\rho/2}(0)$ and some bounded function $\lambda_{\rho} : B^{n-2}_{\rho/2}(0) \rightarrow \mathbb{R}^2$ such that  
\begin{equation*} 
	\sup_{z \in B^{n-2}_{\rho/2}(0)} |\lambda_{\rho}(z)|^2 \leq \beta_2 \rho^{2-n-2\alpha} \int_{B_{\rho}(0)} |w_{\rho}(X,\varphi^{(0)}(X))|^2 
\end{equation*}
and 
\begin{equation} \label{lemma4_14_eqn1}
	\int_{B_{\rho/4}(0)} R^{2-n} \left| \frac{\partial}{\partial R} \left( \frac{w(X,\varphi^{(0)}(X))}{R^{\alpha}} \right) \right|^2 
	\leq \beta_2 \rho^{-n-2\alpha} \int_{B_{\rho}(0)} |w_{\rho}(X,\varphi^{(0)}(X))|^2, 
\end{equation}
then 
\begin{equation*} 
	\vartheta^{-n-2\alpha} \int_{B_{\vartheta}(0)} |w_{\vartheta}(X,\varphi^{(0)}(X))|^2 \leq C \vartheta^{2\mu} \int_{B_1(0)} |w_1(X,\varphi^{(0)}(X))|^2
\end{equation*}
for some constants $\mu \in (0,1)$ and depending only $C \in (0,\infty)$ on $n$, $\alpha$, $\varphi^{(0)}$, $\sigma$, $\beta_1$, $\beta_2$.  Note that in particular $C$ and $\mu$ are independent of $\vartheta$.  
\end{lemma}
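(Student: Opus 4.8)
# Proof Proposal for Lemma 4.14

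The plan is to run the classical Simon-type ``decay of the non-trivial part'' argument, adapted to the branched setting of \cite{KrumWic1}. The idea is that $w(X,\varphi^{(0)}(X))$, being a symmetric homogeneous-degree-$\alpha$ harmonic two-valued function on each blown-up scale, must in the limit lie in the kernel space $\mathcal{L}$ (by Lemma~\ref{lemma4_2}), and the quantitative hypotheses --- the non-concentration estimate near $\{0\}\times\mathbb{R}^{n-2}$ and the radial-monotonicity estimate \eqref{lemma4_14_eqn1} --- force the $\mathcal{L}^{\perp}$-component to decay at a definite geometric rate under one step of dilation by $\vartheta$. First I would set $E(\rho) = \rho^{-n-2\alpha}\int_{B_\rho(0)}|w_\rho(X,\varphi^{(0)}(X))|^2$ and aim to show $E(\vartheta) \le C\vartheta^{2\mu} E(1)$ for a fixed small $\vartheta$, then iterate; the stated conclusion with $w_1$ on the right then follows since $E(1) = \int_{B_1(0)}|w_1|^2$ and $w_\vartheta$ differs from $w_1$ only by an element of $\mathcal{L}$, over which we are minimizing.

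The core is a compactness/contradiction argument. Suppose the claimed decay fails: then there is a sequence $w^{(k)}$ of harmonic symmetric two-valued functions on $\operatorname{graph}\varphi^{(0)}|_{B_1(0)\setminus\{0\}\times\mathbb{R}^{n-2}}$ satisfying all the hypotheses with fixed $\beta_1,\beta_2$, normalized so that $\int_{B_1(0)}|w^{(k)}_1|^2 = 1$, yet $\vartheta^{-n-2\alpha}\int_{B_\vartheta(0)}|w^{(k)}_\vartheta|^2 > k\vartheta^{2\mu}$ for every choice of $\mu$ and a suitable $\vartheta$ (more precisely one fixes the conclusion we want to violate and lets $k\to\infty$). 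Using the harmonicity together with \eqref{harmonic schauder} and the normalization, one extracts a subsequence converging in $C^2_{\mathrm{loc}}(B_1(0)\setminus\{0\}\times\mathbb{R}^{n-2})$ to a limit $w^{(\infty)}$; the non-concentration hypothesis near the axis upgrades this to $L^2(B_1(0))$-convergence and ensures no mass is lost into $\{0\}\times\mathbb{R}^{n-2}$. The limit $w^{(\infty)}(X,\varphi^{(0)}(X))$ is symmetric and harmonic on $B_1(0)\setminus\{0\}\times\mathbb{R}^{n-2}$; the radial estimate \eqref{lemma4_14_eqn1}, which in the limit forces $\partial_R(w^{(\infty)}/R^\alpha) = 0$, shows $w^{(\infty)}$ is homogeneous degree $\alpha$, and the non-concentration estimate gives \eqref{lemma4_2_decay}. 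Lemma~\ref{lemma4_2} then identifies $w^{(\infty)} \in \mathcal{L}$, so $w^{(\infty)}_\rho = w^{(\infty)} - \psi_\rho = 0$ for every $\rho$; in particular $w^{(\infty)}_1 = 0$, contradicting $\int_{B_1(0)}|w^{(\infty)}_1|^2 = \lim_k \int_{B_1(0)}|w^{(k)}_1|^2 = 1$ (using that the orthogonal projections onto $\mathcal{L}$ converge, since $\mathcal{L}$ is finite-dimensional).

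To get the explicit exponent $\mu$ and the constant $C$ independent of $\vartheta$, I would not rely solely on the soft contradiction argument but pair it with an eigenvalue gap: the functions in $\mathcal{L}$ are exactly the homogeneous degree $\alpha$ pieces that are, roughly, the ``translation'' and ``leading'' modes, and any homogeneous harmonic two-valued function orthogonal to $\mathcal{L}$ on the link has homogeneity degree $\ge \alpha' > \alpha$ for some gap $\alpha' = \alpha'(n,\alpha,\varphi^{(0)})$; set $\mu = \alpha' - \alpha$ (or a fixed fraction thereof). The contradiction argument with this specific $\mu$, run for a single small but fixed $\vartheta_0 = \vartheta_0(n,\alpha,\varphi^{(0)},\sigma,\beta_1,\beta_2)$, yields $E(\vartheta_0) \le \tfrac12 \vartheta_0^{2\mu}E(1)$, say; rescaling (the hypotheses are scale-invariant in the appropriate sense, with $w^{(\infty)}$-type control propagating to all dyadic subscales via the hypotheses being assumed for all $\rho\in[\vartheta,1/4]$) gives $E(\vartheta_0^j) \le 2^{-j}\vartheta_0^{2j\mu}E(1)$, and interpolating between consecutive powers of $\vartheta_0$ produces $E(\vartheta)\le C\vartheta^{2\mu}E(1)$ for all $\vartheta\in(0,1/16)$ with $C$ absorbing $\vartheta_0$.

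The main obstacle I anticipate is the rescaling/iteration step: one must check that after dilating by $\vartheta_0$ and renormalizing, the dilated function still satisfies the three hypotheses (the non-concentration estimate, the $\lambda_\rho$-bound, and \eqref{lemma4_14_eqn1}) with the \emph{same} constants $\beta_1,\beta_2$, so that the one-step decay can be applied again. This is exactly why the hypotheses are posited for all $\rho\in[\vartheta,1/4]$ rather than at a single scale, and why the projections $\psi_\rho$ and remainders $w_\rho$ are scale-dependent: the subtlety is controlling how $\psi_{\vartheta_0\rho}$ relates to $\psi_\rho$ under dilation (they differ, but the difference is itself in $\mathcal{L}$ and is controlled by $E(\rho)$), so that the geometric decay of $E$ is not spoiled by the moving target of the projection. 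Handling this bookkeeping carefully --- essentially the content of \cite[Lemma~2.6 and its corollaries]{Sim93} transplanted here --- is the technical heart; everything else is a routine compactness argument plus the already-established Lemma~\ref{lemma4_2}.
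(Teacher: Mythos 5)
There is a genuine gap at the heart of your compactness step. You claim that in the contradiction limit the hypothesis \eqref{lemma4_14_eqn1} ``forces $\partial_R(w^{(\infty)}/R^{\alpha})=0$'' so that Lemma~\ref{lemma4_2} applies and $w^{(\infty)}\in\mathcal{L}$. But the right-hand side of \eqref{lemma4_14_eqn1} is $\beta_2\rho^{-n-2\alpha}\int_{B_\rho}|w_\rho|^2$, which under your normalization $\int_{B_1}|w^{(k)}_1|^2=1$ is of order $\beta_2$, not $o(1)$; nothing in the contradiction hypothesis makes it tend to zero along the sequence. So the limit $w^{(\infty)}$ is merely another symmetric harmonic two-valued function satisfying the \emph{same} hypotheses with the \emph{same} $\beta_1,\beta_2$ --- it is neither homogeneous nor in $\mathcal{L}$, and the contradiction ``$w^{(\infty)}_1=0$ versus $\int_{B_1}|w^{(\infty)}_1|^2=1$'' does not materialize. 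More structurally, the argument is circular: Lemma~\ref{lemma4_14} is itself the decay statement for the linear (harmonic) class, so a soft compactness argument whose limit lies in that very class cannot produce the decay; to rule out the limit you would already need the conclusion of the lemma for it. Relatedly, your quantifiers are off: for a \emph{fixed} $\vartheta$ the estimate $\vartheta^{-n-2\alpha}\int_{B_\vartheta}|w_\vartheta|^2\le\vartheta^{-n-2\alpha}\int_{B_1}|w_1|^2$ is trivial (since $\|w_\vartheta\|_{L^2(B_\vartheta)}\le\|w_1\|_{L^2(B_\vartheta)}$), so the entire content is the uniformity of $C,\mu$ as $\vartheta\downarrow 0$; any contradiction argument run at a single fixed scale, or any one-step decay whose constant is only shown to exist ``for some $\vartheta_0$'' by the trivial bound, cannot deliver that.

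Your instinct about the spectral gap above $\alpha$ in the set of homogeneity degrees of symmetric two-valued harmonic functions is the right mechanism, but it has to be implemented directly on $w$ rather than extracted from a compactness limit. This is what the proof the paper invokes ([KrumWic1, Lemmas~7.2 and 7.3]) does: one expands the symmetric harmonic two-valued function $w(X,\varphi^{(0)}(X))$ (equivalently, a harmonic function on the double cover away from $\{0\}\times\mathbb{R}^{n-2}$, with half-integer angular frequencies) into homogeneous harmonic components on annuli; the weighted non-concentration hypothesis with the subtraction $D_x\varphi^{(0)}\cdot\lambda_\rho(z)$ and the bound on $\lambda_\rho$ justify the expansion up to the axis and control the translation-type and degree-$\alpha$ modes (which are exactly the span $\mathcal{L}$ identified in Lemma~\ref{lemma4_2}), while \eqref{lemma4_14_eqn1} controls the radial variation; the orthogonality of $w_\rho$ to $\mathcal{L}$ at each scale then leaves only components whose degrees exceed $\alpha$ by a definite gap, which yields the geometric one-step decay with a scale-independent constant, and the iteration over $\rho\in[\vartheta,1/4]$ together with the (correctly flagged) bookkeeping for the scale-dependent projections $\psi_\rho$ gives the stated estimate. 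As written, your proposal does not reach that one-step decay, so the argument does not close.
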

\begin{remark}
Note that in \eqref{lemma4_14_eqn1} that 
\begin{equation*}
	\frac{\partial}{\partial R} \left( \frac{w_{\rho}(X,\varphi^{(0)}(X))}{R^{\alpha}} \right) 
	= \frac{\partial}{\partial R} \left( \frac{w(X,\varphi^{(0)}(X))}{R^{\alpha}} \right) . 
\end{equation*}
\end{remark}
\begin{proof}[Proof of Lemma~\ref{lemma4_14}]
See~\cite[Lemmas 7.2 and 7.3]{KrumWic1}. 
\end{proof}

\section{Proof of the excess decay lemma} \label{sec:excess decay sec}

\begin{proof}[Proof of Lemma~\ref{lemma1}] 
By rotating slightly, we may assume that $\varphi \in \Phi_{\varepsilon_0}(\varphi^{(0)})$.  Fix $\alpha$ and $\varphi^{(0)}$ as in Definition~\ref{varphi0 defn} and $\vartheta \in (0,1/4)$.  For $k \in \{1,2,3,\ldots\}$ let $0 < \varepsilon_k \leq \delta_k \downarrow 0$, $\varphi_k \in \Phi_{\varepsilon_k}(\varphi^{(0)})$, $(u_k,\Lambda_k) \in \mathcal{F}_{\varepsilon_k}(\varphi^{(0)})$ and $M_k = {\rm graph}\,u_k$ and assume that 
\begin{equation*}
	u_k(0) = \{0,0\}, \quad Du_k(0) = \{0,0\}, \quad \mathcal{N}_{M_k}(0) \geq \alpha
\end{equation*}
and that alternative (i) of Lemma~\ref{lemma1} is false, that is for every $y_0 \in B^{n-2}_{1/2}(0)$ 
\begin{equation*}
	(B_{\delta_k}(0,y_0) \times \mathbb{R}^m) \cap \{ P \in \mathcal{K}_{M_k} \cap B_{1/2}(0) \times \mathbb{R}^m : \mathcal{N}_{M_k}(P) \geq \alpha \} \neq \emptyset . 
\end{equation*}
We want to show that there are constants $\gamma = \gamma(n,m,\alpha,\varphi^{(0)},\vartheta) \in [1,\infty)$, $\mu = \mu(n,m,\alpha,\varphi^{(0)}) \in (0,1)$ and $C = C(n,m,\alpha,\varphi^{(0)}) \in (0,\infty)$ and for infinitely many $k$ there exists $\widetilde{\varphi}_k \in \widetilde{\Phi}_{\gamma \varepsilon_k}(\varphi^{(0)})$ such that   
\begin{equation} \label{lemma1_eqn1} 
	\vartheta^{-n-2\alpha} \int_{B_{\vartheta}(0)} \mathcal{G}(u_k,\widetilde{\varphi}_k)^2 
	\leq C\vartheta^{2\mu} \left( \int_{B_1(0)} \mathcal{G}(u_k,\varphi_k)^2 + \|Du_k\|_{C^0(0)} \right) . 
\end{equation}
By the arbitrariness of the sequences this will complete the proof of Lemma~\ref{lemma1}.  

Let $\tau_k \downarrow 0$ slowly enough that Lemma~\ref{lemma2_6} holds with $\gamma = 3/4$, $\tau = \tau_k\vartheta$, $\beta = 1/2$, $u = u_k$, $\Lambda = \Lambda_k$ and $\varphi = \varphi_k$ and for each $\rho \in [\vartheta,1/2]$, Theorem~\ref{mainestthm}, Corollary~\ref{lemma3_9}, Corollary~\ref{thm3_1} and Corollary~\ref{cor3_2} hold true with $\gamma = 1/2$, $\tau = \tau_k$, $u(X) = \rho^{-1} u_k(\rho X)$, $\Lambda = \rho^{\alpha-1} \Lambda_k$, $\varphi = \varphi_k$ and $\sigma = 1/2$.  In particular, by Lemma~\ref{lemma2_6} there exists an open set $U_k$ with $\{ (x,y) \in B_{3/4}(0) \cap \{ |x| > \tau_k\vartheta \} \subset U_k$ and a function $v_k \in C^2(\op{graph} \varphi_k |_{U_k},\mathbb{R}^m)$ such that 
\begin{equation} \label{lemma1_eqn2} 
	\frac{u_{k,s}(X)}{\Lambda_k} = \{ \varphi_{k,1}(X) + v_k(X,\varphi_{k,1}(X)), -\varphi_{k,1}(X) + v_k(X,-\varphi_{k,1}(X)) \} 
\end{equation} 
for each $X \in U_k$, where $u_{k,s}(X) = \{ \pm (u_{k,1}(X) - u_{k,2}(X))/2 \}$ is the symmetric part of $u_k(X) = \{u_{k,1}(X),u_{k,2}(X)\}$ (as in \eqref{avg and free defn}) and $\varphi_k(X) = \{ \pm \varphi_{k,1}(X) \}$.  For each $\rho \in [\vartheta,1/2]$, by Theorem~\ref{mainestthm} and Corollary~\ref{cor3_2} we have that 
\begin{equation} \label{lemma1_eqn3} 
	\int_{B_{\rho/2}(0)} R^{2-n} \left| \frac{\partial}{\partial R} \left(\frac{u_{k,s}}{ R^{\alpha} \Lambda_k}\right) \right|^2 
	\leq C \rho^{-n-2\alpha} \int_{B_{\rho}(0)} \mathcal{G}\left( \frac{u_{k,s}}{\Lambda_k}, \varphi_k \right)^2 + C \|Du_k\|_{C^0(B_{\rho}(0))}^2
\end{equation} 
and 
\begin{equation} \label{lemma1_eqn4} 
	\rho^{-n-2\alpha+1/2} \int_{B_{\rho/2}(0)} \frac{\mathcal{G}(u_{k,s}/\Lambda_k,\varphi_k)^2}{r_{\delta}^{1/2}}  
	\leq C \rho^{-n-2\alpha} \int_{B_{\rho}(0)} \mathcal{G}\left( \frac{u_{k,s}}{\Lambda_k}, \varphi_k \right)^2 
	+ C \|Du_k\|_{C^0(B_{\rho}(0))}^2 
\end{equation} 
for each $\delta \geq 2\delta_k/\vartheta$, where where $r = |x|$ and $r_{\delta} = \max\{r,\delta\}$ and $C = C(n,m,\alpha,\varphi^{(0)}) \in (0,\infty)$ is a constant (independent of $\vartheta$).  Moreover, for each $k$, $\rho \in [\vartheta,1/2]$ and $z \in B^{n-2}_{\rho/2}(0)$ there exists $Z_k \in \mathcal{K}_{u_k} \cap B_{\rho/2}(0) \cap B_{\delta_k}(0,z)$ such that $\mathcal{N}_{M_k}(Z_k,u_{k,1}(Z_k)) \geq \alpha$, where $u_k(Z_k) = \{u_{k,1}(Z_k),u_{k,1}(Z_k)\}$ for $u_{k,1}(Z_k) \in \mathbb{R}^m$.  By Corollary~\ref{lemma3_9} and Corollary~\ref{thm3_1}, 
\begin{align} 
	\label{lemma1_eqn5} &\frac{|\xi_k|^2}{\rho^2} \leq C \rho^{-n-2\alpha} \int_{B_{\rho}(0)} \mathcal{G}\left( \frac{u_{k,s}}{\Lambda_k}, \varphi_k \right)^2
		+ C \|Du_k\|_{C^0(B_{\rho}(0))}^2, \\
	\label{lemma1_eqn6} &\rho^{-1/2} \int_{B_{\rho/2}(0) \cap \{ r > \tau_k\rho \}} \frac{|v_k(X,\varphi'_k(X)) + D_x \varphi^{(0)}(X) \cdot \xi_k|^2}{
		|X-Z_k|^{n+2\alpha-1/2}} \\&\hspace{15mm} \leq C \rho^{-n-2\alpha} \int_{B_{\rho}(0)} \mathcal{G}\left( \frac{u_{k,s}}{\Lambda_k}, 
		\varphi_k \right)^2 + C \|Du_k\|_{C^0(B_{\rho}(0))}^2, \nonumber
\end{align} 
where $\xi_k$ is the projection of $Z_k$ onto $\{0\} \times \mathbb{R}^{n-2}$ and $C = C(n,m,\alpha,\varphi^{(0)}) \in (0,\infty)$ is a constant (independent of $\vartheta$). 

Since $\varphi_k \in \Phi_{\varepsilon_k}(\varphi^{(0)})$, $\varphi^{(0)}(re^{i\theta},y) = \op{Re}(c^{(0)} r^{\alpha} e^{i\alpha \theta})$ and $\varphi_k(re^{i\theta},y) = \op{Re}(c_k r^{\alpha} e^{i\alpha \theta})$ for some constants $c^{(0)}, c_k \in \mathbb{C}^m$ with $|c_k - c^{(0)}| \leq C(n,m,\alpha) \,\varepsilon_k$.  Define $w_k : {\rm graph}\,\varphi^{(0)} |_{U_k} \rightarrow \mathbb{R}^m$ by 
\begin{equation*}
	w_k(re^{i\theta},y,\op{Re}(c^{(0)} r^{\alpha} e^{i\alpha\theta})) = \frac{v_k(re^{i\theta},y,\op{Re}(c_k r^{\alpha} e^{i\alpha\theta}))}{E_k} 
\end{equation*}
for each $X = (re^{i\theta},y) \in U_k$, where the excess $E_k$ is defined by 
\begin{equation*}
	E_k = \left( \int_{B_1(0)} \mathcal{G}\left( \frac{u_{k,s}}{\Lambda_k}, \varphi_k \right)^2 + \|Du_k\|_{C^0(B_1(0))} \right)^{1/2} 
\end{equation*}
(without $\|Du_k\|_{C^0(B_1(0))}$ squared so that $E_k^{-1} \|Du_k\|_{C^0(B_1(0))} \rightarrow 0$).  Associate $w_k$ with the symmetric two-valued function $\widehat{w}_k : U_k \rightarrow \mathcal{A}_2(\mathbb{R}^m)$ defined by 
\begin{equation*}
	\widehat{w}_k(X) = w_k(X,\varphi^{(0)}(X)) = \{ \pm w_k(re^{i\theta},y,\op{Re}(c^{(0)} r^{\alpha} e^{i\alpha\theta})) \}
\end{equation*}
for each $X = (re^{i\theta},y) \in U_k$.  Since \eqref{mss3} holds true with $u_k$ and $u_{k,s}/\Lambda_k$ in place of $u$ and $w$ and $\varphi_k$ is harmonic, in each open ball $B \subset U_k$, $w_k$ satisfies  
\begin{equation} \label{lemma1_eqn7}
	\Delta \widehat{w}_{k,1}^{\kappa} = f_{k,1,\kappa} \text{ in } U_k, 
\end{equation}
where for each $X = (re^{i\theta},y) \in B$ we let $\varphi_{k,1}(X) = \op{Re}(c_k r^{\alpha} e^{i\alpha\theta})$ and $\widehat{w}_{k,1}(X) = w_k(re^{i\theta},y,\op{Re}(c^{(0)} r^{\alpha} e^{i\alpha\theta}))$ (with $|c_k - c^{(0)}| \leq C(n,m,\alpha) \,\varepsilon_k$), $u_{k,a}(X) = (u_{k,1}(X) + u_{k,2}(X))/2$ (as in \eqref{avg and free defn}) and 
\begin{equation*}
	f_{k,1,\kappa} = -E_k^{-1} \,D_i (b^{ij}_{\kappa\lambda}(Du_{k,a},Du_{k,s}) \,D_j (\varphi_{k,1}^{\kappa} + E_k w_{k,1}^{\kappa}) .
\end{equation*}
Since $(u_k,\Lambda_k) \in \mathcal{F}_{\varepsilon_k}(\varphi^{(0)})$, 
\begin{equation*}
	\lim_{k \rightarrow \infty} \|u_k\|_{C^{1,1/2}(B_1(0))} = 0, \quad \Lambda_k^{-1} \|u_{k,s}\|_{L^2(B_1(0))} \leq \|\varphi^{(0)}\|_{L^2(B_1(0))} + 1. 
\end{equation*}
Thus by \eqref{mss3 f est}, \eqref{minimal avg schauder} and \eqref{minimal sym decay}, $f_k = \{ \pm f_{k,1} \}$ satisfies 
\begin{align*}
	|f_k| &\leq C E_k^{-1} \Lambda_k^{-1} \left( (|Du_{k,a}|^2 + |Du_{k,s}|^2) \,|D^2 u_{k,s}| 
		+ (|Du_{k,a}| |D^2 u_{k,a}| + |Du_{k,s}| |D^2 u_{k,s}|) \,|Du_{k,s}| \right) 
	\\&\leq C E_k^{-1} \Lambda_k^{-1} \|Du_k\|_{C^0(B_1(0))}^2 \|u_{k,s}\|_{L^2(B_1(0))} \leq C E_k
\end{align*}
in $U$ for some constant $C = C(n,m,\varphi^{(0)}) \in (0,\infty)$.  In particular, $\|f_k\|_{C^0(B)} \rightarrow 0$ as $k \rightarrow \infty$ for each closed ball $B \subset B_1(0) \setminus \{0\} \times \mathbb{R}^{n-2}$.  Moreover, by standard elliptic estimates applied to \eqref{lemma1_eqn7}, for each closed ball $B \subset B_1(0) \setminus \{0\} \times \mathbb{R}^{n-2}$, 
\begin{equation*} 
	\|\widehat{w}_k\|_{C^{1,1/2}(B)} \leq C
\end{equation*}
for all sufficiently large $k$ and some constant $C = C(n,m,\varphi^{(0)},B) \in (0,\infty)$.  After passing to a subsequence, there exists a function $w : {\rm graph}\,\varphi^{(0)} |_{B_{1/2}(0) \setminus \{0\} \times \mathbb{R}^{n-2}} \rightarrow \mathbb{R}^m$ such that $w_k \rightarrow w$ in $C^1(\op{graph} \varphi^{(0)} |_K,\mathbb{R}^m)$ for each compact set $K \subseteq B_1(0) \setminus \{0\} \times \mathbb{R}^{n-2}$.  By \eqref{lemma1_eqn7}, $w(X,\varphi^{(0)}_1(X))$ is harmonic in each open ball $B \subset B_{1/2}(0) \setminus \{0\} \times \mathbb{R}^{n-2}$, where we let $\varphi^{(0)} = \{ \pm \varphi^{(0)}_1(X) \}$ in $B$ for some single-valued harmonic function $\varphi^{(0)}_1 : B \rightarrow \mathbb{R}^m$.  By \eqref{lemma1_eqn4}, 
\begin{equation*} 
	\int_{B_{\rho}(0) \cap B^2_{\delta}(0) \times \mathbb{R}^{n-2}} \mathcal{G}\left( \frac{u_{k,s}}{\Lambda_k}, \varphi_k \right)^2 
		\leq \frac{C \delta^{1/2} E_k^2}{\rho^{1/2}}
\end{equation*}
for all $\rho \in [\vartheta,1/4]$ and $2\delta_k/\vartheta \leq \delta < \rho$, where $C = C(n,m,\alpha,\varphi^{(0)}) \in (0,\infty)$ is a constant.  Thus 
\begin{equation} \label{lemma1_eqn8} 
	\lim_{j \rightarrow \infty} \frac{1}{E_k^2} \int_{B_{\rho}(0)} \mathcal{G}\left( \frac{u_{k,s}}{\Lambda_k}, \varphi_k \right)^2 
		= \int_{B_{\rho}(0)} |w(X,\varphi^{(0)}(X))|^2. 
\end{equation}
for each $\rho \in [\vartheta,1/4]$, where 
\begin{equation*}
	w(X,\varphi^{(0)}(X)) = \{ \pm w(re^{i\theta},y,\op{Re}(c^{(0)} (x_1+ix_2)^{\alpha})) \}
\end{equation*}
for each $X = (re^{i\theta},y) \in B_{1/4}(0) \setminus \{0\} \times \{0\} \times \mathbb{R}^{n-2}$.  

By dividing both sides of \eqref{lemma1_eqn3} by $E_k^2$ and letting $k \rightarrow \infty$ using \eqref{lemma1_eqn8}, for every $\rho \in [\vartheta,1/4]$, 
\begin{equation} \label{lemma1_eqn9} 
	\int_{B_{\rho/2}(0)} R^{2-n} \left| \frac{\partial}{\partial R} \left(\frac{w(X,\varphi^{(0)}(X))}{R^{\alpha}}\right) \right|^2 
	\leq C \rho^{-n-2\alpha} \int_{B_{\rho}(0)} |w(X,\varphi^{(0)}(X))|^2 
\end{equation}
for $C = C(n,m,\alpha,\varphi^{(0)}) \in (0,\infty)$ (independent of $\vartheta$).  Moreover, for each for each $k$, $\rho \in [\vartheta,1]$ and $z \in B^{n-2}_{\rho/2}(0)$ there exists $Z_k \in \mathcal{K}_{u_k} \cap B_{\rho/2}(0) \cap B_{\delta_k}(0,z)$ such that \eqref{lemma1_eqn5} and \eqref{lemma1_eqn6} hold true.  Letting $\xi_k$ denote the projection of $Z_k$ onto $\{0\} \times \mathbb{R}^{n-2}$, by \eqref{lemma1_eqn5} after passing to a subsequence there exists $\lambda(z) \in \mathbb{R}^2$ such that $\xi_k/E_k \rightarrow \lambda(z)$ and, using \eqref{lemma1_eqn8}, 
\begin{equation} \label{lemma1_eqn10} 
	|\lambda(z)|^2 \leq C \rho^{2-n-2\alpha} \int_{B_{\rho}(0)} |w(X,\varphi^{(0)}(X))|^2 
\end{equation}
for some constant $C = C(n,m,\alpha,\varphi^{(0)}) \in (0,\infty)$.  By dividing both sides of \eqref{lemma1_eqn6} by $E_k^2$ and letting $k \rightarrow \infty$ using \eqref{lemma1_eqn8}, 
\begin{equation} \label{lemma1_eqn11} 
	\int_{B_{\rho/2}(0)} \frac{|w(X,\varphi'_k(X)) + D_x \varphi^{(0)}(X) \cdot \lambda(z)|^2}{|X-(0,z)|^{n+2\alpha-1/2}} 
	\leq C \rho^{-n-2\alpha+1/2} \int_{B_{\rho}(0)} |w(X,\varphi^{(0)}(X))|^2 
\end{equation} 
for all $\rho \in [\vartheta,1/4]$ and $z \in B^{n-2}_{\rho/2}(0)$, where $C = C(n,m,\alpha,\varphi^{(0)}) \in (0,\infty)$ is a constant (independent of $\vartheta$).  Notice that this construction gives us for each $\rho \in [\vartheta,1/4]$ and $z \in B^{n-2}_{\rho/2}(0)$ there is at least on $\lambda(z)$ such that \eqref{lemma1_eqn11} holds true, with $\lambda(z)$ possibly not unique or depending on $\rho$.  However, if there existed $\vartheta \leq \rho_1 \leq \rho_2 \leq 1/4$, $z \in B^{n-2}_{\rho_1}(0)$ and $\lambda_1,\lambda_2 \in \mathbb{R}^2$ such that 
\begin{equation*} 
	\int_{B_{\rho_j/2}(0)} \frac{|w(X,\varphi'_k(X)) + D_x \varphi^{(0)}(X) \cdot \lambda_j|^2}{|X-(0,z)|^{n+2\alpha-1/2}} 
	\leq C \rho_j^{-n-2\alpha+1/2} \int_{B_{\rho_j}(0)} |w(X,\varphi^{(0)}(X))|^2 
\end{equation*} 
for $j = 1,2$, then by the triangle inequality 
\begin{equation*} 
	\int_{B_{\rho_1/2}(0)} \frac{|D_x \varphi^{(0)}(X) \cdot (\lambda_1 - \lambda_2)|^2}{|X-(0,z)|^{n+2\alpha-1/2}} < \infty,
\end{equation*} 
which since $\varphi^{(0)}$ is homogeneous degree $\alpha$ and $D_1 \varphi^{(0)}(re^{i\theta},y)$ and $D_2 \varphi^{(0)}(re^{i\theta},y)$ are $L^2$-orthogonal (as functions of $\theta$) necessarily implies that $\lambda_1 = \lambda_2$.  Therefore, we have a well-defined bounded function $\lambda : B^{n-2}_{1/8}(0) \rightarrow \mathbb{R}^2$ such that \eqref{lemma1_eqn10} and \eqref{lemma1_eqn11} hold true for all $\rho \in [\vartheta,1/4]$ and $z \in B^{n-2}_{\rho/2}(0)$.  In particular, by \eqref{lemma1_eqn10} with $\rho = 1$, $|\lambda(z)| \leq C$ for all $z \in B^{n-2}_{1/8}(0)$ and some constant $C = C(n,m,\alpha,\varphi^{(0)}) \in (0,\infty)$. 

Let $\psi \in \mathcal{L}$, where $\mathcal{L}$ is as in Definition~\ref{L defn}, and express $\psi$ as 
\begin{align*} 
	\psi(re^{i\theta},y,\varphi^{(0)}(re^{i\theta},y)) ={}& a_0 r^{\alpha} \cos(\alpha \theta) + b_0 r^{\alpha} \sin(\alpha \theta) 
	\\& + \sum_{j=1}^{n-2} \left( a_j D_1 \varphi^{(0)}(re^{i\theta},y) \,y_j + b_j D_2 \varphi^{(0)}(re^{i\theta},y) \,y_j \right) \nonumber
\end{align*}
for some $a_0,b_0 \in \mathbb{R}^m$ and $a_j,b_j \in \mathbb{R}$ for $j = 1,\ldots,n-2$.  Since $a_0 r^{\alpha} \cos(\alpha \theta)$, $b_0 r^{\alpha} \sin(\alpha \theta)$, $D_1 \varphi^{(0)}(re^{i\theta},y) \,y_j$, and $D_2 \varphi^{(0)}(re^{i\theta},y) \,y_j$ ($1 \leq j \leq n-2$) are mutually orthogonal in $L^2(B_1(0), \pi_0^*d\mathcal{L}^n,\mathbb{R}^m)$,  
\begin{equation} \label{lemma1_eqn12}
	|a_0|^2 + |b_0|^2 + \sum_{j=1}^{n-2} (|a_j|^2 + |b_j|^2) \leq C \int_{B_1(0)} |\psi(X,\varphi^{(0)}(X))|^2 \,dX 
\end{equation}
for some constant $C = C(n,m,\alpha) \in (0,\infty)$.  Let $B = (B_{ij})$ be the $n \times n$ skew-symmetric matrix with $B_{ij} = 0$ if $1 \leq i,j \leq 2$, $B_{1j} = -B_{j1} = a_j$, $B_{2j} = -B_{j2} = b_j$ and $B_{ij} = 0$ if $3 \leq i,j \leq n$.  Recall that since $\varphi_k \in \Phi_{\varepsilon_k}(\varphi^{(0)})$, $\varphi^{(0)}(re^{i\theta},y) = \op{Re}(c^{(0)} r^{\alpha} e^{i\alpha \theta})$ and $\varphi_k(re^{i\theta},y) = \op{Re}(c_k r^{\alpha} e^{i\alpha \theta})$ for some constants $c^{(0)}, c_k \in \mathbb{C}^m$ with $|c_k - c^{(0)}| \leq C \varepsilon_k$ for some constant $C = C(n,\alpha) \in (0,\infty)$.  Thus for each $k \in \{1,2,3,\ldots\}$ we have a function $\widetilde{\varphi}_k \in \widetilde{\Phi}_{C \varepsilon_k}(\varphi^{(0)})$, where $C = 1 + C_1(n,m,\alpha) \,\|\psi\|_{L^2(B_1(0))}$ (by \eqref{lemma1_eqn12}), such that $\widetilde{\varphi}_k$ is defined by 
\begin{equation} \label{lemma1_eqn13}
	\widetilde{\varphi}_k(e^{-E_k B} X) = \{ \pm \op{Re}( (c_k + E_k (a_0 + i b_0)) \,(x_1+ix_2)^{\alpha} ) \}
\end{equation} 
for each $X = (re^{i\theta},y) \in B_1(0) \setminus \{0\} \times \mathbb{R}^{n-2}$.  We can repeat the blow-up procedure above with $u_k \circ e^{-E_k B}$, $\widetilde{\varphi}_k \circ e^{-E_k B}$ and $E_k$ in place of $u_k$, $\varphi_k$ and $E_k$ to produce a blow-up $\widetilde{w}$.  For each $X = (x,y) \in B_{3/4}(0)$ with $|x| \geq \tau$ let $\varphi_k = \{\pm \varphi_{k,1}\}$ on $B_{|x|/2}(X)$ for some single-valued harmonic function $\varphi_{k,1}$.  By \eqref{lemma1_eqn2} and Taylor's theorem applied to $\varphi_{k,1}$, for every $\tau \in (0,3/4)$ and sufficiently large $k$ 
\begin{align} \label{lemma1_eqn14}
	\frac{u_{k,s}(e^{-E_k B} X)}{\Lambda_k} &= \{ \pm (\varphi_{k,1}(e^{-E_k B} X) + v_k(e^{-E_k B} X,\varphi_{k,1}(e^{-E_k B} X))) \}  
	\\&= \Bigg\{ \pm \Bigg( \varphi_{k,1}(X) - E_k \sum_{j=1}^{n-2} (a_j D_{x_1} \varphi_{k,1}(X) + b_j D_{x_2} \varphi_{k,1}(X))\,y_j \nonumber 
		\\& \hspace{45mm} + v_k(e^{-E_k B} X,\varphi_{k,1}(e^{-E_k B} X)) + \mathcal{R}_k(X) \Bigg) \Bigg\} , \nonumber
\end{align} 
where $|\mathcal{R}_k(X)| \leq C(\varphi^{(0)}) \,E_k^2 \,\tau^{\alpha-2}$.  Thus by \eqref{lemma1_eqn13} and \eqref{lemma1_eqn14}, $\widetilde{w} = w - \psi$.  By replacing $u_k$ and $\varphi_k$ by $u_k \circ e^{-E_k B}$ and $\widetilde{\varphi}_k \circ e^{-E_k B}$ in the argument leading to \eqref{lemma1_eqn8}, \eqref{lemma1_eqn9}, \eqref{lemma1_eqn10} and \eqref{lemma1_eqn11} we obtain the following estimates for $\widetilde{w} = w - \psi$.  In place of \eqref{lemma1_eqn8} we obtain 
\begin{equation} \label{lemma1_eqn16} 
	\lim_{j \rightarrow \infty} \frac{1}{E_k^2} \int_{B_{\rho}(0)} \mathcal{G}\left( \frac{u_{k,s}}{\Lambda_k}, \widetilde{\varphi}_k \right)^2 
		= \int_{B_{\rho}(0)} |w(X,\varphi^{(0)}(X)) - \psi(X,\varphi^{(0)}(X))|^2. 
\end{equation}
for all $\rho \in [\vartheta,1/4]$.  In place of \eqref{lemma1_eqn9} we obtain  
\begin{align} \label{lemma1_eqn17} 
	&\int_{B_{\rho/2}(0)} R^{2-n} \left| \left(\frac{\partial}{\partial R} \frac{w(X,\varphi^{(0)}(X))}{R^{\alpha}}\right) \right|^2 
	\\&\hspace{15mm} \leq C \rho^{-n-2\alpha} \int_{B_1(0)} |w(X,\varphi^{(0)}(X)) - \psi(X,\varphi^{(0)}(X))|^2 \nonumber 
\end{align}
for all $\rho \in [\vartheta,1/4]$ and some constant $C = C(n,m,\alpha,\varphi^{(0)}) \in (0,\infty)$ (independent of $\vartheta$).  In place of \eqref{lemma1_eqn10} and \eqref{lemma1_eqn11}, we have that there exists a bounded function $\lambda_{\psi} : B^{n-2}_{1/8}(0) \rightarrow \mathbb{R}^2$ (depending on $\psi$) such that 
\begin{align} 
	\label{lemma1_eqn18} &|\lambda_{\psi}(z)|^2 \leq C \rho^{2-n-2\alpha} \int_{B_{\rho}(0)} |w(X,\varphi^{(0)}(X)) - \psi(X,\varphi^{(0)}(X))|^2, \\
	\label{lemma1_eqn19} &\int_{B_{\rho/2}(0)} \frac{|w(X,\varphi'_k(X)) - \psi(X,\varphi^{(0)}(X)) + D_x \varphi^{(0)}(X) \cdot \lambda_{\psi}(z)|^2}{
		|X-(0,z)|^{n+2\alpha-1/2}} 
	\\&\hspace{15mm} \leq C \rho^{-n-2\alpha+1/2} \int_{B_{\rho}(0)} |w(X,\varphi^{(0)}(X)) - \psi(X,\varphi^{(0)}(X))|^2 \nonumber
\end{align} 
for all $\rho \in [\vartheta,1/4]$ and $z \in B^{n-2}_{\rho/2}(0)$ and some constant $C = C(n,m,\alpha,\varphi^{(0)}) \in (0,\infty)$ (independent of $\vartheta$).   

Now for given $\rho \in [\vartheta,1/4]$, choose $\psi_{\rho} \in \mathcal{L}$ to be the homogeneous degree $\alpha$ function such that 
\begin{equation*}
	\int_{B_{\rho}(0)} |w(X,\varphi^{(0)}(X)) - \psi_{\rho}(X,\varphi^{(0)}(X))|^2 
	= \inf_{\psi \in \mathcal{L}} \int_{B_{\rho}(0)} |w(X,\varphi^{(0)}(X)) - \psi(X,\varphi^{(0)}(X))|^2. 
\end{equation*}
Since $\int_{B_1(0)} |w(X,\varphi^{(0)}(X))|^2 \leq 1$, 
\begin{align} \label{lemma1_eqn20} 
	\int_{B_{1}(0)} |\psi_{\rho}(X,\varphi^{(0)}(X))|^2 &= \rho^{-n-2\alpha} \int_{B_{\rho}(0)} |\psi_{\rho}(X,\varphi^{(0)}(X))|^2 \\
		&\leq 2 \rho^{-n-2\alpha} \int_{B_{\rho}(0)} |w(X,\varphi^{(0)}(X))|^2 
			\nonumber \\&\hspace{15mm} + 2 \rho^{-n-2\alpha} \int_{B_{\rho}(0)} |w(X,\varphi^{(0)}(X)) - \psi_{\rho}(X,\varphi^{(0)}(X))|^2 \nonumber \\
		&\leq 4 \rho^{-n-2\alpha} \int_{B_{\rho}(0)} |w(X,\varphi^{(0)}(X))|^2 \leq 4\vartheta^{-n-2\alpha} \nonumber 
\end{align}
for all $\rho \in [\vartheta,1/4]$.  
By \eqref{lemma1_eqn17}, \eqref{lemma1_eqn18} and \eqref{lemma1_eqn19} we have all the necessary estimates to apply Lemma~\ref{lemma4_14} (with $\sigma = 1/2$) to conclude that  
\begin{equation} \label{lemma1_eqn21} 
	\vartheta^{-n-2\alpha} \int_{B_{\vartheta}(0)} |w(X,\varphi^{(0)}(X)) - \psi_{\vartheta}(X,\varphi^{(0)}(X))|^2 
	\leq C \vartheta^{2\mu} \int_{B_1(0)} |w(X,\varphi^{(0)}(X))|^2 \leq C \vartheta^{2\mu}
\end{equation}
for some constants $\mu = \mu(n,m,\alpha,\varphi^{(0)}) \in (0,1)$ and $C = C(n,m,\alpha,\varphi^{(0)}) \in (0,\infty)$, where we used the fact that $\int_{B_1(0)} |w(X,\varphi^{(0)}(X))|^2 = 1$.  Let $\widetilde{\varphi}_k \in \widetilde{\Phi}_{\gamma \varepsilon_k}(\varphi^{(0)})$ be as in \eqref{lemma1_eqn13} corresponding to $\psi = \psi_{\vartheta}$, where by \eqref{lemma1_eqn20} we take $\gamma = C(n,m,\alpha) \,\vartheta^{-n-2\alpha}$.  By \eqref{lemma1_eqn21} and \eqref{lemma1_eqn16} with $\rho = \vartheta$, 
\begin{equation*}
	\vartheta^{-n-2\alpha} \int_{B_{\vartheta}(0)} \mathcal{G}(u_j,\widetilde{\varphi}_j)^2 \leq 2C \vartheta^{2\mu} E_j^2, 
\end{equation*}
for $j$ sufficiently large, which is \eqref{lemma1_eqn1} as required. 
\end{proof}

\section{Proofs of the main results} \label{sec:prove main result sec}

\begin{theorem} \label{theorem1}
Let $\alpha$ and $\varphi^{(0)}$ be as in Definition~\ref{varphi0 defn}.  There exists $\varepsilon, \delta, \overline{\mu} \in (0,1)$ depending only on $n$, $m$, $\alpha$, and $\varphi^{(0))}$ such that if $u \in C^{1,1/2}(B_4(0),\mathcal{A}_2(\mathbb{R}^m))$ is a two-valued function such that the $M = {\rm graph}\,u$ is stationary in $B_4(0) \times \mathbb{R}^{n-2}$, $\|u\|_{C^{1,1/2}(B_4(0))} \leq \varepsilon^2$ and 
\begin{equation} \label{thm1_hyp}
	\int_{B_4(0)} \mathcal{G}\left(\frac{u_s}{4^{-n/2} \|u_s\|_{L^2(B_4(0))}}, \varphi^{(0)} \right)^2 \leq \varepsilon^2 , 
\end{equation}
then 
\begin{equation*}
	\{ P \in \mathcal{K}_M \cap B_1(0) \times \mathbb{R}^m : \mathcal{N}_M(P) \geq \alpha \} \subseteq S \cup T
\end{equation*}
where $S$ is contained in a properly embedded $(n-2)$-dimensional $C^{1,\overline{\mu}}$-submanifold $\Gamma$ of $B_1(0) \times \mathbb{R}^{n-2}$ with $\mathcal{H}^{n-2}(\Gamma) \leq 2\omega_{n-2}$ and $T \subseteq \bigcup_{j=1}^{\infty} B_{\rho_j}(P_j)$ for a countable family of balls $\{B_{\rho_j}(P_j)\}$ with $\sum_{j=1}^{\infty} \rho_j^{n-2} \leq 1-\delta$.  Moreover, for $\mathcal{H}^{n-2}$-a.e.~$P \in S$, there exists a unique non-zero, symmetric, homogeneous degree $\alpha$, cylindrical, harmonic two-valued function $\varphi^{(P)} \in C^{1,1/2}(T_P M,\mathcal{A}_2(T_P M^{\perp}))$ such that  
\begin{equation} \label{thm1 concl2}
	\rho^{-n-2\alpha} \int_{B_{\rho}(0)} \mathcal{G}(\widetilde{u}_{P,s}, \varphi^{(P)})^2 \leq C \rho^{2\overline{\mu}} \|u_s\|_{L^2(B_4(0))}^2
\end{equation}
for all $\rho \in (0,1]$ and some constant $C = C(n,m,\alpha) \in (0,\infty)$, where we let $\widetilde{u}_P \in C^{1,1/2}(\mathbf{B}_1(0, T_P M),$ $\mathcal{A}_2(T_P M^{\perp}))$ be a two-valued function such that $(M-P) \cap \mathbf{C}_1(0, T_P M) = {\rm graph}\,\widetilde{u}_P$ (as in Definition~\ref{tildeu defn}) and $\widetilde{u}_{P,s}$ be the symmetric part of $\widetilde{u}_P$ (as in \eqref{avg and free defn}). 
\end{theorem}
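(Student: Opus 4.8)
The plan is to run the Simon-type iteration of~\cite{Sim93}, with Lemma~\ref{lemma1} in the role of the excess-decay dichotomy and Corollaries~\ref{lemma3_9}, \ref{thm3_1}, \ref{cor3_2} supplying the non-concentration of excess. First I would rescale ($X\mapsto u(4X)/4$, rescaling the normalization via the homogeneity of $\varphi^{(0)}$) so that $M$ is stationary in $B_1(0)\times\mathbb{R}^m$ and $(u,\Lambda_0)\in\mathcal{F}_\varepsilon(\varphi^{(0)})$ for a fixed $\Lambda_0$ comparable to $\|u_s\|_{L^2(B_4(0))}$; by Remark~\ref{F defn rmk2} this $\Lambda_0$ (scaled only by the homogeneity factor under dyadic blow-ups) is the scale parameter carried through the iteration. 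Next I would fix the constants: with $\mu,C$ from Lemma~\ref{lemma1}, choose $\vartheta\in(0,1/8)$ and $\overline\mu\in(0,\mu)$ with $C\vartheta^{2\mu}\le\vartheta^{2\overline\mu}\le 1/4$, and then shrink $\varepsilon$ so that the thresholds $\varepsilon_0,\delta_0$ of Lemma~\ref{lemma1} and the smallness hypotheses of Theorem~\ref{mainestthm} and Corollaries~\ref{lemma3_9}--\ref{cor3_2} hold along the whole iteration (using the doubling condition of Theorem~\ref{freq_mono_thm2}, valid since $(u,\Lambda_0)\in\mathcal{F}_\varepsilon(\varphi^{(0)})$, and the decay \eqref{L2_decay}). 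For $P\in\mathcal{K}_M\cap B_1(0)\times\mathbb{R}^m$ with $\mathcal{N}_M(P)\ge\alpha$ I would call $P$ \emph{$j$-good} if alternative (ii) of Lemma~\ref{lemma1} holds at all scales $\vartheta^0,\dots,\vartheta^j$ about $P$ (after recentering and the rotation of Theorem~\ref{freq_mono_thm2} taking $T_PM$ to $\mathbb{R}^n\times\{0\}$), producing cylindrical approximating functions $\widetilde\varphi_{P,i}\in\widetilde\Phi_{\gamma\varepsilon_i}(\varphi^{(0)})$; set $S$ to be the set of $P$ that are $j$-good for every $j$, and $T$ the complement in $\{P\in\mathcal{K}_M\cap B_1(0)\times\mathbb{R}^m:\mathcal{N}_M(P)\ge\alpha\}$.

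On $S$ I would iterate Lemma~\ref{lemma1}: writing $E_i=E_i(P)$ for the square root of the scale-$\vartheta^i$ normalized excess of $\widetilde{u}_{P,s}/\Lambda_0$ relative to $\widetilde\varphi_{P,i}$ plus $\|Du\|_{C^0(B_1(0))}$, alternative (ii) gives $E_{i+1}\le\vartheta^{\overline\mu}E_i$, hence $E_i\le C\vartheta^{\overline\mu i}\varepsilon$, and monotonicity of $N_{M,P}$ preserves $\mathcal{N}_M(P)\ge\alpha$ along the rescalings. Since $\widetilde\varphi_{P,i}$ and $\widetilde\varphi_{P,i+1}$ both lie within $CE_i$ of $\widetilde{u}_{P,s}/\Lambda_0$ in $L^2$, and cylindrical homogeneous degree-$\alpha$ harmonic two-valued functions form a finite-dimensional space (with equivalent $L^2(B_1(0))$- and $C^1(B_{1/2}(0))$-norms), the rescaled $\widetilde\varphi_{P,i}$ converge geometrically to a single non-zero symmetric homogeneous degree $\alpha$ cylindrical harmonic two-valued function $\varphi^{(P)}$; this is then the unique tangent function of $M$ at $P$ (uniqueness is immediate from the geometric convergence, and $\mathcal{N}_{\varphi^{(P)}}(0)=\alpha$ follows from semicontinuity of frequency, Lemma~\ref{semicont freq lemma1}). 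Interpolating $E_i\le C\vartheta^{\overline\mu i}\varepsilon$ across the dyadic scales and undoing the initial rescaling yields \eqref{thm1 concl2}.

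To produce $\Gamma$, for $P,P'\in S$ with $\rho=|P-P'|$ small I would run the iteration for $P$ down to scale $\rho$: Corollary~\ref{lemma3_9}(a) then forces $P'$ to lie within $C\rho^{\overline\mu}\varepsilon\,\rho$ of the $(n-2)$-dimensional spine $A_P$ of the cylindrical approximating function, while the geometric convergence shows $A_P$ and $T_PM$ depend on $P$ with a $C^{0,\overline\mu}$ modulus; the standard argument of~\cite{Sim93} upgrades this to: $S$ lies in a properly embedded $(n-2)$-dimensional $C^{1,\overline\mu}$ submanifold $\Gamma\subset B_1(0)\times\mathbb{R}^{n-2}$, realized as the graph over $\{0\}\times B^{n-2}_1(0)$ of a $C^{1,\overline\mu}$ map of arbitrarily small $C^1$-norm, so that $\mathcal{H}^{n-2}(\Gamma)\le 2\omega_{n-2}$. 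For $T$, given $P\in T$ I would take $j(P)\ge0$ minimal with $P$ not $j(P)$-good; then alternative (i) of Lemma~\ref{lemma1} gives a hole of definite relative size $\delta_0$ in the set $\{P'\in\mathcal{K}_M\cap B_{1/2}(0)\times\mathbb{R}^m:\mathcal{N}_M(P')\ge\alpha\}$ inside $B_{\vartheta^{j(P)}}(P)$. Feeding these holes (together with the $C^{1,\overline\mu}$ structure already established on the good part) into the stopping-time/Besicovitch covering argument of~\cite[Corollary~3.2]{Sim93} produces a countable family $\{B_{\rho_j}(P_j)\}$ covering $T$ with $\sum_j\rho_j^{n-2}\le 1-\delta$, the deficit $\delta=\delta(n,m,\alpha,\varphi^{(0)})>0$ coming precisely from the holes; since the decay and uniqueness conclusion has been obtained for \emph{every} $P\in S$, the $\mathcal{H}^{n-2}$-a.e.\ statement follows.

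The hard part will be the bookkeeping of constants in the iteration on $S$: one must ensure that both the excess $E_i$ and the parameter $\gamma\varepsilon_i$ governing the class $\widetilde\Phi_{\gamma\varepsilon_i}(\varphi^{(0)})$ stay below the fixed smallness thresholds of Lemma~\ref{lemma1} and Corollaries~\ref{lemma3_9}--\ref{cor3_2} for all $i$, despite the $\vartheta$-dependent factor $\gamma$ appearing in Lemma~\ref{lemma1}; this dictates the order of the choices (fix $\overline\mu$ and $\vartheta$, then $\gamma$, then $\varepsilon$), exactly as in Simon's treatment. A secondary difficulty is upgrading the scale-by-scale pointwise Hölder control of the spines $A_P$ into genuine $C^{1,\overline\mu}$ regularity of the single submanifold $\Gamma$ together with the $\mathcal{H}^{n-2}$-measure bound, which requires the excess decay to be summable and uniform over $P\in S$.
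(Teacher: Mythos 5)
Your proposal is correct and follows essentially the same route as the paper's proof: the same dyadic iteration of Lemma~\ref{lemma1} about each point of frequency $\ge \alpha$ (your good set $S$ is the paper's $\Upsilon_{\infty}$ and your $T$ its complement $\bigcup_{k<\infty}\Upsilon_k$), the same mechanism for keeping the cylindrical comparison functions admissible via geometric decay of the excess together with the $\|Du\|$-term, the same use of Corollary~\ref{lemma3_9} to pin nearby points of $\{\mathcal{N}_M \geq \alpha\}$ to the spine and upgrade to the $C^{1,\overline{\mu}}$ graph $\Gamma$, and the same Simon-style covering of $T$ exploiting the holes produced by alternative (i). The only cosmetic difference is that the paper first disposes of the trivial case in which alternative (i) already occurs at some scale in $[\vartheta,1]$ (taking $S=\emptyset$ and $T$ the whole set) before running the stopping-time decomposition.
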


\begin{proof}
Choose $\vartheta \in (0,1/64)$ such that $C \vartheta^{2\mu} < 1/4$, where $C$ and $\mu$ as in Lemma~\ref{lemma1}.  Let $\varepsilon_0$ and $\delta_0$ be as in Lemma~\ref{lemma1}.  Suppose that $u \in C^{1,1/2}(B_4(0),\mathcal{A}_2(\mathbb{R}^m))$ satisfies the hypotheses of Theorem~\ref{theorem1} for $C\varepsilon \leq \varepsilon_0$ where $C = C(n,m,\alpha,\varphi^{(0)}) \in [1,\infty)$ to be chosen.  Define 
\begin{equation*}
	\mathcal{K}_M^* = \{ P \in M \cap B_1(0) \times \mathbb{R}^m : \mathcal{N}_M(P) \geq \alpha \} . 
\end{equation*}
Since the average of $u_a$ satisfies $\|D^2 u_a\|_{C^0(B_1(0))} \leq C(n,m) \,\varepsilon^2$, we can assign to each $P \in \mathcal{K}_M \cap B_1(0) \times \mathbb{R}^m$ a rotation $Q_P$ of $\mathbb{R}^{n+m}$ such that $Q_P (T_P M) = \mathbb{R}^n \times \{0\}$, $\|Q_P - I\| \leq C(n,m) \,\varepsilon$ and 
\begin{equation} \label{thm1_eqn1}
	\|Q_{P_1} - Q_{P_2}\| \leq C(n,m) \,\varepsilon^2 \,|P_1 - P_2|
\end{equation}
for all $P_1,P_2 \in \mathcal{K}_M \cap B_1(0) \times \mathbb{R}^m$.  For each $P \in \mathcal{K}_M \cap B_1(0) \times \mathbb{R}^m$ we let $\widehat{u}_P \in C^{1,1/2}(B_1(\pi_0 P), \mathcal{A}_2(\mathbb{R}^m))$  satisfy \eqref{hatu graph} with $\rho = 1$.

Set $\Lambda = 4^{-n/2} \|u_s\|_{L^2(B_4(0))}$.  Observe that by $\|u\|_{C^{1,1/2}(B_4(0))} \leq \varepsilon^2$, \eqref{thm1_hyp}, Lemma~\ref{rotate lemma} (with $u, \widehat{u}_P$ in place of $u, \widehat{u}$), and Corollary~\ref{lemma3_9}, for each $P \in \mathcal{K}_M^*$ we have that $\|\widehat{u}_P\|_{C^{1,1/2}(B_1(\pi_0 P))} \leq C \varepsilon$ and 
\begin{align} \label{thm1_eqn2} 
	&\int_{B_1(0)} \mathcal{G}\left(\frac{\widehat{u}_{P,s}(Z+X)}{\Lambda}, \varphi^{(0)}(X) \right)^2 dX
	\\&\hspace{15mm} \leq 3 \int_{B_1(Z)} \mathcal{G}\left(\frac{u_s}{\Lambda}, \varphi^{(0)} \right)^2 \nonumber
		+ \frac{3}{\Lambda^2} \int_{B_1(Z)} \mathcal{G}(u_s,\widehat{u}_{P,s})^2 
		\\&\hspace{30mm} + 3 \int_{B_1(Z)} \mathcal{G}(\varphi^{(0)}(X-Z), \varphi^{(0)}(X))^2 \,dX \nonumber 
	\\&\hspace{15mm} \leq 3 \int_{B_1(Z)} \mathcal{G}\left(\frac{u_s}{\Lambda}, \varphi^{(0)} \right)^2 
		+ C \|Q_P - I\|^2 + C \op{dist}^2(Z,\{0\} \times \mathbb{R}^{n-2}) \nonumber 
	\\&\hspace{15mm} 3 \int_{B_4(0)} \mathcal{G}\left(\frac{u_s}{\Lambda}, \varphi^{(0)} \right)^2 + C \|Du\|_{C^0(B_4(0))}^2
		\leq C \varepsilon^2 , \nonumber
\end{align}
where $Z = \pi_0 P$ and $C = C(n,m,\alpha,\varphi^{(0)}) \in (0,\infty)$ are constants. 

Suppose that there exists $P \in \mathcal{K}_M^*$ and $\rho \in [\vartheta,1]$ such that $\rho^{-1} \widehat{u}_P(\pi_0 P + \rho X)$ satisfies alternative (i) of Lemma~\ref{lemma1}.  By Lemma~\ref{lemma2_6}, $\mathcal{K}_M^* \subseteq B^2_{\tau(\varepsilon)}(0) \times \mathbb{R}^{n-2} \times B^m_{\varepsilon}(0)$ for some $\tau(\varepsilon)$ such that $\tau(\varepsilon) \rightarrow 0$ as $\varepsilon \downarrow 0$.  Hence trivially we have the desired conclusion with $S = \emptyset$ and $T = \mathcal{K}_M^*$.  Thus we may assume that $\rho^{-1} \widehat{u}_P(\pi_0 P + \rho X)$ does not satisfy alternative (i) of Lemma~\ref{lemma1} for any $P \in \mathcal{K}_M^*$ and $\rho \in [\vartheta,1]$.  

For each $k \in \{1,2,3,\ldots\} \cup \{\infty\}$ we define $\Upsilon_k$ to be the set of all points $P \in \mathcal{K}_M^*$ such that $\vartheta^{-i} \widehat{u}_P(\pi_0 P + \vartheta^i X)$ does not satisfy alternative (i) of Lemma~\ref{lemma1} for all $0 \leq i \leq k$ and either $k = \infty$ or $\vartheta^{-k-1} \widehat{u}_P(\pi_0 P + \vartheta^{k+1} X)$ satisfies alternative (i) of Lemma~\ref{lemma1}.  We want to show that the conclusion holds true with $S = \Upsilon_{\infty}$ and $T = \mathcal{K}_M^* \setminus \Upsilon_{\infty}$.

Suppose $P \in \Upsilon_{\infty}$.  Set $Z = \pi_0 P$.  By iteratively applying alternative (ii) in Lemma~\ref{lemma1} with $\vartheta^{-i} \widehat{u}_P(Z + \vartheta^i X)$ in place of $u$, we define $\varphi_i \in \widetilde{\Phi}_{\gamma \varepsilon_0}(\varphi^{(0)})$ for $i = 0,1,2,\ldots$ as follows.  Set $\varphi_0 = \varphi^{(0)}$.  For each $i = 1,2,\ldots$, choose $\varphi_i \in \widetilde{\Phi}_{\gamma \varepsilon_0}(\varphi^{(0)})$ such that 
\begin{align} \label{thm1_eqn3}
	&\vartheta^{-(n+2\alpha)i} \int_{B_{\vartheta^i}(0)} \mathcal{G}\left( \frac{\widehat{u}_{P,s}(Z+X)}{\Lambda}, \varphi_i(X) \right)^2 
	\\&\hspace{10mm} \leq \frac{1}{4} \vartheta^{-(n+2\alpha)(i-1)} \int_{B_{\vartheta^{i-1}}(Z)} 
		\mathcal{G}\left( \frac{\widehat{u}_{P,s}(Z+X)}{\Lambda}, \varphi_{i-1} \right)^2 
			+ \frac{1}{4} \vartheta^{(i-1)/2} [D\widehat{u}_P]_{1/2,B_{\vartheta^{i-1}}(Z))}. \nonumber 
\end{align}
In order to apply Lemma~\ref{lemma1} to find $\varphi_{i+1}$, we must to verify that,having found $\varphi_0,\varphi_1,\ldots,\varphi_i$ we have $\varphi_i \in \widetilde{\Phi}_{\varepsilon_0}(\varphi^{(0)})$.  By iterating \eqref{thm1_eqn3} and using \eqref{thm1_eqn2} and $\vartheta < 1/64$, 
\begin{align} \label{thm1_eqn4}
	&\vartheta^{-(n+2\alpha)i} \int_{B_{\vartheta^i}(0)} \mathcal{G}\left( \frac{\widehat{u}_{P,s}(Z+X)}{\Lambda}, \varphi_i \right)^2 
	\\&\hspace{15mm} \leq \frac{1}{4^i} \int_{B_1(0)} \mathcal{G}\left( \frac{\widehat{u}_{P,s}(Z+X)}{\Lambda}, \varphi^{(0)}(X) \right)^2 
		+ \sum_{j=1}^i \frac{\vartheta^{(i-j)/2}}{4^j} [D\widehat{u}_P]_{1/2,B_1(Z))} \nonumber
	\\&\hspace{15mm} \leq \frac{1}{4^i} \int_{B_1(0)} \mathcal{G}\left( \frac{\widehat{u}_{P,s}(Z+X)}{\Lambda}, \varphi^{(0)}(X) \right)^2 
			+ \frac{C \varepsilon^2}{4^i} 
	\leq \frac{C \varepsilon^2}{4^i} , \nonumber
\end{align}
where $C = C(n,m) \in (0,\infty)$.  Thus by the triangle inequality 
\begin{equation} \label{thm1_eqn5}
	\int_{B_1(0)} \mathcal{G}(\varphi_{i-1}, \varphi_i)^2 \leq \frac{C \varepsilon^2}{4^i} 
\end{equation}
for some constant $C = C(n,m) \in (0,\infty)$.  Hence 
\begin{equation*} 
	\int_{B_1(0)} \mathcal{G}(\varphi_i, \varphi^{(0)})^2 \leq C \varepsilon^2 
\end{equation*}
for some constant $C = C(n,m) \in (0,\infty)$.  In particular, using our assumption that $C \varepsilon \leq \varepsilon_0^2$ for some constant $C = C(n,m,\alpha,\varphi^{(0)}) \in [1,\infty)$, $\varphi_i \in \widetilde{\Phi}_{\varepsilon_0}(\varphi^{(0)})$. 

By \eqref{thm1_eqn5}, $(\varphi_i)$ converges in $L^2(B_1(0),\mathcal{A}_2(\mathbb{R}^m))$ to some $\widehat{\varphi}^{(P)} \in \widetilde{\Phi}_{\varepsilon_0}(\varphi^{(0)})$ and 
\begin{equation*}
	\int_{B_1(0)} \mathcal{G}(\varphi_i, \widehat{\varphi}^{(P)})^2 \leq \frac{C \varepsilon^2}{4^i}  
\end{equation*}
for each $i \in \{0,1,2,\ldots\}$ and some constant $C = C(n,m) \in (0,\infty)$.  Thus by \eqref{thm1_eqn4}
\begin{equation*}
	\vartheta^{-(n+2\alpha) i} \int_{B_{\vartheta^i}(0)} \mathcal{G}\left( \frac{\widehat{u}_{P,s}(Z+X)}{\Lambda}, \widehat{\varphi}^{(P)}(X) \right)^2 
		\leq \frac{1}{4^i} C \varepsilon^2 
\end{equation*}
for each $i \in \{0,1,2,\ldots\}$ and some constant $C = C(n,m) \in (0,\infty)$.  Hence given $\rho \in (0,1]$, we can choose $i$ so that $\vartheta^{i-1} < \rho \leq \vartheta^i$ to get 
\begin{equation} \label{thm1_eqn6}
	\rho^{-n-2\alpha} \int_{B_{\rho}(0)} \mathcal{G}\left( \frac{\widehat{u}_{P,s}(Z+X)}{\Lambda}, \widehat{\varphi}^{(P)}(X) \right)^2 
	\leq C \varepsilon^2 \rho^{2\overline{\mu}}, 
\end{equation}
where 
$C = C(n,m,\alpha) \in (0,\infty)$ is a constant.  Set $\varphi^{(P)}(X) = \Lambda \,Q_P^{-1} \widehat{\varphi}^{(P)}(Q_P X)$ for each $X \in T_P M$ to obtain $\varphi^{(P)} \in C^{1,1/2}(T_P M,\mathcal{A}_2(T_P M^{\perp}))$ such that, by \eqref{thm1_eqn6}, \eqref{thm1 concl2} holds true. 

For each $P \in \Upsilon_{\infty}$, since $\widehat{\varphi}^{(P)} \in \widetilde{\Phi}_{C(n,m) \,\varepsilon}(\varphi^{(0)})$, there exists a rotation $q_P$ of $\mathbb{R}^n$ such that $\widehat{\varphi}^{(P)} \circ q_P^{-1} \in \Phi_{C(n,m) \,\varepsilon}(\varphi^{(0)})$ and $q_P = e^A$ for some $n \times n$ skew-symmetric matrix $A = (A_{ij})$ with $A_{ij} = 0$ if $1 \leq i,j \leq 2$, $A_{ij} = 0$ if $3 \leq i,j \leq n$, and $\|A\| \leq \varepsilon$.  In particular, $\|q_P - I\| \leq C(n,m) \,\varepsilon$.  By Corollary~\ref{lemma3_9} and \eqref{thm1_eqn6}. 
\begin{equation} \label{thm1_eqn7}
	\op{dist}(q_P Q_P \pi_P(\mathcal{K}_M^* - P) \cap B_{\rho}(0), \{0\} \times \mathbb{R}^{n-2}) \leq C \varepsilon \rho^{1 + \bar \mu} 
\end{equation}
for all $P \in \Upsilon_{\infty}$ and $\rho \in (0,1/2]$, where $\pi_P$ is the orthogonal projection map onto $T_P M$ and $C = C(n,m,\alpha,\varphi^{(0)}) \in (0,\infty)$.  Let $P_1,P_2 \in \Upsilon_{\infty}$ with $|P_1 - P_2| \leq 1/4$ and set $Z_i = \pi_0(P_i)$ for $i = 1,2$, $\widehat{Z} = \pi_0 Q_{P_1}(P_2-P_1)$, and $\rho = |P_1-P_2|$.  Since $\widehat{u}_{P_1}$ and $\widehat{u}_{P_2}$ satisfy \eqref{hatu graph} with $P = P_1,P_2$ and $\rho = 1$, 
\begin{equation*}
	{\rm graph}\, \widehat{u}_{P_2} - P_2 = Q_{P_2} Q_{P_1}^{-1} ({\rm graph}\, \widehat{u}_{P_1} - P_1 - Q_{P_1}(P_2-P_1)) . 
\end{equation*}
Hence by Lemma~\ref{rotate lemma}, 
\begin{equation} \label{thm1_eqn8}
	\int_{B_{\rho}(\widehat{Z})} \mathcal{G}(\widehat{u}_{P_1,s}(Z_1+X), \widehat{u}_{P_2,s}(Z_2-\widehat{Z}+X))^2 \,dX
		\leq C \|Q_{P_1} - Q_{P_2}\|^2 \|\widehat{u}_{P_1}\|_{L^2(B_{4\rho}(Z_1))}^2  
\end{equation}
for some constant $C = C(n,m) \in (0,\infty)$.  By \eqref{thm1_eqn7}, 
\begin{equation} \label{thm1_eqn9}
	\op{dist}(q_{P_1} \widehat{Z}, \{0\} \times \mathbb{R}^{n-2}) \leq C \varepsilon \rho^{1 + \bar \mu}, 
\end{equation}
where $C = C(n,m,\alpha,\varphi^{(0)}) \in (0,\infty)$.  By \eqref{thm1_eqn1}, \eqref{thm1_eqn6}, \eqref{thm1_eqn8}, and \eqref{thm1_eqn9}
\begin{align*} 
	\int_{B_1(0)} \mathcal{G}(\varphi^{(P_1)}, \varphi^{(P_2)})^2 
	\leq\,& 4\rho^{-n-2\alpha} \int_{B_{\rho}(0)} \mathcal{G}\left(\frac{\widehat{u}_{P_1,s}(Z_1+\widehat{Z}+X)}{\Lambda}, 
		\varphi^{(P_1)}(\widehat{Z}+X) \right)^2 dX 
		\\&+ 4\rho^{-n-2\alpha} \int_{B_{\rho}(0)} \mathcal{G}\left(\frac{\widehat{u}_{P_2,s}(Z_2+X)}{\Lambda}, \varphi^{(P_2)}(X) \right)^2 dX 
		\\&+ \frac{4\rho^{-n-2\alpha}}{\Lambda^2} \int_{B_{\rho}(0)} \mathcal{G}(\widehat{u}_{P_1,s}(Z_1+\widehat{Z}+X), 
			\widehat{u}_{P_2,s}(Z_2+X))^2 \,dX 
		\\&+ 4\rho^{-n-2\alpha} \int_{B_{\rho}(0)} \mathcal{G}(\varphi^{(P_1)}(\widehat{Z}+X), \varphi^{(P_1)}(X))^2 \,dX 
	\\ \leq\,& C \varepsilon^2 \rho^{2\overline{\mu}} + C \|Q_{P_1} - Q_{P_2}\|^2 + C \op{dist}^2(q_{P_1} \widehat{Z}, \{0\} \times \mathbb{R}^{n-2}) 
		\nonumber
	\\ \leq\,& C \varepsilon^2 \rho^{2\overline{\mu}} \nonumber
\end{align*}
where $C = C(n,m,\alpha,\varphi^{(0)}) \in (0,\infty)$.
Thus   
\begin{equation} \label{thm1_eqn10}
	\|q_{P_1} - q_{P_2}\| \leq C \varepsilon |P_1 - P_2|^{\overline{\mu}} 
\end{equation}
for all $P_1,P_2 \in \Upsilon_{\infty}$ and some constant $C = C(n,m,\alpha,\varphi^{(0)}) \in (0,\infty)$.  By $\|u\|_{C^{1,1/2}(B_4(0))} \leq \varepsilon$, \eqref{thm1_eqn1}, \eqref{thm1_eqn7}, and \eqref{thm1_eqn10}, $\Upsilon_{\infty} \subseteq \op{graph} f \cap B_1(0) \times \mathbb{R}^m$ is the graph of a function $f \in C^{1,\bar \mu}(B^{n-2}_1(0),\mathbb{R}^{2+m})$ such that $\|f\|_{C^{1,\bar \mu}(B^{n-2}_{1/2}(0))} \leq C(n,m) \,\varepsilon$. 

Suppose instead that $P \in \Upsilon_k$ for some $1 \leq k < \infty$.  Now iteratively applying alternative (ii) in Lemma~\ref{lemma1} only gives us $\varphi_i \in \widetilde{\Phi}_{\gamma \varepsilon_0}(\varphi^{(0)})$ only gives us $\varphi_i \in \widetilde{\Phi}_{\gamma \varepsilon_0}(\varphi^{(0)})$ for $i = 0,1,2,\ldots,k+1$ such that $\varphi_0 = \varphi^{(0)}$ and \eqref{thm1_eqn3} holds.  Take $\widehat{\varphi}^{(P)} = \varphi_{k+1}$.  Let $q_P$ be a rotation of $\mathbb{R}^n$ such that $\varphi^{(P)} \circ q_P^{-1} \in \Phi_{\varepsilon_0}(\varphi^{(0)})$ and $|q_P - I| \leq C\bar \varepsilon$.  By the argument above, \eqref{thm1_eqn7} only holds true for all $\rho \in [\vartheta^{k+1},1/2]$.  Hence 

\begin{equation} \label{thm1_eqn11}
	\op{dist}(\mathcal{K}_M^* \cap B^{n+m}_{\rho}(P), P + \{0\} \times \mathbb{R}^{n-2} \times \{0\}^m) \leq C \varepsilon \rho . 
\end{equation}
for every $P \in \Upsilon_k$ and $\rho \in [\vartheta^{k+1},1/2]$, where $C = C(n,m,\alpha,\varphi^{(0)}) \in (0,\infty)$.  By the definition of $\Upsilon_k$, $\vartheta^{-k-1} \widehat{u}_P(\pi_0 P + \vartheta^{k+1} X)$ satisfies alternative (i) in Lemma~\ref{lemma1}, i.e.
\begin{equation*}
	\forall\,P \in \Upsilon_k, \, \exists\,Y \in B^{n-2}_{\vartheta^{k+1}/2}(0) \text{ such that } 
	\pi_0 Q_P(\mathcal{K}_M^* - P) \cap B_{\delta_0 \vartheta^{k+1}}(Y) \cap B_{\vartheta^{k+1}/2}(0) = \emptyset. 
\end{equation*}
Thus since $\|u\|_{C^{1,1/2}(B_4(0))} \leq \varepsilon$, provided $\varepsilon$ is sufficiently small  
\begin{align} \label{thm1_eqn12}
	&\forall\,P \in \Upsilon_k, \, \exists\, Y' \in P + \{0\}^2 \times B^{n-2}_{\vartheta^{k+1}/2}(0) \times \{0\}^m \\&\hspace{15mm} \text{such that } 
	\mathcal{K}_M^* \cap B^{n+m}_{\delta_0 \vartheta^{k+1}/2}(Y') \cap B^{n+m}_{\vartheta^{k+1}/2}(P) = \emptyset. \nonumber 
\end{align}
Now arguing as in the proof of Theorem 1 in~\cite{Sim93}, using \eqref{thm1_eqn11} and \eqref{thm1_eqn12} in place of (12) and (13) on p.~642 of~\cite{Sim93}, we obtain a covering of $\bigcup_{1 \leq j < \infty} \Upsilon_j$ by a finite collection of balls $B^{n+m}_{\rho_j}(X_j)$, $j = 1,\ldots,N$, such that $\sum_{j=1}^N \rho_j^{n-2} \leq 1-\delta_0$. 
\end{proof}

\begin{proof}[Proof of Theorem~\ref{theorem2}]
Similar to the proof of Theorem 2' in~\cite{Sim93}, so we will only sketch the proof here.  Let $M$ be a $C^{1,1/2}$ two-valued stationary graph as in Definition~\ref{twoval minimal graph}.  Since $\op{dim} \mathcal{K}^{(n-3)}_M \leq n-3$, it suffices to consider the set $\mathcal{K}^*_{M,\alpha}$ of points $P \in \mathcal{K}_M$ at which $M$ has a homogeneous degree $\alpha$ cylindrical tangent function.  Consider any $P \in \mathcal{K}^*_{M,\alpha}$.  Let $\varphi^{(0)} \in C^{1,1/2}(T_P M, T_P M^{\perp})$ be a tangent function of $M$ at $P$.  By the definition of tangent function and monotonicity formula for frequency functions, for every $\varepsilon > 0$ there exists $\sigma > 0$ and a two-valued function $\widetilde{u}_P \in C^{1,1/2}(\mathbf{B}_{\sigma R(\varepsilon)}(0,T_P M), \mathcal{A}_2(T_P M^{\perp}))$ such that 
\begin{gather*}
	(M - P) \cap \mathbf{C}_{\sigma R(\varepsilon)}(0,T_P M) = {\rm graph}\, \widetilde{u}_P, \\
	(\sigma R(\varepsilon))^{1/2} \|\widetilde{u}_P\|_{C^{1,1/2}(\mathbf{B}_{\sigma R(\varepsilon)}(0, T_P M))} < \delta(\varepsilon), \\ 
	N_{M,P_0}(\sigma R(\varepsilon)) - \alpha < \delta(\varepsilon), \\ 
	\int_{\mathbf{B}_1(0,T_P M)} \mathcal{G}\left( \frac{\widetilde{u}_{P_0,s}}{(\sigma R(\varepsilon))^{-n/2} \|\widetilde{u}_{P_0,s}\|_{
		L^2(\mathbf{B}_{\sigma R(\varepsilon)}(0,T_P M))}} , \varphi^{(0)} \right)^2 < \delta(\varepsilon), 
\end{gather*}
where $R(\varepsilon)$ and $\delta(\varepsilon)$ are as in Lemma~\ref{lemma2_4} and $\widetilde{u}_{P,s}$ is the symmetric part of $\widetilde{u}_P$ (as in \eqref{avg and free defn}).  Let $Q_P$ be a rotation such that $Q_P(T_P M) = \mathbb{R}^n \times \{0\}$ and set $M_0 = \sigma^{-1} \,Q_P (M - P)$.  
Set 
\begin{equation*}
	\mathcal{K}^+_{\alpha} = \{ X \in \mathcal{K}_{M_0} \cap \overline{B_1(0)} \times \mathbb{R}^m : \mathcal{N}_{M_0}(X) \geq \alpha \} . 
\end{equation*}
Given $\rho_0 \in (0,1/2]$, define the outer measure $\mu_{\rho_0}$ on $B_1(0)$ by 
\begin{equation*}
	\mu_{\rho_0}(A) = \inf \sum_{i=1}^N \omega_{n-2} \sigma_i^{n-2}
\end{equation*}
for every set $A \subseteq B_1(0)$, where the infimum is taken over all finite covers of $A$ by open balls $B_{\sigma_i}(Y_i)$, $i = 1,2,\ldots,N$, with $\sigma_i \leq \rho_0$.  Choose a cover of $\mathcal{K}^+_{\alpha}$ by a finite collection of open balls $B_{\sigma_i}(Y_i)$ such that 
\begin{equation*}
	\sum_{i=1}^N \omega_{n-2} \sigma_i^{n-2} \leq \mu_{\rho_0}(\mathcal{K}^+_{\alpha}) + 1. 
\end{equation*}
Remove the balls $B_{\sigma_i}(Y_i)$ that do not intersect $\mathcal{K}^+_{\alpha}$ from the collection and for the remaining balls $B_{\sigma_i}(Y_i)$ select $P_i \in \mathcal{K}^+_{\alpha} \cap B_{\sigma_i}(Y_i)$.  By Lemma~\ref{lemma2_4}, for $\varepsilon > 0$ sufficiently small, either:
\begin{enumerate}
	\item[(a)] there is a non-zero, cylindrical, homogeneous degree $\alpha$ two-valued function $\varphi_i \in C^{1,1/2}(T_{P_i} M_0,$ $T_{P_i} M_0^{\perp})$ such that 
\begin{equation} \label{thm2_eqn2}
	\int_{\mathbf{B}_1(0,T_{P_i} M_0)} \mathcal{G}\left( \frac{\widetilde{u}_{M_0,P_i,s}}{(2\sigma_i)^{-n/2} \|\widetilde{u}_{M_0,P_i,s}\|_{
		L^2(\mathbf{B}_{2\sigma_i}(0,T_{P_i} M_0))}}, \varphi^{(0)} \right)^2 < \varepsilon^2 , 
\end{equation}
where $\widetilde{u}_{M_0,P_i} \in C^{1,1/2}(\mathbf{B}_1(0,T_{P_i} M_0), \mathcal{A}_2(T_{P_i} M_0^{\perp}))$ is a two-valued function such that $(M_0 - P_i) \cap \mathbf{C}_2(0,T_{P_i} M_0) = {\rm graph}\, \widetilde{u}_{M_0,P_i}$ (as in Definition~\ref{tildeu defn}) and $\widetilde{u}_{M_0,P_i,s}$ is the symmetric part of $\widetilde{u}_{M_0,P_i}$ (as in \eqref{avg and free defn}), or 

	\item[(b)] there exists an $(n-3)$-dimensional linear subspace $L \subset T_{P_i} M_0$ such that 
\begin{align} \label{thm2_eqn3} 
	&\pi_{T_{P_i} M_0} \{ P \in (\mathcal{K}_{M_0} - P_i) \cap \overline{B^{n+m}_{2\sigma_i}(0)} : \mathcal{N}_{M_0}(P) \geq \alpha \} 
	\\&\hspace{30mm} \subset \{ X \in T_{P_i} M_0 : \op{dist}(X,Y_i+L) < 2 \varepsilon \sigma_i \} , \nonumber 
\end{align}
where $\pi_{T_{P_i} M_0}$ is the orthogonal projection onto $T_{P_i} M_0$. 
\end{enumerate}
Note that this application of Lemma~\ref{lemma2_4} uses the fact that the set of all degrees $\alpha$ of non-zero cylindrical two-valued functions is the discrete.  Using the fact that either \eqref{thm2_eqn2} or \eqref{thm2_eqn3} hold true, we iteratively apply Theorem~\ref{theorem1} much like in the argument in~\cite{Sim93} to conclude that $\mathcal{K}^+_{\alpha} \cap B^{n+m}_{\sigma_i}(P_i)$ is countably $(n-2)$-rectifiable with bounded $\mathcal{H}^{n-2}$-measure.
\end{proof}

\begin{proof}[Proof of Theorem~A  of the Introduction] 
Let $M$ be a $C^{1,\mu}$ two-valued stationary graph as in Definition~\ref{twoval minimal graph}.  By Theorem~\ref{theorem2}(b) we have that for $\mathcal{H}^{n-2}$-a.e.~$P \in \mathcal{K}_M$ there exists a unique, non-zero, symmetric, cylindrical, homogeneous degree, harmonic two-valued $\varphi^{(P)} \in C^{1,1/2}(T_P M, \mathcal{A}_2(T_P M^{\perp}))$, a number $\rho_P \in (0,1/2)$ and a two-valued function $\widetilde{u}_P \in C^{1,1/2}(\mathbf{B}_{\rho_P}(0,T_P M), \mathcal{A}_2(T_P M^{\perp}))$ such that \eqref{theorem2 conclb1} and \eqref{theorem2 conclb2} hold true.  Set $h_P = \widetilde{u}_{P,a}$ and note that, by~\cite{SimWic16}, $h_P$ is a $C^{1,1}$ single-valued function.  This choice of $h_P$ and \eqref{theorem2 conclb2} give us the desired asymptotic expansion for $\widetilde{u}_P$. 
\end{proof}

\begin{proof}[Proof of Theorem~A$^\prime$ of  the Introduction] 
Let $\mu \in (0,1]$ and $\varepsilon = \varepsilon(n,m,\mu) > 0$ to be later determined.  Suppose $u \in C^{1,\mu}(B_1(0),\mathcal{A}_2(\mathbb{R}^m))$ is a two-valued function such that $\|Du\|_{C^{0,\mu}(B_1(0))} \leq \varepsilon$ and $M$ is stationary in $B_1(0) \times \mathbb{R}^m$.  Set $h = u_a$, the average of the values of $u$ and note that, by~\cite{SimWic16}, $u_a$ is a $C^{1,1}$ single-valued function.  For each $Z \in \mathcal{K}_u \cap B_{1/2}(0)$, let $Q_Z$ be a rotation of $\mathbb{R}^{n+m}$ such that $Q_Z(T_{(Z,u_1(Z))} M) = \mathbb{R}^n \times \{0\}$ and $\|Q_Z - I\| \leq C(n,m) \,|Du(Z)|$.  Let $\widehat{u}_Z \in C^{1,1/2}(B_{1/4}(Z),\mathcal{A}_2(\mathbb{R}^m))$ such that 
\begin{equation*}
	{\rm graph}\,\widehat{u}_Z = (Z,u_1(Z)) + (Q_Z (M - (Z,u_1(Z)))) \cap B_{1/4}(Z) \times \mathbb{R}^m
\end{equation*}
where $u(Z) = \{u_1(Z),u_1(Z)\}$ for $u_1(Z) \in \mathbb{R}^m$ (as in Definition~\ref{hatu defn}) and let $\widehat{u}_{Z,s}$ be the symmetric part of $\widehat{u}_Z$ (as in \eqref{avg and free defn}).  By Theorem~A, for $\mathcal{H}^{n-2}$-a.e.~$Z \in \mathcal{K}_u \cap B_{1/2}(0)$ there exists a symmetric, homogeneous, cylindrical, harmonic two-valued function $\varphi^{(Z)} \in C^{1,1/2}(\mathbb{R}^n, \mathcal{A}_2(\mathbb{R}^m))$ and $\rho_Z \in (0,1/16]$ such that 
\begin{equation} \label{thmA eqn1}
	\rho^{-n-2\alpha} \int_{B_{\rho}(0)} \mathcal{G}(\widehat{u}_{Z,s}(X-Z), \varphi^{(Z)}(X))^2 \,dX \leq \overline{C}_Z \rho^{2\overline{\gamma}_Z} 
\end{equation}
for all $\rho \in (0,2\rho_Z]$ and some constants $\overline{\gamma}_Z \in (0,1)$ and $\overline{C}_Z \in (0,\infty)$.  Fix such a point $Z$.  We want to show that $u_s = u - h$ satisfies the asymptotic expansion as in Theorem~A$^\prime$.  We will do this by arguing as in the proof of Lemma~\ref{rotate lemma} with $\widehat{u}_Z, u$ in place of $\hat{u}, u$.

To simplify notation, translate $(Z,u_1(Z))$ to the origin and set $Q = Q_Z$ and $\widehat{u} = \widehat{u}_Z$.  Arguing like in the proof of Lemma~\ref{rotate lemma}, express $Q^{-1}$ (not $Q$) as the block matrix 
\begin{equation*}
	Q^{-1} = \left(\begin{matrix} Q_{11} & Q_{12} \\ Q_{21} & Q_{22} \end{matrix}\right) 
\end{equation*}
where $Q_{11}$, $Q_{12}$, $Q_{21}$ and $Q_{22}$ are $n \times n$, $n \times m$, $m \times n$ and $m \times m$ matrices.  

Suppose that $X \in B_{\rho_Z}(0) \setminus \mathcal{Z}_u$.  Set $d = d(X) = \op{dist}(X,\mathcal{Z}_u)$ and note that since $0 \in \mathcal{Z}_u$, $d \leq |X|$.  Let $u(X) = \{u_1(X),u_2(X)\}$ in $B_d(X)$ for single-valued functions $u_1,u_2 \in C^{1,1/2}(B_d(X);\mathbb{R}^m)$.  For each $l = 1,2$ define $\xi_l : B_d(X) \rightarrow \mathbb{R}^n$ by 
\begin{equation} \label{thmA eqn2} 
	\xi_l(Y) = \pi_0 Q (Y,u_l(Y))
\end{equation}
for all $Y \in B_d(X)$.  For $l = 1,2$ let $\widehat{u}_1,\widehat{u}_2 : \xi_l(B_d(X)) \rightarrow \mathbb{R}^m$ be functions such that 
\begin{equation} \label{thmA eqn3} 
	\left(\begin{matrix} Y \\ u_l(Y) \end{matrix}\right) 
		= Q^{-1} \left(\begin{matrix} \xi_l(Y) \\ \widehat{u}_l(\xi_l(Y)) \end{matrix}\right) 
		= \left(\begin{matrix} Q_{11} \,\xi_l(Y) + Q_{12} \,\widehat{u}_l(\xi_l(Y)) \\ 
			Q_{21} \,\xi_l(Y) + Q_{22} \,\widehat{u}_l(\xi_l(Y)) \end{matrix}\right) 
\end{equation}
for all $Y \in B_d(X)$.  By \eqref{minimal avg schauder}, \eqref{minimal sym schauder} and \eqref{L2_decay}, 
\begin{align}
	\label{thmA eqn4} \sup_{B_{|X|}(X)} |\widehat{u}| + |X| \sup_{B_{|X|}(X)} |D\widehat{u}| 
		&\leq \sup_{B_{3|X|/2}(0)} |\widehat{u}| + |X| \sup_{B_{3|X|/2}(0)} |D\widehat{u}|
		\leq C(n,m) \,\varepsilon \,|X|^{3/2}, \\
	\label{thmA eqn5} \sup_{B_{|X|}(X)} |\widehat{u}_s| + |X| \sup_{B_{|X|}(X)} |D\widehat{u}_s| 
		&\leq \sup_{B_{3|X|/2}(0)} |\widehat{u}_s| + |X| \sup_{B_{3|X|/2}(0)} |D\widehat{u}_s|
		 \\&\leq C(n,m) \,|X|^{-n/2} \|\widehat{u}_s\|_{L^2(B_{2|X|}(0)} \leq C(n,m) \,\varepsilon \,|X|^{\alpha}. \nonumber 
\end{align}
By taking differences in \eqref{thmA eqn3} over $l = 1,2$ we compute that  
\begin{align}
	\label{thmA eqn6} \frac{\xi_1(Y) - \xi_2(Y)}{2} &= -Q_{11}^{-1} \,Q_{12} \,\frac{\widehat{u}_1(\xi_1(Y)) - \widehat{u}_2(\xi_2(Y))}{2}, \\
	\label{thmA eqn7} \frac{u_1(Y) - u_2(Y)}{2} &= (Q_{22} - Q_{21} \,Q_{11}^{-1} \,Q_{12}) \,\frac{\widehat{u}_1(\xi_1(Y)) - \widehat{u}_2(\xi_2(Y))}{2} 
\end{align}
(see \eqref{rotate eqn7} and \eqref{rotate eqn8} in the proof of Lemma~\ref{rotate lemma}).  Provided $\varepsilon$ is sufficiently small, it follows from \eqref{thmA eqn6} using $\|Q - I\| \leq C(n,m) \,\varepsilon$ and $\|D\widehat{u}\|_{C^0(B_{4\rho_Z}(0))} \leq C(n,m) \,\varepsilon$ that 
\begin{equation} \label{thmA eqn8}
	\frac{|\xi_1(X) - \xi_2(X)|}{2} \leq C(n,m) \,\varepsilon \,|\widehat{u}_s(\xi_a(X))| \leq C(n,m) \,\varepsilon \,|X|^{\alpha} , 
\end{equation}
where $\xi_a(X) = (\xi_1(X) + \xi_2(X))/2$ and the last step follows by \eqref{thmA eqn5}. 
Hence by \eqref{thmA eqn7}, \eqref{thmA eqn4} and \eqref{thmA eqn8}, 
\begin{equation} \label{thmA eqn9}
	\mathcal{G}(u_s(X), B \widehat{u}_s(\xi_a(X))) 
		\leq |B| \sup_{B_{|X|}(X)} |D\widehat{u}| \,\frac{|\xi_1(X) - \xi_2(X)|}{2} \leq C(n,m) \,\varepsilon \,|X|^{\alpha+1/2} 
\end{equation}
for all $X \in B_{\rho_Z}(0) \setminus \mathcal{Z}_u$, where we set $B = Q_{22} - Q_{21} \,Q_{11}^{-1} \,Q_{12}$.  Notice that \eqref{thmA eqn9} trivially holds true for all $X \in B_{\rho_Z}(0) \cap \mathcal{Z}_u$, as then $u_s(X) = \widehat{u}_s(\xi_a(X)) = \{0,0\}$. 

By summing \eqref{thmA eqn3} over $l = 1,2$ we compute that  
\begin{equation} \label{thmA eqn10} 
	\xi_a(X) = (Q_{11} - Q_{12} \,Q_{22}^{-1} \,Q_{21})^{-1} (X - Q_{12} \,Q_{22}^{-1} \,u_a(X))
\end{equation}
for all $X \in B_{\rho_Z}(0)$ (see \eqref{rotate eqn13} in the proof of Lemma~\ref{rotate lemma}).  Thus $\xi_a \in C^{1,1}(B_{\rho_Z}(0), \mathbb{R}^n)$.  
By \eqref{thmA eqn3}, \eqref{thmA eqn4} and $\|Q - I\| \leq C(n,m) \,\varepsilon$, 
\begin{equation} \label{thmA eqn11} 
	|X - Q_{11} \xi_a(X)| 
		\leq C(n,m) \,\varepsilon \sup_{B_{|X|}(X)} |\widehat{u}| \leq C(n,m) \,\varepsilon \,|X|^{3/2}
\end{equation}
By \eqref{thmA eqn9}, \eqref{thmA eqn5} and \eqref{thmA eqn11}, 
\begin{align} \label{thmA eqn12}
	\mathcal{G}(u_s(X), B \widehat{u}_s(Q_{11}^{-1} X)) 
		&\leq |B| \,\mathcal{G}(\widehat{u}_s(\xi_a(X)), \widehat{u}_s(Q_{11}^{-1} X)) + C(n,m) \,\varepsilon \,|X|^{\alpha+1/2} \\
		&\leq |B| \,\sup_{B_{3|X|/2}(0)} |D\widehat{u}_s| \,|\xi_a(X) - Q_{11}^{-1} X| + C(n,m) \,\varepsilon \,|X|^{\alpha+1/2} \nonumber \\
		&\leq C(n,m) \,\varepsilon \,|X|^{\alpha+1/2} \nonumber 
\end{align}
for all $X \in B_{\rho_Z}(0)$.  Therefore, assuming $\varepsilon = \varepsilon(n,m)$ is sufficiently small, by \eqref{thmA eqn1} and \eqref{thmA eqn12}, 
\begin{equation*}
	\rho^{-n-2\alpha} \int_{B_{\rho}(0)} \mathcal{G}(u_s(X-Z), B \varphi^{(Z)}(Q_{11}^{-1} X))^2 \,dX \leq C_Z \rho^{2\gamma_Z} 
\end{equation*}
for all $\rho \in (0,\rho_Z]$ where $\gamma_Z = \min\{1/2,\overline{\gamma}_Z\}$ and $C_Z \in (0,\infty)$ is a constant.  This gives us the desired conclusion of Theorem~A$^{\prime}$ with $A_Z = Q_{11}^{-1}$ and $B \varphi^{(Z)}$ in place of $\varphi^{(Z)}$. 
\end{proof}

\begin{proof}[Proof of Theorem~B of the Introduction]
Let $\varphi^{(0)}$ be as in Definition~\ref{varphi0 defn} with $\alpha = 3/2$.  We claim that for every $\delta \in (0,1/2)$ there exists $\varepsilon = \varepsilon(n,m,\varphi^{(0)},\delta) \in (0,1/2)$ such that if $(u,\Lambda) \in \mathcal{F}_{\varepsilon}(\varphi^{(0)})$, then $\{ X \in \mathcal{K}_u \cap B_{1/2}(0) : \mathcal{N}_M(X,u_1(X)) \geq 3/2 \} \cap B_{\delta}(0,y) \neq \emptyset$ for each $y \in B^{n-2}_1(0)$.  To see this, first notice that since $u_s/\Lambda \rightarrow \varphi^{(0)}$ in $C^1(B_{1/2}(0),\mathcal{A}_2(\mathbb{R}^m))$, we can choose $\varepsilon > 0$ small enough that $\mathcal{B}_u \cap B_{1/2}(0) \cap B_{\delta}(0,y) \neq \emptyset$ for each $y \in B^{n-2}_1(0)$.  By Lemma~\ref{freq_mono_thm}(iii), $\mathcal{N}_M(Z,u_1(Z)) \geq 3/2$ whenever $Z \in \mathcal{B}_u \cap B_{1/2}(0) \cap B_{\delta}(0,y)$ for some $y \in B^{n-2}_1(0)$. 

Now let $M$ be a $C^{1,1/2}$ two-valued stationary graph (as in Definition~\ref{twoval minimal graph}) and $P_0 \in \mathcal{K}_M$ with $\mathcal{N}_M(P_0) = 3/2$.  Let $\varphi^{(0)}$ be a tangent function to $M$ at $P_0$.  By translating and rotating, we may assume that $P_0 = 0$ and $T_{P_0} M = \mathbb{R}^n \times \{0\}$.  In the claim in the preceding paragraph, take $\delta = \delta(n,m,\varphi^{(0)})$ as in Lemma~\ref{lemma1} and then choose $\varepsilon = \varepsilon(n,m,\varphi^{(0)})$ sufficiently small that Lemma~\ref{lemma1} and the above claim both apply.  After scaling, we may take $M$ to be the graph of a two-valued function $u \in C^{1,1/2}(B_4(0),\mathcal{A}_2(\mathbb{R}^m))$ with $u(0) = \{0,0\}$, $Du(0) = \{0,0\}$, $[Du]_{1/2,B_4(0)} < \varepsilon$ and $\int_{B_4(0)} \mathcal{G}(u/\|u\|_{L^2(B_4(0))}, \varphi^{(0)})^2 < \varepsilon^2$.  Now we can argue as in the proof of Theorem~\ref{theorem1}, using the claim in the preceding paragraph to rule out alternative (i) of Lemma~\ref{lemma1} in each application of Lemma~\ref{lemma1} in the argument.  We conclude that $\op{sing} M \cap B_1(0) \times \mathbb{R}^m$ is a $C^{1,\alpha}$ $(n-2)$-dimensional submanifold for some $\alpha \in (0,1)$ and that the asymptotic expansion in Theorem~A holds true for all $P \in \op{sing} M \cap B_1(0)$ with $k_P = 3$, $\rho_P = 1$ and $C_P \in (0,\infty)$ and $\gamma_P = \gamma \in (0,1)$ depending only on $n$, $m$, and $\varphi^{(0)}$. 

It only remains to establish the sup estimate on $\{\pm \epsilon^{(P)} \}$.  Fix $P \in \op{sing} M \cap B^{n+m}_1(0)$ and $\sigma \in (0,1/2]$.  Let $Q_P$ be a rotation of $\mathbb{R}^{n+m}$ such that $Q_P (T_P M) = \mathbb{R}^n \times \{0\}$ and $\|Q_P - I\| \leq C(n,m) \,\varepsilon$ and let $\widehat{u} = \widehat{u}_P \in C^{1,1/2}(B_2(0),\mathcal{A}_2(\mathbb{R}^m))$ satisfy \eqref{hatu graph} with $\rho = 2$.  

Let $\widehat{\varepsilon}(X) = Q_P \varepsilon^{(P)}(Q_P^{-1} X)$ for all $X \in B_1(0)$ where $\varepsilon^{(P)}$ is as in Theorem~A.  It suffices to prove a sup estimate for $\{ \pm \widehat{\varepsilon}\}$.  By Corollary~\ref{lemma3_9} and the asymptotic expansion in Theorem~A, 
\begin{equation*} 
	\mathcal{B}_{\widehat{u}} \cap B_{\sigma/2}(0) \subset \{ X : \op{dist}(X, \{0\} \times \mathbb{R}^{n-2}) \leq C \sigma^{1+\gamma/2} \}
\end{equation*}
for some constant $C = C(n,m,\varphi^{(0)}) \in (0,\infty)$.  Hence if $X \in B_{\sigma/4}(0)$ with $\op{dist}(X, \{0\} \times \mathbb{R}^{n-2}) \geq C \sigma^{1+\gamma/n} \equiv \tau$, then $\mathcal{B}_{\widehat{u}} \cap B_{\tau/2}(X) = \emptyset$.  Thus by standard elliptic estimates and the asymptotic expansion in Theorem~A, 
\begin{equation} \label{thmB eqn2}
	|\widehat{\varepsilon}(X)|^2 \leq C \tau^{-n} \int_{B_{\tau/2}(X)} |\widehat{\varepsilon}|^2 
		\leq C \left(\frac{\sigma}{\tau}\right)^n \sigma^{-n} \int_{B_{\sigma}(0)} |\widehat{\varepsilon}|^2 
		\leq C \left(\frac{\sigma}{\tau}\right)^n \sigma^{3+2\gamma} = C \sigma^{3 + \gamma} 
\end{equation}
where $C = C(n,m,\varphi^{(0)}) \in (0,\infty)$.  If instead $X \in B_{\sigma/4}(0)$ with $\op{dist}(X, \{0\} \times \mathbb{R}^{n-2}) < \tau$, set $r = \tfrac{1}{2} \op{dist}(X,\mathcal{B}_{\widehat{u}})$.  By Theorem~A, the triangle inequality and standard elliptic estimates 
\begin{equation} \label{thmB eqn3}
	|\widehat{\varepsilon}(X)|^2 \leq 2 |\widehat{u}_s(X)|^2 + C_1 \op{dist}^3(X,\{0\} \times \mathbb{R}^{n-2}) 
		\leq C r^{-n} \int_{B_r(X)} |\widehat{u}_s|^2 + C_1 \tau^3 
\end{equation}
where $\widehat{u}_s$ is the symmetric part of $\widehat{u}$ (as in \eqref{avg and free defn}) and $C_1 = C_1(\varphi^{(0)}) \in (0,\infty)$ and $C = C(n,m) \in (0,\infty)$ are constants.  Select $Y \in \mathcal{B}_{\widehat{u}}$ with $|X-Y| = \op{dist}(X,\mathcal{B}_{\widehat{u}})$.  Let $\widehat{M} = {\rm graph}\,\widehat{u}$.  Let $Q_Y$ be a rotation of $\mathbb{R}^{n+m}$ such that $Q_Y (T_{(Y,\widehat{u}_1(Y))} \widehat{M}) = \mathbb{R}^n \times \{0\}$ and $\|Q_Y - I\| \leq C(n,m) \,\varepsilon$, where $\widehat{u}(Y) = \{\widehat{u}_1(Y),\widehat{u}_1(Y)\}$ for some $\widehat{u}_1(Y) \in \mathbb{R}^m$, and let $\widehat{u}_Y \in C^{1,1/2}(B_{1/2}(0), \mathcal{A}_2(\mathbb{R}^m))$ such that 
\begin{equation*}
	{\rm graph}\, \widehat{u}_Y = (Y,\widehat{u}_1(Y)) + Q_Y (\widehat{M} - (Y,\widehat{u}_1(Y))) \cap B_{1/2}(0) \times \mathbb{R}^m
\end{equation*}
(as in Definition~\ref{hatu defn}).  By applying Lemma~\ref{rotate lemma} twice and using \eqref{L2_decay}, 
\begin{align} \label{thmB eqn4}
	r^{-n} \int_{B_r(X)} |\widehat{u}_s|^2 &\leq r^{-n} \int_{B_{3r}(Y)} |\widehat{u}_s|^2 \leq 4r^{-n} \int_{B_{6r}(Y)} |\widehat{u}_{Y,s}|^2 
		\\&\leq C r^3 \int_{B_{1/2}(Y)} |\widehat{u}_{Y,s}|^2 \leq 4C r^3 \int_{B_1(Y)} |\widehat{u}_s|^2 \leq 4C r^3 \int_{B_2(0)} |\widehat{u}_s|^2 , \nonumber 
\end{align}
where $\widehat{u}_{Y,s}$ denotes the symmetric part of $\widehat{u}_Y$ (as in \eqref{avg and free defn}) and $C = C(n,m) \in (0,\infty)$ is a constant.  Thus by \eqref{thmB eqn3} and \eqref{thmB eqn4}, 
\begin{equation} \label{thmB eqn5}
	|\widehat{\varepsilon}(X)|^2 \leq C \tau^3 = C \sigma^{3+3\gamma/n}
\end{equation}
where $C = C(n,m,\varphi^{(0)}) \in (0,\infty)$ (and in particular $C$ is independent of $Z$ and $\sigma$).  By \eqref{thmB eqn2} and \eqref{thmB eqn5} we obtain $|\widehat{\varepsilon}(X)| \leq C \sigma^{3/2 + \gamma/(2n)}$ for some constant $C = C(n,m,\varphi^{(0)}) \in (0,\infty)$. 
\end{proof}

\begin{proof}[Proof of Theorem~C of the Introduction]
Let $M$ be the stationary graph of a two-valued function $u \in C^{1,\mu}(\Omega,\mathcal{A}_2(\mathbb{R}^m))$.  Let $B \subseteq \Omega$ be a closed ball.  Observe that if $\mathcal{H}^{n-2}(\op{sing} M \cap (B \times \mathbb{R}^m)) = 0$, then $B \setminus \mathcal{B}_u$ is simply connected (see the appendix of~\cite{SimWic16}).  Since locally in $B \setminus \mathcal{B}_u$, $u$ decomposes into a pair of smooth single-valued functions, $u$ decomposes into a pair of smooth single-valued functions in $B$.  Hence $\mathcal{B}_u \cap B = \emptyset$, that is $\op{sing} M \cap (B \times \mathbb{R}^m) = \emptyset$.

Suppose $B \subseteq \Omega$ be a closed ball with $\mathcal{H}^{n-2}(\op{sing} M \cap (B \times \mathbb{R}^m)) > 0$.  By Theorem~\ref{theorem2}, there exists a finite set $\{\alpha_1,\alpha_2,\ldots,\alpha_k\} \subset \{3/2,2,5/2,\ldots\}$ such that if $P \in \mathcal{K}_M \cap (B \times \mathbb{R}^m)$ and $M$ has a cylindrical tangent function at $P$, then $\mathcal{N}_M(P) = \alpha_j$ for some $j \in \{1,2,\ldots,k\}$.  Arrange the values of $\alpha_j$ so that $\alpha_j < \alpha_{j+1}$ for all $1 \leq j < k$.  For each $j \in \{1,2,\ldots,k\}$ let $V_j = V_{\alpha_j}$, where $V_{\alpha}$ is the open set given in Theorem~\ref{theorem2}, so that 
\begin{equation*}
	V_j \supset \{ P \in \mathcal{K}_M : \mathcal{N}_M(P) = \alpha_j \text{ and $M$ has a cylindrical tangent function at $P$} \}.
\end{equation*}  
Set $\alpha_0 = 3/2$ and $\alpha_{k+1} = \infty$ and for $j \in \{1,2,\ldots,k\}$ let 
\begin{align*}
	\Gamma_j &= \{ P \in \mathcal{K}_M \cap (B \times \mathbb{R}^m) : \alpha_j \leq \mathcal{N}_M(P) < \alpha_{j+1} \} \cap V_j, \\
	\widetilde{\Gamma}_j &= \{ P \in \mathcal{K}_M \cap (B \times \mathbb{R}^m) : \alpha_j \leq \mathcal{N}_M(P) < \alpha_{j+1} \} \setminus V_j .
\end{align*}
By Theorem~\ref{theorem2}, $\Gamma_j$ locally $(n-2)$-rectifiable with locally finite $\mathcal{H}^{n-2}$-measure.  Since $\widetilde{\Gamma}_j \subseteq \mathcal{K}_M^{(n-3)}$, by Lemma~\ref{stratification lemma}, $\dim_{\mathcal{H}}(\widetilde{\Gamma}_j) \leq n-3$.  By Lemma~\ref{semicont freq lemma1}, $\mathcal{N}_M$ is upper semi-continuous and thus $\Gamma_j$ and $\widetilde{\Gamma}_j$ are both the intersection of open and closed set and hence are both locally compact.  Of course, $\op{sing} M \cap (B \times \mathbb{R}^m) = \bigcup_{j=1}^k (\Gamma_j \cup \widetilde{\Gamma}_j)$. 
\end{proof}

\begin{proof}[Proof of Theorem~D of the Introduction]
 Let $P \in \mathcal{B}$ and let $L_{P}$ be a hyperplane such that one tangent cone to $V$ at $P$ is $2|L_{P}|$.  By~\cite{MW}, there exists $\delta > 0$ and $\mu 
 \in (0, 1)$ such that ${(V - P)} \res {\bf C}_{\delta}(0, L_{P})$ (where ${\bf C}_{\delta}(0, L_{P}) = \{X + Y \, : \, X \in L_{P}, \;\; |X| < \delta, \;\; Y \in L_{P}^{\perp}
 \}$)  is the varifold associated with the normal graph of a $C^{1,\mu}$ two-valued function $\widetilde{u}$ over the ball 
 ${\bf B}_{\delta}(0, L_{P}) = \{ X \in L_{P} \, : \, |X| < \delta\}$, with $\|\widetilde{u}\|_{C^{1, \mu}({\bf B}_{\delta}(0, L_{P}))}$ bounded by a fixed constant depending only on $n$. Hence, since $P \not\in \Omega$, 
 $(\op{spt}\|V\| - P) \cap  {\bf C}_{\delta}(0, L_{P})$ is the normal graph of a $C^{1,\mu}$ two-valued function $\widetilde{u}_P$ over ${\bf B}_{\delta}(0, L_{P}).$ In particular, $2|L_{P}|$ is the unique tangent cone to $V$ at $P.$  By~\cite[Theorems 7.1 and 7.4]{SimWic16}, $\widetilde{u}_P$ is a $C^{1,1/2}$ two-valued function.  By Theorem~A of the present paper, $V$ has a unique tangent function at $P$ and $\widetilde{u}_P$ satisfies an asymptotic expansion as in Theorem~A.  By Theorem~C of the present paper, for each closed ball $B \subseteq B^{n+1}_{\delta}(P)$, $\mathcal{B} \cap B$ is either empty or has positive $\mathcal{H}^{n-2}$-measure and is a finite union of pairwise disjoint, locally compact, locally $(n-2)$-rectifiable sets.  If instead we have a closed ball $B \subseteq B^{n+1}_1(0) \setminus (\{ P : \Theta^n(\|V\|, P) \geq 3 \} \cup \mathcal{S})$, then noting that $\mathcal{B} \cap B$ is compact, by a straightforward covering argument we again have that $\mathcal{B} \cap B$ is either empty or has positive $\mathcal{H}^{n-2}$-measure and is a finite union of pairwise disjoint, locally compact, locally $(n-2)$-rectifiable sets. The rest of the conclusions follow from \cite{MW}.
\end{proof}

\printbibliography

@article{M,
	author = {Minter, Paul},
	date-added = {2021-11-21 19:59:50 +0000},
	date-modified = {2021-11-21 20:02:23 +0000},
	journal = {arXiv preprint arXiv:2108.02614},
	title = {The structure of stable codimension one integral varifolds near classical cones of density 5/2},
	year = {2021}}

@article{MW,
	author = {Minter, Paul and Wickramasekera, Neshan},
	date-added = {2021-11-21 16:37:57 +0000},
	date-modified = {2021-11-21 16:40:35 +0000},
	journal = {arXiv Preprint},
	title = {A structure theory for stable codimension 1 integral varifolds with applications to area minimising hypersurfaces mod p},
	year = {2021}}

@book{Almgren,
	author = {Almgren, Jr., Frederick J.},
	publisher = {World Scientific Publishing Co. Inc.},
	title = {Almgren's big regularity paper: Q-valued functions minimizing Dirichlet's integral and the regularity of area-minimizing rectifiable currents up to codimension 2},
	volume = {1},
	year = {2000}}

@article{AKS,
	author = {Aronszajn, Nachman and Krzywicki, Andrzej and Szarski, Jacek},
	journal = {Arkiv f{\"o}r Matematik},
	number = {5},
	pages = {417--453},
	publisher = {Springer},
	title = {A unique continuation theorem for exterior differential forms on Riemannian manifolds},
	volume = {4},
	year = {1962}}

@article{Ambrosio,
	author = {Ambrosio, Luigi},
	journal = {Annali della Scuola Normale Superiore di Pisa-Classe di Scienze},
	number = {3},
	pages = {439--478},
	title = {Metric space valued functions of bounded variation},
	volume = {17},
	year = {1990}}

@article{Chang,
	author = {Chang, Sheldon Xu-Dong},
	journal = {Journal of the American Mathematical Society},
	number = {4},
	pages = {699--778},
	title = {Two-dimensional area minimizing integral currents are classical minimal surfaces},
	volume = {1},
	year = {1988}}

@book{DeLSpa11,
	author = {De Lellis, Camillo and Spadaro, Emanuele},
	publisher = {American Mathematical Society},
	title = {$Q$-valued functions revisited},
	volume = {991},
	year = {2011}}

@article{DeLSpa-I,
	author = {De Lellis, Camillo and Spadaro, Emanuele},
	journal = {Geometric and Functional Analysis},
	number = {6},
	pages = {1831--1884},
	publisher = {Springer},
	title = {Regularity of area minimizing currents I: gradient $L^p$ estimates},
	volume = {24},
	year = {2014}}

@article{DeLSpa-II,
	author = {De Lellis, Camillo and Spadaro, Emanuele},
	journal = {Annals of Mathematics},
	number = {2},
	pages = {499--575},
	publisher = {JSTOR},
	title = {Regularity of area minimizing currents II: center manifold},
	volume = {183},
	year = {2016}}

@article{DeLSpa-III,
	author = {De Lellis, Camillo and Spadaro, Emanuele},
	journal = {Annals of Mathematics},
	number = {2},
	pages = {577--617},
	publisher = {JSTOR},
	title = {Regularity of area minimizing currents III: blow-up},
	volume = {183},
	year = {2016}}

@article{DSS-I,
	author = {De Lellis, Camillo and Spadaro, Emanuele and Spolaor, Luca},
	journal = {Transactions of the American Mathematical Society},
	number = {3},
	pages = {1783--1801},
	title = {Regularity theory for 2-dimensional almost minimal currents I: Lipschitz approximation},
	volume = {370},
	year = {2018}}

@article{DSS-II,
	author = {De Lellis, Camillo and Spadaro, Emanuele and Spolaor, Luca},
	journal = {Annals of PDE},
	number = {2},
	pages = {1--85},
	publisher = {Springer},
	title = {Regularity theory for 2-dimensional almost minimal currents II: branched center manifold},
	volume = {3},
	year = {2017}}

@article{DSS-III,
	author = {De Lellis, Camillo and Spadaro, Emanuele and Spolaor, Luca},
	journal = {Journal of Differential Geometry},
	number = {1},
	pages = {125--185},
	publisher = {Lehigh University},
	title = {Regularity theory for $2 $-dimensional almost minimal currents III: Blowup},
	volume = {116},
	year = {2020}}

@article{GL87,
	author = {Garofalo, Nicola and Lin, Fang-Hua},
	journal = {Communications on Pure and Applied Mathematics},
	number = {3},
	pages = {347--366},
	publisher = {Wiley Online Library},
	title = {Unique continuation for elliptic operators: a geometric-variational approach},
	volume = {40},
	year = {1987}}

@article{Krum16,
	author = {Krummel, Brian},
	journal = {Communications in Analysis and Geometry},
	number = {4},
	pages = {877--935},
	publisher = {International Press of Boston},
	title = {Existence and regularity of multivalued solutions to elliptic equations and systems},
	volume = {27},
	year = {2019}}

@article{KrumWic1,
	author = {Krummel, Brian and Wickramasekera, Neshan},
	journal = {arXiv preprint arXiv:1311.0923},
	title = {Fine properties of branch point singularities: two-valued harmonic functions},
	year = {2013}}

@article{KrumWic2,
	author = {Krummel, Brian and Wickramasekera, Neshan},
	journal = {arXiv preprint arXiv:1711.06222},
	title = {Fine properties of branch point singularities: Dirichlet energy minimizing multi-valued functions},
	year = {2017}}

@article{NV,
	author = {Naber, Aaron and Valtorta, Daniele},
	journal = {Journal of the European Mathematical Society},
	number = {10},
	pages = {3305--3382},
	title = {The singular structure and regularity of stationary varifolds},
	volume = {22},
	year = {2020}}

@article{Resh1,
	author = {Reshetnyak, Yu G.},
	journal = {Siberian Mathematical Journal},
	number = {3},
	pages = {567--583},
	publisher = {Springer},
	title = {Sobolev-type classes of functions with values in a metric space},
	volume = {38},
	year = {1997}}

@article{Resh2,
	author = {Reshetnyak, Yu G.},
	journal = {Siberian Mathematical Journal},
	number = {4},
	pages = {709--721},
	publisher = {Springer},
	title = {Sobolev-type classes of functions with values in a metric space. II},
	volume = {45},
	year = {2004}}

@article{Ros10,
	author = {Rosales, Leobardo},
	journal = {Calculus of Variations and Partial Differential Equations},
	number = {1},
	pages = {59--84},
	publisher = {Springer},
	title = {The geometric structure of solutions to the two-valued minimal surface equation},
	volume = {39},
	year = {2010}}

@article{Sim93,
	author = {Simon, Leon},
	journal = {Journal of Differential Geometry},
	number = {3},
	pages = {585--652},
	publisher = {Lehigh University},
	title = {Cylindrical tangent cones and the singular set of minimal submanifolds},
	volume = {38},
	year = {1993}}

@article{Sim93b,
	author = {Simon, Leon},
	journal = {Surveys in Differential Geometry},
	number = {1},
	pages = {246--305},
	publisher = {International Press of Boston},
	title = {Rectifiability of the singular sets of multiplicity 1 minimal surfaces and energy minimizing maps},
	volume = {2},
	year = {1993}}

@article{SimWic07,
	author = {Simon, Leon and Wickramasekera, Neshan},
	journal = {Journal of Differential Geometry},
	number = {1},
	pages = {143--173},
	publisher = {Lehigh University},
	title = {Stable branched minimal immersions with prescribed boundary},
	volume = {75},
	year = {2007}}

@article{SimWic16,
	author = {Simon, Leon and Wickramasekera, Neshan},
	journal = {Communications on Pure and Applied Mathematics},
	number = {7},
	pages = {1213--1258},
	publisher = {Wiley Online Library},
	title = {A frequency function and singular set bounds for minimal immersions},
	volume = {69},
	year = {2016}}

@article{Wic08,
	author = {Wickramasekera, Neshan},
	journal = {Journal of Differential Geometry},
	number = {1},
	pages = {79--173},
	publisher = {Lehigh University},
	title = {A regularity and compactness theory for immersed stable minimal hypersurfaces of multiplicity at most 2},
	volume = {80},
	year = {2008}}

@article{Wic,
	author = {Wickramasekera, Neshan},
	date-modified = {2021-11-21 16:37:22 +0000},
	title = {Unpublished work},
	year = {2021}}

\bigskip
\hskip-.2in\vbox{\hsize3in\obeylines\parskip -1pt 
  \small 
Brian Krummel
School of Mathematics \& Statistics 
University of Melbourne
Parkville,VIC  3010, Australia
\vspace{4pt}
{\tt brian.krummel@unimelb.edu.au}} 
\vbox{\hsize3in
\obeylines 
\parskip-1pt 
\small 
Neshan Wickramasekera
DPMMS 
University of Cambridge 
Cambridge CB3 0WB, United Kingdom
\vspace{4pt}
{\tt N.Wickramasekera@dpmms.cam.ac.uk}
}

\end{document}